\documentclass[11pt]{article}
\usepackage[T1]{fontenc}
\usepackage{lmodern}
\usepackage[margin=1in]{geometry}
\usepackage{amsmath,amssymb,amsthm,mathtools}
\usepackage{array}
\usepackage{xcolor}
\usepackage{microtype}
\usepackage{hyperref}
\hypersetup{colorlinks=true,linkcolor=blue!45!black,citecolor=blue!45!black,urlcolor=blue!45!black,
 pdftitle={Finite-time blowup of the IPM equation from smooth initial data without forcing},pdfauthor={Fan Zheng}}
\newtheorem{theorem}{Theorem}[section]
\newtheorem{proposition}[theorem]{Proposition}
\newtheorem{lemma}[theorem]{Lemma}
\newtheorem{corollary}[theorem]{Corollary}
\theoremstyle{definition}

\theoremstyle{remark}
\newtheorem{remark}[theorem]{Remark}
\numberwithin{equation}{section}
\allowdisplaybreaks[1]
\title{Finite-time blowup of the IPM equation\\[3pt]from smooth initial data without forcing}
\author{Fan Zheng\\[3pt]
\small Instituto de Ciencias Matem\'aticas\\
\small\href{mailto:fan.zheng@icmat.es}{\texttt{fan.zheng@icmat.es}}}
\date{}
\begin{document}
\maketitle
\begin{abstract}
We prove finite-time gradient blow-up for the
unforced incompressible porous media equation on the two-dimensional
torus, removing the forcing used in the IPM blow-up constructions of
Alp\"oge, Buckmaster, and Coiculescu \cite{lit:ABC} and
C\'ordoba and Mart\'inez-Zoroa \cite{lit:CMIPM}.
The key is an identity that expresses the logarithmic growth of a layer's
gradient as the negative pressure curvature transverse to its transported
phase. A nearly vertical parent layer rotates and amplifies a new phase,
with a gain exponential in the reciprocal parent tilt. An asymmetric
periodic profile gives a uniform upper bound for the pressure Hessian,
preventing rapid decay of the gradient magnitude. Each layer is seeded at time zero;
its phase means are selected by a terminal displacement problem, and a
finite order expansion is corrected to an exact unforced solution.
The scale induction then shows blow-up of two derivative
components at the origin and convergence to a terminal density in every
$C^\eta$, $\eta<1$.
\end{abstract}

\newpage
\begingroup
\small
\tableofcontents
\endgroup
\newpage
\section{Introduction: identities and amplification}
\label{ipm:sec-introduction}

Set $c=2\pi$ and $\mathbb T^2=(\mathbb R/2\pi\mathbb Z)^2$.
Physical vector norms are Euclidean and physical matrix norms are
induced operator norms; their $L^\infty$ norms are spatial suprema.
For mean-zero density, Darcy's law is
\begin{equation}\label{ipm:darcy}
 u(\rho)=-c\rho e_2-\nabla p,\qquad
 \operatorname{div}u(\rho)=0,\qquad \int_{\mathbb T^2}p=0.
\end{equation}
Equivalently, for nonzero $\xi\in\mathbb Z^2$,
\begin{equation}\label{ipm:velocity-symbol}
 \widehat{u(\rho)}(\xi)=\mathfrak m(\xi)\widehat\rho(\xi),
 \qquad
 \mathfrak m(\xi)=c\frac{(\xi_1\xi_2,-\xi_1^2)}{|\xi|^2},
 \qquad \widehat{u(\rho)}(0)=0.
\end{equation}
We seek a solution of
\begin{equation}\label{ipm:unforced}
 \partial_t\rho+u(\rho)\cdot\nabla\rho=0.
\end{equation}
Odd always means odd under the joint spatial reflection $x\mapsto-x$.
Since the Darcy multiplier is even, the velocity of an odd density is odd,
and the origin is a stationary particle.

\begin{theorem}\label{ipm:main}
There are an odd mean-zero $\rho_{\rm in}\in C^\infty(\mathbb T^2)$,
a time $T>0$, and a solution of \eqref{ipm:unforced} in
$C^\infty([0,T)\times\mathbb T^2)$, with $\rho(0)=\rho_{\rm in}$,
such that
\[
 \lim_{t\uparrow T}\partial_2\rho(t,0)=+\infty,
 \qquad
 \lim_{t\uparrow T}\partial_1u_1(\rho)(t,0)=+\infty.
\]
There is a common terminal density $\rho_*$ such that
$\rho(t)\to\rho_*$ in $C^\eta$ for every $0\le\eta<1$.
By replacing $\rho(t,x)$ with $T\rho(Tt,x)$, one can arrange $T=1$.
\end{theorem}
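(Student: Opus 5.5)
We build the solution as a superposition of infinitely many odd layers at geometrically increasing frequencies $N_1\ll N_2\ll\cdots$, each present at time zero. Each layer is transported by the flow and amplified by the layers before it. The starting point is the identity announced in the abstract. Write a layer as $\rho_k\approx a_k\sin(N_k\phi_k(t,x))$ with phase $\phi_k$ transported by the flow, $\partial_t\phi_k+u\cdot\nabla\phi_k=0$. Then $g=\nabla\phi_k$ satisfies $\partial_t g+u\cdot\nabla g=-(\nabla u)^{T}g$, and
\[
 \partial_t\log|g|=-\hat g\cdot\nabla u\,\hat g=\hat g\cdot\nabla^2p\,\hat g,
\]
with $\hat g=g/|g|$. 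Since $\nabla^2 p$ is symmetric, the antisymmetric buoyancy part of $\nabla u$ drops out of this quadratic form. So the log-growth of a layer's gradient is the pressure curvature in the direction $\hat g$, i.e.\ the negative of the curvature transverse to the layer.

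\textbf{Step 1: one amplification stage.} A parent layer at frequency $N_{k}$ that is nearly vertical, with tilt $\theta_k$ from the vertical, produces near the origin a shear. This shear rotates the child phase toward the horizontal and stretches it. The amount of stretching is $\int\hat g\cdot\nabla^2p\,\hat g\,dt$ over the parent's lifetime, and we aim to show it is of size $\gtrsim\exp(c/\theta_k)$. The parent's Hessian $\nabla^2 p$ is explicit from the multiplier $\mathfrak m$: it is $c\,\xi\xi^T\xi_1^2/|\xi|^2$-weighted. We choose an asymmetric periodic profile for the parent instead of a pure sine. This gives a one-sided bound $\hat g\cdot\nabla^2p\,\hat g\ge -C$ uniformly. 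As a result the gradient of every layer can decay at most by a bounded factor over bounded times, while it grows on the favorable sign region, which we arrange to contain the origin.

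\textbf{Step 2: seeding and phase selection.} Layer $k$ is seeded at $t=0$ with tiny amplitude $a_k$, chosen so that $\sum a_k$ converges and $\sum a_k N_k^\eta<\infty$ for every $\eta<1$. Its phase means, i.e.\ the constant shifts of $\phi_k$, are chosen by a terminal displacement problem. Preceding layers displace particles by a computable amount, and we pick the initial shifts so that at the layer's activation time $t_k$ its phase at the origin has the prescribed value. Oddness keeps the origin fixed, which makes this a finite-dimensional fixed-point problem solvable by the implicit function theorem.

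\textbf{Step 3: exact solution and induction.} The $k$-th stage is built as a finite-order expansion in the small parameters $(a_k,\theta_k,N_k^{-1})$. The remainder is a forcing of size $N_k^{-M}$. We remove it by solving the unforced IPM exactly near the approximate solution, using a Lagrangian/energy estimate in $H^s$ on $[t_k,t_{k+1}]$; the local well-posedness theory controls this over the short window. The induction hypothesis is: at $t_k$, layer $k$ has tilt $\theta_k$, gradient at the origin $\sim a_kN_k\exp(\sum_{j<k}c/\theta_j)$, and the errors are summable. Setting $t_k\uparrow T$ with $\sum(t_{k+1}-t_k)<\infty$ gives the theorem. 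The quantity $\partial_2\rho(t,0)$ tends to $+\infty$ because each newly horizontal layer contributes with the chosen sign. The quantity $\partial_1u_1(t,0)=-\partial_{11}p$ also diverges, since it equals the growth rate through the identity above applied at the origin. Finally, $C^\eta$ convergence to $\rho_*$ follows by interpolating the amplitude bounds against the $C^1$ growth.

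\textbf{Main obstacle.} The hardest part is Step 1 together with its uniformity. The gain must dominate the frequency jump $N_{k+1}/N_k$ and all the bounded-factor losses from the other layers, and the lower bound on $\nabla^2 p$ must hold uniformly across the whole superposition. I would first try to choose $\theta_k\to0$ slowly enough that the $\exp(c/\theta_k)$ gains dominate, with the losses only geometric.
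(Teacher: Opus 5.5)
Your outline has the right architecture (layers seeded at time zero, an asymmetric profile, a terminal displacement problem, an exact correction). But the step carrying the mechanism is wrong, and the steps dealing with the removal of the force are misidentified.

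\textbf{The amplification identity.} Darcy's law gives $\nabla u=-\nabla^2p-c\,e_2(\nabla\rho)^T$. The buoyancy part is rank one, not antisymmetric, so a transported phase satisfies $(\log|\nabla\phi|)'=\hat g^T\nabla^2p\,\hat g+c\,\hat g_2(\hat g\cdot\nabla\rho)$. Your ``i.e.'' is also off, by $\operatorname{tr}\nabla^2p=-c\,\partial_2\rho$. In the frozen model your quadratic form is $-cG\sin\alpha\cos^2 d\le0$, so it predicts no growth.

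The correct identity, \eqref{ipm:rayidentity}, is $(\log(a|m|))'=-(n^\perp)^T\nabla^2p\,n^\perp$. It holds for the product of $|m|$ and an amplitude $a$ that is \emph{not} constant: the linear coupling $u(\Theta)\cdot\nabla P$ gives $a'/a=-\mathfrak m(m)\cdot\nabla P$, once transversality $\mathfrak m(m)\cdot m=0$ removes the fast self-interaction.

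Take a parent gradient $Gn$ at angle $\alpha=\pi/2+1/x$, a child normal at angle $\beta$, and $d=\beta-\alpha$. The phase alone obeys $(\log|m|)'=-d'\cot d$. This vanishes at activation and gives only a factor $\approx x$ over the whole turn. The gain $e^{\pi x/2}$ comes entirely from $a$, via $d\log(a|m|)/d\beta=-\tan\alpha\approx x$: growth rate and angular speed share the factor $\sin^2 d$. So your ansatz $a_k\sin(N_k\phi_k)$ with fixed $a_k$ gives no exponential amplification, and Step 1 never derives the clock.

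The same error reverses the one-sided bound. You need the \emph{upper} bound $\nabla^2p\le I$, which gives $(\log(a|m|))'\ge-1$. The asymmetric profile yields exactly this. A layer's leading Hessian is $-c\alpha\chi a\,m_2\frac{m\otimes m}{|m|^2}f'_\delta$, weighted by $m_2$ rather than $\xi_1^2$. So with $m_2>0$ the large positive slope contributes a negative semidefinite term. That negative part has size $h$, so your lower bound $\hat g^T\nabla^2p\,\hat g\ge-C$ fails for this profile.

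\textbf{Phase means and the exact correction.} The ``phase means'' are not constant shifts of $\phi_k$; oddness already makes the phase vanish at the origin. They are the angle-averaged density corrections $B_p$ produced by products of oscillations. Nothing can be switched on later without a source, so they must be in the time-zero data. They solve $D_t\varrho+u(\varrho)\cdot\nabla P=f$, whose forward evolution suffers the same amplification. The paper sets $Q=\varrho+\nabla P\cdot\eta$ with $D_t\eta-(\eta\cdot\nabla)u=u(\varrho)$, so that $D_tQ=f$ and $D_t\eta+\nabla^2p\,\eta+\nabla\pi=-cQe_2$, where $\pi$ is the Darcy pressure of $\varrho$. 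It imposes $\eta(S)=0$ and controls $\eta$ backward through $\tfrac12\tfrac{d}{dt}\|\eta\|_2^2=-\int\eta^T\nabla^2p\,\eta-c\int Q\eta_2$ together with $\nabla^2p\le I$. This selects $\varrho(0)=-\nabla P(0)\cdot\eta(0)$.

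Likewise the exact correction must vanish at $t=0$ and be controlled on all of $[0,S]$, not just on $[t_k,t_{k+1}]$. Local well-posedness gives nothing here. The window has length $\sim x^2/G$, far longer than $1/\|\nabla\rho\|_\infty$. A physical $H^s$ Gronwall exponent around a field oscillating at frequency $k/\ell$ involves powers of $k$ that no fixed residual $k^{-M}$ absorbs. What is needed is:
\begin{itemize}
\item a Gevrey-2 hierarchy of depth $\lfloor k^\vartheta\rfloor$, with one growth constant preserved by every inverse, giving residual $\exp(-0.6k^\vartheta\log k)$;
\item a correction in the lifted variables $(y,\theta)$, where the transport has size $k^{-1}\mathcal P^C$ and the remaining linear term has order zero, so the Gronwall factor $e^{\mathcal P^C}$ is beaten under $\mathcal P^Q\le k^{\vartheta/100}$.
\end{itemize}

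\textbf{Two smaller points.} First, $\sum a_kN_k^\eta<\infty$ for $\eta<1$ does not make $\rho_{\rm in}$ smooth: the gain must dominate every power of the new frequency (the paper has $k_j=e^{x_{j-1}/j^2}$ against a gain $e^{c_0x_{j-1}}$). Second, $\partial_1u_1(t,0)\to\infty$ comes from the explicit entry $c\,g_j\cos^2\beta_j\sin\beta_j\ge a_2h_j/x_j^2$, which uses the tilt $1/x_j$, together with $-\partial_{11}p\ge-1$ for older stages and small history for later layers. It does not follow from identifying it with a growth rate.
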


Theorem~\ref{ipm:main} is proved in Theorem~\ref{ipm:cascade} at the end of
Section~\ref{ipm:seventh-physical}.

The construction adds successively finer localized layers to exact smooth
solutions. Each layer is present at time zero with a very small seed.
The ray identity \eqref{ipm:rayidentity} expresses its logarithmic gradient
growth as the negative pressure curvature transverse to the transported
phase. A nearly vertical parent turns the new phase through an angle of
order one and produces a gain exponential in the reciprocal parent tilt.
After normalization to a prescribed target gradient, the upper pressure
bound $\nabla^2p\le I$\footnote{All inequalities between symmetric matrices are quadratic-form inequalities.} makes the layer's earlier history exponentially
small. An asymmetric periodic profile places its large gradient in the
negative part of the pressure Hessian and leaves a summable positive
pressure cost. The child thus retains the pressure bound and reaches the
direction needed to amplify the next layer.

We first derive the increment equation and the leading wave identities,
then construct the phase means and the successive corrections. The means
are selected through a terminal displacement equation, whose backward
energy estimate uses the same upper pressure bound. Correcting the
remaining residual gives an exact unforced child and its particle map.
We then introduce the stage parameters, explain the amplification and
turning clocks, and show how the small history, pressure budget, and
retained time intervals lead to the limiting solution.

\subsection{The increment equation in parent and lifted coordinates}

Let $P$ be an exact smooth parent density, with velocity $u=u(P)$,
pressure $p$, and initial-label flow $X(t,z)$, $X(0,z)=z$.
Write $D_t=\partial_t+u\cdot\nabla$, $r=\nabla P$,
$M=\nabla u$, and $H=\nabla^2p$, with $M_{ij}=\partial_j u_i$.
Differentiating transport~\eqref{ipm:unforced} and Darcy's law~\eqref{ipm:darcy}
gives the exact identities
\begin{equation}\label{ipm:raybasic}
 D_tr=-M^Tr,\qquad M+ce_2r^T=-H,\qquad \operatorname{tr}H=-cr_2.
\end{equation}
The first transports a scalar gradient as a covector. The other two
express the velocity gradient and the trace of the pressure Hessian in
terms of the density gradient.

Choose a label length $\ell$, a large dimensionless frequency $k$, and a
unit vector $m_0$, with $(k/\ell)m_0\in\mathbb Z^2$. Set $z=\ell y$ and
transport the physical phase by the parent:
\begin{equation}\label{ipm:intro-phase}
 \theta(t,X(t,\ell y))=km_0\cdot y,\qquad
 F(t,y)=D_zX(t,\ell y),\qquad
 m(t,y)=F(t,y)^{-T}m_0.
\end{equation}
Thus $D_t\theta=0$ and $(\nabla_x\theta)\circ X=(k/\ell)m$.
Since $F_t=(M\circ X)F$, the normal satisfies $m'=-(M\circ X)^Tm$ along
each parent particle. Throughout, primes on quantities evolving along a parent
trajectory denote physical-time differentiation at fixed parent label.

Write the proposed child as $P^{\rm new}=P+\Theta$, its velocity as
$u+U$, and its pressure as $p+\pi$. By linearity of Darcy's law,
$U=u(\Theta)$, and subtraction of the exact parent equation gives
\begin{equation}\label{ipm:intro-increment}
 D_t\Theta+U\cdot r+U\cdot\nabla\Theta=0,\qquad
 U=-c\Theta e_2-\nabla\pi,\qquad \operatorname{div}U=0.
\end{equation}
To construct all three increments together, now regard $\theta\in\mathbb T$
as an independent angle and set
\begin{align}
 \Theta(t,X(t,\ell y))&=\ell w(t,y,km_0\cdot y),\nonumber\\
 U(t,X(t,\ell y))&=\ell F(t,y)v_{\mathrm{raw}}(t,y,km_0\cdot y),
       \label{ipm:intro-raw-fields}\\
 \pi(t,X(t,\ell y))&=\ell^2q(t,y,km_0\cdot y).\nonumber
\end{align}
Thus $v_{\mathrm{raw}}$ is the velocity increment in rescaled parent labels,
as calculated in \eqref{ipm:label-velocity}--\eqref{ipm:child-flow-identity}.
Define
\[
 G=F^{-1}F^{-T},\qquad a_2=F^{-1}e_2,\qquad
 R_0(y)=\nabla_zP(0,\ell y),\qquad
 \widetilde\nabla=\nabla_y+km_0\partial_\theta.
\]
Transport of $P$ gives $P(t,X(t,z))=P(0,z)$, and then the chain rule gives
$F^T(r\circ X)=R_0$. The graph chain rule and
the volume-preserving Piola identity, as calculated in
\eqref{ipm:physical-piola}, therefore turn
\eqref{ipm:intro-increment} into the sufficient lifted system
\begin{equation}\label{ipm:intro-raw-equation}
 \begin{aligned}
  w_t+v_{\mathrm{raw}}\cdot R_0+v_{\mathrm{raw}}\cdot\widetilde\nabla w&=0,\\
  v_{\mathrm{raw}}&=-ca_2w-G\widetilde\nabla q,\\
  \widetilde\nabla\cdot v_{\mathrm{raw}}&=0.
 \end{aligned}
\end{equation}
It is solved for every independent $(y,\theta)$ and then restricted to
$\theta=km_0\cdot y$. The integer-carrier condition
$n_0=(k/\ell)m_0\in\mathbb Z^2$ makes that graph periodic. Parent transport becomes $\partial_t$ at fixed $y$; each fast
spatial derivative contributes $k\partial_\theta$.

\subsection{Leading density, velocity, pressure, and ray identities}

In this subsection, $r,M,H$ denote the parent fields
$\nabla P,\nabla u,\nabla^2p$ evaluated at $X(t,\ell y)$.
The leading density and velocity increments are
\begin{align}
 \Theta_{\rm lead}(t,X(t,\ell y))
   &=\frac{\ell}{k}\alpha\chi(y)a(t,y)f_\delta(km_0\cdot y),
       \label{ipm:intro-density}\\
 U_{\rm lead}(t,X(t,\ell y))
   &=\frac{\ell}{k}\alpha\chi(y)a(t,y)
                       \mathfrak m(m(t,y))f_\delta(km_0\cdot y).
       \label{ipm:intro-velocity}
\end{align}
Here $\alpha>0$ is a constant seed, $a>0$ is the evolving scalar
amplitude, $\chi$ localizes the layer, and $f_\delta$ is odd and periodic.
The multiplier is homogeneous of degree zero, so it depends on the
direction of $m$, and
\[
 \mathfrak m(m)\cdot m=0.
\]
The same cancellation follows directly from the lifted Darcy equation.
Seek
\[
 w_{\rm lead}=k^{-1} A_1,\qquad
 v_{\mathrm{raw,lead}}=k^{-1} V_1,\qquad
 q_{\rm lead}=k^{-2}\pi_1,\qquad
 A_1=\alpha\chi a f_\delta.
\]
At leading order, the Darcy and incompressibility equations in
\eqref{ipm:intro-raw-equation} give
\begin{equation}\label{ipm:intro-leading-darcy}
 V_1=-ca_2A_1-Gm_0\partial_\theta\pi_1,
 \qquad m_0\cdot V_1=0.
\end{equation}
Taking the dot product of the expression for $V_1$ with $m_0$ gives
\[
 0=-c(m_0\cdot a_2)A_1-(m_0\cdot Gm_0)\partial_\theta\pi_1,
 \qquad \partial_\theta\pi_1=-c\frac{m_0\cdot a_2}{m_0\cdot Gm_0}A_1.
\]
Taking the zero-mean periodic primitive therefore yields
\begin{equation}\label{ipm:intro-leading-pressure}
 \pi_1=-c\alpha\chi a\frac{m_0\cdot a_2}{m_0\cdot Gm_0}\varphi_\delta,
 \qquad \varphi_\delta=\partial_\theta^{-1}f_\delta,
 \qquad \pi_{\rm lead}\circ X=\ell^2q_{\rm lead}^{\rm diag}
       =\frac{\ell^2}{k^2}\pi_1^{\rm diag}.
\end{equation}
Here the superscript ${\rm diag}$ denotes graph evaluation.
Substituting the expression for $\partial_\theta\pi_1$ into
\eqref{ipm:intro-leading-darcy} gives
\[
 V_1=-c\left(a_2-\frac{m_0\cdot a_2}{m_0\cdot Gm_0}Gm_0\right)A_1
     =-c\left(I-\frac{Gm_0\otimes m_0}{m_0\cdot Gm_0}\right)a_2A_1.
\]
Thus $V_1=b_0A_1=-cLa_2A_1$, where
\begin{equation}\label{ipm:intro-projection}
 b_0=-cLa_2,\qquad L=I-\frac{Gm_0\otimes m_0}{m_0\cdot Gm_0}.
\end{equation}
Since $a_2=F^{-1}e_2$, $G=F^{-1}F^{-T}$, and $m=F^{-T}m_0$,
\[
 m_0\cdot a_2=(F^{-T}m_0)\cdot e_2=m\cdot e_2=m_2,
 \qquad m_0\cdot Gm_0=|m|^2.
\]
Using also $Fa_2=e_2$ and $FGm_0=F^{-T}m_0=m$, we obtain
$Fb_0=-ce_2+cm_2m/|m|^2=\mathfrak m(m)$, which proves
\eqref{ipm:intro-velocity}.
The fast quadratic term at order $k^{-1}$ is
$(m_0\cdot V_1)\partial_\theta A_1=0$. The scalar equation at that
order is therefore $(A_1)_t+c_A^{\ell}A_1=0$, where
\begin{equation}\label{ipm:amplitude-coefficient-definition}
 c_A^{\ell}(t,y):=c_A(t,\ell y)
 =b_0(t,y)\cdot R_0(y)
 =\mathfrak m(m(t,y))\cdot r(t,X(t,\ell y)).
\end{equation}
Here $c_A$ denotes the same coefficient in unscaled labels. This gives
\begin{equation}\label{ipm:intro-wave}
 \frac{a'}a=-\mathfrak m(m)\cdot r,
 \qquad a(s,y)=1,
\end{equation}
where $s$ is the activation time. Together with the phase evolution
derived after \eqref{ipm:intro-phase}, this defines the wave on the
whole parent interval, including times before $s$. The slow derivatives,
Darcy errors, and nonlinear residual of the leading increments are
handled by the corrections in Section~\ref{ipm:sec-exact}.

Set $n=m/|m|$ and $n^\perp=(-n_2,n_1)$. The phase evolution and
equations \eqref{ipm:raybasic} and \eqref{ipm:intro-wave} give
\begin{align*}
 (\log|m|)'&=-n^TMn=n^THn+cn_2(r\cdot n),\\
 (\log a)'&=c(n_1^2r_2-n_1n_2r_1).
\end{align*}
Adding them cancels the terms involving $r_1$; the remaining density
term is $cr_2=-\operatorname{tr}H$. Hence
\begin{equation}\label{ipm:rayidentity}
 (\log(a|m|))'=n^THn+cr_2
       =-(n^\perp)^THn^\perp.
\end{equation}
Thus negative pressure curvature in $m^\perp$ produces growth of the gradient size $a|m|$. The upper bound $H\le I$ gives
$(\log(a|m|))'\ge-1$, controlling the layer's earlier history by
\eqref{ipm:backwardray}.

Differentiating \eqref{ipm:intro-density}, \eqref{ipm:intro-velocity}, and
\eqref{ipm:intro-leading-pressure}, respectively, and retaining the leading
fast derivatives gives, after composition with $X(t,\ell y)$,
\begin{align}
 (\nabla\Theta)_{\rm lead}\circ X
   &=\alpha\chi a\,m f'_\delta(km_0\cdot y),\nonumber\\
 (\nabla U)_{\rm lead}\circ X
   &=\alpha\chi a\,\mathfrak m(m)\otimes m\,f'_\delta(km_0\cdot y),
       \label{ipm:intro-leading-matrices}\\
 (\nabla^2\pi)_{\rm lead}\circ X
   &=-c\alpha\chi a\,m_2\frac{m\otimes m}{|m|^2}
                                      f'_\delta(km_0\cdot y).\nonumber
\end{align}
Here $(\nabla_x\theta)\circ X=(k/\ell)m$ and
$\varphi_\delta''=f_\delta'$; derivatives of the slow coefficients
contribute lower-order terms.
Hereafter all parent densities $P$ are odd, as well as their velocities
$u(P)$ and initial-label flows $X(t,z)$, so the origin is a stationary
particle. For any layer, its physical phase satisfies
$\theta(t,0)=(k/\ell)m_0\cdot X^{-1}(t,0)=0$. With
$\chi(0)=1$ and $f'_\delta(0)=\delta^{-1}$, define its leading
density-gradient magnitude at the origin by
\begin{equation}\label{ipm:intro-central-magnitude}
 g(t)=\frac{\alpha}{\delta}a(t,0)|m(t,0)|.
\end{equation}
At the origin the three expressions in
\eqref{ipm:intro-leading-matrices} become
\begin{equation}\label{ipm:intro-central-matrices}
 gn,\qquad g\mathfrak m(n)\otimes n=-cg n_1 n^\perp\otimes n,\qquad
                  -cg n_2 n\otimes n.
\end{equation}
These are the leading density-gradient, velocity-gradient, and
pressure-Hessian increments that the layer supplies to the next stage.

\subsection{The divergence correction and the phase means}

Transversality removes the fast divergence of $V_1$ but leaves its slow
divergence. In two dimensions this is corrected by a stream function.
For any transverse, zero-angle-mean coefficient $V_p$, set
\begin{equation}\label{ipm:intro-curl}
 \psi_p=\partial_\theta^{-1}(m_0^\perp\cdot V_p),\qquad
 C_p=\nabla_y^\perp\psi_p,
 \qquad D=k^{-1}\nabla_y+m_0\partial_\theta=k^{-1}\widetilde\nabla.
\end{equation}
Then $V_p=m_0^\perp\partial_\theta\psi_p$, and
\[
 V_p+k^{-1} C_p=D^\perp\psi_p,\qquad
 D\cdot(V_p+k^{-1} C_p)=0.
\]
The coefficients of $D$ are constant, so this divergence-of-curl identity
holds on the whole lifted space. Throughout, $\nabla^\perp=(-\partial_2,\partial_1)$.

Products of oscillations also generate angle-independent density
coefficients $B_p$. Define the mean Darcy velocity operator
$\mathcal V_{\mathrm{mean}}$ on a phase-independent density $B$ by
\begin{equation}\label{ipm:intro-mean-darcy}
 \mathcal V_{\mathrm{mean}}B:=-ca_2B-G\nabla_y\bar q,\qquad
                    \operatorname{div}_y(\mathcal V_{\mathrm{mean}}B)=0.
\end{equation}
Here $\bar q$ solves the periodic elliptic equation
$\operatorname{div}_y(G\nabla_y\bar q)=-c\operatorname{div}_y(a_2B)$.
The right-hand side has zero integral, and uniform positivity of $G$
gives existence and uniqueness up to an additive constant; hence its
gradient is unique. Thus $\mathcal V_{\mathrm{mean}}B_p$
is the mean velocity associated with $B_p$, with pressure $\bar q_p$.
Set $B_1=\bar q_1=C_0=\pi_0=0$. At finite depth $N$ use
\begin{align}
 w^a&=\sum_{p=1}^Nk^{-p}(A_p+B_p),\nonumber\\
 v_{\mathrm{raw}}^a&=\sum_{p=1}^N
       \bigl[k^{-p}(V_p+\mathcal V_{\mathrm{mean}}B_p)+k^{-p-1}C_p\bigr],
       \label{ipm:intro-expansion}\\
 q^a&=\sum_{p=1}^N
       \bigl[k^{-p}\bar q_p+k^{-p-1}\pi_p\bigr].\nonumber
\end{align}
Every finite velocity sum is exactly $D$-divergence free. In particular,
the last curl term $k^{-N-1}C_N$ must be retained.

Write $\langle\cdot\rangle$ for the normalized angular average.
At order $p$, the preceding pressure and curl coefficients are known.
The oscillatory (angle-dependent) part of the Darcy equation in
\eqref{ipm:intro-raw-equation} at this order, with the transverse choice of $V_p$, is
\begin{equation}\label{ipm:intro-order-p-darcy}
 V_p+C_{p-1}=-ca_2A_p-G\nabla_y\pi_{p-1}
                         -Gm_0\partial_\theta\pi_p,
 \qquad m_0\cdot V_p=0.
\end{equation}
Put $J_{p-1}=G\nabla_y\pi_{p-1}+C_{p-1}$. Taking the dot product
with $m_0$ gives
\[
 0=m_0\cdot(-ca_2A_p-J_{p-1})
                 -(m_0\cdot Gm_0)\partial_\theta\pi_p.
\]
Thus the zero-mean pressure is determined by
\[
 \partial_\theta\pi_p
   =\frac{m_0\cdot(-ca_2A_p-J_{p-1})}{m_0\cdot Gm_0},
 \qquad \langle\pi_p\rangle=0.
\]
Substituting this pressure derivative into \eqref{ipm:intro-order-p-darcy} gives
\[
 \begin{aligned}
 V_p&=-ca_2A_p-J_{p-1}
       -Gm_0\frac{m_0\cdot(-ca_2A_p-J_{p-1})}{m_0\cdot Gm_0}\\
    &=L(-ca_2A_p-J_{p-1})=b_0A_p-LJ_{p-1}.
 \end{aligned}
\]
Thus the reconstruction is
\begin{align}
 \partial_\theta\pi_p
   &=\frac{m_0\cdot(-ca_2A_p-J_{p-1})}{m_0\cdot Gm_0},
       \qquad \langle\pi_p\rangle=0,\nonumber\\
 V_p&=b_0A_p-LJ_{p-1}.
       \label{ipm:intro-reconstruction}
\end{align}
Afterwards \eqref{ipm:intro-curl} defines $\psi_p,C_p$.
The whole velocity
coefficient at order $k^{-p}$ is
\[
 v_{\mathrm{raw},p}=V_p+\mathcal V_{\mathrm{mean}}B_p+C_{p-1}
     =b_0A_p+\mathcal V_{\mathrm{mean}}B_p+E_p,\qquad
 E_p=-LJ_{p-1}+C_{p-1}.
\]
With $f_p=A_p+B_p$, the slow and fast quadratic terms in
$v_{\mathrm{raw}}\cdot\widetilde\nabla w$ at that order are
\[
 \sum_{i+j=p}v_{\mathrm{raw},i}\cdot\nabla_y f_j
       +\sum_{i+j=p+1}(m_0\cdot v_{\mathrm{raw},i})\partial_\theta A_j.
\]
The term with $i=1$ in the fast sum vanishes because
$m_0\cdot v_{\mathrm{raw},1}=m_0\cdot V_1=0$. Its only current-order
unknown is $(m_0\cdot \mathcal V_{\mathrm{mean}}B_p)\partial_\theta A_1$, which has zero
angle mean. Collect the known terms into
\begin{equation}\label{ipm:intro-known-source}
 \begin{aligned}
 Z_p={}&E_p\cdot R_0+\sum_{i+j=p}v_{\mathrm{raw},i}\cdot\nabla_y f_j\\
 &+\sum_{\substack{i+j=p+1\\2\le i\le p-1}}
         (m_0\cdot \mathcal V_{\mathrm{mean}}B_i)\partial_\theta A_j
  +\sum_{\substack{i+j=p+1\\2\le i\le p}}
         (m_0\cdot C_{i-1})\partial_\theta A_j.
 \end{aligned}
\end{equation}
All unqualified indices are positive. Matching the scalar equation is
now a linear triangular sequence:
\begin{align}
 (B_p)_t+(\mathcal V_{\mathrm{mean}}B_p)\cdot R_0&=-\langle Z_p\rangle,
        \label{ipm:intro-mean-step}\\
 (A_p)_t+c_A^{\ell}A_p&=-Z_p+\langle Z_p\rangle
                   -(m_0\cdot \mathcal V_{\mathrm{mean}}B_p)\partial_\theta A_1,
        \qquad A_p(0)=0\quad(p\ge2).
        \label{ipm:intro-oscillatory-step}
\end{align}
First solve for the mean density $B_p$, with its mean velocity
$\mathcal V_{\mathrm{mean}}B_p$ and mean pressure $\bar q_p$ determined by
\eqref{ipm:intro-mean-darcy}. Then solve for the oscillatory density $A_p$,
determine the oscillatory pressure $\pi_p$ and velocity $V_p$ from
\eqref{ipm:intro-reconstruction}, and finally form the stream function
$\psi_p$ and velocity correction $C_p$ using \eqref{ipm:intro-curl}.
The initial values of $B_p$ are selected by the inverse
described in Section~\ref{ipm:intro-mean-inverse}. They need not vanish.

The mean source has the required spatial compatibility. For each $i$,
the oscillatory velocity $V_i$ and its correction $k^{-1}C_i$ form the
divergence-free pair $V_i+k^{-1}C_i=D^\perp\psi_i$ from
\eqref{ipm:intro-curl}. Thus
\[
 \operatorname{div}_yV_i+m_0\cdot\partial_\theta C_i=0,
 \qquad \operatorname{div}_yC_i=0.
\]
Since $v_{\mathrm{raw},i}=V_i+\mathcal V_{\mathrm{mean}}B_i+C_{i-1}$,
and $\operatorname{div}_y(\mathcal V_{\mathrm{mean}}B_i)=0$ by
\eqref{ipm:intro-mean-darcy}, we obtain
\[
 \operatorname{div}_yv_{\mathrm{raw},i}
 =\operatorname{div}_yV_i=-m_0\cdot\partial_\theta C_i.
\]
In \eqref{ipm:intro-known-source}, $\langle E_p\rangle=0$, and the
terms $(m_0\cdot\mathcal V_{\mathrm{mean}}B_i)\partial_\theta A_j$ have zero angle mean
because this velocity is angle independent and
$\langle\partial_\theta A_j\rangle=0$. Using
$\partial_\theta A_j=\partial_\theta f_j$, integrating by parts in
$\theta$, and shifting the fast index from $i$ to $i+1$ gives
\begin{equation}\label{ipm:intro-mean-flux}
 \begin{aligned}
 \langle Z_p\rangle
   &=\sum_{i+j=p}\langle v_{\mathrm{raw},i}\cdot\nabla_y f_j\rangle
     +\sum_{\substack{i+j=p+1\\2\le i\le p}}
        \langle(m_0\cdot C_{i-1})\partial_\theta A_j\rangle\\
   &=\sum_{i+j=p}\Bigl\langle
        v_{\mathrm{raw},i}\cdot\nabla_y f_j
        -(m_0\cdot\partial_\theta C_i)f_j
      \Bigr\rangle\\
   &=\sum_{i+j=p}\Bigl\langle
        v_{\mathrm{raw},i}\cdot\nabla_y f_j
        +(\operatorname{div}_yv_{\mathrm{raw},i})f_j
      \Bigr\rangle\\
   &=\operatorname{div}_y\sum_{i+j=p}\langle v_{\mathrm{raw},i} f_j\rangle.
 \end{aligned}
\end{equation}
Its spatial integral is zero. This uses the cancellation between the
slow and fast terms; it does not assign zero integral to each product.

\subsection{The mean inverse and the selected initial density}
\label{ipm:intro-mean-inverse}

The forced construction of Alp\"oge, Buckmaster, and Coiculescu
\cite{lit:ABC} amplifies successively finer localized layers. Its time cutoff
and retained parametrix residual contribute to the force. Here the small
oscillatory seed and the phase-independent corrections have to be both
included in the child's initial density due to smooth uniqueness. The unforced Euler construction
\cite{lit:OpenAI,lit:ZhengEuler} motivates this choice of initial data followed by an
exact residual correction.

To solve \eqref{ipm:intro-mean-step} at a fixed order $p$, define the
physical density $\varrho$ and source $f$ by
\[
 \varrho(t,X(t,\ell y))=\ell B_p(t,y),\qquad
 f(t,X(t,\ell y))=-\ell\langle Z_p\rangle(t,y).
\]
Here $r,M,H$ are Eulerian parent fields, as in Section~1.1.
Comparing physical Darcy's law \eqref{ipm:darcy} with
\eqref{ipm:intro-mean-darcy}, using
$\varrho(t,X(t,\ell y))=\ell B_p(t,y)$, gives
\[
 u(\varrho)(t,X(t,\ell y))=\ell F(t,y)\mathcal V_{\mathrm{mean}}B_p(t,y).
\]
Transport of the parent density $P$ gives
$P(t,X(t,\ell y))=P(0,\ell y)$; the chain rule then yields
\[
 F(t,y)^T r(t,X(t,\ell y))=R_0(y),\qquad
 r(t,X(t,\ell y))=F(t,y)^{-T}R_0(y).
\]
Consequently, \eqref{ipm:intro-mean-step} translates into
\[
 \begin{aligned}
 (D_t\varrho+u(\varrho)\cdot r)(t,X(t,\ell y))
   &=\ell\bigl[(B_p)_t+(\mathcal V_{\mathrm{mean}}B_p)\cdot R_0\bigr](t,y)\\
   &=-\ell\langle Z_p\rangle(t,y)=f(t,X(t,\ell y)),
 \end{aligned}
\]
that is, the full linearized IPM equation $D_t\varrho+u(\varrho)\cdot r=f$.
To construct its inverse, introduce a displacement $\eta$ with terminal
condition $\eta(S)=0$ at the retained endpoint $S$, and set
\begin{equation}\label{ipm:intro-displacement-definition}
 D_t\eta-M\eta=u(\varrho),
 \qquad Q=\varrho+r\cdot\eta.
\end{equation}
The pressure $\pi$ associated with $\varrho$ is related to the mean
pressure $\bar q_p$ by
\[
 \pi(t,X(t,\ell y))=\ell^2\bar q_p(t,y),\qquad
 (\nabla_x\pi)(t,X(t,\ell y))
     =\ell F(t,y)^{-T}\nabla_y\bar q_p(t,y),
\]
where the additive function of time in $\pi$ is chosen to make the
first equality exact. Thus \eqref{ipm:intro-mean-darcy} is precisely
the physical Darcy law
\[
 u(\varrho)=-c\varrho e_2-\nabla\pi.
\]
By \eqref{ipm:raybasic}, $D_tr=-M^Tr$ and $M+H=-ce_2r^T$, so
\[
 \begin{aligned}
 D_tQ
   &=D_t\varrho-(M^Tr)\cdot\eta+r\cdot(M\eta+u(\varrho))
     =D_t\varrho+u(\varrho)\cdot r=f,\\
 D_t\eta+H\eta+\nabla\pi
   &=(M+H)\eta-c\varrho e_2
     =-c(\varrho+r\cdot\eta)e_2=-cQe_2.
 \end{aligned}
\]
To justify the divergence constraint, the Leibniz rule, with $M=\nabla u$ and $\operatorname{div}u=0$, gives
\[
 \begin{aligned}
 \operatorname{div}(D_t\eta)
   &=D_t(\operatorname{div}\eta)+\operatorname{tr}(M\nabla\eta),\\
 \operatorname{div}(M\eta)
   &=\eta\cdot\nabla(\operatorname{div}u)
       +\operatorname{tr}(M\nabla\eta)
     =\operatorname{tr}(M\nabla\eta).
 \end{aligned}
\]
Subtracting cancels the trace terms:
\[
 D_t(\operatorname{div}\eta)
   =\operatorname{div}(D_t\eta-M\eta)
   =\operatorname{div}u(\varrho)=0.
\]
Thus $\operatorname{div}\eta$ is transported by the parent flow, and
$\eta(S)=0$ implies $\operatorname{div}\eta=0$ throughout the interval.
Hence the displacement system is
\begin{equation}\label{ipm:intro-displacement}
 D_tQ=f,\qquad D_t\eta+H\eta+\nabla\pi=-cQe_2,
 \qquad\operatorname{div}\eta=0.
\end{equation}
For the inverse, first solve $D_tQ=f$ forward with $Q(0)=0$, then solve
the displacement equation backward from $\eta(S)=0$, using
$\operatorname{div}\eta=0$ as the constraint determining $\pi$.
Recover the density and velocity by
\[
 \varrho=Q-r\cdot\eta,\qquad
 u(\varrho)=D_t\eta-M\eta=-c\varrho e_2-\nabla\pi.
\]
The divergence identity above gives
$\operatorname{div}(D_t\eta-M\eta)=D_t(\operatorname{div}\eta)=0$,
so the recovered velocity satisfies Darcy's law and incompressibility.
The identity for $D_tQ$ then gives the linearized density equation.
The initial density is selected by
$\varrho(0)=-r(0)\cdot\eta(0)$. Pairing the displacement equation with
$\eta$ gives
\begin{equation}\label{ipm:intro-displacement-energy}
 \frac12\frac{d}{dt}\|\eta\|_2^2
   =-\int\eta^TH\eta-c\int Q\eta_2.
\end{equation}
The upper bound $H\le I$ gives
$-\int\eta^TH\eta\ge-\|\eta\|_2^2$. Integrating the energy inequality
backward from $\eta(S)=0$ controls $\eta$, then
$\varrho=Q-r\cdot\eta$, and hence $B_p$, throughout $[0,S]$.
In parent coordinates, this gives
Proposition~\ref{ipm:meaninverse}, with polynomial cost in the parent
regularity bounds. This estimate handles the forced phase-mean equation \eqref{ipm:intro-mean-step}.
The earlier history of the leading wave is controlled separately by the
pointwise ray identity \eqref{ipm:rayidentity}, as explained below.
Both backward estimates follow directly from evolution identities;
the IPM argument here uses no variational minimization.

Products of oscillations generate phase means, so the coefficient
construction solves a mean equation first at each order, then a supported
oscillatory equation, and finally reconstructs the velocity and pressure.
The mean inverse, the scalar inverse, and the lifted Darcy inverse all
preserve one common Gevrey growth constant, even when the input has an
arbitrarily large factorial shift. This is the quantitative property
needed to repeat the inversion many times.

\subsection{The residual and its exact correction}

The divergence of $v_{\mathrm{raw}}^a$ vanishes exactly, but the finite reconstruction
leaves one Darcy tail:
\begin{equation}\label{ipm:intro-darcy-tail}
 v_{\mathrm{raw}}^a+ca_2w^a+G\widetilde\nabla q^a
       =k^{-N-1}H_N,\qquad H_N=C_N+G\nabla_y\pi_N.
\end{equation}
The symbol $H_N$ here denotes this vector tail, not the parent's Hessian
$H$. To remove it, let $\mathcal H_D$ be the closed space of periodic
$D$-gradients, let $\Pi_D$ be its orthogonal projection, and define
\[
 \mathcal T_G=\Pi_DG|_{\mathcal H_D},\qquad
 \mathcal P_G=I-G\mathcal T_G^{-1}\Pi_D.
\]
The uniform positivity of $G=F^{-1}F^{-T}$ makes $\mathcal T_G$
coercive and invertible on vector gradients.
The projection $\mathcal P_G$ fixes $D$-divergence-free vectors and
annihilates $G\mathcal H_D$. Define the exact lifted Darcy operator by
\begin{equation}\label{ipm:intro-exact-darcy}
 \mathcal V_D A=-ca_2A-G\boldsymbol p,\qquad
 D\cdot \mathcal V_D A=0,\qquad \boldsymbol p\in\mathcal H_D.
\end{equation}
As in the pressure construction after \eqref{ipm:intro-mean-darcy},
uniform positivity of $G$ gives existence and uniqueness: the divergence
constraint is equivalent to
$\mathcal T_G\boldsymbol p=-c\Pi_D(a_2A)$, which uniquely determines
$\boldsymbol p\in\mathcal H_D$ and hence $\mathcal V_D A$.
The subscript $D$ records the dependence on
$D=k^{-1}\nabla_y+m_0\partial_\theta$.
For angle-independent $B$, this operator reduces to the mean Darcy
operator: $\mathcal V_DB=\mathcal V_{\mathrm{mean}}B$.
Applying $\mathcal P_G$ to \eqref{ipm:intro-darcy-tail} gives
$\mathcal V_D w^a-v_{\mathrm{raw}}^a=-k^{-N-1}\mathcal P_GH_N$.
Thus Darcy's law can be restored before estimating the scalar residual.

For an exact solution of the lifted system \eqref{ipm:intro-raw-equation},
set $A=kw$ and define the normalized velocity by
\[
 v=kv_{\mathrm{raw}}=\mathcal V_D A,
 \qquad v_{\mathrm{raw}}=k^{-1} v.
\]
Throughout the layer construction, $v$ denotes this normalized velocity;
the raw velocity is always written $v_{\mathrm{raw}}$.
For the finite approximation put $A^a=kw^a$.
Its normalized finite velocity $kv_{\mathrm{raw}}^a$ differs from
$\mathcal V_D A^a$ by the Darcy tail just computed.
The exact normalized scalar equation and the residual of the approximation are
\begin{align}
 A_t+\mathcal V_D A\cdot R_0+\mathcal V_D A\cdot DA&=0,
       \label{ipm:intro-normalized-equation}\\
 r^a=(A^a)_t+\mathcal V_D A^a\cdot R_0
                      +\mathcal V_D A^a\cdot DA^a.&
       \label{ipm:intro-residual}
\end{align}
Each step of the coefficient recursion loses a fixed number of derivatives,
so truncation at order $N$ incurs a factorial shift of order $N$.
In Gevrey class $2$, this gives bounds of the form
\[
 \|\text{coefficient at order }p\|_\rho
    \le(\mathcal P^C N^C)^{p+1},\qquad 1\le p\le N,
\]
at a radius $\rho$ independent of $N$ and bounded below by a fixed
inverse power of $\mathcal P$. Here $\mathcal P$ is the input size
parameter defined in \eqref{ipm:packet-parameter}; the parent density
is denoted by $P$. Each new order also brings an inverse power of $k$.
The coefficient equations cancel all raw scalar orders through $N$,
so the raw residual starts at $k^{-N-1}$. Restoring Darcy's law adds
a tail of the same order, and the normalization $A=kw$ multiplies
the raw residual by $k$. Accounting for the remaining products and
their number gives \eqref{ipm:residual-before-separation}:
\[
 \|r^a\|_\rho\le\mathcal P^C k^{-N}\mathcal B_N^{N+6},
 \qquad \mathcal B_N\le\mathcal P^C N^C,
\]
where $\mathcal B_N$ is the coefficient majorant defined in
\eqref{ipm:expanded-hierarchy-BN}. Choosing
\[
 N=\lfloor k^\vartheta\rfloor,\qquad
 \mathcal P^Q\le k^{\vartheta/100}
\]
with $\vartheta>0$ sufficiently small and $Q$ sufficiently large gives
$\mathcal B_N\le k^{1/50}$ by \eqref{ipm:expanded-hierarchy-ratio-small}.
The inverse powers of $k$ therefore dominate the derivative losses:
\[
 \|r^a\|_\rho
 \le\mathcal P^C k^{-N+(N+6)/50}
 \le k^{-0.9N}
 \le\exp(-0.6k^\vartheta\log k)
\]
for sufficiently large $k$, since $N\ge0.9k^\vartheta$.

Write $A=A^a+e$. Subtraction gives the exact error equation
\begin{equation}\label{ipm:intro-exact-error}
 \begin{aligned}
 e_t+b\cdot\nabla_{y,\theta}e
      +\mathcal V_D e\cdot(R_0+DA^a)&=-r^a,\qquad e(0)=0,\\
 b&=(k^{-1} \mathcal V_D(A^a+e),\,
                       m_0\cdot \mathcal V_D(A^a+e)).
 \end{aligned}
\end{equation}
The vector field $b$ is divergence free. In its approximate part, the
first two components have an explicit factor $k^{-1}$, and the leading
angle component vanishes because $m_0\cdot V_1=0$. Consequently its
Gevrey norm is $O(k^{-1}\mathcal P^C)$. The remaining linear term in
\eqref{ipm:intro-exact-error} has no derivative on $e$.

This small transport controls the loss of Gevrey radius. The residual
also absorbs the polynomial-coefficient Gronwall factor
$\exp(C\mathcal P^C S)$: the frequency separation
\eqref{ipm:correction-frequency-separation}, with $Q$ large enough, gives $\mathcal P^C\ll N$. Proposition~\ref{ipm:correction} constructs an exact
correction with size at most
$\mathcal P^C\exp(-k^\vartheta/2)$ on the whole retained interval.
Since $e(0)=0$, the finite hierarchy has already fixed all of the child's
initial data before this correction is made.

\subsection{The physical solution and its particle map}

Use the normalized velocity $v=\mathcal V_D A$, so that
$v_{\mathrm{raw}}=k^{-1} v$. Since $\boldsymbol p\in\mathcal H_D$
and $n_0=(k/\ell)m_0\in\mathbb Z^2$, we can recover a smooth periodic scalar
$\zeta$ with $D\zeta=\boldsymbol p$. Indeed, at Fourier frequency
$(\nu,j)\in\mathbb Z^2\times\mathbb Z$, the multiplier of $D$ is
$i\eta$, where $\eta=(\ell/k)(\nu+jn_0)$. Thus $|\eta|\ge\ell/k$ whenever
$\eta\ne0$, so recovering the potential via
$\widehat\zeta(\nu,j)=\eta\cdot\widehat{\boldsymbol p}(\nu,j)/(i|\eta|^2)$
for $\eta\ne0$, with $\widehat\zeta(\nu,j)=0$ for $\eta=0$,
preserves smoothness.

The exact physical increments are
\begin{equation}\label{ipm:intro-exact-physical}
 \Theta\circ X=\ell k^{-1} A^{\rm diag},\qquad
 U\circ X=\ell k^{-1} Fv^{\rm diag},\qquad
 \pi\circ X=\ell^2k^{-2}\zeta^{\rm diag},
\end{equation}
where all compositions use $X(t,\ell y)$. The graph chain rule proves
Darcy's law, incompressibility, and the scalar equation for $P+\Theta$.
Joint oddness under $(y,\theta)\mapsto(-y,-\theta)$ gives zero physical
mean and keeps the origin stationary.

The same normalized velocity produces the actual child flow. Let
$\Phi$ solve
\[
 \dot y=k^{-1} v(t,y,\theta),\qquad
 \dot\theta=m_0\cdot v(t,y,\theta),\qquad \Phi(0)=\mathrm{id}.
\]
Then $\theta-km_0\cdot y$ is constant along trajectories. With
$\pi_y$ denoting the first two components, set
\begin{equation}\label{ipm:intro-child-flow}
 Y(t,z)=\ell\pi_y\Phi(t,z/\ell,(k/\ell)m_0\cdot z),\qquad
 X^{\rm new}(t,z)=X(t,Y(t,z)).
\end{equation}
Differentiation gives
$\partial_tX^{\rm new}=(u+U)(t,X^{\rm new})$, see
\eqref{ipm:child-flow-identity}. Also,
$\operatorname{div}_Dv=0$ implies $\det D_zY=1$.
The small lifted transport bounds $Y$ and its first two time derivatives;
composition with the known parent map gives polynomial Gevrey bounds for
$X^{\rm new}-\mathrm{id}$, $X^{\rm new}_t$, and $X^{\rm new}_{tt}$.
Together with the initial density bound, these are precisely the inputs
recovered for the next stage by Proposition~\ref{ipm:packet}.

\subsection{Successive increments and their parameters}

The preceding construction gives one exact child and the bounds needed
to use it as the next parent. We now describe the successive stages.
Start from the exact stationary plane-wave parent constructed in
Section~\ref{ipm:sec-base}. Successive exact densities telescope as
\[
 P^L=P^{J-1}+\sum_{j=J}^L\Theta_j,\qquad
 \rho_{\rm in}=P^{J-1}(0)+\sum_{j=J}^\infty\Theta_j(0).
\]
The convergence of these series is established after the stage estimates.
At stage $j$, the preceding leading density-gradient increment at the
origin has target magnitude $h_{j-1}$ and angle $\pi/2+1/x_{j-1}$. The new layer activates at $t_{j-1}$ and
reaches angle $\pi/2+1/x_j$ at $t_j$. Its positive seed is normalized by
\[
 \frac{\alpha_j}{\delta_j}a_j(t_j,0)|m_j(t_j,0)|=h_j.
\]
The parameter choice is
\begin{align}
 x_j&=j^2x_{j-1},\qquad h_j=\exp(x_{j-1}/j^5),\nonumber\\
 \widehat k_j&=\exp(x_{j-1}/j^2),\qquad
 \ell_j=\exp(-x_{j-1}/j^{7/2}),\qquad
 \delta_j=\exp(-x_{j-1}/j^3).
       \label{ipm:intro-scales}
\end{align}
To satisfy the integer-carrier condition, the actual $k_j$ and $m_{0,j}$
are rounded, which introduces a small error in the perpendicularity
condition at activation; Lemma~\ref{ipm:amplification} allows this error. The gain $e^{c_0x_{j-1}}$ dominates both the
prescribed target size and every fixed derivative cost in the initial
oscillation. The much larger new frequency pays for the coefficient
hierarchy and exact correction, while the narrow label support controls
variation of the parent coefficients. The phase means have no fast
angular derivatives and begin at order $k_j^{-2}$, which makes their
initial series summable as well.

All fixed analytic exponents are chosen first, then a sufficiently large
starting index $J$, and finally a large base parameter $x_{J-1}$.
The following frozen calculation explains the clocks behind this choice;
Lemma~\ref{ipm:scales} verifies all inequalities simultaneously.

\subsection{The amplification and turning clocks}

We first freeze the leading parent geometry at the origin. Let
\[
 n=(\cos\phi,\sin\phi),\qquad
 \phi=\frac\pi2+\frac1x,\qquad x\gg1,
\]
and suppose the parent has constant leading density-gradient magnitude $G>0$.
In this frozen calculation, $G$ denotes that scalar magnitude.
The model retains precisely the rank-one fields from
\eqref{ipm:intro-central-matrices}:
\begin{equation}\label{ipm:intro-frozen-parent}
 r=Gn,\qquad M=-cG\cos\phi\,n^\perp\otimes n,
 \qquad H=-cG\sin\phi\,n\otimes n.
\end{equation}
These equalities define the frozen model. For the actual parent, an older
solution and the preceding layer's correction provide controlled
remainders, and $G,\phi$ vary slowly.

Write the new unit normal as $N=(\cos\beta,\sin\beta)$.
At the activation time $s$, choose $N\perp n$, in the
orientation $\beta(s)=\phi-\pi/2=1/x$. Its normal equation and
\eqref{ipm:rayidentity} give, in the frozen model,
\begin{align}
 \beta'&=-(N^\perp)^TM^TN=cG\sin(1/x)\sin^2(\beta-\phi),
       \label{ipm:intro-angular-speed}\\
 (\log(a|m|))'&=-(N^\perp)^THN^\perp
       =cG\sin\phi\,\sin^2(\beta-\phi).
       \label{ipm:intro-angular-growth}
\end{align}
The shear rotates the new normal counterclockwise from a nearly
horizontal direction. As it approaches the parent normal, both the
angular speed and the growth rate decrease by the same factor
$\sin^2(\beta-\phi)$. Their ratio therefore stays large:
\begin{equation}\label{ipm:intro-growth-per-angle}
 \frac{d\log(a|m|)}{d\beta}=-\tan\phi=\cot(1/x)\sim x.
\end{equation}
A turn through an angle of order one gives an exponential gain in $x$.

The time scale is just as explicit. Put
\[
 z=-\cot(\beta-\phi),\qquad z(s)=0.
\]
Equation~\eqref{ipm:intro-angular-speed} reduces to $z'=cG\sin(1/x)$.
Choose a larger next parameter $x_{\rm new}=j^2x$ and stop at
$\beta(t_*)=\pi/2+1/x_{\rm new}$. Then
\begin{align}
 z(t_*)&=\cot(1/x-1/x_{\rm new}),\nonumber\\
 t_*-s&=\frac{\cot(1/x-1/x_{\rm new})}{cG\sin(1/x)}
       =\frac{x^2}{cG}\bigl[1+O(j^{-2}+x^{-2})\bigr],
       \label{ipm:intro-frozen-time}\\
 \log\frac{a(t_*)|m(t_*)|}{a(s)|m(s)|}
       &=\cot(1/x)\left(\frac\pi2-\frac1x+
                                      \frac1{x_{\rm new}}\right)
         =\frac\pi2x+O(1).
       \label{ipm:intro-frozen-gain}
\end{align}
Thus a parent with gradient size $G$ produces a gain comparable to
$e^{c_0x}$ during a physical time comparable to $x^2/G$. The new
normal ends just beyond vertical, with the smaller tilt $1/x_{\rm new}$.
After its amplitude is normalized to a larger target magnitude, this
new layer supplies the same geometry for the next amplification.

Lemma~\ref{ipm:amplification} shows that these
time and gain estimates persist for the actual parent. Let $E$ bound
the density-gradient, velocity-gradient, and Hessian remainders,
together with $|\phi'|$ and $|(\log G)'|$. It suffices that
$Ej^2x^3/G$ and the angular error at activation caused by carrier rounding
be sufficiently small.
No further spatial derivatives of these remainder fields are needed.
At the target, $|\beta-\phi|=1/x-1/x_{\rm new}\sim x^{-1}$,
so \eqref{ipm:intro-angular-speed} gives
$\beta'=cG\sin(1/x)\sin^2(\beta-\phi)\sim G/x^3$.
Thus $Ej^2x^3/G\ll1$ makes the perturbation small even relative to
this slower angular speed, ensuring that the target crossing persists.

\subsection{The pressure bound and the small earlier history}
\label{ipm:intro-history}

The construction maintains an upper bound $H\le I$ globally, which
translates to the lower bound $(\log(a|m|))'\ge-1$ at each fixed
parent label via \eqref{ipm:rayidentity}.
Integrating from $t$ to the activation time $s$ yields the exact
pointwise history estimate
\begin{equation}\label{ipm:backwardray}
 a(t)|m(t)|\le e^{s-t}a(s)|m(s)|\qquad(t\le s).
\end{equation}

Normalize a new layer by $g(t_*)=h$. If its activation-to-target gain
is at least $e^{c_0x}$, \eqref{ipm:backwardray} gives, for $t\le s$ on
an interval of length at most one,
\begin{equation}\label{ipm:intro-small-history}
 g(t)\le e^{s-t}g(s)\le eh\,e^{-c_0x}.
\end{equation}
This is how a layer that later has a large gradient can already be
present at time zero with a very small seed. Localization makes the
amplitude and direction on its supported tube comparable to their
values at the origin. By $\operatorname{tr}H=-cr_2$ and
$M+H=-ce_2r^T$ in \eqref{ipm:raybasic},
\begin{equation}\label{ipm:matrixbounds}
 \|H\|\le1+c|r|,\qquad \|M\|\le1+2c|r|.
\end{equation}
These bounds control growth and rotation on a
short retained interval after $t_*$, preserving the layer's target direction.

To preserve the upper Hessian bound, choose the asymmetric phase profile
of Subsection~\ref{ipm:sec-profile}, with
\[
 f'_\delta(0)=\delta^{-1},\qquad f'_\delta\ge-C_f,
 \qquad \|f_\delta\|_\infty\le C_f.
\]
During activation, the new normal turns counterclockwise from almost
horizontal to almost vertical, remaining in the upper half-plane.
It therefore has $m_2>0$ throughout the supported tube during activation
and the retained future, when its target direction is preserved. In
\eqref{ipm:intro-leading-matrices}, the large positive slope of the
profile therefore creates a negative semidefinite Hessian. Its positive
part comes from the bounded negative slope and costs only $C\delta h$.
Before activation, when no direction sign is imposed, the small-history
bound supplies the cost $Ch e^{-c_0x}$. With a global correction error
$\varepsilon$, the resulting estimate is
\begin{equation}\label{ipm:intro-hessian-budget}
 H(P+\Theta)\le H(P)+
       \bigl[C(\delta h+h e^{-c_0x})+\varepsilon\bigr]I.
\end{equation}
The scales make these costs summable. Thus the pressure bound used to
control a layer's history is recovered for the next layer.

\subsection{The stage timeline and the limiting solution}

The activation time is $t_{j-1}$, the target time is $t_j$, and the retained
endpoint for the exact child is $S_j$. Put
\begin{equation}\label{ipm:intro-horizons}
 W_j=\frac{2x_{j-1}^2}{c h_{j-1}},\qquad
 \frac{W_j}{4}\le t_j-t_{j-1}\le\frac{3W_j}{4},\qquad
 S_j=t_j+2W_{j+1}.
\end{equation}
The parent is already available through
$S_{j-1}=t_{j-1}+2W_j>t_j$. Starting at the activation time $t_{j-1}$,
evolve the new phase normal under the parent flow and choose $t_j$ when
its angle at the origin reaches $\beta_j(t_j)=\pi/2+1/x_j$,
where $x_j=j^2x_{j-1}$ is prescribed in \eqref{ipm:intro-scales}.
Next set $S_j=t_j+2W_{j+1}$, using the already prescribed width
$W_{j+1}=2x_j^2/(c h_j)$. With this endpoint fixed, construct the child
on $[0,S_j]$; in particular, select its initial mean corrections by
solving the displacement problems with terminal condition $\eta(S_j)=0$.
Thus both $t_j$ and $S_j$ are determined before the child is constructed.

The intervals have distinct roles: $[0,t_{j-1}]$ is the layer's small
history, $[t_{j-1},t_j]$ is its amplification interval, and
$[t_j,S_j]$ retains the new geometry long enough to act as a parent.
For all $j\ge J$, the scale choices give $W_{j+1}\le W_j/16$
once $J$ and then $x_{J-1}$ are sufficiently large, as established in
Lemma~\ref{ipm:scales}. Hence
\[
 S_j\le t_j+\frac{W_j}{8}
 \le t_{j-1}+\frac{7W_j}{8}
 <t_{j-1}+2W_j=S_{j-1}.
\]
Moreover,
\begin{equation}\label{ipm:intro-accumulation}
 T=\lim_jt_j,\qquad
 0<T-t_j\le\frac45W_{j+1}<2W_{j+1}=S_j-t_j.
\end{equation}
Thus every finite stage is smooth past the common accumulation time $T$.
On every compact interval before $T$, the later layers are still in
their small-history regime. Their derivative bounds yield smooth
convergence to one exact unforced solution with a smooth limiting initial
density.

At the origin, the leading density-gradient increment of layer $j$ has
magnitude $g_j$ and angle $\beta_j$. The corresponding components are
\begin{equation}\label{ipm:intro-positive-components}
 (\partial_2\Theta_j)_{\rm lead}=g_j\sin\beta_j,\qquad
 (\partial_1u_1(\Theta_j))_{\rm lead}
       =c g_j\cos^2\beta_j\sin\beta_j.
\end{equation}
Both are nonnegative during activation and the retained future. After
its target, layer $j$ contributes at least fixed positive multiples of
$h_j$ and $h_j/x_j^2$, respectively. Later active layers preserve these
lower bounds up to summable errors, and later layers still in history
have summably small gradients. Since $h_j$ and $h_j/x_j^2$ tend to
infinity, the lower bounds hold at every sufficiently late time and give
the two actual limits in Theorem~\ref{ipm:main}. Finally, the small
uniform density increments and their controlled first derivatives give
summability in each $C^\eta$, $\eta<1$, and hence the common terminal
density.

\subsection{Related work on the incompressible porous media equation}
\label{ipm:lit-ipm}

\subsubsection{Local theory and the role of stratification}
The IPM system \eqref{ipm:darcy}--\eqref{ipm:unforced} couples density transport to Darcy's law. Its velocity is a Fourier multiplier of order zero applied to the density, whereas the two-dimensional Euler Biot--Savart operator gains one derivative from vorticity to velocity. Smooth transport preserves density values, but the gradient equation \eqref{ipm:raybasic} allows deformation to amplify derivatives. This distinction underlies both the local theory and the singularity problem.

C\'ordoba, Gancedo, and Orive established foundational local existence and uniqueness results, continuation criteria, and singularity formation in a special infinite-energy class \cite{lit:CGO}. In the usual finite-energy setting, local well-posedness holds above the two-dimensional Sobolev threshold $H^s$, $s>2$; the local theory is also discussed in \cite{lit:BJPW}. Such local results and transport bounds leave open the possibility of unbounded deformation at later times.

Stable stratification supplies a contrasting regime. Around the equilibrium $\rho_s=-x_2$, write $\rho=\rho_s+\vartheta$. Then the perturbation equation is $\partial_t\vartheta+u(\vartheta)\cdot\nabla\vartheta=u_2(\vartheta)$, and the Fourier symbol of its linear right-hand side is $-c\xi_1^2/|\xi|^2$. This anisotropic damping can overcome the nonlinear transport for small, sufficiently regular perturbations. Elgindi proved nonlinear asymptotic stability of stratified states \cite{lit:ElgindiIPM}. Castro, C\'ordoba, and Lear treated a confined strip, including the boundary conditions needed by the energy estimates \cite{lit:CCL}.

Subsequent work sharpened both regularity assumptions and decay estimates. Bianchini, Crin-Barat, and Paicu obtained stability on $\mathbb R^2$ through a relaxation from a damped Boussinesq system \cite{lit:BCP}. Jo and Kim quantified stability and sharp decay on several domains \cite{lit:JK}. Park developed an approach using the nonlinear potential-energy structure in a periodic channel \cite{lit:Park}. Bianchini, Jo, Park, and Wang proved stability for small $H^k$ perturbations with every real $k>2$ on $\mathbb T\times\mathbb R$ \cite{lit:BJPW}. These theorems concern perturbations of a stably stratified background; their domains and background densities should be retained when comparing them with a smooth periodic density on $\mathbb T^2$.

The endpoint behaves differently. Bianchini, C\'ordoba, and Mart\'inez-Zoroa proved nonexistence in the critical regularity class and strong ill-posedness for arbitrarily small $H^2(\mathbb R^2)$ perturbations of $-x_2$ \cite{lit:BCM}. Their construction locally neutralizes the stabilizing background and creates strong deformation. This is a critical-regularity phenomenon, and does not assert finite-time breakdown of an initially everywhere-smooth, unforced solution.

\subsubsection{Growth and singularities in other settings}
Kiselev and Yao constructed smooth data for which global smoothness implies unbounded growth of derivatives, and proved nonlinear instability of a class of stratified states \cite{lit:KY}. Their results include periodic settings and demonstrate that bounded transported density is compatible with the formation of arbitrarily fine scales. The conclusion is conditional growth over an unbounded time interval, rather than a finite-time singularity theorem.

Unforced finite-time blowup is known in classes that differ from smooth finite-energy flow without boundary. The infinite-energy examples of \cite{lit:CGO} and the related reduction of Castro, C\'ordoba, Gancedo, and Orive \cite{lit:CCGO} exploit a special spatial structure. Collot, Prange, and Tan established stability of an explicit self-similar singularity within this infinite-energy reduction, using its relation to a Proudman--Johnson equation \cite{lit:CPT}. More recently, Dembski constructed singularities for Lipschitz IPM solutions in wedge domains, smooth away from the origin, with compactly supported density vanishing on the boundary \cite{lit:Dembski}. The boundary geometry and the initial regularity are essential qualifications when placing this result next to a periodic smooth-data construction.

Weak-solution nonuniqueness is another distinct issue. C\'ordoba, Faraco, and Gancedo constructed bounded nonunique weak solutions of IPM by convex integration \cite{lit:CFG}. Such flexibility below the classical uniqueness regime does not itself imply breakdown of the unique smooth solution from smooth data. Likewise, interface singularities in the Muskat problem concern densities with a jump across an evolving curve. They belong to the broader IPM literature, but address a different regularity problem from the continuously varying smooth densities considered here.

\subsubsection{Successive amplification with a smooth source}
The closest predecessor of the present layer construction is the work of C\'ordoba and Mart\'inez-Zoroa \cite{lit:CMIPM}. They constructed finite-time singularities for IPM on $\mathbb R^2$ from smooth compactly supported initial density, with a compactly supported source whose spatial derivatives remain uniformly bounded. Their theorem controls the source in $L^\infty_t C^\infty_x$; joint smoothness through the singular time is discussed as a possible extension, rather than established by that theorem. Both the density gradient and the velocity gradient become unbounded.

Their mechanism repeatedly uses the field already constructed to amplify a new localized oscillation. The phase is transported by an approximation of the preceding velocity; the direction of its gradient controls the growth of the new layer. The Darcy multiplier admits a high-frequency expansion on a localized oscillation. Transversality cancels the largest quadratic self-interaction, and corrections of increasing order make the residual sufficiently small in progressively higher spatial norms. This permits infinitely many amplification stages to accumulate in finite time while the resulting source remains uniformly smooth in space \cite[\S1.2]{lit:CMIPM}.

Alp\"oge, Buckmaster, and Coiculescu extended this construction to $\mathbb T^2$ with a source in $C^\infty([0,1]\times\mathbb T^2)$ \cite{lit:ABC}. Their density converges in every $C^\eta$, $0\leq\eta<1$, while the density and velocity gradients diverge as $t\uparrow1$. The proof uses Fourier truncations of the preceding velocity and simultaneous control of mixed time and space derivatives. It retains the oscillatory amplification and correction strategy of \cite{lit:CMIPM}. Our construction on $\mathbb T^2$ removes the forcing used in both blow-up constructions \cite{lit:ABC,lit:CMIPM}. The parent is an exact unforced solution and its full velocity transports the new phase; no Fourier truncation of that parent is used. The terminal selection of the phase means and the exact residual correction are described in Sections~\ref{ipm:intro-mean-inverse} and \ref{ipm:sec-inverses}--\ref{ipm:sec-map}.

\subsection{Related work on three-dimensional Euler}
\label{ipm:lit-euler}

\subsubsection{Continuation and lower-regularity singularities}
For the incompressible Euler equations,
\[
 \partial_tu+(u\cdot\nabla)u+\nabla p=0,\qquad \operatorname{div} u=0,
\]
vorticity satisfies $D_t\omega=(\omega\cdot\nabla)u$. The stretching term has no counterpart in two-dimensional Euler. The Beale--Kato--Majda criterion states that a smooth three-dimensional solution can be continued while $\int_0^T\|\omega(t)\|_{L^\infty}\,dt$ remains finite \cite{lit:BKM}. Thus a finite-time singularity must involve sufficiently strong vorticity growth; large finite amplification alone does not establish breakdown.

Elgindi proved finite-time singularity formation on $\mathbb R^3$ for axisymmetric solutions without swirl with velocity in $C^{1,\alpha}$ for sufficiently small positive $\alpha$ \cite{lit:ElgindiEuler}. Elgindi, Ghoul, and Masmoudi proved stability of the self-similar mechanism and obtained finite-energy examples \cite{lit:EGM}. These solutions lie in a classical local uniqueness class, but their initial velocities are not everywhere smooth. C\'ordoba, Mart\'inez-Zoroa, and Zheng subsequently constructed finite-energy unforced solutions in $C^{1,\alpha}$ that are smooth outside the origin \cite{lit:CMZ}. Their construction uses an accumulation of spatial scales, providing a non-self-similar route to singularity formation.

Elgindi and Pasqualotto developed a different instability mechanism for the Boussinesq system and for Euler with swirl \cite{lit:EP}. In the Euler case their finite-energy $C^{1,\alpha}$ data are smooth away from a circle. The mechanism overcomes the effect of transport through an instability related to Taylor--Couette flow. The 2026 preprints of Shkoller and Chen treat finite-energy axisymmetric Euler without swirl for every $0<\alpha<1/3$ \cite{lit:Shkoller,lit:Chen}. Shao, Wei, Zhang, and Zhang construct asymptotically self-similar, finite-energy axisymmetric blowup without swirl in every dimension $d\geq3$ for $0<\alpha<1-2/d$, with initial velocity in $C^{1,\alpha}\cap C^\infty(\mathbb R^d\setminus\{0\})\cap L^2$ and compactly supported initial vorticity \cite[Theorem~1.3]{lit:SWZZ}. Taking $d=3$ gives precisely the range $0<\alpha<1/3$, denoted $C^{1,1/3-}$; the endpoint $\alpha=1/3$ is not included. These results extend the range of H\"older exponents for which singularities are constructed, while preserving the distinction between $C^{1,\alpha}$ and everywhere-smooth initial velocity.

\subsubsection{Smooth data, boundaries, and forcing}
Chen and Hou proved finite-time blowup from smooth finite-energy data in an axially periodic cylinder with an impermeable wall \cite{lit:CH1,lit:CH2}. Their proof combines analytic stability estimates for a nearly self-similar profile with rigorous numerical verification. The solid boundary is part of the theorem. Consequently, this result and a smooth-data theorem on all of $\mathbb R^3$ answer different versions of the Euler singularity problem.

Forced constructions form a second line of development. C\'ordoba and Mart\'inez-Zoroa constructed finite-energy Euler solutions with preterminal velocity in $C^{3,1/2}$ and force uniformly bounded in $C^{1,1/2-\varepsilon}\cap L^2$ for each fixed $0<\varepsilon<1/2$ \cite{lit:CMEuler}. In a published generalization, C\'ordoba, Mart\'inez-Zoroa, and Zheng extended this construction to the hypodissipative Navier--Stokes equations, obtaining finite-time blowup of smooth preterminal finite-energy solutions for sufficiently small positive dissipation orders, with force in $L^1_tC_x^{1,\varepsilon}\cap L^\infty_tL_x^2$ for some $\varepsilon>0$ \cite{lit:CMZHypo}. Alp\"oge and Buckmaster's later manuscript constructs axisymmetric blowup with swirl from smooth compactly supported data and a force smooth in both variables through the singular time \cite{lit:ABE}. Their companion Boussinesq manuscript develops the corresponding amplification mechanism with smooth forcing in both equations \cite{lit:ABB}. In these constructions the preceding layers amplify finer oscillations and high-order corrections control the residual assigned to the force.

The unforced Euler manuscript and its exposition \cite{lit:OpenAI,lit:ZhengEuler} state finite-time breakdown on $\mathbb R^3$ for smooth compactly supported divergence-free initial velocity. Its construction passes through exact smooth Euler solutions obtained by adding localized oscillations and corrections to earlier solutions. A uniform upper bound on the pressure Hessian permits a controlled choice of the earlier history of each layer. The initial increments are summable in every fixed Sobolev norm, while the finite-stage gradients become arbitrarily large. It is the direct methodological precedent for the proposed unforced IPM construction here.

\subsubsection{Norm inflation and weak solutions}
High-frequency amplification also appears in problems that do not require a finite-time singularity from smooth data. Bourgain and Li showed that large Lagrangian deformation produces strong ill-posedness in the borderline Sobolev spaces $H^{d/2+1}$ for velocity, in dimensions two and three \cite{lit:BL}. Elgindi and Jeong gave a related critical Sobolev ill-posedness construction \cite{lit:EJ}. These works explain how a background flow can amplify a small perturbation through deformation, but critical norm inflation is a different conclusion from the breakdown of an everywhere-smooth solution.

Convex integration addresses still weaker solutions. De Lellis and Sz\'ekelyhidi constructed nonunique bounded weak Euler solutions \cite{lit:DLS}; Isett proved the existence of nonconservative H\"older solutions below the Onsager exponent $1/3$ \cite{lit:Isett}. Oscillatory corrections occur in both this theory and layer constructions, but their purposes differ. Here each finite stage must remain a classical solution, and the limiting initial datum must be smooth. Weak-solution flexibility does not supply those requirements.

\subsection{Transported waves and the relation to earlier constructions}
\label{ipm:lit-waves}

\subsubsection{Geometric optics and exact waves}
The phase and amplitude equations have an older origin than the recent blowup constructions. Lifschitz and Hameiri, and Friedlander and Vishik, studied short-wavelength perturbations of incompressible flows through geometric optics \cite{lit:LH,lit:FV}. If $X$ is the background flow, $F=D_zX$, and $M=\nabla u\circ X$, then a transported wavevector with initial value $m_0$ has the form $m=F^{-T}m_0$ and obeys $m'=-M^Tm$, with time derivatives taken along the trajectory. For linearized Euler the leading velocity amplitude $v$ satisfies
\[
 v'=-Mv+2\frac{m\cdot Mv}{|m|^2}m,\qquad m\cdot v=0.
\]
The second term is the pressure contribution required to preserve transversality. These equations describe leading high-frequency growth along trajectories; using that growth in an iterative nonlinear construction additionally requires a field defined and controlled on an entire spatial region.

Craik and Criminale constructed exact finite-amplitude waves over affine background flows \cite{lit:CC}. For a velocity wave $v(t)f(m(t)\cdot x)$, its quadratic self-advection vanishes when $m\cdot v=0$. This cancellation is a useful model for the leading order calculation. Fabijonas and Holm proposed iterated wave superpositions \cite{lit:FH}, but Le Diz\`es and Leblanc showed that the proposed iteration generally solves the equations only along a trajectory \cite{lit:LL}. That limitation explains why an exact affine wave calculation alone does not justify using the resulting field as the parent of the next localized layer. Cheverry's nonlinear geometric-optics analysis develops oscillatory expansions and the additional phase corrections generated by nonlinear evolution \cite{lit:Cheverry}.

\subsubsection{The IPM identities in this framework}
For IPM, Darcy's law determines the velocity instantaneously from the density. The symbol \eqref{ipm:velocity-symbol}, the transported phase equation \eqref{ipm:raybasic}, and the amplitude equation \eqref{ipm:intro-wave} lead to the pressure-curvature identity \eqref{ipm:rayidentity}. This connects the transported-phase method of \cite{lit:CMIPM,lit:ABC} with the formulation used here: the density-gradient amplitude grows where the parent pressure has negative curvature transverse to the phase. The calculation and its transverse cancellation were given in Section~1.2.

\subsubsection{From amplification to an unforced construction}
An approximate layer with a small residual gives a forced solution if that residual is assigned to the external source. An unforced construction must instead remove it by solving correction equations. Moreover, a new layer cannot be inserted at a later time by a time cutoff without producing a source. Its oscillatory seed and selected phase means must already belong to the time-zero data; the exact residual correction vanishes initially. Large amplification near a target time must therefore be compatible with controlled evolution during the layer's entire earlier history.

The Euler construction in \cite{lit:OpenAI,lit:ZhengEuler} deals with this history by displacement boundary-value problems. The quadratic form governing a displacement $\eta$ contains $\int(|\eta_t|^2-\eta\cdot H\eta)\,dt$. Its coercivity on an appropriately short interval follows from an upper bound on $H$ and the time Poincar\'e inequality. This is related to the pressure-augmented least-action principle and short-time minimization discussed by Brenier \cite{lit:Brenier}; the variational estimate selects a controlled history, rather than suppressing the later amplification.

The present construction retains successive amplification, transverse cancellation, and the upper pressure-Hessian bound, but its displacement problem has a different differential order. The variables $\eta$ and $Q=\varrho+\nabla P\cdot\eta$ from Section~\ref{ipm:intro-mean-inverse} reduce the forced linearized density equation to $D_tQ=f$. The resulting first-order projected terminal problem selects the initial phase mean. Proposition~\ref{ipm:meaninverse} controls that history by backward energy estimates. The second-order Euler variational problem is its methodological precedent; the IPM inverse uses the first-order displacement equation instead.

The proposed extension combines the earlier amplification mechanisms with control of the full layer history, exact unforced correction, and summability of all initial derivatives. The domain and regularity in Theorem~\ref{ipm:main} distinguish its conclusion from forced blowup, critical ill-posedness, and singularities that use a boundary or infinite energy.

\subsection*{Organization and notation}

The proof begins with the geometric estimates, then constructs and
iterates an exact layer. Sections~\ref{ipm:sec-transfer}--\ref{ipm:sec-history} establish
quantitative amplification and turning, and control of the wave history
and pressure. Sections~\ref{ipm:calculus}--\ref{ipm:sec-inverses}
develop the fixed calculus and the linear inverses.
Sections~\ref{ipm:sec-finite}--\ref{ipm:sec-initial-increment} then construct
the finite approximation, correct it to an exact solution, recover the
particle-map class, and identify the initial increment.
Section~\ref{ipm:sec-packet} collects these results into the complete
conditional layer theorem. Sections~\ref{ipm:seventh-cascade-interface}--\ref{ipm:sec-cascade}
verify its geometric inputs, choose the scales and stationary base, and
close the induction. Finally,
Sections~\ref{ipm:sec-initial-limit}--\ref{ipm:seventh-physical} construct
the limiting initial datum and smooth solution, prove the component
blow-up, and establish uniqueness and the terminal limit. Appendix~\ref{ipm:seventh-appendices}
records the uniform operator estimates.

Constants denoted by $C$ are independent of the stage. A subscript, such as
$C_m$, permits dependence on the indicated fixed derivative order.
The parent fields $r=\nabla P$, $M=\nabla u(P)$, and $H=\nabla^2p(P)$
have Eulerian arguments $(t,x)$; composition with $X$ is understood only
where explicitly stipulated.

\section{Quantitative amplification and the next direction}
\label{ipm:sec-transfer}

In this section we quantify the gain and turning produced by a nearly
vertical parent, under hypotheses on its density gradient $r_P$,
velocity gradient $M_P$, pressure Hessian $H_P$, and their remainders
at the origin. The layer construction summarized in
Section~\ref{ipm:sec-packet} will supply an exact layer with these
leading fields; Section~\ref{ipm:seventh-cascade-interface} verifies
the resulting transfer hypotheses.

\begin{lemma}[Transfer with value errors]
\label{ipm:amplification}
Fix $c>0$. There are constants $x_0$, $j_0$, and $\epsilon_0>0$
with the following property. Suppose that, on $[s,s+2W]$, a parent has
the following decompositions at the origin:
\begin{align}
 r_P&=G n+R_r,&
 M_P&=-cG\cos\alpha\,n^\perp\otimes n+R_M,
       \label{ipm:transfer-decomposition}\\
 H_P&=-cG\sin\alpha\,n\otimes n+R_H,&
 n&=(\cos\alpha,\sin\alpha),
       \nonumber
\end{align}
where $G(s)=H_0\ge1$, $\alpha(s)=\pi/2+1/x$, and
\begin{equation}
 |R_r|+\|R_M\|+\|R_H\|
       +|\alpha'|+|(\log G)'|\le E.
 \label{ipm:transfer-error}
\end{equation}
Let
\begin{equation}
 W=\frac{2x^2}{cH_0},\qquad
 \eta=\frac1{j^2x},\qquad
 j\ge j_0,\quad x\ge x_0,
 \qquad \frac{E j^2x^3}{H_0}\le\epsilon_0.
 \label{ipm:transfer-smallness}
\end{equation}
Choose a child phase with initial angle
$\beta(s)=\alpha(s)-\pi/2+\tau$, where
$|\tau|j^2x\le\epsilon_0$. Its first crossing
$\beta(t_*)=\pi/2+\eta$ occurs with
\begin{equation}
 s+W/4\le t_*\le s+3W/4.
 \label{ipm:transfer-time}
\end{equation}
The phase angle at the origin satisfies
\begin{equation}\label{ipm:transfer-angle-range}
 \beta(t)\in[1/(2x),\pi-1/(2x)],\qquad s\le t\le t_*.
\end{equation}
The child amplitude satisfies
\begin{equation}
 e^{c_0x}\le
 \frac{a(t_*,0)|m(t_*,0)|}{a(s,0)|m(s,0)|}
 \le e^{C_0x}
 \label{ipm:transfer-gain}
\end{equation}
for absolute constants $c_0,C_0>0$.
\end{lemma}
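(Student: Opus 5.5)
The plan is to reduce everything to two scalar ODEs along the stationary particle at the origin and to treat the remainders as perturbations of the frozen calculation \eqref{ipm:intro-angular-speed}--\eqref{ipm:intro-frozen-gain}. Since the origin is stationary, $m'=-M_P^Tm$ holds exactly there. Writing $N=m/|m|=(\cos\beta,\sin\beta)$ gives
\[
 \beta'=-(N^\perp)^TM_P^TN,\qquad
 (\log(a|m|))'=-(N^\perp)^TH_PN^\perp ,
\]
the second by the ray identity \eqref{ipm:rayidentity}. Inserting \eqref{ipm:transfer-decomposition}, we expect
\[
 \beta'=-cG\cos\alpha\,\sin^2(\beta-\alpha)+O(E),\qquad
 (\log(a|m|))'=cG\sin\alpha\,\sin^2(\beta-\alpha)+O(E).
\]

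\emph{Step 1: a priori control of the parent geometry.} Since $|\alpha'|+|(\log G)'|\le E$ on $[s,s+2W]$, I will bound the drift by $EW\le 2Ex^2/(cH_0)\le 2\epsilon_0/(cj^2x)$. This gives $|\alpha(t)-\pi/2-1/x|\lesssim\epsilon_0\eta$ and $G(t)=H_0(1+O(\epsilon_0 j^{-2}x^{-1}))$. Hence $-\cos\alpha=\sin(1/x)(1+O(\epsilon_0))$ and $\sin\alpha=1+O(x^{-2})$ throughout. The drift of $\alpha$ is a small fraction of $\eta$, so the target $\pi/2+\eta$ stays strictly between $\alpha-\pi/2$ and $\alpha$, at distance $\approx 1/x-\eta$ from $\alpha$.

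\emph{Step 2: the turning clock.} Set $z=-\cot(\beta-\alpha)$, as in the frozen model. Then
\[
 z'=\frac{\beta'-\alpha'}{\sin^2(\beta-\alpha)}
   =-cG\cos\alpha+O\bigl(E/\sin^2(\beta-\alpha)\bigr).
\]
A bootstrap keeps $\beta-\alpha\in[-\pi/2-1/(2x),\,-(1/x-\eta)/2]$ up to the first crossing. There $\sin^2(\beta-\alpha)\gtrsim x^{-2}$, so the error in $z'$ is $O(Ex^2)$. Relative to the main rate $cH_0\sin(1/x)\sim H_0/x$, this error is $O(Ex^3/H_0)=O(\epsilon_0j^{-2})$. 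Hence $z'=(cH_0/x)(1+O(\epsilon_0))$ and $z$ is strictly increasing, which means $\beta$ moves monotonically counterclockwise.

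Two endpoint facts then fix the clock. First, $z(s)=-\cot(\tau-\pi/2)=O(\tau)=O(\epsilon_0/(j^2x))$. Second, at the crossing $z=\cot(\alpha(t_*)-\pi/2-\eta)=x(1+O(j^{-2}+\epsilon_0))$. Together these give $t_*-s=(x^2/(cH_0))(1+O(\epsilon_0+j^{-2}))=(W/2)(1+o(1))$, which is \eqref{ipm:transfer-time}. Because $\beta$ is monotone, it runs from $1/x+\tau+O(\epsilon_0\eta)\ge1/(2x)$ up to $\pi/2+\eta\le\pi-1/(2x)$, which gives \eqref{ipm:transfer-angle-range}. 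The crossing exists by continuity once $z$ exceeds its target, and it is the first one by monotonicity.

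\emph{Step 3: the gain.} I will divide the two rates to get $d\log(a|m|)/d\beta$. The main ratio is $\sin\alpha/(-\cos\alpha)=x(1+O(\epsilon_0))$. The error contribution is bounded by $E/\beta'$ per unit angle. Since $\beta'\gtrsim H_0x^{-1}\sin^2(\beta-\alpha)\gtrsim H_0/x^3$ on the bootstrap range, this is at most $Ex^3/H_0\le\epsilon_0$ per unit angle. Integrating over a $\beta$-range of length $\pi/2+O(1/x)$ gives
\[
 \log\frac{a(t_*,0)|m(t_*,0)|}{a(s,0)|m(s,0)|}=\frac\pi2x(1+O(\epsilon_0))+O(1),
\]
which yields \eqref{ipm:transfer-gain} with, say, $c_0=1$ and $C_0=2$ for $\epsilon_0$ small and $x_0$ large.

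The main obstacle is the end of the turn. There $\sin^2(\beta-\alpha)$ is only of order $x^{-2}$, so the angular speed drops to $\sim H_0/x^3$, and the remainders $E$ and the drift of $\alpha$ must stay below it. This is exactly where the hypothesis $Ej^2x^3/H_0\le\epsilon_0$ enters, and it is why the bootstrap keeps $\beta-\alpha$ bounded away from $0$ by a fixed fraction of $1/x-\eta$.
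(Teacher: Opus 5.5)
Your proposal is correct and follows essentially the same route as the paper: the substitution $z=-\cot(\beta-\alpha)$, a stopped region where $\sin^2(\beta-\alpha)\gtrsim x^{-2}$ so that $z'=(cH_0/x)(1+O(\epsilon_0))$, comparison of $z$ with the moving target $\cot(\alpha(t)-\pi/2-\eta)\approx x$, and integration of the growth rate per unit angle. One bookkeeping point needs fixing. When you divide by $\beta'=q\sin^2(\beta-\alpha)+O(E)$, the denominator error enters multiplied by $|\tan\alpha|\sim x$, so it contributes $O(\epsilon_0 x j^{-2})$ per unit angle, not $O(\epsilon_0)$. This is a relative error $O(Ex^3/H_0)$, as in the paper's $-\tan\alpha\,[1+O(\mu)]+O(\mu)$ with $\mu=Ex^3/H_0$, and your final $\frac\pi2 x(1+O(\epsilon_0))$ still absorbs it. Likewise, monotonicity of $\beta$ itself, and not only of $\beta-\alpha$, comes from $q\sin^2(\beta-\alpha)\gtrsim H_0/x^3\gg E$.
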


\begin{proof}
We give a stopping argument in which no target crossing is assumed.
Throughout this proof constants may depend on the fixed number $c$.
Set
\[
 \mu=\frac{Ex^3}{H_0}\le\epsilon_0j^{-2}.
\]
Since
$2W=4x^2/(cH_0)$, integration of the two derivative bounds yields, for
$s\le t\le s+2W$,
\begin{equation}
 |\alpha(t)-(\pi/2+x^{-1})|+|\log(G(t)/H_0)|
       \le2EW=\frac{4\mu}{cx}\le\frac{4\epsilon_0}{c}\eta.
 \label{ipm:expanded-cascade-parent-drift}
\end{equation}
Fix $\epsilon_0$ so small that this is at most $\eta/100$.
By increasing $x_0,j_0$, we also arrange
that $G/H_0\in[9/10,11/10]$ and
$\alpha-\pi/2\in[9/(10x),11/(10x)]$.

Let $N=(\cos\beta,\sin\beta)$ be the child unit phase. Its equation is
\[
 \beta'=(N^\perp)^T(-M_P^T)N.
\]
For $n=(\cos\alpha,\sin\alpha)$,
$n^\perp\cdot N=\sin(\beta-\alpha)$ and
$N^\perp\cdot n=-\sin(\beta-\alpha)$. Set $q=-cG\cos\alpha$,
$d=\beta-\alpha$, and $z=-\cot d$ whenever $d\in(-\pi,0)$.
Substituting the rank-one part of $M_P$ then gives
\begin{equation}
 \beta'=q(t)\sin^2d+O(E),\qquad
 d'=q(t)\sin^2d+O(E),\qquad
 z'=q(t)+O(E(1+z^2)),
 \label{ipm:transfer-z}
\end{equation}
The identity for $d'$
follows from $d'=\beta'-\alpha'$, the identity for $\beta'$, and
$|\alpha'|\le E$ from \eqref{ipm:transfer-error}.
The identity for $z'$ follows from
$z'=(\sin d)^{-2}d'$ and
$(\sin d)^{-2}=1+z^2$.

Since $-\cos\alpha=\sin(\alpha-\pi/2)
=x^{-1}[1+O(x^{-2}+\mu)]$, \eqref{ipm:expanded-cascade-parent-drift} gives
\begin{equation}
 q(t)=\frac{cH_0}{x}
       [1+O(x^{-2}+\mu)].
 \label{ipm:transfer-q}
\end{equation}

Initially $d(s)=-\pi/2+\tau$ and $z(s)=\tan\tau$.
Consider the connected interval beginning at $s$ on which $d\in(-\pi,0)$
and $|z|<2x$. On this interval the last equation in
\eqref{ipm:transfer-z} has error at most $CEx^2$. Relative to
$cH_0/x$ this is at most $C\mu$. Hence
\begin{equation}
 z'=\frac{cH_0}{x}
       [1+O(x^{-2}+\mu)]>0.
 \label{ipm:transfer-monotone}
\end{equation}
We may make the relative error here at most $1/20$. The same choices
give $|z(s)|=|\tan\tau|\le2|\tau|\le2\epsilon_0/(j^2x)\le x/100$.
A first exit before $s+3W/4$ is impossible:
monotonicity excludes the boundary $z=-2x$, while integration gives
\[
 z(t)\le x/100+(21/20)(cH_0/x)(3W/4)
             <2x
\]
for $t\le s+3W/4$. The angle cannot leave $(-\pi,0)$ while $z$
remains bounded, because the two endpoints correspond to poles of
$-\cot d$. Thus all the preceding estimates hold on the complete
interval $[s,s+3W/4]$, whether or not a target has yet been crossed.
\label{ipm:expanded-cascade-stopping}

The target is the moving value
\begin{equation}
 z_{\rm tar}(t)=
    \cot\bigl(\alpha(t)-\pi/2-\eta\bigr).
 \label{ipm:expanded-cascade-moving-target}
\end{equation}
Its argument is positive, since
$\alpha-\pi/2-\eta=x^{-1}(1-j^{-2})+O(\mu/x)$.
The Taylor estimate $\cot u=u^{-1}+O(u)$ for small positive $u$
therefore gives, uniformly in the complete stopped interval,
\[
 z_{\rm tar}(t)=x\,[1+O(j^{-2}+x^{-2}+\mu)].
\]
Choose $j_0,x_0,\epsilon_0$ so that it lies between $0.9x$ and
$1.1x$. On $[s,s+W/4]$, integration of
\eqref{ipm:transfer-monotone} gives $z(t)\le0.54x$, whereas at
$s+3W/4$ it gives $z\ge1.41x$. Continuity forces the first intersection
with \eqref{ipm:expanded-cascade-moving-target} between $s+W/4$ and
$s+3W/4$, proving \eqref{ipm:transfer-time}.

Since $z$ is increasing and $d\in(-\pi,0)$, the angle $d$ is
increasing, so $d(t)\ge d(s)=-\pi/2+\tau$. Together with
$9/(10x)\le\alpha-\pi/2\le11/(10x)$, this gives
\[
 \beta=\alpha+d\ge\frac{9}{10x}-|\tau|\ge\frac1{2x},
 \qquad \beta<\alpha\le\frac\pi2+\frac{11}{10x}\le\pi-\frac1{2x}.
\]
Here we use the smallness of $|\tau|$ and sufficiently large $x_0$.
This proves the asserted angle range on $[s,t_*]$.

By \eqref{ipm:rayidentity} and the Hessian decomposition,
\begin{equation}
 (\log(a|m|))'
     =cG\sin\alpha\,\sin^2d+O(E).
 \label{ipm:transfer-log-g}
\end{equation}
To estimate this growth, use $|z|\le2x$ and \eqref{ipm:transfer-q}:
\begin{equation}
 \sin^2d=\frac1{1+z^2}\ge\frac1{1+4x^2},\qquad
 q\sin^2d\ge c_1H_0/x^3.
 \label{ipm:expanded-cascade-angle-denominator}
\end{equation}
Write
$B=q\sin^2d$ and $D=cG\sin\alpha\sin^2d$. Then
$D/B=-\tan\alpha$, $B\ge c_1H_0/x^3$, and the quotient of either
additive error $O(E)$ by $B$ is at most $C\mu$. Therefore
\[
 \frac{(\log(a|m|))'}{d'}
 =-\tan\alpha(t)
       [1+O(\mu)]
       +O(\mu),
\]
where the denominator factor $(1+O(\mu))^{-1}$ was expanded only
after ensuring $|O(\mu)|\le1/4$. The number $-\tan\alpha(t)$ lies
between fixed positive multiples of $x$ by
\eqref{ipm:expanded-cascade-parent-drift}. Moreover,
\[
 d(t_*)-d(s)
 =\pi/2-\tau-[\alpha(t_*)-\pi/2-\eta]
\]
lies, for the fixed choices above, between $1$ and $2$. Integration
with respect to the strictly increasing variable $d$ proves
\eqref{ipm:transfer-gain} with fixed $c_0,C_0>0$.
\end{proof}

\section{Wave history and pressure control}
\label{ipm:sec-history}

The history estimate \eqref{ipm:backwardray} bounds the earlier gradient by its
value at activation. We now combine it with localization and an asymmetric
profile to control the positive pressure curvature. The estimates here concern the leading fields
\eqref{ipm:intro-leading-matrices}; their application to the exact child
uses the global remainder bounds \eqref{ipm:packet-gradient-errors}. They are used after the exact layer theorem is proved in
Section~\ref{ipm:sec-packet}. The Gevrey product and composition estimates
used below are established independently in Section~\ref{ipm:calculus}.

\subsection{The parent regularity class}

Throughout, an ordered derivative word is a tuple
$I=(i_1,\ldots,i_n)\in\{1,\ldots,d\}^n$, with length $|I|=n$ and
$\partial^I=\partial_{i_1}\cdots\partial_{i_n}$.
The notation $\sum_{|I|=n}$ sums over all such tuples; for a specified
variable $z$, write $\partial_z^I=\partial_{z_{i_1}}\cdots\partial_{z_{i_n}}$.
For $J\subseteq\{1,\ldots,n\}$, $I_J$ is the subword in those positions,
in their original order, and $J^c$ denotes the complementary positions.
The Sobolev base index $\beta$ is an ordinary multi-index.
For a field $Z$ in unscaled physical labels $z$, write
$[Z]_n=\sum_{|I|=n}\|\partial_z^IZ\|_\infty$.

Let $X(t,z)$ be the initial-label flow of a smooth odd solution $P$ of
\eqref{ipm:darcy}--\eqref{ipm:unforced} on $[0,S]\times\mathbb T^2$, where $S\le1$.
Thus $X(0,z)=z$, $\det D_zX=1$, and $X(t,-z)=-X(t,z)$ for a lift to $\mathbb R^2$. Recall that $F=D_zX$ in these unscaled labels.
A bound by the parameter $K\ge2$ means
\begin{equation}\label{ipm:parent-class}
 \sup_{0\le t\le S}[Z(t)]_n
 \le K^{n+1}(n!)^2\quad(n\ge0),
 \qquad
 Z\in\{P(0),X-\operatorname{id},X_t,X_{tt}\}.
\end{equation}
A fixed power enlargement of $K$ absorbs the distinction between equivalent
finite-dimensional norm conventions.

\subsection{The asymmetric phase profile}
\label{ipm:sec-profile}
\label{ipm:profile}

\begin{lemma}[A narrow, large positive slope and a bounded negative slope]
\label{ipm:profilelemma}
There is a family of odd $2\pi$-periodic Gevrey-two functions $f_\delta$,
$0<\delta\le\delta_0$, with fixed constants $C_f\ge1$ and $\delta_0>0$, such that
\begin{equation}\label{ipm:profile-bounds}
 f_\delta'(0)=\delta^{-1},\qquad f_\delta'\ge-C_f,
 \qquad \|f_\delta\|_\infty\le C_f,
 \qquad
 \|f_\delta^{(n)}\|_\infty\le C_f^{n+1}\delta^{-n}(n!)^2.
\end{equation}
There is also a fixed nonnegative even Gevrey-two cutoff $\chi$ supported
in $B_{1/2}(0)$ and equal to one on $B_{1/4}(0)$.
\end{lemma}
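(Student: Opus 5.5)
The plan is to build $f_\delta$ as the derivative-primitive of a fixed Gevrey-two bump, so that a narrow positive spike in $f'_\delta$ is compensated by a small constant negative slope. The construction starts from a fixed even Gevrey-two function $\psi\ge0$ on $\mathbb R$, supported in $[-1,1]$, with $\psi(0)=1$ and $\int\psi=1$. Such a $\psi$ exists, for instance $\exp(-1/(1-s^2))$ suitably normalized; that function is Gevrey of order two, since $e^{-1/s}$ is Gevrey-two at $0$. We then set
\[
 f_\delta'(\theta)=\delta^{-1}\sum_{k\in\mathbb Z}\psi\bigl((\theta-2\pi k)/\delta\bigr)\cdot\frac{1}{\psi(0)}-\kappa_\delta ,
\]
with $\kappa_\delta=\frac{1}{2\pi\psi(0)}\int\psi$ chosen to make the mean of $f'_\delta$ vanish. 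This $\kappa_\delta$ is independent of $\delta$ and bounded, because the spike has height $\delta^{-1}$ and width $\delta$, so its integral is $O(1)$. For $\delta\le\delta_0<\pi$ the translates do not overlap.

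Since $f'_\delta$ is even and has zero mean, its zero-mean primitive $f_\delta(\theta)=\int_0^\theta f'_\delta$ is odd and $2\pi$-periodic. It satisfies $f'_\delta(0)=\delta^{-1}-\kappa_\delta$. To obtain exactly $\delta^{-1}$, we replace the spike amplitude $\delta^{-1}$ by $\delta^{-1}+\kappa$ and recompute the mean: $\kappa$ solves $\kappa=(1+\delta\kappa)\kappa_1$, which is a bounded affine fixed point for small $\delta$. The remaining bounds then follow directly. We have $f'_\delta\ge-\kappa\ge-C_f$, and $\|f_\delta\|_\infty\le\int_{-\pi}^{\pi}|f'_\delta|\le 2(1+\delta\kappa)+2\pi\kappa\le C_f$.

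For the derivative bounds with $n\ge1$, write $f^{(n)}_\delta=(\delta^{-1}+\kappa)\delta^{-(n-1)}\psi^{(n-1)}(\cdot/\delta)$ away from the constant term, which only contributes at $n=1$. The Gevrey-two bound $\|\psi^{(m)}\|\le C^{m+1}(m!)^2$ then gives $C_f^{n+1}\delta^{-n}(n!)^2$. At $n=0$ the bound is the sup bound already established.

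The cutoff $\chi$ on $\mathbb R^2$ is $\chi(y)=\lambda(|y|^2)$, where $\lambda$ is a one-dimensional Gevrey-two step function equal to $1$ on $[0,1/16]$ and $0$ on $[1/4-\epsilon,\infty)$. We take $\lambda=\int\Psi/\int\Psi$ with $\Psi$ a Gevrey-two bump on the transition interval. Composition with the polynomial $|y|^2$ preserves Gevrey class two, by the composition estimates of Section~\ref{ipm:calculus} or a direct Faà di Bruno count. The result is even, nonnegative, supported in $B_{1/2}$, and equal to $1$ on $B_{1/4}$.

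The only genuinely nontrivial input is the existence of a compactly supported Gevrey-two bump with the factorial-squared bound, the standard estimate for $e^{-1/s}$. I expect that verification, done via Cauchy estimates on a sector, to be the main step. Everything else is a bookkeeping check that the constants do not depend on $\delta$.
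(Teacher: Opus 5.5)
Your proposal is correct and is essentially the paper's construction. The paper also periodizes the rescaled bump $\psi(\theta/\delta)$ with $\psi=\exp(1-(1-z^2)^{-1})$, subtracts its mean, and divides by $\delta(1-\mu_\delta)$ so that the slope at the origin is exactly $\delta^{-1}$; your fixed point $\kappa=\kappa_1/(1-\delta\kappa_1)$ is this same normalization. It then takes the odd primitive and builds $\chi$ by composing an integrated, normalized Gevrey step with a function of $|y|^2$, just as you do.
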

\begin{proof}
Choose the even bump
\[
 \psi(z)=\begin{cases}\exp(1-(1-z^2)^{-1}),&|z|<1,\\0,&|z|\ge1.
 \end{cases}
\]
It satisfies $\psi(0)=1$ and $\|\psi^{(n)}\|_\infty\le C^{n+1}(n!)^2$.
To wit, at a point with $t=1-z^2>0$, a complex disk of radius
$c_0t$ keeps $1-w^2$ in a fixed right sector of size comparable to $t$.
Cauchy's estimate gives
$|\psi^{(n)}(z)|\le n!(C/t)^n e^{-c_1/t}$.
Taking the supremum in $t$ and using $n^ne^{-n}\le n!$ gives the claim.
The same estimate at the endpoints implies that all derivatives extend
by zero there.

Periodize $\psi(\theta/\delta)$ and put
\[
 \mu_\delta=\frac1{2\pi}\int_{-\pi}^{\pi}\psi(\theta/\delta)\,d\theta
             =a_0\delta,\qquad
 g_\delta=\frac{\psi(\theta/\delta)-\mu_\delta}
                  {\delta(1-\mu_\delta)}.
\]
Take $\delta_0$ small enough that $\mu_\delta\le1/2$.
Then $g_\delta$ is even, has zero mean, satisfies $g_\delta(0)=\delta^{-1}$,
and has negative part bounded by $2a_0$.
The primitive $f_\delta(\theta)=\int_0^\theta g_\delta(v)\,dv$ is odd
and periodic. Its supremum is bounded by the integral of $|g_\delta|$,
which is uniform. Its positive-order derivative bounds follow from those
of $\psi$ and the displayed scaling. Choose one fixed $C_f\ge1$
to cover these profile bounds. A fixed cutoff $\chi$ is obtained
by integrating and normalizing $\psi$ to a smooth step and composing
that step with an affine function of $|y|^2$. Lemma~\ref{ipm:composition}
gives its Gevrey bound; the affine function is chosen to place the two
cutoff radii at $1/4$ and $1/2$.
\end{proof}

\begin{corollary}[One-sided pressure cost of the profile]
\label{ipm:profile-pressure-cost}
Let $m\ne0$, $\alpha,a\ge0$, and $\chi\ge0$, and consider the leading
pressure Hessian $(\nabla^2\pi)_{\rm lead}$ in
\eqref{ipm:intro-leading-matrices}, evaluated at $X(t,\ell y)$.
If $m_2>0$, then $(\nabla^2\pi)_{\rm lead}\le C_fc\alpha\chi a|m|I$.
Without a sign assumption on $m_2$, $\|(\nabla^2\pi)_{\rm lead}\|\le
C_f^2c\alpha\chi a|m|/\delta$. Thus wherever
$\alpha\chi a|m|/\delta\le C_*h$, the positive-Hessian cost
in the first case is at most $C_fC_*c\delta h$.
\end{corollary}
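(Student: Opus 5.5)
The plan is to read off the quadratic form of the explicit rank-one matrix in \eqref{ipm:intro-leading-matrices} and use the one-sided slope bounds of Lemma~\ref{ipm:profilelemma}. With $n=m/|m|$, the leading Hessian is
\[
 (\nabla^2\pi)_{\rm lead}\circ X=-c\,\alpha\chi a\,m_2\,f'_\delta\,n\otimes n
 =-c\,\alpha\chi a\,|m|\,n_2 f'_\delta\,n\otimes n .
\]
This is a scalar multiple of the orthogonal projection $n\otimes n$. For every unit vector $e$, its quadratic form is therefore $-c\alpha\chi a|m|\,n_2f'_\delta\,(n\cdot e)^2$, with $0\le(n\cdot e)^2\le1$. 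The whole statement thus reduces to bounding the scalar coefficient $-c\alpha\chi a|m|n_2f'_\delta$, from above in the signed case and in absolute value in the unsigned case.

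In the first case, $m_2>0$ and $\alpha,\chi,a\ge0$ make the prefactor $c\alpha\chi a|m|n_2$ nonnegative, so the coefficient has the sign of $-f'_\delta$. Lemma~\ref{ipm:profilelemma} gives $-f'_\delta\le C_f$, and with $0<n_2\le1$ the coefficient is at most $C_fc\alpha\chi a|m|$. Multiplying by $(n\cdot e)^2\le1$ gives the quadratic-form inequality $(\nabla^2\pi)_{\rm lead}\le C_fc\alpha\chi a|m|I$. The point is that the large positive slope $f'_\delta(0)=\delta^{-1}$ only contributes negative curvature, so it never appears in this bound.

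In the second case, I use $\|n\otimes n\|=1$, $|n_2|\le1$, and the derivative bound of \eqref{ipm:profile-bounds} with $n=1$, namely $\|f'_\delta\|_\infty\le C_f^2\delta^{-1}$. These give $\|(\nabla^2\pi)_{\rm lead}\|\le C_f^2c\alpha\chi a|m|/\delta$.

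For the final assertion, I write $\alpha\chi a|m|=\delta\cdot(\alpha\chi a|m|/\delta)\le C_*\delta h$ and substitute this into the first bound, which gives the cost $C_fC_*c\delta h$. There is no real obstacle; the only care needed is the sign bookkeeping that makes $m_2>0$ pair with the lower bound on $f'_\delta$, and the observation that the rank-one structure turns a scalar bound into a matrix inequality.
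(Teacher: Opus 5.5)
Your proof is correct and follows the paper's argument: the paper also reads the bounds off the rank-one form using $0\le(m\otimes m)/|m|^2\le I$, $|m_2|\le|m|$, $f'_\delta\ge-C_f$, and $\|f'_\delta\|_\infty\le C_f^2/\delta$, exactly as you do with $n=m/|m|$. One small point: when the scalar coefficient is negative, multiplying it by $(n\cdot e)^2\le1$ does not bound the product by the coefficient. The step still goes through, because the product is at most $\max(\text{coefficient},0)\le C_fc\alpha\chi a|m|$ and that bound is nonnegative.
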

\begin{proof}
The conclusions follow from $0\le(m\otimes m)/|m|^2\le I$,
$|m_2|\le|m|$, and the bounds $f'_\delta\ge-C_f$ and
$\|f'_\delta\|_\infty\le C_f^2/\delta$ from \eqref{ipm:profile-bounds}.
\end{proof}

\subsection{Localization, normalization, and the pressure budget}

\begin{lemma}[Parent coefficients and localization of a transported layer]
\label{ipm:tube-localization}
Assume the four-field parent class \eqref{ipm:parent-class} with
parameter $K$. In physical labels let $m=F^{-T}m_0$, $|m_0|=1$,
and normalize the transported scalar amplitude by $a(s,\cdot)=1$.
Recall from \eqref{ipm:amplitude-coefficient-definition} that
$c_A(t,z)=\mathfrak m(F(t,z)^{-T}m_0)\cdot\nabla P(t,X(t,z))$.
Then all the following conclusions hold uniformly for $t\in[0,S]$.
For an absolute exponent $A$,
\begin{equation}
 [c_A]_n\le A_c B_c^n(n!)^2,\qquad
 A_c+B_c\le K^{A},\qquad
 [m]_n\le K^{A(n+1)}(n!)^2.
 \label{ipm:expanded-cascade-amplitude-coefficient}
\end{equation} Whenever
$\ell K^{2A+3}\le\varepsilon_{\rm loc}\le1$,
the normalized amplitude $a(t,\ell y)/a(t,0)=e^{\Lambda(t,y)}$
satisfies, on $|y|\le1$,
\begin{equation}\label{ipm:centered-amplitude-gevrey}
 [e^\Lambda-1]_{n,y}
       \le C\varepsilon_{\rm loc}C^n(n!)^2\quad(n\ge0).
\end{equation}
If $\varepsilon_{\rm loc}\le1/(20\pi)$,
then for $|y|\le1$,
\begin{equation}
 \frac12\le
 \frac{a(t,\ell y)|m(t,\ell y)|}{a(t,0)|m(t,0)|}
 \le2,
 \qquad
 \left|\frac{m(t,\ell y)}{|m(t,\ell y)|}
          -\frac{m(t,0)}{|m(t,0)|}\right|
       \le2\ell K^{2A+3}.
 \label{ipm:cascade-tube-comparison}
\end{equation}
\end{lemma}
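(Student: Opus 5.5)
The plan is to derive everything from the parent class \eqref{ipm:parent-class} by Gevrey calculus: products, quotients, inverses and compositions, as in Lemma~\ref{ipm:composition} and Section~\ref{ipm:calculus}.

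\textbf{Coefficient bounds.} First I bound the label fields.
\begin{itemize}
\item Since $\det F=1$, we have $F^{-T}=\operatorname{cof}(F)^T$, which is linear in the entries of $F=I+D_z(X-\mathrm{id})$. Hence $[F^{-T}]_n\le K^{C(n+1)}(n!)^2$, and so $[m]_n\le K^{A(n+1)}(n!)^2$ follows at once.
\item The transport identity $F^T(r\circ X)=R_0=\nabla_zP(0,z)$ gives $\nabla P(t,X(t,z))=F^{-T}R_0$. This is again a product of class-$K$ fields, with one derivative shift on $P(0)$. That shift costs at most a factor $K^2(n+1)^2$, which is absorbed by enlarging the exponent.
\item For $\mathfrak m(m)=c(m_1m_2,-m_1^2)/|m|^2$ I need a lower bound on $|m|$. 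Since $m=F^{-T}m_0$ and $|F|\le K$, we get $|m|\ge\|F^T\|^{-1}\ge K^{-1}$.
\item The reciprocal $1/|m|^2$ is a composition of the analytic function $u\mapsto1/u$ on $[K^{-2},K^2]$ with a Gevrey field. The composition lemma gives Gevrey-two bounds with $A_c,B_c\le K^{A}$.
\end{itemize}
Together these give \eqref{ipm:expanded-cascade-amplitude-coefficient}.

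\textbf{Centered amplitude.} Next, from \eqref{ipm:intro-wave},
\[
\Lambda(t,y)=-\int_s^t\bigl[c_A(\tau,\ell y)-c_A(\tau,0)\bigr]\,d\tau .
\]
This integral runs backward when $t<s$; since $S\le1$, the time length is at most one.
\begin{itemize}
\item Each $y$-derivative brings a factor $\ell$, so $[\partial_y^I c_A(\cdot,\ell\cdot)]\le \ell^nA_cB_c^n(n!)^2$.
\item The difference vanishes at $y=0$. By the mean value theorem on $|y|\le1$, the $n=0$ term is at most $\ell A_cB_c$.
\item Hence $[\Lambda]_{n,y}\le \ell K^{2A}(\ell K^{A})^{n}(n!)^2\le \varepsilon_{\rm loc}(n!)^2$ for $n\ge0$.
\item Composing with $e^u-1$ (entire, vanishing at $0$) via the composition lemma yields \eqref{ipm:centered-amplitude-gevrey} with an absolute $C$. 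This works because the input is small with geometric ratio at most one.
\end{itemize}

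\textbf{Tube comparison.} For \eqref{ipm:cascade-tube-comparison}, write the ratio as $e^{\Lambda}\,|m(t,\ell y)|/|m(t,0)|$.
\begin{itemize}
\item The first factor lies in $[e^{-\varepsilon_{\rm loc}},e^{\varepsilon_{\rm loc}}]$.
\item The mean value theorem gives $|m(t,\ell y)-m(t,0)|\le \ell[m]_1\le \ell K^{2A}$. Combined with $|m|\ge K^{-1}$, the relative change is at most $\ell K^{2A+1}\le\varepsilon_{\rm loc}$.
\item With $\varepsilon_{\rm loc}\le 1/(20\pi)$, the product lies in $[1/2,2]$.
\item For the directions, use $\bigl|u/|u|-v/|v|\bigr|\le 2|u-v|/|v|$. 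This gives $2\ell K^{2A+1}\le 2\ell K^{2A+3}$.
\end{itemize}

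\textbf{Main obstacle.} The main obstacle is the careful Gevrey bound for $\mathfrak m(m)$ and the exponential composition. These need the composition lemma with explicit dependence on the lower bound $K^{-1}$ and on the sizes, and tracking that all constants stay polynomial in $K$ with a single absolute exponent $A$. I would state one composition estimate for analytic functions on a disk of radius comparable to $K^{-2}$, and apply it once to each of these two compositions.
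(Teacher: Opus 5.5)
Your proposal is correct and follows essentially the paper's route: Gevrey product and reciprocal bounds for $m$ and $c_A$, the centered integral formula for $\Lambda$ with the mean value theorem and $S\le1$, an exponential composition that keeps the factor $\varepsilon_{\rm loc}$, and mean-value comparisons on the tube. As the paper notes, that factor comes from the chain-rule proof, not from the stated composition lemma.

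Two small slips do not affect the conclusions. The class only gives $[X-\mathrm{id}]_1\le K^2$, so $\|F\|\le K^3$ and $|m|\ge K^{-3}$ rather than $K^{-1}$; this is exactly where the exponent $2A+3$ comes from, and your comparison argument goes through unchanged with it. Also, for $n\ge1$ the bound on $[\Lambda]_{n,y}$ is $K^A(\ell K^A)^n(n!)^2$, with no extra mean-value factor, which still gives $\varepsilon_{\rm loc}(n!)^2$.
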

\begin{proof}
Using \eqref{ipm:parent-class}, $(n+1)^2\le4^n$,
and $K\ge2$, one extra spatial derivative gives
\[
 [F]_n\le\mathbf1_{\{n=0\}}+K^{n+2}((n+1)!)^2
       \le K^{n+3}((n+1)!)^2
       \le K^{n+3}4^n(n!)^2
       \le K^{3(n+1)}(n!)^2.
\]
The same holds for $F_t$ and $F_{tt}$, using the bounds on $X_t$ and
$X_{tt}$. Since $\det F=1$,
$F^{-1}=\operatorname{adj}F=(\operatorname{cof}F)^T$. In dimension two,
its entries are the entries of $F$ times $\pm1$, so $F^{-1}$ has the same bound.

Transport of the parent density $P$ gives $r\circ X=F^{-T}\nabla P(0)$.
The product bounds of Lemmas~\ref{ipm:word-products} and~\ref{ipm:shiftcalculus}
show that a product of a fixed number of fields of this class has
size and growth constants bounded by a fixed power of $K$.
Thus $r\circ X$ has that class.

The reciprocal bound has the same quantitative
property. If $g\ge b>0$ and
$[g]_n\le U V^n(n!)^2$, differentiating $g g^{-1}=1$ gives, for $n\ge1$,
\[
 [g^{-1}]_n\le b^{-1}\sum_{l=1}^n
          \binom nl[g]_l[g^{-1}]_{n-l}.
\]
Choose $R\ge4V\max(1,U/b)$. Induction using \eqref{ipm:shiftabsorption} and $\sum_{l\ge1}(V/R)^l\le2V/R$ proves
$[g^{-1}]_n\le2b^{-1}R^n(n!)^2$.
This is also a direct special case of Lemma~\ref{ipm:composition}.
Its constants are polynomial in $U,V,b^{-1}$, not exponential in
these quantities.

For $m=F^{-T}m_0$ with $|m_0|=1$, the identity $F^Tm=m_0$ gives
$|m|\ge\|F\|^{-1}\ge K^{-3}$.
Apply the preceding reciprocal argument to $|m|^2$ and then the
product bounds of Lemmas~\ref{ipm:word-products} and~\ref{ipm:shiftcalculus} to the rational symbol
$\mathfrak m$ in \eqref{ipm:velocity-symbol}.
These estimates give the coefficient bounds
\eqref{ipm:expanded-cascade-amplitude-coefficient}.
The fixed exponent $A$ is uniform in $m_0$, the child carrier, the seed,
and the stage. This verifies the coefficient interface with the exact
layer proposition using only its inherited four-field class.

Write $a(s,z)=1$. Since $a_t+c_A a=0$, the growth at the origin
cancels in the identity
\begin{equation}
 \Lambda(t,y):=\log\frac{a(t,\ell y)}{a(t,0)}
       =-\int_s^t[c_A(\sigma,\ell y)
                              -c_A(\sigma,0)]\,d\sigma.
 \label{ipm:cascade-normalization}
\end{equation}
On $|y|\le1$, the mean-value theorem and $S\le1$ give
\begin{equation}
 \|\Lambda\|_\infty\le A_cB_c\ell,\qquad
 [\Lambda]_{n,y}\le A_c(\ell B_c)^n(n!)^2\quad(n\ge1).
 \label{ipm:expanded-cascade-centered-derivatives}
\end{equation}
The bound $\ell K^{2A+3}\le\varepsilon_{\rm loc}$ and
\eqref{ipm:expanded-cascade-amplitude-coefficient} imply $\ell B_c\le1$ and
$A_cB_c\ell\le\varepsilon_{\rm loc}$. At this step we only require $\varepsilon_{\rm loc}\le1$. Then for $n\ge1$ the
last bound is at most $\varepsilon_{\rm loc}(n!)^2$.
The exponential composition estimate of Lemma~\ref{ipm:composition}
gives
\eqref{ipm:centered-amplitude-gevrey}.
One can retain the factor $\varepsilon_{\rm loc}$ directly in its
proof: every positive-order derivative term in the exponential
chain rule contains at least one derivative of $\Lambda$, and all
those derivatives have this factor; the remaining factorial
partition sum is bounded by $C^n(n!)^2$. The $L^\infty$ estimate is $|e^\Lambda-1|\le e^{\|\Lambda\|_\infty}\|\Lambda\|_\infty$.
The same calculation works with any lower time limit
$\sigma\in[0,S]$ in place of $s$. It therefore controls the full
centered propagator, in both directions of time.

For the phase comparison, \eqref{ipm:expanded-cascade-amplitude-coefficient}
and $|m|\ge K^{-3}$ give
\begin{equation}\label{ipm:phase-spatial-derivative-bounds}
 |\nabla_{z}\log|m||\le\frac{|\nabla_{z}m|}{|m|}
       \le K^{2A+3},\qquad
 \left|\nabla_{z}\frac m{|m|}\right|
       \le\frac{2|\nabla_{z}m|}{|m|}\le2K^{2A+3}.
\end{equation}
Apply the mean-value theorem between $0$ and $\ell y$.
On $|y|\le1$, \eqref{ipm:expanded-cascade-centered-derivatives}
and the choice $A_cB_c\ell\le\varepsilon_{\rm loc}$ give
\[
 \left|\log\frac{a(t,\ell y)}{a(t,0)}\right|\le\varepsilon_{\rm loc},
 \qquad
 \left|\log\frac{|m(t,\ell y)|}{|m(t,0)|}\right|
 \le\ell K^{2A+3}\le\varepsilon_{\rm loc}.
\]
Thus $\varepsilon_{\rm loc}\le1/(20\pi)$ makes their sum at most
$2\varepsilon_{\rm loc}\le1/(10\pi)<\log2$. Together with the direction derivative bound in \eqref{ipm:phase-spatial-derivative-bounds}, this proves
\eqref{ipm:cascade-tube-comparison}.
\end{proof}

\begin{proposition}[History and retained geometry]
\label{ipm:packet-history-geometry}
Let $P$ be an exact parent on $[0,S]$, $S\le1$, with
\begin{equation}\label{ipm:history-parent-bounds}
 H(P)\le I,\qquad \|\nabla P\|_\infty\le C_{\nabla}h_{\rm par}.
\end{equation}
Suppose a layer activates at $s$, reaches angle
$\pi/2+1/x_{\rm tar}$ at the child target time $t_*$ with gain at least $e^{c_0x}$,
and is retained until $S=t_*+2W_{\rm next}$.
Assume its angle at the origin satisfies
$\beta(t)\in[1/(2x),\pi-1/(2x)]$ for $s\le t\le t_*$,
where $x_{\rm tar}\ge x$ is sufficiently large.
Normalize its leading density-gradient increment at the origin to magnitude
$h$ at the target time by
\begin{equation}
 \alpha a(t_*,0)|m(t_*,0)|/\delta=h.
 \label{ipm:cascade-target-normalization}
\end{equation}
Then the history estimate is
\begin{equation}
 \alpha a(t,0)|m(t,0)|/\delta
       \le e h e^{-c_0x},\qquad 0\le t\le s.
 \label{ipm:cascade-small-history}
\end{equation}
Assume in addition that
\begin{equation}
 2(1+2cC_{\nabla})(1+h_{\rm par})W_{\rm next}
 \le\frac1{4x_{\rm tar}},
 \qquad x_{\rm tar}\ell K^{2A+3}\le\frac1{20\pi}.
 \label{ipm:cascade-future-localization}
\end{equation}
Then $m_2>0$ throughout $\operatorname{supp}\chi$
on $[s,S]$.
With $\gamma(t)=\alpha a(t,0)$,
\begin{equation}
 \sup_{[0,S]}\gamma\le\Gamma:=\max(1,CK^A\delta h),
 \label{ipm:cascade-gamma-choice}
\end{equation}
and its earlier history satisfies
\begin{equation}
 \gamma(t)\le CK^A\delta h e^{-c_0x},\qquad t\le s.
 \label{ipm:cascade-gamma-history}
\end{equation}
All exponents are fixed, and the constants are independent of the stage.
\end{proposition}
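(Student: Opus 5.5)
The proof splits into four claims: the history estimate \eqref{ipm:cascade-small-history}, the sign $m_2>0$ on the supported tube, and the two bounds on $\gamma$. All of them are obtained at the origin first and then spread to the tube $\operatorname{supp}\chi$ by Lemma~\ref{ipm:tube-localization}.

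\textbf{History.} At the origin, the ray identity \eqref{ipm:rayidentity} together with $H(P)\le I$ gives $(\log(a|m|))'\ge-1$. Integrating from $t\le s$ up to $s$ yields \eqref{ipm:backwardray}, so
\[
\alpha a(t,0)|m(t,0)|/\delta\le e^{s-t}\,\alpha a(s,0)|m(s,0)|/\delta .
\]
The gain hypothesis bounds the activation value by $h e^{-c_0x}$, using the normalization \eqref{ipm:cascade-target-normalization}. Since $s-t\le S\le1$, this proves \eqref{ipm:cascade-small-history}.

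\textbf{Sign of $m_2$.} On $[s,t_*]$ the angle at the origin lies in $[1/(2x),\pi-1/(2x)]$, so $\sin\beta\ge\sin(1/(2x))>0$ there.

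For $[t_*,S]$, I use $m'=-M^Tm$. This gives $|\beta'|\le\|M\|$, and by \eqref{ipm:matrixbounds} together with $\|\nabla P\|_\infty\le C_\nabla h_{\rm par}$ we have $\|M\|\le 1+2cC_\nabla h_{\rm par}\le(1+2cC_\nabla)(1+h_{\rm par})$. The time elapsed after the target is $2W_{\rm next}$, so the first condition in \eqref{ipm:cascade-future-localization} shows the angle moves by at most $1/(4x_{\rm tar})$. Since $\beta(t_*)=\pi/2+1/x_{\rm tar}$, the angle then stays within $\pi/2+[3/(4x_{\rm tar}),5/(4x_{\rm tar})]$, well inside $(0,\pi)$.

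To pass from the origin to the tube $|y|\le1/2$, I apply the direction comparison \eqref{ipm:cascade-tube-comparison}. The normalized normal deviates by at most $2\ell K^{2A+3}\le 1/(10\pi x_{\rm tar})$, using the second condition in \eqref{ipm:cascade-future-localization}. This is smaller than the angular margin $\min(\sin(1/(2x)),\ldots)\gtrsim 1/x_{\rm tar}$ (here $x\le x_{\rm tar}$ is used). Hence $m_2>0$ on the whole tube. This margin comparison is the step I expect to need the most care: near activation the origin angle may be as small as $1/(2x)$, and one must check that $1/(10\pi x_{\rm tar})<\sin(1/(2x))$, which holds since $x_{\rm tar}\ge x$ is large.

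\textbf{Bound on $\gamma$.} Write $\gamma=\alpha a(t,0)=\bigl(\alpha a|m|/\delta\bigr)\cdot\delta/|m(t,0)|$. From $F^Tm=m_0$ we get $|m|\ge K^{-3}$, so $1/|m|\le K^3\le K^A$.

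Before $s$, the history estimate gives $\gamma\le K^A\delta\, e h e^{-c_0x}$, which is \eqref{ipm:cascade-gamma-history}.

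On $[s,S]$, I bound $\alpha a|m|/\delta$ by a constant times $h$. At the origin, $(\log(a|m|))'$ from \eqref{ipm:rayidentity} is bounded above by $\|H\|\le1+cC_\nabla h_{\rm par}$, and after $t_*$ this is integrated over a window of length $2W_{\rm next}$, small by \eqref{ipm:cascade-future-localization}. So after $t_*$ the quantity grows by at most a factor $e$. Before $t_*$, I integrate the lower bound backward from $t_*$, exactly as in the history step, which gives at most $e^{t_*-t}h\le eh$. Combining, $\gamma\le CK^A\delta h\le\Gamma$ on $[0,S]$, which proves \eqref{ipm:cascade-gamma-choice}.
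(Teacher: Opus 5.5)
Your proposal is correct and follows the paper's proof essentially step for step: the backward ray bound combined with the gain for the history, the estimate $|\beta'|\le\|M_P\|\le 1+2cC_\nabla h_{\rm par}$ integrated over $2W_{\rm next}$ for the retained angle, the tube direction comparison against the margins $\sin(1/(2x))\ge 1/(\pi x)$ and $\sin\beta\ge 1/2$, and $|m(t,0)|^{-1}\le CK^A$ for the two $\gamma$ bounds. The only implicit point, which the paper shares, is that the fixed exponent $A$ is taken to be at least $3$, so that $|m|\ge K^{-3}$ gives $|m|^{-1}\le K^A$.
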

\begin{proof}
The target normalization \eqref{ipm:cascade-target-normalization} and the
activation-to-target gain give
$\alpha a(s,0)|m(s,0)|\le\delta h e^{-c_0x}$.
By \eqref{ipm:backwardray} and $S\le1$, for $0\le t\le s$,
\[
 \alpha a(t,0)|m(t,0)|
 \le e^{s-t}\alpha a(s,0)|m(s,0)|
 \le e\delta h e^{-c_0x}.
\]
This proves \eqref{ipm:cascade-small-history} without the additional
smallness conditions.

Now assume \eqref{ipm:cascade-future-localization}. Equations
\eqref{ipm:history-parent-bounds}, \eqref{ipm:backwardray},
\eqref{ipm:rayidentity}, and the matrix bounds
\eqref{ipm:matrixbounds} give
\begin{align}
 \alpha a(t,0)|m(t,0)|&\le e\delta h,
                   &&0\le t\le t_*,
                   \label{ipm:cascade-before-target}\\
 \alpha a(t,0)|m(t,0)|&\le\delta h
              \exp[C(1+h_{\rm par})W_{\rm next}],
                   &&t_*\le t\le t_*+2W_{\rm next}.
                   \label{ipm:cascade-after-target}
\end{align}
From the phase equation,
$|\beta'|\le\|M_P\|\le1+2cC_{\nabla}h_{\rm par}$.

For $t_*\le t\le S$, the angle at the origin satisfies
\[
 |\beta(t)-\beta(t_*)|
 \le\int_{t_*}^t|\beta'(\tau)|\,d\tau
 \le2(1+2cC_{\nabla}h_{\rm par})W_{\rm next}
 \le\frac1{4x_{\rm tar}}.
\]
Since $\beta(t_*)=\pi/2+1/x_{\rm tar}$,
\[
 \frac\pi2+\frac3{4x_{\rm tar}}
 \le\beta(t)\le\frac\pi2+\frac5{4x_{\rm tar}},
 \qquad
 \sin\beta(t)\ge\cos\frac5{4x_{\rm tar}}\ge\frac12
\]
for sufficiently large $x_{\rm tar}$. This same requirement bounds the exponential in
\eqref{ipm:cascade-after-target} by a fixed constant, which we may
take to be $2$ after tightening the fixed threshold.

For $s\le t\le t_*$, the assumed activation-angle range gives
\[
 \frac1{2x}\le\beta(t)\le\pi-\frac1{2x},
 \qquad \sin\beta(t)\ge\sin\frac1{2x}\ge\frac1{\pi x}.
\]
The second bound in \eqref{ipm:cascade-future-localization} controls the spatial direction error.

By \eqref{ipm:cascade-tube-comparison}, for $y\in\operatorname{supp}\chi$
and $s\le t\le S$,
\[
 \left|\frac{m_2(t,\ell y)}{|m(t,\ell y)|}-\sin\beta(t)\right|
 \le2\ell K^{2A+3}
 \le\frac1{10\pi x_{\rm tar}}\le\frac1{10\pi x}.
\]
Consequently, for $s\le t\le t_*$,
\[
 \frac{m_2(t,\ell y)}{|m(t,\ell y)|}
 \ge\frac1{\pi x}-\frac1{10\pi x}
 =\frac9{10\pi x}>0.
\]
For $t_*\le t\le S$, the retained-future angle bound instead gives
\[
 \frac{m_2(t,\ell y)}{|m(t,\ell y)|}
 \ge\frac12-\frac1{10\pi x_{\rm tar}}>0.
\]
This proves $m_2>0$ throughout $\operatorname{supp}\chi$ on
$[s,S]$. We use no phase sign during earlier history $[0,s]$.
\label{ipm:expanded-cascade-supported-sign}

The amplitude parameter also has a polynomial bound. Since
$\gamma(t)=\alpha a(t,0)$ and $|m(t,0)|^{-1}\le CK^A$,
\eqref{ipm:cascade-before-target} and \eqref{ipm:cascade-after-target}
justify the choice \eqref{ipm:cascade-gamma-choice}.
The history bound \eqref{ipm:cascade-small-history} similarly gives
\eqref{ipm:cascade-gamma-history}.
Neither $\alpha^{-1}$ nor the large raw amplification enters $\mathcal P$.
\end{proof}

\begin{corollary}[Leading pressure and gradient bounds]
\label{ipm:packet-budget}
Assume the hypotheses of Proposition~\ref{ipm:packet-history-geometry},
with the profile of Lemma~\ref{ipm:profilelemma}, and set
$C_\chi=\max(1,\|\chi\|_\infty)$. For the leading fields
in \eqref{ipm:intro-leading-matrices}, for all $t\in[0,S]$ and globally in space,
\begin{align}
 (\nabla^2\pi)_{\rm lead}&\le C_H(\delta h+he^{-c_0x})I,
 \label{ipm:cascade-h-budget}\\
 \|(\nabla\Theta)_{\rm lead}\|_\infty&\le C_{\rm lead}h,
 \qquad C_{\rm lead}:=2eC_\chi C_f^2.
 \label{ipm:cascade-gradient-increment}
\end{align}
The constants depend only on the fixed bump $\psi$ and cutoff $\chi$,
and are independent of the stage.
\end{corollary}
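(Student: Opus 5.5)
We split time into the history interval $[0,s]$ and the active interval $[s,S]$, and on each combine Corollary~\ref{ipm:profile-pressure-cost} with the amplitude bounds of Proposition~\ref{ipm:packet-history-geometry} and the tube comparison \eqref{ipm:cascade-tube-comparison}. The quantity to control pointwise is $\alpha\chi(y)a(t,\ell y)|m(t,\ell y)|/\delta$ at the point $X(t,\ell y)$, since both leading expressions scale with it. Outside $X(t,\ell\operatorname{supp}\chi)$ the leading fields vanish, so we may work on $|y|\le1/2$. There \eqref{ipm:cascade-tube-comparison} applies, because the second condition of \eqref{ipm:cascade-future-localization} gives $\ell K^{2A+3}\le1/(20\pi)$. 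It yields
\[
 a(t,\ell y)|m(t,\ell y)|\le 2a(t,0)|m(t,0)|,
 \qquad
 \frac{\alpha\chi a|m|}{\delta}\le 2C_\chi\,\frac{\alpha a(t,0)|m(t,0)|}{\delta}.
\]

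For $t\in[0,s]$ we use the history estimate \eqref{ipm:cascade-small-history}, which bounds the central magnitude by $ehe^{-c_0x}$. No sign of $m_2$ is available here, so we apply the unsigned part of Corollary~\ref{ipm:profile-pressure-cost}. This gives $\|(\nabla^2\pi)_{\rm lead}\|\le C_f^2c\cdot2C_\chi eh e^{-c_0x}$, a cost of the form $C_Hhe^{-c_0x}I$.

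For $t\in[s,S]$, Proposition~\ref{ipm:packet-history-geometry} gives $m_2>0$ on the support of $\chi$. The signed case of Corollary~\ref{ipm:profile-pressure-cost} then bounds the positive part by $C_fc\,\alpha\chi a|m|$, which is $\delta$ times the normalized quantity. We therefore need a bound by a fixed multiple of $h$ on $[s,S]$. On $[s,t_*]$ this comes from \eqref{ipm:cascade-before-target}, which gives $eh$. On $[t_*,S]$ it comes from \eqref{ipm:cascade-after-target}, whose exponential factor the proof of that proposition reduces to at most $2$ under \eqref{ipm:cascade-future-localization}. Hence the normalized quantity is at most $4eC_\chi h$, and the positive Hessian cost is at most $C\delta hI$. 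Taking the maximum of the two regimes gives \eqref{ipm:cascade-h-budget} with $C_H$ depending only on $C_f$, $C_\chi$ and $c$.

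For the gradient bound \eqref{ipm:cascade-gradient-increment}, note that $|(\nabla\Theta)_{\rm lead}|=\alpha\chi a|m||f'_\delta|\le\alpha\chi a|m|C_f^2/\delta$ by \eqref{ipm:profile-bounds}. Combining this with the uniform bound on the normalized quantity over all of $[0,S]$ gives $2eC_\chi C_f^2h$ once the constant is tracked carefully: the history bound $eh$ and the pre-target bound $eh$ are each at most $eh$, and the tube factor is $2$. The main obstacle is the post-target factor, which naively produces $2\cdot2\,e$ rather than $2e$ as in the stated constant. To fix this I would sharpen the exponential in \eqref{ipm:cascade-after-target} to at most $e$ (not $2$) using the fixed threshold in \eqref{ipm:cascade-future-localization}, and absorb the tube comparison into $C_\chi$ appropriately. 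Failing that, I would accept a slightly larger fixed constant, since only stage independence matters downstream.
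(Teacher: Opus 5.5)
Your argument matches the paper's proof: you split at $s$, apply the sign-free case of Corollary~\ref{ipm:profile-pressure-cost} with the history bound on $[0,s]$ and the signed case with $m_2>0$ on $[s,S]$, and bound the gradient via $\|f'_\delta\|_\infty\le C_f^2/\delta$ and the tube comparison. The constant ``obstacle'' is not real. On $[t_*,S]$, \eqref{ipm:cascade-after-target} with exponential factor at most $2$ bounds the central magnitude by $2h\le eh$, so it is at most $eh$ on all of $[0,S]$; you multiplied the two regime bounds instead of taking their maximum, which is also where your spurious $4eC_\chi h$ came from. The tube factor $2$ then gives exactly $C_{\rm lead}h=2eC_\chi C_f^2h$, with no sharpening and no enlarged constant needed.
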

\begin{proof}
Proposition~\ref{ipm:packet-history-geometry} and the localization bound
\eqref{ipm:cascade-tube-comparison} give, on the support of $\chi$,
$\alpha\chi a|m|/\delta\le Ch$ throughout $[0,S]$ and
$\alpha\chi a|m|/\delta\le Ch e^{-c_0x}$ on $[0,s]$.
On $[s,S]$, $m_2>0$, so Corollary~\ref{ipm:profile-pressure-cost}
gives $(\nabla^2\pi)_{\rm lead}\le C\delta hI$. On $[0,s]$, its sign-free bound
gives $\|(\nabla^2\pi)_{\rm lead}\|\le Ch e^{-c_0x}$. This proves
\eqref{ipm:cascade-h-budget}. The formula
$(\nabla\Theta)_{\rm lead}\circ X=\alpha\chi a\,m f'_\delta$ and
$\|f'_\delta\|_\infty\le C_f^2/\delta$ from \eqref{ipm:profile-bounds}, together with the target normalization
\eqref{ipm:cascade-target-normalization}, give
\eqref{ipm:cascade-gradient-increment}. Both leading fields vanish
outside the transported support, so the bounds hold globally.
\end{proof}

\section{Analytic inputs and a fixed Gevrey calculus}
\label{ipm:calculus}

\subsection{Ordered derivatives, products, and composition}
\label{ipm:general-calculus}

This subsection records product, composition, and flow estimates in Gevrey norms. All tori used below have periods bounded
below by a fixed positive constant. Their measure is unnormalized Lebesgue
measure. By Lemma~\ref{ipm:seventh-foundations-sobolev}, the constants in
$H^s\hookrightarrow W^{1,\infty}$, the $H^s$ algebra estimate, and
multiplication $W^{s,\infty}\times H^s\to H^s$ are independent of the
torus side lengths when $s>d/2+1$. We fix $s=3$, which suffices on both
the two-dimensional label torus and the three-dimensional lifted torus.

For a function with finitely many components, let
\[
 D_n(f)=\sum_{|I|=n}\|\partial^I f\|_{L^\infty},
 \qquad
 E_n(f)=\sum_{|I|=n}\|\partial^I f\|_{H^s}.
\]
In these derivative norms, component norms are summed. The empty word
gives the corresponding component sum for the function.
Fix the $l$ positions of the differentiated word falling on the first
factor. Summing the coordinate choices at these positions and their
complement gives the product of the two shorter word sums, yielding
\begin{lemma}[Ordered-word product estimates]\label{ipm:word-products}
The preceding word norms satisfy
\begin{equation}\label{ipm:wordproduct}
 E_n(fg)\le C_s\sum_{l=0}^n\binom nl E_l(f)E_{n-l}(g).
\end{equation}
For $D_n$ and coefficient multiplication, respectively,
\[
 D_n(fg)\le\sum_{l=0}^n\binom nl D_l(f)D_{n-l}(g),\qquad
 E_n(af)\le C_s\sum_{l=0}^n\binom nl
 \left(\sum_{|I|=l}\|\partial^Ia\|_{W^{s,\infty}}\right)E_{n-l}(f).
\]
The constants are independent of the torus periods.
\end{lemma}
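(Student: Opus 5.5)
The plan is to prove the $D_n$ estimate first, directly from the Leibniz rule for ordered words, and then deduce the two Sobolev versions by inserting the fixed-$s$ algebra and multiplier bounds. Throughout, the constants come only from the $H^s$ inequalities, so the torus periods never enter.

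For an ordered word $I=(i_1,\dots,i_n)$, the Leibniz rule gives
\[
 \partial^I(fg)=\sum_{J\subseteq\{1,\dots,n\}}\partial^{I_J}f\,\partial^{I_{J^c}}g.
\]
This holds because each position either falls on $f$ or on $g$, and the coordinate derivatives commute, so each factor receives its subword in the original order.

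The key step is the regrouping. We sum over $|I|=n$ and exchange the two sums. Fix a subset $J$ with $|J|=l$. As $I$ ranges over $\{1,\dots,d\}^n$, the pair $(I_J,I_{J^c})$ ranges bijectively over $\{1,\dots,d\}^l\times\{1,\dots,d\}^{n-l}$. Hence
\[
 \sum_{|I|=n}\|\partial^{I_J}f\,\partial^{I_{J^c}}g\|
 \le\Bigl(\sum_{|I'|=l}\|\partial^{I'}f\|\Bigr)
   \Bigl(\sum_{|I''|=n-l}\|\partial^{I''}g\|\Bigr).
\]
This bound requires a norm with $\|uv\|\le C\|u\|\|v\|$. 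Finally, there are $\binom nl$ subsets $J$ of size $l$, which produces the binomial weights.

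The three cases differ only in which product inequality is used at this step.
\begin{itemize}
\item For $D_n$, the pointwise bound $\|uv\|_\infty\le\|u\|_\infty\|v\|_\infty$ applies with constant $1$. Component sums are handled by expanding each component of the product, and the triangle inequality then gives the claim.
\item For $E_n$, the algebra inequality $\|uv\|_{H^s}\le C_s\|u\|_{H^s}\|v\|_{H^s}$ with $s=3>d/2$ supplies the constant $C_s$. Lemma~\ref{ipm:seventh-foundations-sobolev} makes $C_s$ uniform in the periods.
\item For the coefficient estimate, we use instead $\|av\|_{H^s}\le C_s\|a\|_{W^{s,\infty}}\|v\|_{H^s}$, again from Lemma~\ref{ipm:seventh-foundations-sobolev}, with $l$ derivatives on $a$.
\end{itemize}

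The only point needing care is the bijection between words and pairs of subwords, which holds because the positions in $J$ are fixed before summing. Beyond that, the whole substance of the period independence lies in Lemma~\ref{ipm:seventh-foundations-sobolev}, which we take as given.
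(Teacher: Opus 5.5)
Your proposal is correct and is essentially the paper's own argument: expand $\partial^I(fg)$ over the position subsets $J$, factor the sum over coordinate choices for fixed $|J|=l$ into the product of the two shorter word sums, count the $\binom nl$ subsets, and use the appropriate product inequality. The period-independent $H^s$ algebra and $W^{s,\infty}\times H^s$ multiplication constants are then taken from Lemma~\ref{ipm:seventh-foundations-sobolev}, exactly as the paper does.
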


On a product torus with a $2\pi$-periodic angle, let $\langle f\rangle$
be the normalized angular average, viewed as an angle-independent field.
For $N_n=D_n$ or $E_n$,
\[
 N_n(\langle f\rangle)\le N_n(f),\qquad
 N_n(\partial_\theta^{-1}f)\le C N_n(f)\quad\text{if }\langle f\rangle=0.
\]
The primitive has zero angular mean, and $C$ is independent of $n$ and
the other torus periods.

For $R\ge1$, write $E_d(n)=R^{n+d}((n+d)!)^2$.
Unless stated otherwise, the growth constant $R$ is unchanged in the estimates.
\begin{lemma}[Uniform shifted products]\label{ipm:shiftcalculus}
For $n,d_1,d_2\ge0$,
\begin{equation}\label{ipm:shiftproduct}
 \sum_{l=0}^n\binom nl (l+d_1)!^2(n-l+d_2)!^2
 \le 3(n+d_1+d_2)!^2.
\end{equation}
For $0\le l\le n$ and every $d\ge0$,
\begin{equation}\label{ipm:shiftabsorption}
 \binom nl(l!)^2\frac{(n-l+d)!^2}{(n+d)!^2}\le\binom nl^{-1}\le1.
\end{equation}
Consequently, if $E_n(f)\le A_fE_{d_1}(n)$ and
$E_n(g)\le A_gE_{d_2}(n)$ for every $n\ge0$, then
\[
 E_n(fg)\le3C_sA_fA_gE_{d_1+d_2}(n),\qquad
 E_n(\nabla f)=E_{n+1}(f)\le A_fE_{d_1+1}(n).
\]
For an integer $q\ge0$ and a prefactor $B\le R^q$,
\[
 B E_d(n)\le E_{d+q}(n),\qquad
 \frac{E_{d+q}(n)}{E_d(n)}
 =R^q\left(\frac{(n+d+q)!}{(n+d)!}\right)^2\ge R^q.
\]
In particular, a prefactor $B\le Q^b$ is absorbed when $R\ge Q^r$
and $rq\ge b$, with $Q\ge1$ and $r>0$.
For a finite sum, if $E_n(f_j)\le A E_d(n)$ for $1\le j\le M$ and
$M\le R(d+1)^2$, then
\[
 E_n\!\left(\sum_{j=1}^M f_j\right)
 \le MAE_d(n)\le AE_{d+1}(n),\qquad
 \frac{E_{d+1}(n)}{E_d(n)}=R(n+d+1)^2.
\]
The same conclusions hold for $D_n$ using its product constant.
\end{lemma}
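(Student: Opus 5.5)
The plan is to reduce both factorial inequalities to one elementary binomial comparison, and then read off every consequence by inserting the hypotheses into the ordered-word product estimate of Lemma~\ref{ipm:word-products}. The basic tool is the identity $a!\,b!=(a+b)!/\binom{a+b}{a}$, combined with a monotonicity bound for binomial coefficients. Vandermonde's identity
\[
 \binom{n+d_1+d_2}{l+d_1}=\sum_i\binom{n}{i}\binom{d_1+d_2}{l+d_1-i}
\]
contains the term $i=l$, which equals $\binom nl\binom{d_1+d_2}{d_1}\ge\binom nl$. Hence
\[
 \binom{n+d_1+d_2}{l+d_1}\ge\binom nl .
\]

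For \eqref{ipm:shiftproduct}, put $N=n+d_1+d_2$ and divide by $(N!)^2$. Using $(l+d_1)!\,(n-l+d_2)!=N!/\binom{N}{l+d_1}$ and the comparison above, the $l$-th term is at most
\[
 \binom nl\binom{N}{l+d_1}^{-2}\le\binom nl^{-1}.
\]
It therefore suffices to show $\sum_{l=0}^n\binom nl^{-1}\le3$. For $n\le3$ I will check this directly; the sums are $1,2,5/2,8/3$. For $n\ge4$ I will bound the terms separately. The endpoints $l=0,n$ contribute $2$. The terms $l=1,n-1$ contribute $2/n\le1/2$. The remaining $n-3$ terms are each at most $\binom n2^{-1}=2/(n(n-1))$, so they contribute less than $2/n\le1/2$. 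This summation is the only genuinely non-formal step, and the small-$n$ check is where care is needed.

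For \eqref{ipm:shiftabsorption}, the same identity with $d$ in place of $d_1+d_2$ gives $l!\,(n-l+d)!=(n+d)!/\binom{n+d}{l}$, and $\binom{n+d}{l}\ge\binom nl$. Therefore
\[
 \binom nl(l!)^2\frac{(n-l+d)!^2}{(n+d)!^2}
 =\binom nl\binom{n+d}{l}^{-2}
 \le\binom nl^{-1}\le1 .
\]

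The consequences are then direct substitutions.

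\emph{Products.} Inserting $E_l(f)\le A_fR^{l+d_1}(l+d_1)!^2$ and $E_{n-l}(g)\le A_gR^{n-l+d_2}(n-l+d_2)!^2$ into \eqref{ipm:wordproduct}, the powers of $R$ combine to the common factor $R^{n+d_1+d_2}$. Then \eqref{ipm:shiftproduct} gives $E_n(fg)\le3C_sA_fA_gE_{d_1+d_2}(n)$.

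\emph{Gradients.} The bound for $\nabla f$ is the reindexing $E_n(\nabla f)=E_{n+1}(f)\le A_fE_{d_1}(n+1)=A_fE_{d_1+1}(n)$.

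\emph{Prefactors.} The displayed ratio $E_{d+q}(n)/E_d(n)=R^q\bigl((n+d+q)!/(n+d)!\bigr)^2$ is at least $R^q$. So $BE_d(n)\le E_{d+q}(n)$ whenever $B\le R^q$. In particular $Q^b\le Q^{rq}\le R^q$ when $rq\ge b$ and $R\ge Q^r$.

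\emph{Finite sums.} The triangle inequality gives $E_n\bigl(\sum_{j=1}^Mf_j\bigr)\le MAE_d(n)$. Moreover $M\le R(d+1)^2\le R(n+d+1)^2=E_{d+1}(n)/E_d(n)$, which gives $MAE_d(n)\le AE_{d+1}(n)$.

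The $D_n$ versions repeat these arguments verbatim, using the $D_n$ product rule of Lemma~\ref{ipm:word-products} with product constant $1$ in place of $C_s$.
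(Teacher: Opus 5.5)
Your proposal is correct and follows essentially the same route as the paper. The comparison $\binom{n+d_1+d_2}{l+d_1}\ge\binom nl$ reduces each summand to $\binom nl^{-1}$, the reciprocal sum is bounded by the same split $2+2/n+2(n-3)/(n(n-1))$ with $n\le3$ checked directly, and the consequences are read off from Lemma~\ref{ipm:word-products}; the only difference is cosmetic, in that you justify the comparison by Vandermonde and the absorption bound by $\binom{n+d}{l}\ge\binom nl$, whereas the paper argues by monotonicity in $d$.
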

\begin{proof}
Increasing either $d_i$ shows
\[
 \binom{n+d_1+d_2}{l+d_1}\ge\binom nl.
\]
After division by $(n+d_1+d_2)!^2$, each summand in
\eqref{ipm:shiftproduct} is therefore at most $\binom nl^{-1}$.
For $n\ge4$, the sum of these reciprocals is bounded by
$2+2/n+2(n-3)/(n(n-1))<3$; the cases $n\le3$ are direct.
For \eqref{ipm:shiftabsorption}, the factorial quotient
$(n+d)!/(n-l+d)!$ increases with $d$. At $d=0$ the left side is
$\binom nl^{-1}$. The norm bounds follow from Lemma~\ref{ipm:word-products},
the identity $E_n(\nabla f)=E_{n+1}(f)$, and the displayed weight ratios.
\end{proof}

At a radius $\rho>0$ put
\[
 \|f\|_\rho=\sum_{n\ge0}\frac{\rho^n}{(n!)^2}E_n(f).
\]
The truncated sum through any $m$ is denoted by $\|f\|_{\rho,m}$.
Lemma~\ref{ipm:word-products} and \eqref{ipm:shiftabsorption} with $d=0$ give
\[
 \|fg\|_\rho\le C_s\|f\|_\rho\|g\|_\rho,
 \qquad
 \|fg\|_{\rho,m}\le C_s\|f\|_{\rho,m}\|g\|_{\rho,m}.
\]

\begin{lemma}[Shifted bounds and a fixed radius decrease]
\label{ipm:radius-calculus}
If $E_n(f)\le A R^{n+d}(n+d)!^2$, then, for $4\rho R\le1/2$,
\begin{equation}\label{ipm:shifttoradius}
 \|f\|_\rho\le 2A(4R)^d(d!)^2.
\end{equation}
A fixed decrease of the radius also gives
\begin{equation}\label{ipm:radiusderivative}
 \|\nabla f\|_{\rho/2}
 \le C\rho^{-1}\|f\|_\rho,
\end{equation}
\end{lemma}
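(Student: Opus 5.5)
Both estimates come from the definition $\|f\|_\rho=\sum_n\rho^nE_n(f)/(n!)^2$ by summing the hypothesis term by term; the only real work is the factorial ratio in the first estimate.

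\textbf{The shifted bound \eqref{ipm:shifttoradius}.} Substituting the hypothesis gives
\[
 \|f\|_\rho\le A R^d\sum_{n\ge0}(\rho R)^n\frac{(n+d)!^2}{(n!)^2}.
\]
We bound the ratio using $\frac{(n+d)!}{n!\,d!}=\binom{n+d}{d}\le2^{n+d}$, which gives
\[
 \frac{(n+d)!^2}{(n!)^2}\le 4^{n+d}(d!)^2.
\]
The sum is then at most
\[
 A(4R)^d(d!)^2\sum_n(4\rho R)^n\le 2A(4R)^d(d!)^2,
\]
since $4\rho R\le1/2$ makes the geometric series at most $2$. This is exactly \eqref{ipm:shifttoradius}. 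We expect this step to be the main obstacle, but only mildly: the point is to keep the constant $4^d$ rather than something worse. The binomial bound achieves this.

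\textbf{The radius decrease \eqref{ipm:radiusderivative}.} Use $E_n(\nabla f)=E_{n+1}(f)$ to write
\[
 \|\nabla f\|_{\rho/2}=\sum_n\frac{(\rho/2)^nE_{n+1}(f)}{(n!)^2}
 =\rho^{-1}\sum_n\frac{\rho^{n+1}E_{n+1}(f)}{((n+1)!)^2}\cdot 2^{-n}(n+1)^2.
\]
The last factor $2^{-n}(n+1)^2$ is bounded by an absolute constant; its maximum is $4$, attained at $n=1,2$. Hence
\[
 \|\nabla f\|_{\rho/2}\le 4\rho^{-1}\|f\|_\rho,
\]
so we may take $C=4$.

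Both bounds are independent of the torus periods, since they only manipulate the scalar sequences $E_n$.
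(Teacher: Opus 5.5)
Your proof is correct and is essentially the paper's argument: the factorial inequality $(n+d)!\le 2^{n+d}n!\,d!$ turns the shifted bound into a geometric series, and reindexing the derivative sum uses only $\sup_n 2^{-n}(n+1)^2<\infty$. One small arithmetic slip: $2^{-n}(n+1)^2$ takes the values $1,2,9/4,2,\dots$, so its maximum is $9/4$ at $n=2$, not $4$ at $n=1,2$; your choice $C=4$ is still valid.
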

\begin{proof}
The factorial inequality $(n+d)!\le2^{n+d}n!d!$ reduces
\eqref{ipm:shifttoradius} to a geometric series. Reindexing the
derivative sum proves \eqref{ipm:radiusderivative}, since
$\sup_n2^{-n}(n+1)^2<\infty$.
\end{proof}

\begin{lemma}[Reciprocals and composition]\label{ipm:composition}
Suppose $a\ge\mu>0$ and $D_n(a)\le A R_0^n(n!)^2$.
Then
\[
 D_n(a^{-1})\le C\mu^{-1}
       [C R_0(1+A/\mu)]^n(n!)^2.
\]
If $D_n(f)\le A_oR_o^n(n!)^2$ and
$D_n(h)\le A_iR_i^n(n!)^2$, with $R_i,R_o\ge1$, then
for $g=\mathrm{id}+h$,
\begin{equation}\label{ipm:compositionbound}
 D_n(f\circ g)\le C A_o R_{\rm new}^n(n!)^2,
 \quad
 R_{\rm new}=C\max\{R_i,R_o(d+2A_iR_i)\}.
\end{equation}
The constants depend only on the dimensions and norm conventions.
\end{lemma}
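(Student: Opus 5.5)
For the reciprocal bound I will follow the induction already sketched in the proof of Lemma~\ref{ipm:tube-localization}. Differentiating $a\cdot a^{-1}=1$ along an ordered word of length $n\ge1$ and using the Leibniz rule of Lemma~\ref{ipm:word-products} for $D_n$ gives
\[
 D_n(a^{-1})\le\mu^{-1}\sum_{l=1}^n\binom nl D_l(a)D_{n-l}(a^{-1}),
\]
with $D_0(a^{-1})\le\mu^{-1}$. I will prove by induction the bound $D_n(a^{-1})\le2\mu^{-1}R^n(n!)^2$ with $R=4R_0\max(1,A/\mu)$. Inserting the hypothesis and the induction bound, each summand is at most
\[
 2\mu^{-1}\frac A\mu R_0^lR^{n-l}\binom nl(l!)^2((n-l)!)^2 .
\]
By \eqref{ipm:shiftabsorption} with $d=0$, this is at most $2\mu^{-1}(A/\mu)R_0^lR^{n-l}(n!)^2$. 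Summing over $l\ge1$ with $(A/\mu)\sum_{l\ge1}(R_0/R)^l\le 2(A/\mu)R_0/R\le1/2$ closes the induction with room to spare. Since $\max(1,A/\mu)\le1+A/\mu$, this gives the stated form.

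For the composition, I will use the multivariate Faà di Bruno formula in ordered-word form. A derivative $\partial^I(f\circ g)$ with $|I|=n$ is a sum over set partitions $\{J_1,\dots,J_k\}$ of the positions $\{1,\dots,n\}$. Each partition contributes $(\partial^{k}f)\circ g$ contracted against $\prod_{i}\partial^{I_{J_i}}g$. Here $g=\mathrm{id}+h$, so a block of size one contributes $\delta+\partial h$, with $D_1(g)\le d+A_iR_i$ after summing coordinate choices. A block of size $b\ge2$ contributes $D_b(h)\le A_iR_i^b(b!)^2$. Summing coordinates as in Lemma~\ref{ipm:word-products}, the contribution of the partitions with $k$ blocks of sizes $b_1,\dots,b_k$ is bounded by
\[
 A_oR_o^k(k!)^2\prod_i\bigl[(d+A_iR_i)\mathbf 1_{b_i=1}+A_iR_i^{b_i}(b_i!)^2\mathbf 1_{b_i\ge2}\bigr].
\]
Each factor is at most $(d+2A_iR_i)R_i^{b_i-1}(b_i!)^2$, up to replacing $A_i$ by $\max(A_i,1)$; I may assume this, since the stated $R_{\rm new}$ is insensitive to it after enlarging $C$. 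The count of set partitions with prescribed block sizes, $n!/(\prod b_i!\,\cdot\,\prod_s m_s!)$, then has to be combined with the weights $(k!)^2\prod(b_i!)^2$.

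The main obstacle is the combinatorial estimate
\[
 \sum_{k}\ \sum_{\text{partitions into }k\text{ blocks}}(k!)^2\prod_i(b_i!)^2\le C^n(n!)^2 .
\]
It is the Gevrey-2 analogue of the analytic Faà di Bruno bound. I would handle it by the standard majorant-series method. Gevrey-$s$ classes with $s\ge1$ are closed under composition: using $k!\prod b_i!\le n!$ (the multinomial inequality, valid since $\sum b_i=n$ and $k\le n$), one factor of $n!$ is absorbed directly. The remaining sum $\sum n!/(\prod b_i!\,\prod m_s!)\cdot k!\prod b_i!$ is the analytic case. It is bounded by the Taylor coefficients of $\frac{1}{1-\sum_{b\ge1}x^b}$-type majorants, giving $C^n n!$.

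Collecting powers, $R_o^k(d+2A_iR_i)^kR_i^{n-k}\le R_{\rm new}^n/C^n$ once $R_{\rm new}=C\max\{R_i,R_o(d+2A_iR_i)\}$, because $R_i\ge1$. The prefactor is $A_o$ times an absolute constant. The case $n=0$ is trivial.
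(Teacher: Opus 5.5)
Your proof is correct and essentially the paper's. The reciprocal part is the same Leibniz induction, closed by $(A/\mu)\sum_{l\ge1}(R_0/R)^l\le\tfrac12$. The composition part is the same set-partition Fa\`a di Bruno argument: $k!\prod_i b_i!\le n!$ absorbs one factorial, and the remaining sum is a geometric majorant, which the paper evaluates through the series $U(\zeta)$ at $\zeta=R_{\rm new}^{-1}$ and you evaluate through the composition count $2^{n-1}$.

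One correction: drop the replacement of $A_i$ by $\max(A_i,1)$. It is unnecessary, since already $A_iR_i^{b}\le 2A_iR_i\,R_i^{b-1}$. It is also not harmless as you claim, because $R_o(d+2R_i)$ can far exceed $\max\{R_i,R_o(d+2A_iR_i)\}$ when $A_iR_i$ is small, which is exactly the regime used in Lemma~\ref{ipm:smallflow}.
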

\begin{proof}
Differentiate $aa^{-1}=1$. For $n\ge1$, the derivative of the inverse
is bounded by $\mu^{-1}$ times the sum of terms with at least one
derivative on $a$. After division by
$\mu^{-1}R^n(n!)^2$, the induction for lower derivatives is closed by
\[
 \frac A\mu\sum_{l\ge1}(R_0/R)^l\le\frac12.
\]
One may take $R=C R_0(1+A/\mu)$. This argument applies directly to
word sums, so it introduces no order-dependent change of radius.

For composition, use the partition form of the chain rule. After
normalizing the $n$th derivative by $(n!)^2$, the term involving $q$
outer derivatives and positive block lengths $j_1+\cdots+j_q=n$
is bounded by
\[
 A_o R_o^q
 \frac{q!\prod_{i=1}^q j_i!}{n!}
 \prod_{i=1}^q\frac{D_{j_i}(g)}{(j_i!)^2}.
\]
Summing over ordered block lengths is an upper bound for the chain-rule
sum; the factor $1/q!$ for unordered blocks has already been included.
For positive $j_1,\ldots,j_q$ with sum $n$,
\begin{equation}\label{ipm:composition-ratio}
 \frac{q!\prod_{i=1}^q j_i!}{n!}\le1,
\end{equation}
since the multinomial coefficient $n!/\prod j_i!$ is at least $q!$.
The coordinate and component sums are bounded by the corresponding
products of word sums.

Set
\[
 U(\zeta)=\sum_{j\ge1}\frac{D_j(g)}{(j!)^2}\zeta^j.
\]
Coefficientwise, the positive-order derivative generating series of
$f\circ g$ is bounded by
\begin{equation}\label{ipm:composition-series}
 A_o\frac{R_oU(\zeta)}{1-R_oU(\zeta)},
 \qquad R_oU(\zeta)<1.
\end{equation}
Moreover
\[
 U(\zeta)\le d\zeta+
               \frac{A_iR_i\zeta}{1-R_i\zeta}.
\]
At $\zeta=R_{\rm new}^{-1}$ with the fixed constant in
\eqref{ipm:compositionbound} large enough, $R_i\zeta\le1/2$ and
$R_oU\le1/2$. The value of the generating series is bounded by
$C A_o$, which bounds each nonnegative coefficient. The zeroth
derivative is bounded directly by $D_0(f)$.
\end{proof}

\begin{lemma}[A small Gevrey vector field has a controlled flow]
\label{ipm:smallflow}
Let $b(t,z)$ be smooth and periodic in $z\in\mathbb R^d$, and assume
\[
 \sup_{0\le t\le S}D_n(b(t))\le B R^n(n!)^2,
 \qquad SBR\le c_d,
\]
where $R\ge1$ and $c_d>0$ is a sufficiently small fixed constant.
Its flow from $\Phi(0,z)=z$ satisfies
\begin{equation}\label{ipm:smallflowbound}
 \sup_t D_n(\Phi(t)-\mathrm{id})
       \le C_dBS(C_dR)^n(n!)^2.
\end{equation}
The first time derivative satisfies
\begin{equation}\label{ipm:smallflow-first-time}
 \sup_{0\le t\le S}D_n(\Phi_t(t))\le C_dB(C_dR)^n(n!)^2.
\end{equation}
If also $\sup_{0\le t\le S}D_n(b_t(t))\le B_tR_t^n(n!)^2$, with $R_t\ge1$, then
\begin{equation}\label{ipm:smallflow-second-time}
 \sup_{0\le t\le S}D_n(\Phi_{tt}(t))
 \le C_d(B_t+B^2R)\bigl(C_d\max\{R,R_t\}\bigr)^n(n!)^2.
\end{equation}
These estimates hold for every $n\ge0$, with $C_d$ depending only on $d$.
\end{lemma}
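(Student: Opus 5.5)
The plan is to treat the flow equation $\Phi_t=b(t,\Phi)$, $\Phi(0)=\mathrm{id}$, through the displacement $h=\Phi-\mathrm{id}$. This displacement satisfies
\[
 h(t)=\int_0^t b(\sigma,\mathrm{id}+h(\sigma))\,d\sigma,
\]
and the Gevrey bounds will be obtained by an a priori/continuity argument built on the composition estimate \eqref{ipm:compositionbound} of Lemma~\ref{ipm:composition}. Because composition degrades the growth constant only through the factor $R_o(d+2A_iR_i)$, we have to close the estimate at a single growth constant $R_*=C_dR$ under the smallness $SBR\le c_d$. Since the flow is smooth and periodic, each $D_n(h(t))$ is finite and continuous in $t$. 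I will work with the generating quantity $U_t(\zeta)=\sum_{j\ge1}D_j(h(t))\zeta^j/(j!)^2$ at $\zeta=R_*^{-1}$, together with $D_0(h)$, instead of bounding each $n$ separately. This avoids any order-dependent loss, and coefficientwise bounds follow at the end.

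\textbf{Key step (bootstrap).} Fix $R_*=C_1R$ with $C_1$ large. Suppose that on $[0,t]$ we have $R\,U(R_*^{-1})\le 1/2$ and $D_j(h)\le 2C_2BS R_*^j(j!)^2$. The series argument in the proof of Lemma~\ref{ipm:composition}, in the form \eqref{ipm:composition-series} applied to the outer function $b(\sigma)$ and inner map $\mathrm{id}+h(\sigma)$, then gives a coefficientwise bound for $b(\sigma,\mathrm{id}+h)$. This is exactly where the ordered-block ratio \eqref{ipm:composition-ratio} is used. The bound reads
\[
 D_n(b(\sigma)\circ(\mathrm{id}+h))\le C B R_*^n(n!)^2,
\]
because $U_{\mathrm{id}+h}(\zeta)\le d\zeta+U_h(\zeta)$ and $d R/R_*$ is small when $C_1$ is large. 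Integrating in time yields $D_n(h(t))\le CBS R_*^n(n!)^2$. The generating function at $\zeta=R_*^{-1}$ is then at most $CBS\sum_{j\ge1}(R_*\zeta)^j$ evaluated at a slightly smaller effective ratio. Concretely, I would define $U$ with $\zeta=(2R_*)^{-1}$ and state the final bound with constant $2R_*$ to keep the series geometric, so the generating function is $\le CBS$. The resulting bound on $RU$ is $\le CSBR\le Cc_d<1/4$, which strictly improves the assumption. Since everything is continuous in $t$ (finitely many derivatives at a time, each continuous), a continuity argument over $[0,S]$ closes the bootstrap and proves \eqref{ipm:smallflowbound}. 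The main obstacle is this closing. The smallness has to be expressed as $SBR$ rather than $SBR_*$, and it must not depend on $n$. My first attempt is therefore to run the continuity argument on the single scalar $U_t(\zeta_*)$, with the coefficient bounds treated as consequences, rather than inducting on $n$.

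\textbf{First and second time derivatives.} Once the flow bound holds, $\Phi_t=b(t)\circ\Phi$, and \eqref{ipm:smallflow-first-time} is the same composition bound without the time integral, which costs a factor $S$. Differentiating again gives
\[
 \Phi_{tt}=b_t(t)\circ\Phi+(\nabla b)(t)\circ\Phi\cdot\Phi_t .
\]
The first term is handled by composition with outer growth constant $R_t$, which produces the maximum $\max\{R,R_t\}$. For the second term, $\nabla b$ satisfies $D_n(\nabla b)\le BR^{n+1}((n+1)!)^2$. I shift this into radius form with Lemma~\ref{ipm:shiftcalculus}, absorbing $(n+1)^2\le 4^n$ into the growth constant, so its size is $BR$. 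Composing and then multiplying by $\Phi_t$ with the product estimate of Lemma~\ref{ipm:word-products} and \eqref{ipm:shiftabsorption} at $d=0$ gives $C(BR)(B)(C_d R)^n(n!)^2$. Summing the two terms yields \eqref{ipm:smallflow-second-time}. All constants depend only on $d$ through Lemma~\ref{ipm:composition}.
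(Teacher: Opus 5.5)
Your overall strategy is the paper's: a majorant generating function for $h=\Phi-\mathrm{id}$, fed through the composition series \eqref{ipm:composition-series} and closed by a bootstrap that uses only $SBR\le c_d$. Your treatment of $\Phi_t$ and $\Phi_{tt}$ is fine. The paper composes $b_t+(\nabla_zb)b$ with $\Phi$ instead of composing $\nabla b$ and then multiplying by $\Phi_t$, which is equivalent.

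The gap is in the closing step you yourself singled out. As written, you go from the hypothesis $RU_h(\sigma,\zeta_*)\le 1/2$ to coefficient bounds $D_n(b\circ(\mathrm{id}+h))\le CBR_*^n(n!)^2$, by evaluating the majorant at $\zeta_*=R_*^{-1}$ and reading off coefficients. You then integrate to get $D_n(h(t))\le CBSR_*^n(n!)^2$ and re-sum. But coefficient bounds at growth $R_*$ give no information on $U_h$ at $\zeta_*=R_*^{-1}$, since $\sum_jCBS(R_*\zeta_*)^j$ diverges. You only recover $U_h\le CBS$ at the smaller point $(2R_*)^{-1}$. Redefining $U$ at $(2R_*)^{-1}$ does not repair this: the same chain then improves $U_h$ only at $(4R_*)^{-1}$. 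Each extraction of coefficients followed by re-summation costs a fixed factor in the growth constant. The hypothesis and the conclusion therefore never sit at the same $\zeta$, and the bootstrap does not close.

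The missing idea is to stay at the level of power series and never extract coefficients inside the bootstrap. Take $g=\mathrm{id}+h$, so $U_g\le d\zeta+U_h$. The composition estimate is a coefficientwise majorization $\sum_{n\ge1}D_n(b\circ g)\zeta^n/(n!)^2\preceq B\,RU_g(\zeta)/(1-RU_g(\zeta))$. Combined with $D_n(h(t))\le\int_0^tD_n(b\circ g)\,d\sigma$, it gives, at one and the same $\zeta$,
\[
 U_h(t,\zeta)\le\int_0^tB\,\frac{RU_g(\sigma,\zeta)}{1-RU_g(\sigma,\zeta)}\,d\sigma .
\]
Take $\zeta=(4dR)^{-1}$ and the bootstrap hypothesis $RU_g\le1/2$. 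The integrand is then at most $2BR(d\zeta+U_h)$. Gronwall gives $U_h\le C\,BRS\,d\zeta\le CBS$, hence $RU_g\le 1/4+Cc_d<1/2$, a strict improvement. Only after this do you read off $D_n(h)\le CBS\,\zeta^{-n}(n!)^2$. This is exactly the paper's integral inequality, which it applies to the generating function of $\Phi$ itself.

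A second, smaller point concerns the continuity argument. Smoothness of $\Phi$ does not make the full series $U_h(t,\zeta)$ finite, let alone continuous in $t$. The argument must therefore run on the truncations $U_J=\sum_{1\le j\le J}D_j(h)\zeta^j/(j!)^2$. These are continuous because the flow is smooth on the compact torus. They satisfy the same inequality because the order-$n$ chain-rule terms involve only derivatives of $h$ of order at most $n$. All constants are uniform in $J$, so you can let $J\to\infty$ at the end, as the paper does.
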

\begin{proof}
For a finite derivative cutoff $J$, define
\[
 U_J(t,\zeta)=d\zeta+
       \sum_{n=1}^J\frac{D_n(\Phi(t)-\mathrm{id})}{(n!)^2}\zeta^n.
\]
The preceding partition calculation and $\Phi_t=b\circ\Phi$ give
\[
 U_J(t,\zeta)\le d\zeta+
       \int_0^t\frac{BRU_J(\tau,\zeta)}{1-RU_J(\tau,\zeta)}\,d\tau
\]
as long as $RU_J<1$. This holds first on a short interval, since
$U_J(0,\zeta)=d\zeta$. Take $\zeta=(4dR)^{-1}$.
On the bootstrap interval $RU_J\le1/2$, Gronwall gives
$U_J\le d\zeta\exp(2BRS)$. Choosing $c_d$ small closes
$U_J\le2d\zeta$ with a strict margin. Substitution into the integral
inequality yields $U_J-d\zeta\le C BS$.
This proves \eqref{ipm:smallflowbound} for positive orders, uniformly
in $J$; the zeroth order follows from $|\Phi-z|\le BS$.

For $h=\Phi-\mathrm{id}$, \eqref{ipm:smallflowbound} gives size
$A_i=C_dBS$ and growth $R_i=C_dR$, so $A_iR_i\le C_d^2c_d$.
Thus \eqref{ipm:compositionbound} applied to $\Phi_t=b\circ\Phi$
gives \eqref{ipm:smallflow-first-time}.
Also, $(n+1)^2\le4^n$ and Lemmas~\ref{ipm:word-products}
and~\ref{ipm:shiftcalculus} give, uniformly in time,
\[
 D_n(\nabla_zb)=D_{n+1}(b)\le BR(4R)^n(n!)^2,
 \qquad
 D_n((\nabla_zb)b)\le C_dB^2R(4R)^n(n!)^2.
\]
Consequently
\[
 D_n\bigl(b_t+(\nabla_zb)b\bigr)
 \le C_d(B_t+B^2R)\bigl(4\max\{R,R_t\}\bigr)^n(n!)^2.
\]
Applying \eqref{ipm:compositionbound} to
$\Phi_{tt}=(b_t+(\nabla_zb)b)\circ\Phi$ proves
\eqref{ipm:smallflow-second-time}.
\end{proof}

\subsection{Parent coefficient bounds}

Let $P,X,K$ satisfy the parent class \eqref{ipm:parent-class}.
Choose $0<\ell<1$ and a unit vector $m_0$, and set $y=z/\ell$ on
$\mathbb T_\ell^2=(\mathbb R/(2\pi/\ell)\mathbb Z)^2$.
Recall the coefficient fields
\begin{equation}\label{ipm:label-coefficients}
 \begin{aligned}
 F(t,y)&=D_zX(t,\ell y),& G&=F^{-1}F^{-T},& a_2&=F^{-1}e_2,\\
 R_0(y)&=\nabla_zP(0,\ell y),&&\\
 L&=I-\frac{Gm_0\otimes m_0}{m_0\cdot Gm_0},&
 b_0&=-cLa_2,&c_A^{\ell}(t,y)&=c_A(t,\ell y)=b_0\cdot R_0.
 \end{aligned}
\end{equation}
\begin{lemma}
\label{ipm:packet-calculus}
Under \eqref{ipm:parent-class}, the fields in
\eqref{ipm:label-coefficients} and $F$, $F_t$, $F_{tt}$, $F^TF$, $G_t$, $(a_2)_t$, $(F^TF)_t$
expressed in the unscaled labels $z$ satisfy, for one fixed exponent $A_0$,
\begin{equation}\label{ipm:derived-coefficients}
 \sum_{|I|=n}\|\partial_z^IC\|_{W_z^{s,\infty}}
 \le K^{A_0}(K^{A_0})^n(n!)^2,
 \qquad G\ge K^{-6}I.
\end{equation}
\end{lemma}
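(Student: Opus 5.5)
The plan is the same derivative bookkeeping already used in the proof of Lemma~\ref{ipm:tube-localization}, now carried in the $W^{s,\infty}$ word norms. Everything is done in the unscaled labels $z$, so that no power of $\ell^{-1}$ appears.

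\emph{Step 1: the basic fields.} From \eqref{ipm:parent-class} applied to $X-\mathrm{id}$, $X_t$, $X_{tt}$, one extra spatial derivative gives $F=I+D_z(X-\mathrm{id})$, $F_t=D_zX_t$, $F_{tt}=D_zX_{tt}$. The $W^{s,\infty}$ norm with $s=3$ costs three more derivatives, i.e.\ a shift by $d=4$. Using $(n+d)!^2\le 4^{n+d}(d!)^2(n!)^2$, this shifted bound becomes $K^{C}(K^C)^n(n!)^2$. Likewise $R_0=\nabla_zP(0)$ is a one-derivative shift of the bound on $P(0)$. Since $\det F=1$ in dimension two, $F^{-1}$ is the adjugate, so it has the same bounds entrywise, and so does $F^{-T}$.

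\emph{Step 2: polynomial expressions.} Each of $F^TF$, $G=F^{-1}F^{-T}$, $a_2=F^{-1}e_2$, and the time derivatives is a fixed polynomial in the fields of Step 1:
\[
 G_t=-F^{-1}F_tG-GF_t^TF^{-T},\qquad (a_2)_t=-F^{-1}F_tF^{-1}e_2,\qquad (F^TF)_t=F_t^TF+F^TF_t,
\]
where we use $(F^{-1})_t=-F^{-1}F_tF^{-1}$. Lemma~\ref{ipm:shiftcalculus} for $D_n$ (with $d=0$, combined with the $W^{s,\infty}$ shift already absorbed) gives size and growth constants multiplied by a fixed constant per product. Since the number of factors is fixed, the result is a fixed power of $K$.

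\emph{Step 3: the lower bound on $G$.} We have $m_0\cdot Gm_0=|F^{-T}m_0|^2$, and $|F^{-T}v|\ge\|F^T\|^{-1}|v|$. The $n=0$ bound gives $\|F\|\le\|I\|+[X-\mathrm{id}]_1\le 1+K^2\cdot C\le K^3$ for $K\ge2$ up to norm conventions, absorbed by enlarging exponents. Hence $G\ge K^{-6}I$. This is the step where constants must be tracked exactly rather than absorbed; if needed, the fixed power enlargement of $K$ noted after \eqref{ipm:parent-class} handles the norm convention.

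\emph{Step 4: $L$, $b_0$, $c_A^\ell$, and the main obstacle.} The denominator $m_0\cdot Gm_0\ge K^{-6}$ together with the Step 2 bounds lets us apply the reciprocal part of Lemma~\ref{ipm:composition}. This gives $D_n((m_0\cdot Gm_0)^{-1})\le CK^6[CK^{C}(1+K^{C+6})]^n(n!)^2$, which is polynomial in $K$. This is the main obstacle: the growth constant must stay a fixed power of $K$ rather than picking up an exponential dependence, and Lemma~\ref{ipm:composition} guarantees exactly that. Since $W^{s,\infty}$ word sums are sums of $D_{n+j}$, $j\le s$, the reciprocal bound transfers with another fixed shift. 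Then $L$, $b_0=-cLa_2$, and $c_A^\ell=b_0\cdot R_0$ are products of a bounded number of controlled factors, and Lemma~\ref{ipm:shiftcalculus} again closes the estimate. Taking $A_0$ as the maximum of the finitely many exponents yields \eqref{ipm:derived-coefficients}, uniformly in $m_0$ and $t\in[0,S]$.
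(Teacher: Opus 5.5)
Your proposal is correct and follows the paper's proof essentially step for step: shifted parent bounds for $F,F_t,F_{tt}$, the adjugate for $F^{-1}$, the product and reciprocal lemmas (including the reciprocal of $m_0\cdot Gm_0$ that $L$ requires), the identity $(F^{-1})_t=-F^{-1}F_tF^{-1}$, and $\|F\|\le K^3$ to get $G\ge K^{-6}I$. The one thing to tighten is Step 3, where no constant $C$ appears: $[X-\mathrm{id}]_1\le K^2$ directly bounds the operator norm of $D_z(X-\mathrm{id})$, so $\|F\|\le 1+K^2\le K^3$ holds exactly for $K\ge2$, which matters because the fixed exponent $-6$ cannot be enlarged.
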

\begin{remark}
Derivatives in $y$ are more favorable because they bring powers of $\ell<1$.
\end{remark}
\begin{proof}
From \eqref{ipm:parent-class}, a fixed derivative shift and
$(n+j)!\le2^{n+j}n!j!$ show that $F,F_t,F_{tt}$ have size and growth
bounded by fixed powers of $K$. Since $\det F=1$ in dimension two,
$F^{-1}=\operatorname{adj}F=(\operatorname{cof}F)^T$ has the same bounds.
The product and reciprocal estimates of Lemmas~\ref{ipm:word-products},
\ref{ipm:shiftcalculus}, and~\ref{ipm:composition} give
\eqref{ipm:derived-coefficients} for all the listed fields, using
$(F^{-1})_t=-F^{-1}F_tF^{-1}$ to bound $G_t$ and $(a_2)_t$, and
$(F^TF)_t=F_t^TF+F^TF_t$.

The proof of Lemma~\ref{ipm:tube-localization} gives $\|F\|\le K^3$, so
$v^TGv=|F^{-T}v|^2\ge\|F\|^{-2}|v|^2\ge K^{-6}|v|^2$.
\end{proof}

\section{The mean equation and selected initial data}
\label{ipm:sec-mean}

We select initial data for the phase-mean equation \eqref{ipm:intro-mean-step} by imposing a terminal
condition on the displacement. Assume the parent class
\eqref{ipm:parent-class} and $\nabla^2p\le I$ on $[0,S]$.
On $\mathbb T_\ell^2$, use the coefficients in
\eqref{ipm:label-coefficients} and define
\begin{equation}\label{ipm:mean-darcy}
 \mathcal V_{\mathrm{mean}}B:=-ca_2B-G\nabla_y\bar q,
 \qquad \operatorname{div}_y(\mathcal V_{\mathrm{mean}}B)=0,
\end{equation}
where $\bar q$ is periodic and its gradient is uniquely determined.

All integrals use unnormalized Lebesgue measure. Write
$|v|_n=\sum_{|I|=n}\|\partial_y^Iv\|_{H^3}$ and
$E_d(n)=R^{n+d}((n+d)!)^2$, with ordered derivative words $I$.

\begin{proposition}\label{ipm:meaninverse}
Under the parent hypotheses \eqref{ipm:parent-class} and
$\nabla^2p\le I$ on $[0,S]$, there are fixed exponents $A_1,A_2$ such
that the following holds for every $R\ge K^{A_1}$ and every integer
$d\ge0$. Suppose $f(t,y)$ is smooth, has zero spatial integral for
every $t$, and satisfies
\[
 \sup_{0\le t\le S}|f(t)|_n\le A E_d(n)\qquad(n\ge0).
\]
Then there are a choice of initial data and a solution $B$ of
\begin{equation}\label{ipm:mean-equation}
 B_t+(\mathcal V_{\mathrm{mean}}B)\cdot R_0=f
\end{equation}
both depending linearly on $f$, such that
\begin{equation}\label{ipm:mean-bound}
 \sup_t|(B,B_t,\mathcal V_{\mathrm{mean}}B,\nabla_y\bar q)(t)|_n
 \le K^{A_2}A E_d(n)\qquad(n\ge0).
\end{equation}
The same shift $d$ occurs on both sides. The selected
$B$ has zero spatial integral at every time. For an odd parent and odd
source it is odd. The constants are uniform as the torus is enlarged.
\end{proposition}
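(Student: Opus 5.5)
The plan follows the displacement reduction of Section~\ref{ipm:intro-mean-inverse}, carried out in label coordinates so that all estimates are uniform in the enlarged torus. Write the physical fields as $\varrho\circ X=\ell B$ and $-\ell\langle Z\rangle\circ X\to f$. Pull the displacement back by $\eta\circ X=\ell F\xi$; then $D_t\eta-M\eta=u(\varrho)$ becomes simply $\xi_t=\mathcal V_{\mathrm{mean}}B$, and $Q=\varrho+r\cdot\eta$ becomes $\ell(B+R_0\cdot\xi)$. In labels the system therefore reads
\[
 \partial_t\widetilde Q=f,\qquad \xi_t=\mathcal V_{\mathrm{mean}}(\widetilde Q-R_0\cdot\xi),\qquad \widetilde Q=B+R_0\cdot\xi .
\]
I first set $\widetilde Q(t)=\int_0^tf$, so that $\widetilde Q(0)=0$ and the shift bound for $f$ passes to $\widetilde Q$ with the factor $S\le1$. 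Next I solve the linear evolution for $\xi$ backward from $\xi(S)=0$. Finally I put $B=\widetilde Q-R_0\cdot\xi$, which gives $B(0)=-R_0\cdot\xi(0)$. Differentiating $B=\widetilde Q-R_0\cdot\xi$ in time yields \eqref{ipm:mean-equation} at once, and every step is linear in $f$. Oddness follows by uniqueness, since $R_0$ is odd and $a_2,G$ are even. The zero spatial integral of $B$ needs a separate argument. The label divergence $\operatorname{div}_y\xi$ is constant in time (because $\operatorname{div}\mathcal V_{\mathrm{mean}}=0$), so it vanishes. Then $\int R_0\cdot\xi=-\int P(0,\ell y)\operatorname{div}\xi/\ell=0$, and $\int\widetilde Q=0$ because each $f(t)$ has zero integral.

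The core is the backward estimate for $\xi$ in the Gevrey-shift norms $|\cdot|_n$. At order $n=0$ I use the energy identity \eqref{ipm:intro-displacement-energy} in physical variables. Since $X$ preserves volume, $\|\eta\|_2$ is comparable to $\ell\|F\xi\|_2$. The bound $H\le I$ then gives $\frac{d}{dt}\|\eta\|^2\ge-2\|\eta\|^2-2c\|Q\|\|\eta\|$, and integrating backward from $\xi(S)=0$ over a time interval of length at most $1$ controls $\|\eta\|_{L^2}$ by $\|Q\|_{L^2}$. Equivalently, in labels I pair the $\xi$ equation with $F^TF\xi$. The commutator terms are then exactly the Hessian term, because $(F^TF)_t$ is expressed through $M$, and $M+H=-ce_2r^T$ cancels against the $-R_0\cdot\xi$ part of the Darcy operator. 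The pressure term drops out since $\operatorname{div}\xi=0$. This step is the only one that uses $H\le I$ rather than a polynomial Gronwall bound; a Gronwall bound with $\|H\|$ would lose $e^{C\|\nabla P\|}$, which is not polynomial in $K$.

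For higher ordered derivative words I differentiate the $\xi$ equation. Each $\partial_y^I\xi$ satisfies the same backward system, with $\mathcal V_{\mathrm{mean}}$ applied to $\partial^I\widetilde Q$, plus commutator sources. These sources involve derivatives of the coefficients $a_2,G,R_0,F^TF$, all bounded by Lemma~\ref{ipm:packet-calculus}, and lower words of $\xi$. The elliptic estimate for $\bar q$, via coercivity $G\ge K^{-6}I$ and the commutator structure, gives $|\mathcal V_{\mathrm{mean}}B|_n$ in terms of $\sum_l\binom nl K^{A_0(l+1)}(l!)^2|B|_{n-l}$. The $H^3$ base level is handled the same way. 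Applying the energy estimate at each order and inducting on $n$, the commutator sums are closed by \eqref{ipm:shiftabsorption} once $R\ge K^{A_1}$, so that $\sum_{l\ge1}(K^{A_0}/R)^l$ is small. The shift $d$ is the same on both sides, as in the proof of Lemma~\ref{ipm:composition}. The bounds for $B_t$, $\mathcal V_{\mathrm{mean}}B$ and $\nabla_y\bar q$ then follow from the equation and the elliptic bound.

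I expect the main obstacle to be the higher-order step. The positivity structure $H\le I$ is available only at top order in the physical energy. Differentiated commutators containing $R_0\cdot\partial\xi$ are not lower order in the energy sense: they carry a factor $|\nabla P|\sim K$ with no derivative loss, and this would produce an $e^{KS}$ Gronwall factor. My first attempt is to differentiate in the physical, Lie-transported form, using $D_t$ commuted with $\partial_z$ along $X$. The top-order term then always reappears as $-\partial^I\eta\cdot H\partial^I\eta$, and everything else contains a derivative of a coefficient, which the $R\ge K^{A_1}$ margin absorbs.
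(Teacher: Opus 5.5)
Your proposal is correct and is essentially the paper's proof. It has the same ingredients:
\begin{itemize}
\item the displacement is pulled back to labels;
\item $\widetilde Q_t=f$ is solved forward from $\widetilde Q(0)=0$, and the displacement backward from $\xi(S)=0$;
\item the backward energy bound uses $\operatorname{sym}\mathcal B+\tfrac12\mathcal K_t=-F^T(H\circ X)F$ together with $H\le I$;
\item the derivative words are handled by induction at one fixed $R$;
\item $B=\widetilde Q-R_0\cdot\xi$ is reconstructed, with zero mean and parity.
\end{itemize}
Your velocity form $\xi_t=\mathcal V_{\mathrm{mean}}(\widetilde Q-R_0\cdot\xi)$ is equivalent to the paper's constrained form $\mathcal Kz_t-\mathcal Bz+\nabla_y\pi=g$, $\operatorname{div}_yz=0$. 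The paper's form has the advantage that commuting label derivatives through it produces only local commutators.

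Two small corrections:
\begin{itemize}
\item For an odd parent, $R_0=\nabla_zP(0,\ell y)$ is \emph{even}, not odd. Evenness is what makes $R_0\cdot\xi$ odd, so your parity conclusion stands for the corrected reason.
\item The $e^{KS}$ loss you worry about does not occur in labels. The term $R_0\cdot\partial^I\xi$ is part of the same principal term $\mathcal B\partial^I\xi$ at every order, so the same Hessian identity absorbs it. Every genuine commutator puts at least one derivative on a coefficient and strictly fewer on $\xi$. It therefore enters the triangular induction \eqref{ipm:mean-triangular} as an already-bounded source, and its polynomial-in-$K$ size is beaten by the factor $(R_s/R)^l$.
\end{itemize}
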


\begin{proof}[Proof of Proposition~\ref{ipm:meaninverse}]
\textbf{Step 1. Coefficient constants and multiplication.}
We first fix constants which will make the proof quantitative.
By the previously derived parent bounds, choose
$\mathsf M,\mathsf R\ge2$, both bounded by fixed powers of $K$, such that
\begin{equation}\label{ipm:expanded-mean-coefficients}
 \sum_{|I|=n}\|\partial^IC(t)\|_\infty
 \le\mathsf M\mathsf R^n(n!)^2,
 \quad C\in\{F,F^TF,cF^Te_2R_0^T,R_0,G,a_2,c_A^{\ell}\},
 \quad \|F^{-1}\|_\infty\le\mathsf M.
\end{equation}
Increase them, still by fixed powers of $K$, to cover the first time
derivatives $G_t,(a_2)_t$ needed later.

To bound coefficient fields in $W^{s,\infty}$, use the elementary
fixed-shift estimate $(n+j)!=\binom{n+j}{n}n!j!\le2^{n+j}n!j!$ to obtain
\begin{equation}\label{ipm:expanded-mean-multiplier-class}
 \sum_{|I|=n}\|\partial^IC\|_{W^{s,\infty}}
 \le L_s R_s^n(n!)^2,
 \quad R_s=4\mathsf R,
 \quad L_s=C_s\mathsf M(4\mathsf R)^s(s!)^2.
\end{equation}
Here $C_s$ depends only on $s$ and dimension; a finite sum over the
listed coefficient fields may be included by increasing $C_s$.
Write $\Lambda_s=C_sL_s$, increasing $C_s$ to cover all fixed component sums.
For every $R\ge2R_s$, every $d\ge0$, and every field with shifted input bound
$|v(t)|_n\le A_*E_d(n)$, the coefficient product estimate of
Lemma~\ref{ipm:word-products} and \eqref{ipm:shiftabsorption} give
\begin{equation}\label{ipm:expanded-mean-multiplication}
 \begin{split}
 |Cv(t)|_n
 &\le\Lambda_s A_*E_d(n)
       \sum_{l=0}^n(R_s/R)^l
       \binom nl\frac{(l!)^2((n-l+d)!)^2}{((n+d)!)^2}\\
 &\le2\Lambda_s A_*E_d(n).
 \end{split}
\end{equation}
This formula shows explicitly that coefficient multiplication does
not enlarge the chosen $R$, even for an arbitrarily shifted input bound.

\textbf{Step 2. Scaling and the exact displacement identities.}
The relevant physical domain for the label variable $y$ is the
enlarged torus. On it introduce the rescaled parent
\[
 \widetilde P(t,\widetilde x)=\ell^{-1}P(t,\ell\widetilde x),
 \quad \widetilde u(t,\widetilde x)=\ell^{-1}u(P)(t,\ell\widetilde x),
 \quad \widetilde p(t,\widetilde x)=\ell^{-2}p(t,\ell\widetilde x).
\]
Its flow is $\widetilde X(t,y)=\ell^{-1}X(t,\ell y)$. Direct differentiation gives
\[
 \nabla_y\widetilde X(t,y)=\nabla_zX(t,\ell y)=F(t,y),\quad
 \nabla_{\widetilde x}\widetilde P(t,\widetilde x)
 =\nabla_xP(t,\ell\widetilde x),\quad
 \nabla_{\widetilde x}^2\widetilde p(t,\widetilde x)
 =\nabla_x^2p(t,\ell\widetilde x).
\]
Thus the rescaled equation has the same coefficient $c$, the same
upper Hessian bound, and initial gradient $R_0$, with no extra factor
of $\ell$. We suppress the tildes in the following identities only.

With the notation of Section~1.1, the parent identities
\eqref{ipm:raybasic} still hold after rescaling. For a linearized density
$\varrho$, introduce the displacement $\eta$ and
$Q=\varrho+r\cdot\eta$ as in \eqref{ipm:intro-displacement-definition}.
The calculation there gives the displacement system
\eqref{ipm:intro-displacement}.
We will construct a particular solution by imposing $Q(0)=0$ and
$\eta(S)=0$. These conditions select the initial density
$\varrho(0)=-r(0)\cdot\eta(0)$.

In parent labels, write $\eta\circ\widetilde X=Fz$ and denote the
pullbacks of $Q,f,\pi$ by $\widetilde X$ by the same letters. Since
$F_t=(M\circ\widetilde X)F$ and
$r\circ\widetilde X=F^{-T}R_0$, system
\eqref{ipm:intro-displacement} becomes
\begin{equation}\label{ipm:terminal-mean-system}
 \begin{aligned}
 Q_t&=f,\qquad Q(0)=0,\\
 \mathcal Kz_t-\mathcal Bz+\nabla_y\pi&=g,
 \qquad\operatorname{div}_yz=0,\qquad z(S)=0,\\
 \mathcal K&=F^TF,\qquad
 \mathcal B=cF^Te_2R_0^T,\qquad g=-cF^Te_2Q.
 \end{aligned}
\end{equation}
To check the middle line directly, multiply the displacement equation
by $F^T$ after pullback. The coefficient of $z$ becomes
$F^T((M+H)\circ\widetilde X)F=-cF^Te_2R_0^T=-\mathcal B$.
The Piola identity gives
$(\operatorname{div}_x\eta)\circ\widetilde X
 =\operatorname{div}_y\bigl(F^{-1}(\eta\circ\widetilde X)\bigr)
 =\operatorname{div}_yz$, since $\det F=1$.

\textbf{Step 3. Existence and smoothness of the solution $z$.}
Let $\Pi$ be the $L^2$-orthogonal projection from
$L^2(\mathbb T_\ell^2;\mathbb R^2)$ onto the closed subspace
\[
 \mathcal H=\{v\in L^2(\mathbb T_\ell^2;\mathbb R^2):
 \operatorname{div}_yv=0\text{ in distributions}\}.
\]
This space includes constant vector fields; no zero-mean condition is imposed.
On $\mathcal H$ define
\[
 T_{\mathcal K}(t)v=\Pi(\mathcal K(t)v).
\]
For each $t$, $T_{\mathcal K}(t)$ is a bounded self-adjoint operator on $\mathcal H$, and
\begin{equation}\label{ipm:expanded-mean-coercivity}
 \langle v,T_{\mathcal K}v\rangle
 =\|Fv\|_2^2\ge\mathsf M^{-2}\|v\|_2^2.
\end{equation}
Coercivity gives an inverse on $\mathcal H$ of norm at most
$\mathsf M^2$: its range is closed by the lower bound, and its
orthogonal complement is its kernel, which is zero. Since
$\Pi\nabla_y\pi=0$, applying $\Pi$ to the equation for $z$ gives
\begin{equation}\label{ipm:expanded-mean-terminal-ode}
 z_t=T_{\mathcal K}^{-1}\Pi(\mathcal Bz+g),\qquad z(S)=0.
\end{equation}
This is an ODE with bounded time-continuous operators on $\mathcal H$.
Standard local well-posedness for ODEs in $\mathcal H$, followed by
continuation over $[0,S]$ using linearity and boundedness of the
coefficients, gives a unique $z\in C^1([0,S];\mathcal H)$.

We justify spatial regularity of $z$ before differentiating it.
The projection commutes with translations. If $T_{\mathcal K}v=h$,
the difference quotient in any coordinate satisfies
\[
 T_{\mathcal K}\delta_av
 =\delta_ah-\Pi((\delta_a\mathcal K)\tau_av),
\]
where $\tau_a$ is translation by $a$ in that coordinate. The $L^2$ operator bound for $T_{\mathcal K}^{-1}$ and the
$W^{1,\infty}$ bound for $\mathcal K$ give uniform
difference quotient bounds when $h\in H^1$, and hence $v\in H^1$.
Iterating this argument, or differentiating after each established
regularity step, proves $T_{\mathcal K}^{-1}:H^j\cap\mathcal H\to H^j$
for every fixed $j$. Its norm is a fixed polynomial in the coefficient
bounds through order $j$. The inverse is continuous in time, as
follows from the resolvent identity. Solving
\eqref{ipm:expanded-mean-terminal-ode} in each $H^j$ and using $L^2$
uniqueness identifies these solutions with $z$. Thus $z$ is smooth.

For this $z$, $g+\mathcal Bz-\mathcal Kz_t$ has zero Leray
projection (in particular, zero spatial mean), so it is a periodic
gradient. This gives $\nabla_y\pi=\nabla_y\bar q$.
Only the gradient is estimated in \eqref{ipm:mean-bound}; the zero-mean scalar potential exists
by Fourier series, but its norms are not needed.

\textbf{Step 4. Backward energy, including arbitrary finite families.}
The identities of the exact parent imply
\begin{equation}\label{ipm:expanded-mean-matrix-energy}
 \operatorname{sym}\mathcal B+\tfrac12\mathcal K_t
 =F^T\operatorname{sym}\bigl((M+ce_2r^T)\circ\widetilde X\bigr)F
 =-F^T(H\circ\widetilde X)F.
\end{equation}
For an arbitrary vector source $g$, testing the equation for $z$ by
$z$ therefore gives
\begin{equation}\label{ipm:backward-energy}
 \frac12\frac d{dt}\|Fz\|_2^2
 =-\int(Fz)^T(H\circ\widetilde X)(Fz)+\int z\cdot g.
\end{equation}
The pressure term vanishes by the divergence constraint. Since $H\le I$,
$E(t)=\|Fz(t)\|_2$ satisfies the lower differential inequality
$E'\ge-E-\mathsf M\|g(t)\|_2$, interpreted by regularizing the norm
at zero. Integrating backward from $E(S)=0$ gives
\begin{equation}\label{ipm:expanded-mean-pointwise-energy}
 \|z(t)\|_2\le
 \mathsf M^2\int_t^S e^{\tau-t}\|g(\tau)\|_2\,d\tau.
\end{equation}
The projected ODE also gives, at the same time,
\begin{equation}\label{ipm:expanded-mean-pointwise-time}
 \|z_t(t)\|_2\le
 \mathsf M^2\bigl(\mathsf M\|z(t)\|_2+\|g(t)\|_2\bigr).
\end{equation}

We will apply the estimates to families of derivatives of $z$.
The required norm is $\sup_t\sum_\alpha$, whereas estimating each
member separately would give the larger $\sum_\alpha\sup_t$ on the
source. For a finite family $z^\alpha$ with sources $g^\alpha$, indexed
by $\mathcal A$, sum \eqref{ipm:expanded-mean-pointwise-energy} before
taking the time supremum. Together with
\eqref{ipm:expanded-mean-pointwise-time} and $S\le1$, this gives
\begin{equation}\label{ipm:expanded-mean-family-L2}
 \sup_t\sum_{\alpha\in\mathcal A}
   (\|z^\alpha(t)\|_2+\|z_t^\alpha(t)\|_2)
 \le I_0\sup_t\sum_{\alpha\in\mathcal A}\|g^\alpha(t)\|_2,
 \qquad I_0=4e\mathsf M^5.
\end{equation}
The constant is independent of the cardinality of $\mathcal A$.

\textbf{Step 5. A fixed Sobolev base with explicit polynomial constants.}
The norm $\sum_{j=0}^s\sum_{|I|=j}\|\partial^Iv\|_2$ is equivalent
to $\|v\|_{H^s}$ with constants depending only on $s$ and dimension,
not on the torus period. This follows by comparing their Fourier
weights; the final finite $\ell^1$--$\ell^2$ comparison concerns only
the fixed number of base derivatives, since $s$ and the dimension are fixed.

Commuting a word $I$ through the equation for $z$ gives
\begin{equation}\label{ipm:expanded-mean-commuted-system}
 \mathcal K\partial^Iz_t-\mathcal B\partial^Iz
       +\nabla\partial^I\pi
 =\partial^Ig+
 \sum_{\emptyset\ne J\subseteq\{1,\ldots,|I|\}}
 \bigl[(\partial^{I_J}\mathcal B)\partial^{I_{J^c}}z
       -(\partial^{I_J}\mathcal K)\partial^{I_{J^c}}z_t\bigr].
\end{equation}
Grouping the position subsets by their cardinality produces $\binom{|I|}{|J|}$. Terminal zero data and the divergence
constraint are preserved by differentiation.

For a finite family put
\[
 J_j=\sup_t\sum_\alpha\sum_{|I|=j}
  (\|\partial^Iz^\alpha(t)\|_2+
   \|\partial^Iz_t^\alpha(t)\|_2),
 \quad F_s=\sup_t\sum_\alpha\|g^\alpha(t)\|_{H^s}.
\]
Let $M_s=C_s\mathsf M\mathsf R^s(s!)^2$, chosen to dominate all
coefficient derivative sums through order $s$.
For $0\le j\le s$, the family estimate and the commutator identity give
\[
 J_j\le I_0\left(C_sF_s+
 M_s\sum_{l=1}^j\binom jl J_{j-l}\right).
\]
With $V_j=\sum_{i=0}^jJ_i$, it follows that
$V_j\le(1+2^sI_0M_s)V_{j-1}+C_sI_0F_s$.
Thus a valid joint $H^s$ inverse constant is
\begin{equation}\label{ipm:expanded-mean-base-constant}
 I_s=C_s(s+1)I_0(1+2^sI_0M_s)^s,
\end{equation}
after increasing the fixed norm-equivalence constant $C_s$.
Explicitly, uniformly over finite families,
\begin{equation}\label{ipm:expanded-mean-family-Hs}
 \sup_t\sum_\alpha
   (\|z^\alpha(t)\|_{H^s}+\|z_t^\alpha(t)\|_{H^s})
 \le I_s\sup_t\sum_\alpha\|g^\alpha(t)\|_{H^s}.
\end{equation}
Because $s$ is fixed, $I_s$ is a polynomial in
$\mathsf M,\mathsf R$ and hence a fixed power of $K$.

\textbf{Step 6. Arbitrary additional derivatives at the same growth rate.}
Apply \eqref{ipm:expanded-mean-family-Hs} to
\eqref{ipm:expanded-mean-commuted-system} for all $|I|=n$, and use
\eqref{ipm:expanded-mean-multiplier-class} to bound the commutator sum.
Writing
$U_n=\sup_t|(z,z_t)(t)|_n$ and $G_n=\sup_t|g(t)|_n$, we obtain
\begin{equation}\label{ipm:mean-triangular}
 U_n\le I_s\left[G_n+
 \Lambda_s\sum_{l=1}^n\binom nl R_s^l(l!)^2U_{n-l}\right].
\end{equation}
The binomial coefficient follows from Lemma~\ref{ipm:word-products}.

Choose
\begin{equation}\label{ipm:expanded-mean-common-R}
 R\ge R_{\rm mean}:=\max\{2R_s,8I_s\Lambda_sR_s\}.
\end{equation}
Suppose $G_n\le A_gE_d(n)$ for an arbitrary $d\ge0$.
We claim
\begin{equation}\label{ipm:mean-all-order-bound}
 U_n\le2I_sA_gE_d(n)\qquad(n\ge0).
\end{equation}
The case $n=0$ follows
from \eqref{ipm:mean-triangular}. In the induction step normalize a
commutator by $E_d(n)$ and use \eqref{ipm:shiftabsorption}. Its total
contribution is at most
\[
 2I_s^2\Lambda_s A_g
       \sum_{l=1}^n(R_s/R)^l
 \le\tfrac12 I_s A_g,
\]
after the harmless numerical enlargement in
\eqref{ipm:expanded-mean-common-R}. Adding the direct source gives a
strict bound below $2I_sA_g$. This induction is uniform in $d$ and uses
one fixed $R$, with no radius reduction at each inversion.

\textbf{Step 7. Reconstruction, zero mean, and parity.}
Define
\begin{equation}\label{ipm:mean-reconstruction}
 B=Q-R_0\cdot z,\qquad
 \mathcal V_{\mathrm{mean}}B=z_t,\qquad
 \nabla_y\bar q=\nabla_y\pi=g+\mathcal Bz-\mathcal Kz_t.
\end{equation}
To verify the second identity, use
$g=-cF^Te_2Q$ in the equation for $z$ to get
$\mathcal Kz_t=-cF^Te_2B-\nabla_y\bar q$.
Multiplication by $G=\mathcal K^{-1}$ gives precisely
\eqref{ipm:mean-darcy}, and $\operatorname{div}z_t=0$ gives its
constraint. Since $R_0$ is time independent,
$B_t=f-R_0\cdot z_t$, which proves \eqref{ipm:mean-equation}.

For the bounds, $Q(t)=\int_0^tf(\tau)\,d\tau$ satisfies
\[
 \sup_{0\le t\le S}|Q(t)|_n\le SA E_d(n)\qquad(n\ge0).
\]
Multiplication \eqref{ipm:expanded-mean-multiplication} gives
\[
 \sup_{0\le t\le S}|g(t)|_n\le 2c\Lambda_sSA E_d(n)\qquad(n\ge0).
\]
The arbitrary-shift estimate \eqref{ipm:mean-all-order-bound} gives
\[
 \sup_{0\le t\le S}|(z,z_t)(t)|_n
 \le 4cI_s\Lambda_sSA E_d(n)\qquad(n\ge0).
\]
Use \eqref{ipm:expanded-mean-multiplication} again in each term of
\eqref{ipm:mean-reconstruction} and in $B_t=f-R_0\cdot z_t$.
The total output amplitude is bounded, for example, by
\[
 C_c(1+\Lambda_s)^2(1+I_s)A,
\]
after a fixed enlargement of the numerical constant. It is a
polynomial in the prescribed parent constants. The same argument
also controls $G\nabla_y\bar q$ if desired. It requires no source time
derivative.

Since $\int f(t)=0$, one has $\int Q(t)=0$. The vector $R_0$ is the
gradient of the periodic rescaled initial density
$\widetilde P(0,y)=\ell^{-1}P(0,\ell y)$. Hence
\[
 \int R_0\cdot z
 =-\int\widetilde P(0,y)\operatorname{div}z\,dy=0.
\]
It follows that $B$ has zero mean, including at the selected initial
time, when $B(0)=-R_0\cdot z(0)$. The periodic physical Darcy velocity
then has zero mean, as follows by integrating its Darcy law.

For an odd parent $F,\mathcal K,\mathcal B,R_0$ are even under
$y\mapsto-y$. An odd $f$ makes $Q$ and $g$ odd. The Leray projection
commutes with reflection, and the coercive inverse preserves its
parity subspaces. Uniqueness of the ODE for $z$ with $z(S)=0$ gives odd $z,z_t$;
then $B$ is odd. The pressure gradient is odd and its zero-mean scalar
potential is even. This completes the proof of
Proposition~\ref{ipm:meaninverse}.
\end{proof}

\section{The localized scalar and lifted Darcy inverses}
\label{ipm:sec-inverses}

Choose $k\ge2$ satisfying the integer-carrier condition
\begin{equation}\label{ipm:carrier-lattice}
 n_0:=\frac{k}{\ell}m_0\in\mathbb Z^2.
\end{equation}
On the lifted domain $\mathbb T_\ell^2\times\mathbb T$, set
$D=k^{-1}\nabla_y+m_0\partial_\theta$.
Angular averages use normalized angular measure.

\subsection{The localized scalar equation and its estimates}

Use the fixed cutoff $\chi$ and profiles $f_\delta$ of
Lemma~\ref{ipm:profilelemma}, with $0<\delta\le\delta_0$.
Constants may depend on these fixed choices, but not on the expansion
order, derivative order, amplitude seed, or layer index.

Fix $s_0\in[0,S]$ and solve
\begin{equation}\label{ipm:leading-amplitude}
 a_t+c_A^{\ell}(t,y)a=0,\quad a(s_0,y)=1,
 \qquad \gamma(t)=\alpha a(t,0),\quad \alpha>0.
\end{equation}
Set
\begin{equation}\label{ipm:packet-parameter}
 \mathcal P=2+K+\ell^{-1}+\delta^{-1}+\Gamma,
 \qquad \Gamma\ge\max(1,\sup_{t\in[0,S]}\gamma(t)).
\end{equation}

\begin{proposition}
\label{ipm:scalar-inverse}
Assume the parent class \eqref{ipm:parent-class} and $\ell K^{2A+3}\le\varepsilon_*$,
where $\varepsilon_*>0$ is sufficiently small and $A$ is the exponent
in Lemma~\ref{ipm:tube-localization}.
Let $R_E=\max\{8,R_\chi\}$ be the extended propagator growth rate
constructed below. It depends only on the fixed cutoff and the Sobolev
index $s=3$, and is independent of $K$.
For every $R\ge2R_E$ and $d\ge0$, a smooth source supported in
$\operatorname{supp}\chi$ with
$|f(t)|_n\le\gamma(t)A_*E_d(n)$ has a unique zero-initial solution of
\[
 A_t+c_A^{\ell}A=f,\qquad A(0)=0,
\]
satisfying $|(A,A_t)(t)|_n\le K^C\gamma(t)A_*E_d(n)$.
The solution preserves support and zero angular mean.
\end{proposition}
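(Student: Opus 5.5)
The equation is a scalar ODE in $t$ at each fixed $(y,\theta)$. I would solve it by Duhamel with the propagator of \eqref{ipm:leading-amplitude}. Write $a(t,y)$ for the solution with $a(s_0,\cdot)=1$; then $a(t,y)/a(\sigma,y)=\exp(-\int_\sigma^t c_A^\ell(\tau,y)\,d\tau)$. The unique zero-initial solution is
\[
 A(t,y,\theta)=\int_0^t\frac{a(t,y)}{a(\sigma,y)}f(\sigma,y,\theta)\,d\sigma .
\]
Support preservation and zero angular mean are immediate, since the kernel is independent of $\theta$ and acts pointwise in $y$. Uniqueness is standard linear ODE uniqueness. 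The time derivative is $A_t=f-c_A^\ell A$, so its bound follows from that of $A$ by coefficient multiplication.

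The key step is to factor the propagator through the origin:
\[
 \frac{a(t,y)}{a(\sigma,y)}=\frac{a(t,0)}{a(\sigma,0)}\,e^{\Lambda_\sigma(t,y)},
 \qquad
 \Lambda_\sigma(t,y)=-\int_\sigma^t\bigl[c_A^\ell(\tau,y)-c_A^\ell(\tau,0)\bigr]\,d\tau .
\]
Then the source bound $|f(\sigma)|_n\le\gamma(\sigma)A_*E_d(n)$ combines with $\gamma(t)/\gamma(\sigma)=a(t,0)/a(\sigma,0)$. This cancels the possibly exponentially large growth at the origin exactly, leaving
\[
 |A(t)|_n\le\gamma(t)\,S\,\sup_\sigma\bigl|e^{\Lambda_\sigma(t)}\,\gamma(\sigma)^{-1}f(\sigma)\bigr|_n .
\]
This is why the estimate is uniform in the gain and in $\alpha$. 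By the remark in the proof of Lemma~\ref{ipm:tube-localization}, the centered propagator estimate \eqref{ipm:centered-amplitude-gevrey} holds for any lower limit $\sigma$ and in both time directions. With $\ell K^{2A+3}\le\varepsilon_*$, it gives $[e^{\Lambda_\sigma}]_{n,y}\le C\,C^n(n!)^2$ on $|y|\le1$, with constants independent of $K$. Since $f$ is supported in $\operatorname{supp}\chi\subset B_{1/2}$, only $|y|\le1$ matters. However, the $H^3$-based word norms $|\cdot|_n$ require a globally defined multiplier. I would therefore multiply by an extended cutoff $\chi_E$, equal to one on $\operatorname{supp}\chi$ and supported in $B_1$, and use $\chi_E e^{\Lambda_\sigma}$, which agrees with $e^{\Lambda_\sigma}$ on the support of $f$. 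Its $W^{s,\infty}$ word sums are bounded by $C R_E^n(n!)^2$, where $R_E=\max\{8,R_\chi\}$ absorbs the Gevrey constant of $\chi_E$, the fixed shift to $W^{3,\infty}$, and the composition constant. Because the $y$-derivatives of $c_A^\ell$ carry powers of $\ell$, which the localization hypothesis controls, this rate is independent of $K$.

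The multiplication step then runs exactly as in \eqref{ipm:expanded-mean-multiplication}. Lemma~\ref{ipm:word-products} with coefficient multiplication, together with \eqref{ipm:shiftabsorption} and $R\ge2R_E$, gives $|\chi_Ee^{\Lambda_\sigma}g|_n\le 2C\,A_*E_d(n)$ for any shift $d$ without enlarging $R$. Integrating over $\sigma\in[0,t]$ with $S\le1$ gives the bound on $A$. For $A_t=f-c_A^\ell A$, I would multiply again by $c_A^\ell$. Its $y$-word sums are bounded by \eqref{ipm:expanded-cascade-amplitude-coefficient}: the size is at most $K^A$, and the growth, after the $\ell$ factors, is at most $\ell B_c\le1$. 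This costs only the factor $K^C$, again at unchanged $R$. The angular variable is untouched throughout, since all multipliers are $\theta$-independent.

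The main obstacle is the uniform Gevrey bound for the extended centered propagator. The required $H^3$ multiplier must be global in $y$ while $\Lambda_\sigma$ is only controlled locally. Its growth rate must also be independent of $K$, so that the fixed $R\ge2R_E$ suffices for every stage. I would handle this by inserting $\chi_E$ as above and checking that every positive-order derivative of $e^{\Lambda_\sigma}$ carries a factor of $\ell B_c$, following Lemma~\ref{ipm:composition}.
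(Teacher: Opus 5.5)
Your proposal is correct and follows essentially the same route as the paper: normalize by $\gamma(t)=\alpha a(t,0)$ so that the centered propagator $e^{L_{\sigma,t}}$ appears in the Duhamel formula, extend it globally by a fixed cutoff, and apply the same-$R$ coefficient multiplication estimate first to $A$ and then to $A_t=f-c_A^{\ell}A$. The differences are cosmetic. The paper re-derives the propagator bound explicitly, getting rate $2$ and hence $8$ after the $W^{3,\infty}$ base shift; this is where $R_E=\max\{8,R_\chi\}$ comes from, while citing \eqref{ipm:centered-amplitude-gevrey} gives only some fixed absolute rate. The paper also extends by $1+\chi_*(E_{\sigma,t}-1)$ rather than $\chi_E e^{\Lambda_\sigma}$, but either extension works.
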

\begin{proof}
Normalize by $\gamma(t)=\alpha a(t,0)$. Since
$\gamma'/\gamma=-c_A^{\ell}(t,0)$, the equation becomes
\begin{equation}\label{ipm:normalized-scalar-equation}
 \partial_t\!\left(\frac{A}{\gamma}\right)
 +\bigl(c_A^{\ell}(t,y)-c_A^{\ell}(t,0)\bigr)\frac{A}{\gamma}
 =\frac{f}{\gamma}.
\end{equation}
For $|y|\le1$ and any two times $\sigma,t\in[0,S]$, in either order,
set
\[
 L_{\sigma,t}(y)
 =-\int_\sigma^t[c_A^{\ell}(\tau,y)-c_A^{\ell}(\tau,0)]\,d\tau,
 \qquad E_{\sigma,t}=e^{L_{\sigma,t}}.
\]
By \eqref{ipm:expanded-cascade-amplitude-coefficient}, the localization
condition with sufficiently small $\varepsilon_*$ gives
\[
 \ell B_c\le1,\qquad
 \ell A_cB_c\le\varepsilon_*K^{-3}\le\epsilon_0,\qquad
 \epsilon:=S A_c\ell B_c\le\epsilon_0,
\]
where $\epsilon_0$ is a sufficiently small absolute constant.
Using $c_A^{\ell}(t,y)=c_A(t,\ell y)$ and
\eqref{ipm:expanded-cascade-amplitude-coefficient}, the mean value theorem gives
\begin{equation}\label{ipm:expanded-mean-log-propagator}
 \|L_{\sigma,t}\|_\infty\le\epsilon,
 \qquad
 \sum_{|I|=n}\|\partial_y^I L_{\sigma,t}\|_\infty
 \le S A_c(\ell B_c)^n(n!)^2
 \le\epsilon(n!)^2\quad(n\ge1).
\end{equation}

We next bound the spatial derivatives of $E_{\sigma,t}=e^{L_{\sigma,t}}$
uniformly in $\sigma,t$. Take $\epsilon_0\le\log2$. In the normalized
derivative generating series, the positive derivatives of $L$ are
bounded by $U(\zeta)=\epsilon\zeta/(1-\zeta)$.
The partition estimate \eqref{ipm:composition-ratio} bounds the
positive derivative series of $e^L$ by
\[
 \frac{2U(\zeta)}{1-U(\zeta)}
 =\frac{2\epsilon\zeta}{1-(1+\epsilon)\zeta},
\]
since the outer derivatives of the exponential are at most two on
$|L|\le\epsilon$. Comparing coefficients gives
\[
 \sum_{|I|=n}\|\partial_y^IE_{\sigma,t}\|_\infty
 \le2\epsilon(1+\epsilon)^{n-1}(n!)^2
 \le2\cdot2^n(n!)^2\qquad(n\ge1).
\]
The zeroth-order bound is $E_{\sigma,t}\le2$. For the base derivatives
$0\le j\le s$, use $((n+j)!)^2\le4^{n+j}(n!)^2(j!)^2$, as in
\eqref{ipm:expanded-mean-multiplier-class}, to obtain
\begin{equation}\label{ipm:centered-propagator}
 \tfrac12\le E_{\sigma,t}\le2,
 \qquad
 \sum_{|I|=n}\|\partial_y^IE_{\sigma,t}\|_{W^{s,\infty}}
 \le C_s8^n(n!)^2\qquad(n\ge0)
\end{equation}
on $|y|\le1$.
To obtain global multiplier bounds, choose a fixed Gevrey cutoff
$0\le\chi_*\le1$, equal to one near $\operatorname{supp}\chi$ and
supported in $B_1$, with
\[
 \sum_{|I|=n}\|\partial_y^I\chi_*\|_{W^{s,\infty}}
 \le C_{\chi,s}R_\chi^n(n!)^2\qquad(n\ge0).
\]
Define the extended propagator
\[
 \widetilde E_{\sigma,t}(y)
 :=1+\chi_*(y)\bigl(E_{\sigma,t}(y)-1\bigr).
\]
It agrees with $E_{\sigma,t}$ near $\operatorname{supp}\chi$ and equals
one outside $B_1$. The product bounds of
Lemmas~\ref{ipm:word-products} and \ref{ipm:shiftcalculus} give
\eqref{ipm:centered-propagator} globally for $\widetilde E_{\sigma,t}$,
with $8$ replaced by $R_E:=\max\{8,R_\chi\}$ and an enlarged $C_s$.
These constants depend only on $s$ and the fixed cutoff, and are
independent of the parent, seed, time pair, and expansion depth.

For a source supported in $\operatorname{supp}\chi$, direct integration of
\eqref{ipm:normalized-scalar-equation} gives the solution with zero initial data:
\begin{equation}\label{ipm:oscillatory-inverse}
 A(t,y,\theta)=\gamma(t)\int_0^t
 E_{\sigma,t}(y)\frac{f(\sigma,y,\theta)}{\gamma(\sigma)}\,d\sigma.
\end{equation}
Since $E_{\sigma,t}f(\sigma)=\widetilde E_{\sigma,t}f(\sigma)$ on the source
support, the multiplier estimate applied inside the time integral gives,
for $|f(t)|_n\le\gamma(t)A_*E_d(n)$ and $R\ge2R_E$ with
$R_E=\max\{8,R_\chi\}$,
\begin{equation}\label{ipm:expanded-mean-scalar-bound}
 |A(t)|_n\le
 C_sS\gamma(t)A_*E_d(n)\sum_{l=0}^n(R_E/R)^l
 \le2C_sS\gamma(t)A_*E_d(n).
\end{equation}
To bound $A_t=-c_A^{\ell}A+f$, the scale condition
$\ell K^{2A+3}\le\varepsilon_*$ and the global coefficient bounds
\eqref{ipm:expanded-cascade-amplitude-coefficient} give, on $\mathbb T_\ell^2$,
\[
 \sum_{|I|=n}\|\partial_y^Ic_A^{\ell}\|_\infty
 \le A_c(\ell B_c)^n(n!)^2\le(n!)^2\qquad(n\ge1),
\]
while $\|c_A^{\ell}\|_\infty\le A_c\le K^A$.
Including the fixed base derivatives by factorial splitting yields
\[
 \sum_{|I|=n}\|\partial_y^Ic_A^{\ell}\|_{W^{s,\infty}}
 \le C_sK^A4^n(n!)^2\qquad(n\ge0).
\]
The multiplication argument of \eqref{ipm:expanded-mean-multiplication}
therefore applies with coefficient growth rate $4$ whenever $R\ge8$.
Since $R\ge2R_E\ge16$, the bound for $A_t$ follows with the
same $R,d$ and only a fixed polynomial cost in $K$.
The operator is local in $y$, so support and zero angular mean
are preserved. The calculation
also bounds the leading homogeneous coefficient
$A_1=\gamma(t)\chi E_{s_0,t}f_\delta$ over its entire history.
\end{proof}

\subsection{The lifted Darcy equation and its (time derivative) estimates}

\begin{proposition}
\label{ipm:lifted-inverse}
For the parent coefficients on $\mathbb T_\ell^2\times\mathbb T$, let
$D=k^{-1}\nabla_y+m_0\partial_\theta$. For each smooth scalar $A$, the system
\begin{equation}\label{ipm:lifted-darcy}
 \mathcal V_D A=-ca_2A-Gp,\qquad
 \operatorname{div}_D \mathcal V_D A=0,\qquad
 p\in\mathcal H_D:=
 \overline{\{Dq:q\text{ smooth periodic}\}}^{L^2}
\end{equation}
has a unique pressure vector $p\in\mathcal H_D$ and velocity
$\mathcal V_D A$. Let $\Pi_D$ be the $L^2$-orthogonal projection onto
$\mathcal H_D$, set $T_G=\Pi_DG|_{\mathcal H_D}$, and define
\begin{equation}\label{ipm:darcy-projection}
 \mathbb P_G=I-GT_G^{-1}\Pi_D.
\end{equation}
For $R\ge K^C$ and $d\ge0$, the inverse and $\mathbb P_G$ preserve
bounds of the form $|v|_n\le A_*E_d(n)$, with the same $R,d$ and
with $A_*$ multiplied by at most $K^C$. Explicitly, for all $n\ge0$,
\[
 \begin{aligned}
 |A|_n\le A_*E_d(n)&\quad\Longrightarrow\quad
 |(p,\mathcal V_D A)|_n\le K^CA_*E_d(n),\\
 |v|_n\le A_*E_d(n)&\quad\Longrightarrow\quad
 |\mathbb P_Gv|_n\le K^CA_*E_d(n).
 \end{aligned}
\]
For $0<\rho\le K^{-C}$ and every integer $N\ge0$, they satisfy
\begin{equation}\label{ipm:darcy-gevrey}
 \|\mathcal V_D A\|_{\rho,N}\le K^C\|A\|_{\rho,N},
 \qquad
 \|\mathbb P_Gv\|_{\rho,N}\le K^C\|v\|_{\rho,N}.
\end{equation}
after a fixed enlargement of $C$.
Under the same condition $R\ge K^C$, if
$|(A,A_t)|_n\le A_*E_d(n)$ for all $n\ge0$, then
$|(p_t,(\mathcal V_D A)_t)|_n\le K^CA_*E_d(n)$ with the same $R,d$.
All constants are uniform in $\ell,k$ and in the resonances of $D$.
\end{proposition}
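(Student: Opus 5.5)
The plan follows the template of Proposition~\ref{ipm:meaninverse}, Steps 3--6. The time-dependent ODE is replaced by a single coercive elliptic inversion at each time, plus a commutator induction at a fixed growth rate $R$.

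\textbf{Existence and uniqueness.} The divergence constraint $\operatorname{div}_D(-ca_2A-Gp)=0$ with $p\in\mathcal H_D$ is equivalent to $\Pi_D(Gp)=-c\Pi_D(a_2A)$. Here we use that $\mathcal H_D^\perp$ is the space of $D$-divergence-free fields, identified on each Fourier mode $(\nu,j)$ as the complement of the line spanned by $\eta=(\ell/k)(\nu+jn_0)$. By Lemma~\ref{ipm:packet-calculus},
\[
\langle p,T_Gp\rangle=\|F^{-T}p\|_2^2\ge K^{-6}\|p\|_2^2,
\]
so $T_G$ is coercive and self-adjoint on $\mathcal H_D$, with inverse of norm at most $K^6$. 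Hence $p=-cT_G^{-1}\Pi_D(a_2A)$ is unique. This bound uses only $L^2$ positivity, not any lower bound on $|\eta|$, so it is uniform in $\ell,k$ and in the resonances of $D$. The same coercivity shows that $\mathbb P_G$ is bounded on $L^2$ by $1+K^{12}$, that it fixes $\mathcal H_D^\perp$, and that it annihilates $G\mathcal H_D$.

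\textbf{Fixed base regularity.} $\Pi_D$ is a Fourier multiplier, so it commutes with $\partial_y$ and $\partial_\theta$ and is a contraction on every $H^j$. Only the coefficients $G,a_2$ fail to commute, and they depend on $y$ alone. Commuting a word $\partial^I$ through $T_Gp=h$ gives
\[
T_G\partial^Ip=\partial^Ih-\sum_{\emptyset\ne J}\Pi_D\bigl((\partial^{I_J}G)\partial^{I_{J^c}}p\bigr),
\]
where $\partial^Ip\in\mathcal H_D$ because $\mathcal H_D$ is translation invariant. Difference quotients, exactly as in Step 3 of Proposition~\ref{ipm:meaninverse}, justify smoothness. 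For the $H^s$ base with $s=3$, iterating this identity over the base derivatives gives a constant $I_s$ that is polynomial in the coefficient bounds, hence at most $K^C$. These derivatives fall on $y$-dependent coefficients expressed in $y=z/\ell$, which only brings favorable powers of $\ell$, and $\partial_\theta$ commutes outright. Therefore the $W^{s,\infty}$ coefficient class \eqref{ipm:expanded-mean-multiplier-class} holds with growth rate $R_s\le K^C$.

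\textbf{All-order bounds at fixed $R$.} Set $U_n=|p|_n$. The commutator identity gives
\[
U_n\le I_s\Bigl[c|a_2A|_n+\Lambda_s\sum_{l\ge1}\binom nl R_s^l(l!)^2U_{n-l}\Bigr].
\]
For $|A|_n\le A_*E_d(n)$ and $R\ge\max(2R_s,8I_s\Lambda_sR_s)$, I would run the induction of \eqref{ipm:mean-all-order-bound} using \eqref{ipm:shiftabsorption}; it closes uniformly in $d$ and yields $U_n\le K^CA_*E_d(n)$. The velocity $\mathcal V_DA=-ca_2A-Gp$ then follows from \eqref{ipm:expanded-mean-multiplication}. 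For $\mathbb P_Gv=v-Gq$ with $q=T_G^{-1}\Pi_Dv$, the same induction applies. The truncated Gevrey norms \eqref{ipm:darcy-gevrey} follow by summing the same recursion with weights $\rho^n/(n!)^2$ for $n\le N$. The commutator sum is a convolution, and it is bounded by $C_s\|G\|_{\rho}\|p\|_{\rho,N}$ with $\sum_l(\rho R_s)^l\le 2$ once $\rho\le K^{-C}$. This lets the lower-order terms be absorbed.

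\textbf{Time derivatives and the main obstacle.} Differentiating $T_Gp=-c\Pi_D(a_2A)$ in $t$ gives
\[
T_Gp_t=-c\Pi_D\bigl((a_2)_tA+a_2A_t\bigr)-\Pi_D(G_tp).
\]
Here $G_t$ and $(a_2)_t$ are bounded by Lemma~\ref{ipm:packet-calculus}, so the already established all-order estimate, applied to this source, bounds $p_t$, and then $(\mathcal V_DA)_t=-c(a_2A)_t-G_tp-Gp_t$. Justifying the time differentiability is routine via the resolvent identity. The step I expect to be the main obstacle is making sure that no constant depends on $k$ or on near-resonant modes with small $|\eta|$. I would handle this by working exclusively with the $L^2$ coercivity and with the commutation of $\Pi_D$ with all derivatives, and by never dividing by $|\eta|$.
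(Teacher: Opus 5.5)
Your proposal is correct and follows the paper's proof essentially step for step: coercive inversion of $T_G$ on $\mathcal H_D$ using only $L^2$ positivity, the commutator and difference-quotient bootstrap to a fixed $H^3$ base, the triangular induction at one fixed $R$, weighted summation for the truncated Gevrey norms, and time differentiation of the projected pressure equation justified by the resolvent identity. One point to tighten: absorption in the truncated Gevrey sum needs the commutator part alone to satisfy $I_s\Lambda_s\sum_{l\ge1}(\rho R_s)^l\le\tfrac12$, for instance by taking $\rho\le(8I_s\Lambda_sR_s)^{-1}$; the condition $\sum_l(\rho R_s)^l\le2$ that you state does not by itself give absorption.
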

\begin{proof}
\textbf{Step 1. Coercive inversion and derivative bounds.}
At a lifted Fourier frequency $(\xi,j)$ put
$\eta=k^{-1}\xi+jm_0$. The orthogonal projection $\Pi_D$ onto
$\mathcal H_D$ has symbol $\eta\otimes\eta/|\eta|^2$ when
$\eta\ne0$, and zero when $\eta=0$. It has norm at most one on
$L^2$ and on every ordinary $H^s$, commutes with ordinary derivatives
and translations, and is independent of time. All these facts are
uniform in $k$, $\ell$, and the possible resonances.
Distributional $D$-divergence-free vectors are exactly
$\ker\Pi_D$. The pressure equation is therefore
\begin{equation}\label{ipm:coercive-darcy}
 T_Gp=-c\Pi_D(a_2A).
\end{equation}
For $p\in\mathcal H_D$,
$\langle p,T_Gp\rangle=\int p\cdot Gp\ge\lambda\|p\|_2^2$,
where $\lambda^{-1}\le K^C$. The closed-range/self-adjoint argument
used in \eqref{ipm:expanded-mean-coercivity} gives
$\|T_G^{-1}\|_{L^2\to L^2}\le\lambda^{-1}$.
This is an inverse on vector gradients; no scalar potential or
division by $|\eta|$ appears.

The difference quotient proof used above applies unchanged, because
$\Pi_D$ commutes with translations. Thus the inverse maps $H^j$ to
$H^j$ for every fixed $j$, and smooth right sides give smooth pressure
vectors. More quantitatively, for $T_Gp=\Pi_Dh$ one has
\[
 T_G\partial^Ip=\Pi_D\partial^Ih-
 \Pi_D\sum_{\emptyset\ne J\subseteq\{1,\ldots,|I|\}}
       (\partial^{I_J}G)\partial^{I_{J^c}}p.
\]
At the fixed base $s$, the same finite induction gives a valid
constant
\begin{equation}\label{ipm:expanded-mean-elliptic-base}
 J_s=C_s(s+1)\lambda^{-1}
        (1+2^s\lambda^{-1}M_s)^s
\end{equation}
for the map $h\mapsto p$ on $H^s$, where $M_s$ is enlarged to cover
the fixed derivatives of $G$. The assertion holds for arbitrary
finite sums of input norms with no family-size factor, since the
estimate is instantaneous and can simply be summed.
Consequently the additional derivative norms satisfy
\begin{equation}\label{ipm:expanded-mean-pressure-triangular}
 |p|_n\le J_s\left[|h|_n+
 \Lambda_s\sum_{l=1}^n\binom nlR_s^l(l!)^2|p|_{n-l}\right].
\end{equation}
With $R\ge\max(2R_s,8J_s\Lambda_sR_s)$, exactly the same induction
as for \eqref{ipm:mean-triangular} gives
$|p|_n\le2J_s A_*E_d(n)$ whenever $|h|_n\le A_*E_d(n)$.
Apply the multiplication estimate to $h=-ca_2A$ and to $Gp$ in
\eqref{ipm:lifted-darcy}. This proves an arbitrary-shift bound for
$\mathcal V_D$ with the same $R,d$ and only a fixed polynomial prefactor.

The operator $\mathbb P_G$ in \eqref{ipm:darcy-projection} maps onto $\ker\Pi_D$, because $\Pi_D\mathbb P_G=0$, is the
identity on that kernel, and annihilates $G\mathcal H_D$.
The same estimates bound it on every shifted class. It need not be
orthogonal when $G$ is variable.

\textbf{Step 2. Finite-cutoff Gevrey bounds for the lifted inverse.}
The correction argument needs bounds for truncated derivative sums,
not just pointwise factorial estimates. Put
$\|v\|_{\rho,N}=\sum_{n=0}^N\rho^n|v|_n/(n!)^2$.
In the pressure commutator, the relevant weight identity is
\[
 \frac{\rho^n}{(n!)^2}\binom nl(l!)^2
 \frac{((n-l)!)^2}{\rho^{n-l}}
 =\frac{\rho^l}{\binom nl}\le\rho^l.
\]
Multiply \eqref{ipm:expanded-mean-pressure-triangular} by the weights
and sum. Each commutator index satisfies $n-l\le N$, so
\[
 \|p\|_{\rho,N}\le J_s\|h\|_{\rho,N}
 +J_s\Lambda_s\sum_{l=1}^N(\rho R_s)^l\|p\|_{\rho,N}.
\]
Choose $0<\rho\le(8J_s\Lambda_sR_s)^{-1}$, reducing the fixed upper
bound if necessary. The second term is absorbed. Multiplication by
a coefficient is bounded in the same finite sum, by Leibniz and
$\binom nl^{-1}\le1$. This proves \eqref{ipm:darcy-gevrey} uniformly in $N$.
Monotone convergence gives the full Gevrey estimates. The permissible
upper radius is a fixed inverse power of $K$; in the construction a
radius above a fixed inverse power of $\mathcal P$ is chosen beneath
it. This formulation separates the interval of valid radii from the
single radius eventually selected.

\textbf{Step 3. One time derivative without differentiating the hierarchy.}
The space $\mathcal H_D$ and the projection $\Pi_D$ do not vary in
time. Applying \eqref{ipm:seventh-foundations-time-inverse} from
Lemma~\ref{ipm:time-projected-inverse} with coefficient $G$, unknown $p$,
projection $\Pi_D$, and forcing $-ca_2A$ to
\eqref{ipm:coercive-darcy} gives
\begin{equation}\label{ipm:darcy-time-derivative}
 \Pi_D(Gp_t)=-c\Pi_D((a_2)_tA+a_2A_t)-\Pi_D(G_tp).
\end{equation}
The multiplier class was chosen to include $G_t,(a_2)_t$.
If $A,A_t$ have amplitude at most $A_*$ in one shifted class, its
right side has amplitude at most a fixed polynomial times $A_*$,
using the already established estimate for $p$.
Applying the same inverse once more gives $p_t$, and
\[
 (\mathcal V_D A)_t
 =-c(a_2)_tA-ca_2A_t-G_tp-Gp_t
\]
has the same $R,d$ and a fixed polynomial prefactor. The same assertion
holds for finite-cutoff Gevrey norms. It needs no $A_{tt}$ and no time
derivative of a recursive oscillatory pressure coefficient.
\end{proof}

\subsection{Uniformity of growth constants \texorpdfstring{$R$}{R}}

This subsection summarizes the coefficient multiplication and inverse estimates
from Sections~\ref{ipm:sec-mean}--\ref{ipm:sec-inverses} and chooses one
growth constant $R$ valid for all inverses in each step of the hierarchy.

\begin{corollary}
\label{ipm:common-growth}
There is a fixed exponent $r$ such that $R=\mathcal P^r$ is
admissible simultaneously for coefficient multiplication, the selected
mean inverse, the localized scalar inverse, the lifted Darcy inverse,
and its time derivative. Each operation returns the same $R$ and
the same arbitrary input shift, up to a fixed polynomial prefactor.
The choice is independent of the hierarchy depth.
\end{corollary}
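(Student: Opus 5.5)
The plan is to collect the explicit admissibility thresholds from the four preceding results and observe that each is bounded by a fixed power of $K$ or by a fixed constant, hence by a fixed power of $\mathcal P$. Since $\mathcal P\ge 2+K$, any threshold of the form $K^{C}$ is at most $\mathcal P^{C}$. So it suffices to take $r$ at least the largest exponent that appears, plus a margin for absolute constants: since $\mathcal P\ge2$, a constant $C_0$ is absorbed by a factor $\mathcal P^{\log_2 C_0}$.

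First, coefficient multiplication. By \eqref{ipm:expanded-mean-multiplication}, multiplication by any coefficient of \eqref{ipm:label-coefficients} preserves the class $|v|_n\le A_*E_d(n)$ with prefactor $2\Lambda_s$ whenever $R\ge 2R_s$. Here $R_s=4\mathsf R$ and $\mathsf R,\Lambda_s\le K^{C}$ by Lemma~\ref{ipm:packet-calculus}. The same applies to $c_A^\ell$, which has growth rate $4$ by the proof of Proposition~\ref{ipm:scalar-inverse}.

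Second, the inverses.
\begin{itemize}
\item The mean inverse, Proposition~\ref{ipm:meaninverse}, requires $R\ge K^{A_1}$. Concretely this is $R_{\rm mean}=\max\{2R_s,8I_s\Lambda_sR_s\}$, where $I_s$ is the polynomial constant \eqref{ipm:expanded-mean-base-constant}. Its prefactor is $K^{A_2}$.
\item The localized scalar inverse requires $R\ge 2R_E$ with $R_E=\max\{8,R_\chi\}$, which is an absolute constant. Its prefactor is $K^C$.
\item The lifted Darcy inverse, $\mathbb P_G$, and the time-derivative statement of Proposition~\ref{ipm:lifted-inverse} all require $R\ge K^{C}$. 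Specifically, $R\ge\max(2R_s,8J_s\Lambda_sR_s)$, with $J_s$ given by \eqref{ipm:expanded-mean-elliptic-base}.
\end{itemize}
Each of these results is stated for every $d\ge0$ and proved by an induction uniform in $d$ (through \eqref{ipm:shiftabsorption}). Each returns the same $R$ and the same shift $d$. Choosing $r$ larger than all the exponents above therefore makes $R=\mathcal P^r$ admissible for every operation at once, with prefactors bounded by $K^C\le\mathcal P^C$.

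Third, independence of the hierarchy depth. None of the thresholds depends on $p$, $N$, $k$, $\ell$ beyond $\ell^{-1}\le\mathcal P$, $\alpha$, or on the input shift. The constants are uniform as the torus is enlarged and in the resonances of $D$; those results state this explicitly. The only mild point, which I would check carefully, is the scalar inverse. Its hypothesis $\ell K^{2A+3}\le\varepsilon_*$ is a standing scale assumption and does not constrain $R$. Its source bound carries the weight $\gamma(t)$, which multiplies the prefactor rather than affecting $R$.

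I do not expect a substantive obstacle. The step requiring care is confirming that every \emph{time-derivative} bound uses only multipliers ($G_t$, $(a_2)_t$, $F_t$) already included in the enlarged $\mathsf M,\mathsf R$, so that no further enlargement of $R$ is needed.
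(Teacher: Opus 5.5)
Your proposal is correct and is essentially the paper's argument. The paper tabulates the same thresholds ($2R_s$ for multiplication, $K^{A_1}$ for the mean inverse, $2R_E$ for the scalar inverse, $K^C$ for the Darcy inverse and its time derivative) and chooses $r$ so that $R=\mathcal P^r$ exceeds all of them, and also the fixed profile and cutoff growth rates needed later for $A_1$. The time-derivative point you flag is handled as you expect: Step~1 of the proof of Proposition~\ref{ipm:meaninverse} enlarges $\mathsf M,\mathsf R$ by fixed powers of $K$ to include $G_t,(a_2)_t$, so Proposition~\ref{ipm:lifted-inverse} requires no further increase of $R$.
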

\begin{proof}
The preceding input and output requirements can be summarized as follows.
All polynomial factors below are independent of $n,d$ and the number
of times an operator is called.
\begin{center}
\begin{tabular}{>{\raggedright\arraybackslash}p{0.22\linewidth}>{\raggedright\arraybackslash}p{0.16\linewidth}>{\raggedright\arraybackslash}p{0.24\linewidth}>{\raggedright\arraybackslash}p{0.25\linewidth}}
\hline
Operation and reference & Required $R$ & Input bound & Output and cost\\
\hline
Coefficient multiplication, \eqref{ipm:expanded-mean-multiplication}
& $R\ge2R_s$
& $|v|_n\le A_*E_d(n)$; coefficient in $W^{s,\infty}$
& Same $R,d$; factor $2\Lambda_s$\\
Selected mean inverse, Proposition~\ref{ipm:meaninverse}
& $R\ge K^{A_1}$
& $|f|_n\le A_*E_d(n)$; zero spatial integral
& $B,B_t,\mathcal V_{\mathrm{mean}}B,\nabla\bar q$; same $R,d$, factor $K^{A_2}$\\
Supported scalar inverse, Proposition~\ref{ipm:scalar-inverse}
& $R\ge2R_E$ (independent of $K$)
& $|f(t)|_n\le\gamma(t)A_*E_d(n)$; supported in $\operatorname{supp}\chi$
& $A,A_t$; $A(0)=0$; same $\gamma,R,d$, factor $K^C$\\
Lifted Darcy and projection, Proposition~\ref{ipm:lifted-inverse}
& $R\ge K^C$
& $|A|_n\le A_*E_d(n)$ or $|v|_n\le A_*E_d(n)$
& $p,\mathcal V_D A$ or $\mathbb P_Gv$; same $R,d$, factor $K^C$\\
Time derivative of Darcy, Proposition~\ref{ipm:lifted-inverse}
& $R\ge K^C$
& $|(A,A_t)|_n\le A_*E_d(n)$
& $p_t,(\mathcal V_D A)_t$; same $R,d$, factor $K^C$\\
\hline
\end{tabular}
\end{center}
The finite-cutoff bounds \eqref{ipm:darcy-gevrey} hold for
$0<\rho\le K^{-C}$, including the time-derivative bounds of
Proposition~\ref{ipm:lifted-inverse}.
Choose $R=\mathcal P^r$ to satisfy all thresholds in the table and
to dominate the fixed profile and cutoff growth constants.
Each threshold is a fixed polynomial in the prescribed parameters.
Increasing $r$ also permits fixed polynomial prefactors to be absorbed
into a fixed number of factorial shifts. This selection is made once,
before choosing a hierarchy order. Every inverse thereafter accepts
and returns precisely this $R$.
\end{proof}

\begin{remark}
No global $H^s$ norm of a nondecaying coefficient has been used in
these arguments. If such a norm were substituted on the enlarged
two-dimensional torus it could introduce a factor $\ell^{-1}$;
the multiplier formulation avoids that factor altogether. Pressure
potentials also have not been estimated: all coercive inverses act on
vector gradients. These two choices are what make the stated period
uniformity literal rather than an unrecorded scaling convention.
\end{remark}

\section{A finite-order approximation}
\label{ipm:sec-finite}

At depth $N$, write the finite expansions
\begin{equation}\label{ipm:raw-expansions}
 \begin{aligned}
 w^a&=\sum_{p=1}^Nk^{-p} f_p,& f_p&=A_p+B_p,\\
 q^a&=\sum_{p=1}^N(k^{-p}\bar q_p+k^{-p-1}\pi_p),\\
 v_{\mathrm{raw}}^a&=\sum_{p=1}^N
       [k^{-p}(V_p+\mathcal V_{\mathrm{mean}}B_p)+k^{-p-1}C_p].
 \end{aligned}
\end{equation}
Here $q^a$ is a scalar approximate pressure, and $v_{\mathrm{raw}}^a$
is the raw label velocity. For exact fields, $v_{\mathrm{raw}}=k^{-1}v$
with $v=\mathcal V_D A$ and $A=kw$.
The fields $A_p,V_p,\pi_p,\psi_p,C_p$ have
zero angular mean, while $B_p,\bar q_p$ are independent of $\theta$.
Given $A_p$ and the preceding coefficients, reconstruct the velocity
and pressure for $1\le p\le N$ by
\begin{equation}\label{ipm:curl-recurrence}
 \begin{aligned}
 J_{p-1}&=G\nabla_y\pi_{p-1}+C_{p-1},\\
 V_p&=b_0A_p-LJ_{p-1},\\
 \partial_\theta\pi_p
   &=\frac{m_0\cdot(-ca_2A_p-J_{p-1})}{m_0\cdot Gm_0},
       &\langle\pi_p\rangle&=0,\\
 \psi_p&=\partial_\theta^{-1}(m_0^\perp\cdot V_p),
       &C_p&=\nabla_y^\perp\psi_p.
 \end{aligned}
\end{equation}
Write the coefficient of $k^{-p}$ in $v_{\mathrm{raw}}^a$ as
\begin{equation}\label{ipm:velocity-coefficient}
 v_{\mathrm{raw},p}=V_p+\mathcal V_{\mathrm{mean}}B_p+C_{p-1}
     =b_0A_p+\mathcal V_{\mathrm{mean}}B_p+E_p,\qquad
 E_p=-LJ_{p-1}+C_{p-1}.
\end{equation}
Set $v_{\mathrm{raw},N+1}=C_N$. For $2\le p\le N$, define the known source
\begin{equation}\label{ipm:full-source}
 \begin{aligned}
 Z_p={}&E_p\cdot R_0+
       \sum_{\substack{i+j=p\\i,j\ge1}}v_{\mathrm{raw},i}\cdot\nabla_y f_j\\
 &+\sum_{\substack{i+j=p+1\\2\le i\le p-1}}
          (m_0\cdot \mathcal V_{\mathrm{mean}}B_i)\partial_\theta A_j\\
 &+\sum_{\substack{i+j=p+1\\2\le i\le p}}
          (m_0\cdot C_{i-1})\partial_\theta A_j .
 \end{aligned}
\end{equation}
The raw scalar equation to be solved is
\begin{equation}\label{ipm:raw-scalar}
 w_t+v_{\mathrm{raw}}\cdot R_0+
 v_{\mathrm{raw}}\cdot(\nabla_y+km_0\partial_\theta)w=0.
\end{equation}

\begin{proposition}[Triangular coefficient construction and mean compatibility]
\label{ipm:hierarchy-construction}
Under the layer hypotheses, prescribe
$A_1(t,y,\theta)=\alpha\chi(y)a(t,y)f_\delta(\theta)$,
$B_1=\bar q_1=0$, and $\pi_0=C_0=0$.
At every finite depth $N$, smooth coefficients in
\eqref{ipm:raw-expansions} can be selected recursively as follows.
At each order $2\le p\le N$, first select the mean by solving
\begin{equation}\label{ipm:mean-recurrence}
 (B_p)_t+(\mathcal V_{\mathrm{mean}}B_p)\cdot R_0=-\langle Z_p\rangle
\end{equation}
then solve the oscillatory equation with zero initial data,
\begin{equation}\label{ipm:oscillatory-recurrence}
 (A_p)_t+c_A^{\ell}A_p
   =-Z_p+\langle Z_p\rangle
      -(m_0\cdot \mathcal V_{\mathrm{mean}}B_p)\partial_\theta A_1,\qquad A_p(0)=0,
\end{equation}
and finally reconstruct the velocity and pressure by
\eqref{ipm:curl-recurrence}. Every mean source has zero spatial integral.
The finite velocity $v_{\mathrm{raw}}^a$ is exactly $D$-divergence free,
and substitution of the finite expansions into \eqref{ipm:raw-scalar}
cancels every power $k^{-p}$, $1\le p\le N$.
\end{proposition}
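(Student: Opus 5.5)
The proof is an induction on $p$ that checks three things: each step is solvable, the finite velocity is divergence free, and the powers $k^{-p}$ cancel. The induction hypothesis at order $p$ is that $A_i,B_i,\pi_{i},\psi_i,C_i,V_i,\bar q_i$ are smooth for $i\le p-1$, with the stated zero-angle-mean and angle-independence properties. Then $J_{p-1}$, $E_p$ and every term of $Z_p$ in \eqref{ipm:full-source} are known smooth fields. The only exception is $v_{\mathrm{raw},p}\cdot\nabla_yf_0$, which does not occur because indices are positive. The terms with $i=p$ or $j=p$ in the slow sum would involve unknowns. However, $j\ge1$ forces $i\le p-1$, and $i\ge1$ forces $j\le p-1$, so $Z_p$ is genuinely known.

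\textbf{Mean compatibility and solvability.} I will establish the mean compatibility by repeating the computation \eqref{ipm:intro-mean-flux}, now with full justification. The ingredients are as follows.
\begin{itemize}
\item $\langle E_p\rangle=0$, since $L$ and $G$ are angle independent while $\pi_{p-1}$ and $C_{p-1}$ have zero mean.
\item The $\mathcal V_{\mathrm{mean}}B_i$ terms have zero mean.
\item The identity $D\cdot(V_i+k^{-1}C_i)=0$, i.e.\ $\operatorname{div}_yV_i=-m_0\cdot\partial_\theta C_i$, together with $\operatorname{div}_yC_i=0$ and $\operatorname{div}_y\mathcal V_{\mathrm{mean}}B_i=0$.
\item Integration by parts in $\theta$, with the fast index shifted.
\end{itemize}
For the shift I must check the index ranges. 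The fast sum runs over $i$ from $2$ to $p$, with $C_{i-1}$ paired with $A_{p+1-i}$. After the shift the term becomes $C_{i'}$ paired with $f_{p-i'}$ for $1\le i'\le p-1$, which matches exactly the slow sum over $i+j=p$. The replacement of $A_j$ by $f_j$ inside $\partial_\theta$ is harmless since $\partial_\theta B_j=0$. This gives $\langle Z_p\rangle=\operatorname{div}_y\sum\langle v_{\mathrm{raw},i}f_j\rangle$, which has zero spatial integral. Proposition~\ref{ipm:meaninverse} then supplies $B_p$, $\mathcal V_{\mathrm{mean}}B_p$ and $\nabla\bar q_p$.

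The right side of \eqref{ipm:oscillatory-recurrence} has zero angle mean and is supported in $\operatorname{supp}\chi$. Support holds because every oscillatory coefficient inherits the support of $A_1$, and Proposition~\ref{ipm:scalar-inverse} together with the local reconstruction preserves support. The mean parts multiply $\partial_\theta A_j$ or appear only through $\langle Z_p\rangle$, which is subtracted. Solvability then follows from Proposition~\ref{ipm:scalar-inverse}. The reconstruction \eqref{ipm:curl-recurrence} is well defined because the $\theta$-primitives act on zero-mean functions. The numerator of $\partial_\theta\pi_p$ has zero mean since $A_p$, $\pi_{p-1}$ and $C_{p-1}$ do, and $m_0^\perp\cdot V_p$ has zero mean. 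The identity $m_0\cdot V_p=0$ follows from $m_0\cdot L=0$ and $m_0\cdot b_0=0$.

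\textbf{Divergence and order cancellation.} Divergence freeness comes from the telescoped identity. Each $V_p+k^{-1}C_p=D^\perp\psi_p$, and each mean velocity is $\operatorname{div}_y$-free and angle independent, hence $D$-free. This is exactly why $k^{-N-1}C_N$ is retained in \eqref{ipm:raw-expansions}.

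For cancellation, I expand \eqref{ipm:raw-scalar} in powers of $k$. At order $k^{-p}$, the terms are the following.
\begin{itemize}
\item $(f_p)_t$.
\item $v_{\mathrm{raw},p}\cdot R_0=(b_0A_p+\mathcal V_{\mathrm{mean}}B_p+E_p)\cdot R_0$.
\item The slow products over $i+j=p$.
\item The fast products $(m_0\cdot v_{\mathrm{raw},i})\partial_\theta A_j$ over $i+j=p+1$, since $k\cdot k^{-i}k^{-j}=k^{-p}$.
\end{itemize}
In the fast products, the term $i=1$ vanishes, and $i=p$ gives $(m_0\cdot\mathcal V_{\mathrm{mean}}B_p)\partial_\theta A_1$ plus the $C_{p-1}$ term. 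Using $c_A^\ell=b_0\cdot R_0$, the sum of all these terms equals $(B_p)_t+\mathcal V_{\mathrm{mean}}B_p\cdot R_0+(A_p)_t+c_A^\ell A_p+Z_p+(m_0\cdot\mathcal V_{\mathrm{mean}}B_p)\partial_\theta A_1$. This vanishes by \eqref{ipm:mean-recurrence} and \eqref{ipm:oscillatory-recurrence}. For $p=1$ the sum is simply $(A_1)_t+c_A^\ell A_1=0$, by \eqref{ipm:leading-amplitude}. The velocity coefficient $v_{\mathrm{raw},N+1}=C_N$ only contributes at orders beyond $N$.

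\textbf{Main obstacle.} I expect the main difficulty to be the careful index bookkeeping in the mean-flux identity. Specifically, I must verify that after the shift the fast $C$ terms cover exactly the pairs $(i,j)$ with $1\le i\le p-1$ in the slow sum. The $C_N$ tail must also not be double-counted.
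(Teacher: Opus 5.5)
Your proposal is correct and follows essentially the paper's proof. Incompressibility comes from the curl pairs $V_p+k^{-1}C_p=D^\perp\psi_p$ together with the $y$-divergence-free means; cancellation comes from the same order count, with $(m_0\cdot\mathcal V_{\mathrm{mean}}B_p)\partial_\theta A_1$ as the only new fast term; and compatibility comes from the slow--fast integration by parts that writes $\langle Z_p\rangle$ as the exact divergence \eqref{ipm:expanded-hierarchy-local-flux} (the paper's additional global-integration check and the explicit formula for $\langle Z_2\rangle$ are redundant with your argument).
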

\begin{proof}
\textbf{Step 1. Darcy reconstruction and incompressibility.}
In \eqref{ipm:curl-recurrence}, every coefficient multiplying an angular
function is independent
of $\theta$. Hence the right side defining $\pi_p$ has zero mean.
Also $m_0\cdot L=0$, so $m_0\cdot V_p=0$. Since $|m_0|=1$,
$V_p=m_0^\perp\partial_\theta\psi_p$. It follows that
\begin{equation}\label{ipm:exact-curl-pairs}
 k^{-p}V_p+k^{-p-1}C_p
      =k^{-p}D^\perp\psi_p.
\end{equation}
These pairs are exactly $D$-divergence free. The mean velocity is
also $D$-divergence free, since
$D\cdot \mathcal V_{\mathrm{mean}}B_p=k^{-1}\operatorname{div}_y \mathcal V_{\mathrm{mean}}B_p=0$.
Thus the finite $v_{\mathrm{raw}}^a$ is exactly incompressible, including its last
term $k^{-N-1}C_N$.

By \eqref{ipm:velocity-coefficient}, the field $E_p$ is known from the preceding oscillatory bundle,
has zero angular mean, and satisfies
\[
 m_0\cdot E_p=m_0\cdot C_{p-1}.
\]
In particular $E_1=0$. Substitution in
\eqref{ipm:curl-recurrence} also gives the coefficientwise Darcy identity
\begin{equation}\label{ipm:expanded-hierarchy-darcy-coefficient}
 V_p+C_{p-1}
   =-ca_2A_p-Gm_0\partial_\theta\pi_p
                      -G\nabla_y\pi_{p-1}.
\end{equation}
Indeed the right side is
$L(-ca_2A_p-J_{p-1})+C_{p-1}=V_p+C_{p-1}$.

\textbf{Step 2. Triangular recursion and scalar cancellation.}
In \eqref{ipm:raw-scalar}, the slow quadratic term has order $k^{-i-j}$ and the fast term
has order $k^{1-i-j}$. Thus at order $k^{-p}$ their sum is
\begin{equation}\label{ipm:expanded-hierarchy-order-count}
 \sum_{\substack{i+j=p\\i,j\ge1}}v_{\mathrm{raw},i}\cdot\nabla_y f_j
 +\sum_{\substack{i+j=p+1\\i,j\ge1}}
                 (m_0\cdot v_{\mathrm{raw},i})\partial_\theta A_j.
\end{equation}
The angular derivative annihilates $B_j$.
For $p=1$ the only possible quadratic term is
$(m_0\cdot v_{\mathrm{raw},1})\partial_\theta A_1=0$, so
$(A_1)_t+c_A^{\ell}A_1=0$, as prescribed.

For $p\ge2$ all indices in the slow sum are at most $p-1$.
In the fast sum the $i=1$ term is zero. For $i\ge2$,
\[
 m_0\cdot v_{\mathrm{raw},i}=m_0\cdot \mathcal V_{\mathrm{mean}}B_i+m_0\cdot C_{i-1}.
\]
The only current unknown in this expression is the term with
$i=p,j=1$ containing $B_p$. Thus every term of $Z_p$ in \eqref{ipm:full-source} is known before
solving order $p$. The unknown fast term has zero angular average.
First solve \eqref{ipm:mean-recurrence} using the selected mean inverse,
then \eqref{ipm:oscillatory-recurrence} using the localized scalar inverse,
and finally reconstruct the curl and pressure bundle by
\eqref{ipm:curl-recurrence}.
Angular averaging of \eqref{ipm:oscillatory-recurrence} preserves
$\langle A_p\rangle=0$. These equations cancel the coefficient of
$k^{-p}$ in \eqref{ipm:raw-scalar}.

\textbf{Step 3. Compatibility of every mean source.}

The curl pairs imply
\begin{equation}\label{ipm:coefficient-divergence}
 \operatorname{div}_y v_{\mathrm{raw},i}+
                  m_0\cdot\partial_\theta v_{\mathrm{raw},i+1}=0.
\end{equation}
In the first term only $V_i$ contributes: the mean velocity and
$C_{i-1}$ have zero slow divergence. In the second only $C_i$
contributes: $m_0\cdot V_{i+1}=0$ and the mean is angle independent.
The identity consequently holds without knowing the unknown
$A_{i+1},B_{i+1}$. Set $v_{\mathrm{raw},0}=0$; the $i=0$ identity is
$m_0\cdot\partial_\theta v_{\mathrm{raw},1}=0$.

In fact the compatibility holds locally in the slow variable:
\begin{equation}\label{ipm:expanded-hierarchy-local-flux}
 \langle Z_p\rangle
   =\operatorname{div}_y
       \sum_{\substack{i+j=p\\i,j\ge1}}\langle v_{\mathrm{raw},i} f_j\rangle.
\end{equation}
To check this identity, angular integration by parts changes the
fast sum into
$-\sum_{i+j=p+1}\langle(m_0\cdot\partial_\theta v_{\mathrm{raw},i})f_j\rangle$.
The divergence identity converts it to
$\sum_{i+j=p}\langle(\operatorname{div}_yv_{\mathrm{raw},i})f_j\rangle$.
Adding the slow sum gives the displayed divergence. The omitted
current mean drift and $E_p\cdot R_0$ have zero angular average.
All indices in the resulting flux are at most $p-1$, so the source
depends only on already constructed coefficients.

Integrate \eqref{ipm:expanded-hierarchy-order-count} over $y,\theta$.
Integration by parts changes its two sums into
\[
 -\sum_{i+j=p}\int(\operatorname{div}_y v_{\mathrm{raw},i})f_j,\qquad
 -\sum_{i+j=p+1}\int(m_0\cdot\partial_\theta v_{\mathrm{raw},i})f_j.
\]
Use \eqref{ipm:coefficient-divergence} in the second sum and replace
$i-1$ by $i$. Its $i=1$ contribution is zero, and the remaining sum
is the negative of the first displayed sum. Hence their sum vanishes.
The omitted current term
$(m_0\cdot \mathcal V_{\mathrm{mean}}B_p)\partial_\theta A_1$ has zero angular integral.
Also $\langle E_p\rangle=0$. It follows that
\begin{equation}\label{ipm:expanded-hierarchy-mean-compatible}
 \int_{\mathbb T_\ell^2}\langle Z_p\rangle\,dy=0.
\end{equation}
This is exactly the source condition needed by the selected mean
inverse. It is a cancellation of the full slow and fast sums, not an
assertion that each of their products separately has zero integral.

For comparison, the first nonzero mean source is explicitly
\begin{equation}\label{ipm:second-mean-source}
 \begin{aligned}
 \langle Z_2\rangle
 &=\langle A_1b_0\cdot\nabla_yA_1
            +(m_0\cdot C_1)\partial_\theta A_1\rangle\\
 &=\operatorname{div}_y\bigl(b_0\langle A_1^2\rangle\bigr).
 \end{aligned}
\end{equation}
To check the coefficient, integrate the second term by parts in
$\theta$ and use
$m_0\cdot\partial_\theta C_1=-\operatorname{div}_y(b_0A_1)$.
Its sum with the first term is the divergence of $b_0A_1^2$.
Thus no factor of one half is missing.
\end{proof}

\subsection{Coefficient bounds}

We keep the fixed Sobolev base $s=3$ and the growth constant
$R=\mathcal P^r$ chosen above. Write
\[
 E_d(n)=R^{n+d}((n+d)!)^2,\qquad
 f\in[g;d]\quad\Longleftrightarrow\quad
 |f(t)|_n\le g(t)E_d(n)\quad\hbox{for every }t,n.
\]
For a tuple, the norm means the sum of its component norms. In
particular, no unspecified tuple-dependent constant is suppressed.
We use the mean bound \eqref{ipm:mean-bound} and the pointwise,
$\gamma$-weighted scalar bound \eqref{ipm:expanded-mean-scalar-bound}.

Here is a precise convention for the finite operation costs used below.
The same-$R$ inverse estimates, coefficient multiplication estimates,
fixed-dimensional contractions, angular averaging, subtraction of
the average, and the zero-mean angular primitive have prefactors
bounded by fixed powers of $\mathcal P$. A derivative increases a
shift by one. Since
\[
 E_{d+b}(n)/E_d(n)\ge R^b\qquad(b\ge0),
\]
any such fixed prefactor is absorbed by a fixed number of shifts,
uniformly in $d,n$. Choose an integer $h\ge10$, independent of all
orders, large enough that the following conventions hold:
\begin{enumerate}
\item either inverse, including its scalar time derivative and the
mean velocity and pressure-gradient outputs when applicable, costs
at most $h$ shifts;
\item a product with one spatial or angular derivative, including
fixed coefficient contractions and a fixed number of additions,
costs at most $2h$ shifts beyond the two input shifts;
\item the entire finite curl/pressure reconstruction \eqref{ipm:curl-recurrence}
costs at most $2h$ shifts beyond each of its input bundles;
\item forming $E_p\cdot R_0$ from the preceding oscillatory bundle
costs at most $2h$ shifts.
\end{enumerate}
These are a finite list of operations; increasing $h$ covers them
simultaneously. Increase it also so that the leading scalar bundle
$(A_1,(A_1)_t)$ fits at shift $h$ with profile $\gamma$, and to cover
the fixed numerical constants in the counting estimates below. No cost
in this list depends on an expansion index or a factorial shift.
We use
\begin{equation}\label{ipm:hierarchy-shifts}
 a_p=(20p-16)h\quad(p\ge1),\qquad
 b_p=(20p-28)h\quad(p\ge2).
\end{equation}
Notice $a_1=4h$ and $b_2=12h$.

\begin{lemma}[Uniform coefficient bounds]\label{ipm:coefficients}
With $R,h$ chosen above and $a_p,b_p$ defined in \eqref{ipm:hierarchy-shifts}, every order satisfies
\begin{equation}\label{ipm:coefficient-bundles}
 \begin{aligned}
 (A_p,(A_p)_t,V_p,\pi_p,\psi_p,C_p,\nabla_y\pi_p)
       &\in[\gamma\Gamma^{2p-2};a_p],\\
 (B_p,(B_p)_t,\mathcal V_{\mathrm{mean}}B_p,\nabla_y\bar q_p)
       &\in[\Gamma^{2p-2};b_p]\qquad(p\ge2).
 \end{aligned}
\end{equation}
The same $R$ is used at every order. Each finite oscillatory coefficient
is supported in $\operatorname{supp}\chi$ in the label variable.
\end{lemma}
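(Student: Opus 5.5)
The plan is a strong induction on the order $p$, following the triangular recursion of Proposition~\ref{ipm:hierarchy-construction}. At each order we track the shift index and the time profile using the operation costs (1)--(4) fixed above. For the base case, the convention on $h$ puts $(A_1,(A_1)_t)$ in $[\gamma;h]\subset[\gamma;a_1]$. Applying reconstruction \eqref{ipm:curl-recurrence} with $\pi_0=C_0=0$ costs at most $2h$ shifts, so $(V_1,\pi_1,\psi_1,C_1,\nabla_y\pi_1)\in[\gamma;3h]\subset[\gamma;4h]$. The profile $\gamma$ is unchanged because $\Gamma^0=1$. Support in $\operatorname{supp}\chi$ holds because $\chi$ multiplies $A_1$ and every reconstruction operation is local in $y$.

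For the inductive step at order $p\ge2$, assume the bundles for all orders $<p$. We first bound the source $Z_p$ of \eqref{ipm:full-source} term by term.
\begin{itemize}
\item $E_p\cdot R_0$ comes from the order-$(p-1)$ oscillatory bundle and costs $2h$. It lies in $[\gamma\Gamma^{2p-4};a_{p-1}+2h]$.
\item A product $v_{\mathrm{raw},i}\cdot\nabla_yf_j$ or $(m_0\cdot\cdot)\partial_\theta A_j$ with $i+j\in\{p,p+1\}$ and both indices $\le p-1$ lies at shift at most $a_i+a_j+2h$ (or $b_i+a_j+2h$).
\item Its profile is at most $\gamma^2\Gamma^{2i+2j-4}$ when both factors oscillate, and $\gamma\Gamma^{\dots}$ when one factor is a mean.
\end{itemize}
Since $\gamma\le\Gamma$, $\gamma^2\Gamma^{2(i+j)-4}\le\gamma\Gamma^{2p-3}$ when $i+j=p$. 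When $i+j=p+1$ the fast terms involve $C_{i-1}$ or $\mathcal V_{\mathrm{mean}}B_i$, which effectively carry index $i-1$ or a $\Gamma^{2i-2}$ mean profile; this again gives at most $\gamma\Gamma^{2p-2}$ (and $\Gamma^{2p-2}$ for the mean).

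For the shifts, $a_i+a_j+2h=(20p-32)h+2h$ when $i+j=p$. In the fast sum, the index bookkeeping ($C_{i-1}$ sits at $a_{i-1}$) gives $a_{i-1}+a_j+2h\le(20p-32)h+2h$ as well. The number of summands is at most $2p$, and I must absorb it. Here I use Lemma~\ref{ipm:shiftcalculus}: a sum of $M\le R(d+1)^2$ terms costs one shift, and $p\le R(d+1)^2$ is automatic because $d\ge 10h(p-1)$. So $Z_p\in[\gamma\Gamma^{2p-2};(20p-29)h]$, and $\langle Z_p\rangle\in[\Gamma^{2p-2}\cdot\sup\gamma/\Gamma;\cdot]\subset[\Gamma^{2p-2};(20p-29)h]$ after using $\gamma\le\Gamma$. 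This requires the mean source to carry a $\gamma$-free profile; I bound $\gamma$ by its supremum there, because the mean inverse of Proposition~\ref{ipm:meaninverse} takes a sup-in-time input.

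Next we invert. The compatibility \eqref{ipm:expanded-hierarchy-mean-compatible} permits Proposition~\ref{ipm:meaninverse}, which costs $h$ shifts. This gives $B_p$ and its companions in $[\Gamma^{2p-2};(20p-28)h]=[\Gamma^{2p-2};b_p]$. The oscillatory source adds $(m_0\cdot\mathcal V_{\mathrm{mean}}B_p)\partial_\theta A_1$, which lies in $[\gamma\Gamma^{2p-2};b_p+a_1+2h]=[\cdot;(20p-22)h]$. It is supported in $\operatorname{supp}\chi$, as is $Z_p-\langle Z_p\rangle$, because every oscillatory product contains an oscillatory factor supported there. Proposition~\ref{ipm:scalar-inverse} costs $h$ and preserves the pointwise profile $\gamma(t)$, giving $(A_p,(A_p)_t)$ at shift $(20p-21)h$. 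Reconstruction \eqref{ipm:curl-recurrence} adds $2h$ beyond the bundles $A_p$ and $(\pi_{p-1},C_{p-1})$, the latter at $a_{p-1}+2h$. This reaches at most $(20p-19)h\le a_p=(20p-16)h$.

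The main obstacle is the profile bookkeeping. The scalar inverse needs a source bounded by $\gamma(t)$ pointwise, while the mean bundle is only bounded by constants. I will therefore check that every oscillatory source term contains at least one oscillatory factor carrying $\gamma(t)$, and that the remaining factor contributes at most $\Gamma^{2p-2}/\gamma$-free powers via $\gamma\le\Gamma$. I will also verify carefully that the fast-sum index shift (with $C_{i-1}$ in $v_{\mathrm{raw},i}$ and $i$ up to $p$) never exceeds the budget $(20p-29)h$. If it does, the fallback is to enlarge the step $20h$ per order, since only linearity of $a_p,b_p$ in $p$ is used later.
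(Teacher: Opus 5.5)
Your overall scheme matches the paper's: strong induction on $p$, a term-by-term source table, then the mean inverse, the scalar inverse and the reconstruction, with the paper's $a_p,b_p$. Your final oscillatory count is also fine. The gap is in the mean source.

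You assert $Z_p\in[\gamma\Gamma^{2p-2};(20p-29)h]$ and obtain $\langle Z_p\rangle$ by averaging. That bound on $Z_p$ is false.
\begin{itemize}
\item You list but never evaluate the known fast-mean products $(m_0\cdot\mathcal V_{\mathrm{mean}}B_i)\partial_\theta A_j$, $i+j=p+1$, $2\le i\le p-1$. These sit at $b_i+a_j+2h=(20p-22)h$.
\item The slow mean--mean products $\mathcal V_{\mathrm{mean}}B_i\cdot\nabla_yB_j$ carry the profile $\Gamma^{2p-4}$ with no factor $\gamma$.
\end{itemize}
Averaging does not lower a shift. A correct bound on all of $Z_p$ would therefore put $\langle Z_p\rangle$ at $(20p-22)h$ and the mean bundle near $(20p-21)h>b_p=(20p-28)h$.

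Your fallback of enlarging the step does not rescue this. With shifts linear in $p$, say $a_p=\alpha p-\beta$ and $b_p=\alpha p-\beta'$, a mean source containing the product with $i=p-1$, $j=2$ forces $b_p\ge b_{p-1}+a_2$. That means $\beta\ge\alpha$, i.e. $a_1\le 0$, which is impossible.

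The missing idea, which the paper uses explicitly, is to discard every term with zero angular mean before estimating $\langle Z_p\rangle$. The fast-mean products are an angle-independent coefficient times $\partial_\theta A_j$, so they have zero mean. The slow mixed products and $E_p\cdot R_0$ also have zero mean. Only slow oscillation--oscillation, slow mean--mean, and fast curl terms reach $\langle Z_p\rangle$. All of these lie at shift at most $(20p-30)h=b_p-2h$ with profile at most $\Gamma^{2p-2}$. Your count (one shift for the $O(p)$ terms, $h$ for the mean inverse) then gives exactly $b_p$.

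Dually, the oscillatory source has to be bounded term by term rather than through a blanket bound on $Z_p$. The pure mean products cancel exactly in $Z_p-\langle Z_p\rangle$, which is what yields both the pointwise profile $\gamma\Gamma^{2p-2}$ and the support in $\operatorname{supp}\chi$. Every surviving term, including the known fast-mean ones, lies at shift at most $(20p-22)h=a_p-6h$, which your subsequent scalar-inverse count already accommodates. Your last paragraph anticipates the profile part of this, but it contradicts the blanket bound on $Z_p$ stated earlier, and the shift part is absent.
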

\begin{proof}
The proof keeps separate the derivative shifts and the time profiles.
For $p=1$, write
$A_1=\gamma\chi E_{s_0,t}f_\delta$. The centered propagator has fixed
Gevrey bounds on this support, the cutoff is fixed, and the profile
has growth polynomial in $\delta^{-1}$. The fixed $H^3$ base adds
only a fixed prefactor and a fixed derivative shift. By the preceding
choice of $R,h$, both $A_1$ and
$(A_1)_t=-c_A^{\ell}A_1$ fit at shift $h=a_1-3h$ with profile $\gamma$.
The reconstruction costs at most $2h$, leaving $h$ shifts to absorb
the finite tuple sum within $a_1$. This proves the first bundle.

Assume all bundles through $p-1$ have been constructed. Split every
slow velocity into its oscillation $V_i+C_{i-1}$ and its mean
$\mathcal V_{\mathrm{mean}}B_i$, and split $f_j=A_j+B_j$. A fixed addition is included in the
operation cost above. With the larger shift $a_i$ permitted for a mean
factor, the following are bounds for the individual known sources:
\begin{equation}\label{ipm:expanded-hierarchy-cost-table}
 \begin{array}{c|c|c}
 \text{source}&\text{shift bound}&\text{time profile}\\ \hline
 E_p\cdot R_0&a_{p-1}+2h
                     =20hp-34h&\gamma\Gamma^{2p-4}\\
 \text{slow oscillation--oscillation},\ i+j=p
     &a_i+a_j+2h=20hp-30h&\gamma^2\Gamma^{2p-4}\\
 \text{slow mixed},\ i+j=p
     &a_i+a_j+2h=20hp-30h&\gamma\Gamma^{2p-4}\\
 \text{slow mean--mean},\ i+j=p
     &a_i+a_j+2h=20hp-30h&\Gamma^{2p-4}\\
 \text{fast curl},\ (i-1)+j=p
     &a_{i-1}+a_j+2h=20hp-30h&\gamma^2\Gamma^{2p-4}\\
 \text{fast mean},\ i+j=p+1
     &b_i+a_j+2h=20hp-22h&\gamma\Gamma^{2p-2}.
 \end{array}
\end{equation}
Absent factors with index one mean zero. The linear reconstruction
uses the preceding oscillatory bundle. In a fast curl product both
indices $i-1,j$ are at most $p-1$. In a known fast mean product
$i\le p-1$ and $j\le p-1$. Thus every line is covered by the induction.

Only slow oscillation--oscillation, slow mean--mean, and fast curl
terms can contribute to $\langle Z_p\rangle$. Indeed a slow mixed
term has one zero-mean angular factor and an angle-independent
factor; slow differentiation preserves that zero mean. A fast mean
term is a coefficient times an angular derivative. The linear
reconstruction has zero mean as well.
Since $\gamma\le\Gamma$ and $\Gamma\ge1$, the profiles of all
surviving mean terms are bounded by $\Gamma^{2p-2}$.
Their shifts are at most
\[
 d_M=20hp-30h=b_p-2h.
\]
There are at most a fixed constant times $p$ terms after the slow
products have been split into four types. For $p\ge2$,
$d_M\ge5hp$. Therefore
$E_{d_M+1}(n)/E_{d_M}(n)=R(n+d_M+1)^2$
absorbs this number of terms, including the fixed angular-average
constant, after the fixed choice of $h$. The mean source belongs to
$[\Gamma^{2p-2};d_M+1]$. Applying the mean inverse, with its zero
spatial integral established above, gives its entire bundle at shift
\begin{equation}\label{ipm:expanded-hierarchy-mean-reserve}
 d_M+1+h=b_p-h+1<b_p.
\end{equation}
This application is unweighted in time; no factor $\gamma(t)$ is
claimed for a terminally selected mean.

For the oscillatory equation, subtraction of the angle average
removes every pure mean product. Each remaining term retains at least
one oscillatory factor. The table and
$\gamma^2\le\gamma\Gamma$ show that its time profile is at most
$\gamma\Gamma^{2p-2}$. The current mean drift, now known, has shift
\[
 b_p+a_1+2h=20hp-22h=a_p-6h
\]
and exactly that profile. This is also an upper bound for every
known oscillatory-source shift in the table. Summing its at most
$Cp$ terms costs one shift. The scalar inverse costs at most $h$
more, so
\begin{equation}\label{ipm:expanded-hierarchy-osc-reserve}
 (A_p,(A_p)_t)\in
 [\gamma\Gamma^{2p-2};a_p-6h+1+h]
 \subset[\gamma\Gamma^{2p-2};a_p-4h].
\end{equation}
The scalar inverse is used only after the source has been bounded
pointwise by this profile. Division by $\gamma$ in its formula
cancels that existing factor; it does not introduce an inverse seed.

To finish the reconstruction, the current scalar bundle costs at
most $2h$ more, landing at $a_p-2h$. Inputs from order $p-1$ land at
most at $a_{p-1}+2h=a_p-18h$, with a smaller power of $\Gamma$.
The remaining $2h$ shifts absorb the fixed tuple sum and these two
input groups within $a_p$.
This proves \eqref{ipm:coefficient-bundles} with unit majorant
constant, rather than a hidden order-dependent constant.
Time derivatives in the two scalar bundles are supplied by the
inverse estimates or directly by their displayed equations; no
source or recursive pressure is differentiated in time.

Finally every oscillatory term is supported in the fixed label
support. Slow derivatives, angular primitives, and coefficient
multiplication preserve support. A global mean velocity enters an
oscillatory source only multiplied by an already supported
oscillation. The scalar equation is local in $y$ with zero initial
data. These facts prove the support assertion by the same induction.
\end{proof}

\begin{lemma}[Parity of the hierarchy in the phase variable]
\label{ipm:phase-parity}
At each fixed $(t,y)$ the coefficients $A_p,V_p,v_{\mathrm{raw},p},E_p$ have parity
$(-1)^p$ under $\theta\mapsto-\theta$, while
$\pi_p,\psi_p,C_p$ have parity $(-1)^{p+1}$.
Every odd-order mean $B_p$ vanishes.
\end{lemma}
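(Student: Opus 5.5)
The argument is an induction on the order $p$ through the triangular construction of Proposition~\ref{ipm:hierarchy-construction}. The key observation is that every operation in the recursion either preserves parity in $\theta$ or flips it, and the flips are what produce the alternating pattern. Under $\theta\mapsto-\theta$ at fixed $(t,y)$:
\begin{itemize}
\item multiplication by $\theta$-independent coefficients ($b_0$, $L$, $G$, $a_2$, $R_0$, $c_A^{\ell}$, $m_0$) preserves parity;
\item $\nabla_y$ and $\nabla_y^\perp$ preserve parity;
\item $\partial_\theta$ flips parity;
\item the zero-mean primitive $\partial_\theta^{-1}$ flips parity; an odd function has zero mean automatically, and the zero-mean normalization keeps the primitive of an even zero-mean function odd;
\item the angular average kills odd functions;
\item the scalar inverse of Proposition~\ref{ipm:scalar-inverse} has the explicit formula \eqref{ipm:oscillatory-inverse}, which is local in $\theta$, so it preserves parity.
\end{itemize}

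For the base case, $A_1=\alpha\chi a f_\delta$ is odd because $f_\delta$ is odd, which is parity $(-1)^1$. We have $V_1=b_0A_1$, which is odd. Since $\partial_\theta\pi_1$ is a $\theta$-independent multiple of $A_1$, it is odd, so $\pi_1$ is even. Next, $\psi_1=\partial_\theta^{-1}(m_0^\perp\cdot V_1)$ is even, and hence so is $C_1$. Also $E_1=0$ and $v_{\mathrm{raw},1}=V_1$, while $B_1=0$.

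For the inductive step, assume the claims through order $p-1$. Then:
\begin{itemize}
\item $J_{p-1}=G\nabla_y\pi_{p-1}+C_{p-1}$ has parity $(-1)^p$, and therefore so does $E_p=-LJ_{p-1}+C_{p-1}$.
\item Each term of $Z_p$ in \eqref{ipm:full-source} has parity $(-1)^p$.
  \begin{itemize}
  \item $E_p\cdot R_0$ is immediate.
  \item A slow product $v_{\mathrm{raw},i}\cdot\nabla_y f_j$ with $i+j=p$ has parity $(-1)^{i}(-1)^j$. Here we use that $f_j=A_j+B_j$ has parity $(-1)^j$, because $B_j$ vanishes for odd $j$ and is $\theta$-independent, hence even, for even $j$. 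Similarly $v_{\mathrm{raw},i}$ has parity $(-1)^i$, since $\mathcal V_{\mathrm{mean}}B_i$ is zero or even according to the parity of $i$.
  \item A fast mean product $(m_0\cdot\mathcal V_{\mathrm{mean}}B_i)\partial_\theta A_j$ with $i+j=p+1$ is nonzero only for even $i$. Then $\partial_\theta A_j$ has parity $(-1)^{j+1}=(-1)^{p-i}=(-1)^p$.
  \item A fast curl product $(m_0\cdot C_{i-1})\partial_\theta A_j$ has parity $(-1)^{i}(-1)^{j+1}=(-1)^p$.
  \end{itemize}
\end{itemize}

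If $p$ is odd, $Z_p$ is odd, so $\langle Z_p\rangle=0$. The mean equation \eqref{ipm:mean-recurrence} then has zero source, and the selected mean inverse is linear in $f$ by Proposition~\ref{ipm:meaninverse}, so $B_p=0$. If $p$ is even, $B_p$ is $\theta$-independent, hence trivially even.

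In either case the right-hand side of \eqref{ipm:oscillatory-recurrence} has parity $(-1)^p$:
\begin{itemize}
\item for odd $p$ the drift term vanishes;
\item for even $p$ the drift $(m_0\cdot\mathcal V_{\mathrm{mean}}B_p)\partial_\theta A_1$ is even, since $A_1$ is odd.
\end{itemize}
Because the scalar inverse is local in $\theta$, $A_p$ inherits parity $(-1)^p$. The reconstruction \eqref{ipm:curl-recurrence} then gives:
\begin{itemize}
\item $V_p=b_0A_p-LJ_{p-1}$ has parity $(-1)^p$;
\item $\partial_\theta\pi_p$ has parity $(-1)^p$, so $\pi_p$ has parity $(-1)^{p+1}$;
\item $\psi_p$ and $C_p$ have parity $(-1)^{p+1}$;
\item $v_{\mathrm{raw},p}=V_p+\mathcal V_{\mathrm{mean}}B_p+C_{p-1}$ has parity $(-1)^p$.
\end{itemize}
This closes the induction.

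The only step needing care is the vanishing of the odd-order means. It relies on the mean inverse being a linear map of the source, so that zero source gives zero selected initial datum and zero solution. This holds because the terminal ODE \eqref{ipm:expanded-mean-terminal-ode} with $Q=0$, $g=0$, and $z(S)=0$ has only the trivial solution. The remaining steps are bookkeeping of parities.
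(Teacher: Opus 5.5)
Your proposal is correct and is the same induction the paper uses: parity is tracked through the triangular recursion, slow sources with $i+j=p$ and fast sources with $i+j=p+1$ (where $\partial_\theta$ flips parity) both have parity $(-1)^p$, so the odd-order mean source vanishes and the linear mean inverse returns $B_p=0$. Your version writes out the bookkeeping the paper leaves implicit, in particular that the vanishing of $B_i$ at odd $i<p$ is needed inside the induction to get the parity of $f_j$, $v_{\mathrm{raw},i}$, and the fast mean terms.
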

\begin{proof}
With $t,y$ fixed, $A_1$ is
odd in $\theta$. Inductively $A_p,V_p,v_{\mathrm{raw},p},E_p$ have parity $(-1)^p$,
and $\pi_p,\psi_p,C_p$ have parity $(-1)^{p+1}$. Slow sources with
$i+j=p$ and fast sources with $i+j=p+1$ both have parity $(-1)^p$,
the latter because $\partial_\theta$ reverses parity. At odd $p$ the
mean source is therefore zero, and the selected inverse returns
$B_p=0$. This observation is not needed for the estimates, but
independently checks their order indexing.
\end{proof}

\subsection{Finite approximation and exponentially small residual}

\begin{proposition}
\label{ipm:finite-approximation}
Choose the common growth constant and the coefficients above. There
are fixed $\vartheta>0$, $Q,C,k_*>0$ such that, if
$\mathcal P^Q\le k^{\vartheta/100}$ and $k\ge k_*$, the depth
$N=\lfloor k^\vartheta\rfloor$ gives a normalized approximation
$A^a=k w^a$ with the following properties at a radius
$\rho\ge\mathcal P^{-C}$.
The approximation and its lifted transport obey
\begin{equation}\label{ipm:approximation-bounds}
 \begin{aligned}
 \|A^a\|_\rho+\|DA^a\|_\rho+\|(A^a)_t\|_\rho
       &\le\mathcal P^C,\\
 \|\mathcal V_D A^a-V_1\|_\rho&\le k^{-1}\mathcal P^C,\\
 \|b^a\|_\rho&\le k^{-1}\mathcal P^C\le k^{-1/2},
 \qquad b^a=(k^{-1} \mathcal V_D A^a,m_0\cdot \mathcal V_D A^a).
 \end{aligned}
\end{equation}
Its scalar residual satisfies
\[
 \|(A^a)_t+\mathcal V_D A^a\cdot R_0+
                   \mathcal V_D A^a\cdot DA^a\|_\rho
 \le \exp(-0.6k^\vartheta\log k).
\]
The finite oscillatory tail and phase means satisfy
\begin{equation}\label{ipm:expanded-hierarchy-split-tail}
 \begin{aligned}
 \left\|\sum_{p=2}^Nk^{1-p}A_p(t)\right\|_\rho
      &\le k^{-1}\gamma(t)\mathcal P^C,\\
 \left\|\sum_{p=2}^Nk^{1-p}B_p(t)\right\|_\rho
      &\le k^{-1}\mathcal P^C.
 \end{aligned}
\end{equation}
All constants are independent of $N$ and the stage.
\end{proposition}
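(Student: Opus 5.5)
The plan is to convert the shifted factorial bounds of Lemma~\ref{ipm:coefficients} into Gevrey norms at one radius, and then sum the expansion. We fix $R=\mathcal P^r$ and take $\rho=(8R)^{-1}\ge\mathcal P^{-C}$, so that the hypothesis $4\rho R\le1/2$ of Lemma~\ref{ipm:radius-calculus} holds. By \eqref{ipm:shifttoradius}, a field in $[g;d]$ has $\|\cdot\|_\rho\le 2g(4R)^d(d!)^2$. Applied with $d=a_p$ or $b_p=O(hp)$ and $p\le N$, this gives
\[
 \|(A_p,V_p,\pi_p,C_p,\ldots)\|_\rho+\|(B_p,\mathcal V_{\mathrm{mean}}B_p,\ldots)\|_\rho
 \le \Gamma^{2p}(4R)^{20hp}((20hp)!)^2\le \mathcal B_N^{\,p+1},
\]
with $\mathcal B_N:=\mathcal P^{C}N^{40h}$. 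The hypotheses $\mathcal P^Q\le k^{\vartheta/100}$ and $N\le k^\vartheta$, with $\vartheta$ small relative to $1/(40h)$, make $\mathcal B_N\le k^{1/50}$. The series $\sum_p k^{1-p}\mathcal B_N^{p+1}$ is then geometric with ratio at most $k^{-49/50}$. This yields the bounds on $A^a$, $(A^a)_t$ and $DA^a$ in \eqref{ipm:approximation-bounds}; for $DA^a$ we lose only a radius halving via \eqref{ipm:radiusderivative}, and we rename $\rho$ afterwards. For the split tails \eqref{ipm:expanded-hierarchy-split-tail}, the leading $p=2$ term already carries $k^{-1}$ together with the profile $\gamma\Gamma^{2}$ or $\Gamma^2$ respectively; we keep $\gamma(t)$ pointwise in $t$ in the oscillatory sum.

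For the velocity we use the Darcy tail identity \eqref{ipm:intro-darcy-tail}. Applying $\mathbb P_G$ gives
\[
 \mathcal V_D A^a=k\,v^a_{\mathrm{raw}}+k^{-N}\mathbb P_GH_N .
\]
Here $k v^a_{\mathrm{raw}}-V_1$ is a sum of terms $k^{1-p}(\cdots)$ with $p\ge2$, plus the curl term $C_1/k$. By \eqref{ipm:darcy-gevrey} and the coefficient bounds, both pieces are $O(k^{-1}\mathcal P^C)$ in $\|\cdot\|_\rho$. For the transport field $b^a$, the first components carry an explicit factor $k^{-1}$. For the angular component we use $m_0\cdot V_1=0$, so $m_0\cdot\mathcal V_DA^a=m_0\cdot(\mathcal V_DA^a-V_1)$, which is again $O(k^{-1}\mathcal P^C)$. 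The inequality $k^{-1}\mathcal P^C\le k^{-1/2}$ follows from the hypothesis on $\mathcal P$.

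The residual is the main step. We expand
\[
 r^a=k\Bigl[w^a_t+v^a_{\mathrm{raw}}\cdot R_0+v^a_{\mathrm{raw}}\cdot\widetilde\nabla w^a\Bigr]+k^{-N}\,\mathbb P_GH_N\cdot(R_0+DA^a).
\]
By Proposition~\ref{ipm:hierarchy-construction}, every order $k^{-p}$ with $p\le N$ cancels in the bracket. What remains consists of:
\begin{itemize}
\item the linear term $k^{-N-1}C_N\cdot R_0$;
\item slow products $k^{-i-j}v_{\mathrm{raw},i}\cdot\nabla_yf_j$ with $i+j\ge N+1$;
\item fast products $k^{1-i-j}(m_0\cdot v_{\mathrm{raw},i})\partial_\theta A_j$ with $i+j\ge N+2$, where $i=N+1$ uses $v_{\mathrm{raw},N+1}=C_N$.
\end{itemize}
Each product is bounded with the algebra property $\|fg\|_\rho\le C_s\|f\|_\rho\|g\|_\rho$ and a radius halving for the derivative. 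Each is therefore at most $\mathcal P^C\mathcal B_N^{i+j+2}$. After the factor $k$, there are at most $N^2\le\mathcal B_N$ of them, so the total is at most $\mathcal P^Ck^{-N}\mathcal B_N^{N+6}$. The Darcy tail term has the same form.

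The obstacle is that this exponent bookkeeping must hold uniformly in $N$. The shifts $a_p\sim 20hp$ produce $((20hp)!)^2$, which is exactly what the factor $N^{40h}$ in $\mathcal B_N$ absorbs; we verify $(20hp)!\le(20hN)^{20hp}$ for $p\le N+1$. Given that, $k^{-N+(N+6)/50}\mathcal P^C\le k^{-0.9N}$ for $k\ge k_*$. Since $N\ge 0.9k^\vartheta$, this is at most $\exp(-0.81k^\vartheta\log k)\le\exp(-0.6k^\vartheta\log k)$. All radius losses occur a fixed number of times, so the final radius stays above $\mathcal P^{-C}$.
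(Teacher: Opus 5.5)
Your proposal is correct and follows essentially the same route as the paper. That route is a single passage from the shifted classes to one radius, the $\mathbb P_G$-corrected Darcy tail, the exact use of $m_0\cdot V_1=0$ for $b^a$, and an order-by-order count of the leftover slow products, fast products (including $i=N+1$, $j=1$) and top-curl term, which gives $\mathcal P^Ck^{-N}\mathcal B_N^{N+6}$.

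The one point to tighten is that the $p=1$ and $p=2$ terms must be estimated by their own depth-independent polynomial bounds. The paper does this with $M_1,M_2$ and the ratio $M_{p+1}/M_p\le\mathcal B_N$. If you instead bound them by $\mathcal B_N^{p+1}=(\mathcal P^CN^{40h})^{p+1}$, factors $N^{O(h)}$ enter the claimed $\mathcal P^C$ bounds for $A^a$ and $\mathcal V_DA^a-V_1$. Also, the sign of your Darcy tail $k^{-N}\mathbb P_GH_N$ is reversed, which is harmless.
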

\begin{proof}
\textbf{Step 1. Darcy residual and velocity correction.}
We now keep a finite depth $N\ge2$. Its terminal curl term is essential:
as noted above, $v_{\mathrm{raw},N+1}=C_N$, even though there is no density
coefficient $f_{N+1}$. Summing
\eqref{ipm:expanded-hierarchy-darcy-coefficient}, and using
$\mathcal V_{\mathrm{mean}}B_p=-ca_2B_p-G\nabla_y\bar q_p$, gives the exact identity
\begin{equation}\label{ipm:darcy-defect}
 v_{\mathrm{raw}}^a+ca_2w^a+
 G(\nabla_y+km_0\partial_\theta)q^a
       =k^{-N-1}H_N,\qquad H_N=C_N+G\nabla_y\pi_N.
\end{equation}
All lower powers cancel. The scalar pressure term is in the range
of $G D$, and $v_{\mathrm{raw}}^a$ is exactly $D$-divergence free. The projection
$\mathbb P_G$ annihilates $G D$ and is the identity on
$D$-divergence-free vectors. Hence
\begin{equation}\label{ipm:exact-darcy-correction}
 \mathcal V_D(w^a)-v_{\mathrm{raw}}^a=-k^{-N-1}\mathbb P_GH_N.
\end{equation}
The same-radius vector estimate for $\mathbb P_G$ is used here.
No scalar pressure potential, inverse carrier, or small-divisor
estimate is used to obtain this velocity error.

\textbf{Step 2. A quantitative majorant at one fixed radius.}

If $f\in[g;d]$, then at $\rho\le(8R)^{-1}$,
\begin{equation}\label{ipm:shifted-to-radius}
 \|f(t)\|_\rho\le2g(t)(4R)^d(d!)^2.
\end{equation}
Indeed $(n+d)!\le2^{n+d}n!d!$ reduces the defining sum to
$g(4R)^d(d!)^2\sum_n(4\rho R)^n$.
Choose $\rho$ once smaller if needed to lie in the interval for
the exact Darcy/projection estimates. All choices are by fixed
powers of $\mathcal P$. The same argument may be applied after a
fixed number of derivatives by adding that fixed number to $d$.

By \eqref{ipm:coefficient-bundles}, since $a_p,b_p\le20hp$,
$R=\mathcal P^r$, and $\Gamma\le\mathcal P$, there are fixed constants
$A,D,C\ge1$ such
that every coefficient bundle, and its first two additional
spatial/angular derivatives when needed, obeys
\begin{equation}\label{ipm:order-coefficient-bound}
 M_p:=\bigl[\mathcal P^A(C(p+1))^D\bigr]^{p+1}.
\end{equation}
Here an oscillatory coefficient or its time derivative has the
stronger bound $\gamma(t)M_p$, whereas a mean coefficient has
bound $M_p$. All powers $\Gamma^{2p-2}$ have been absorbed into
$\mathcal P^{A(p+1)}$. To see that $A,D$ are fixed, the factors
$(4R)^{20hp+O(1)}$ are $\mathcal P^{O(p+1)}$, and
$((20hp+O(1))!)^2$ is bounded by
$(C(p+1))^{D(p+1)}$. Fixed coefficient operations and the projection
prefactor only enlarge these fixed constants.

Keep the constant $C$ in $M_p$ fixed and define the larger constant
$C_B=e^2C$. Set
\begin{equation}\label{ipm:expanded-hierarchy-BN}
 \mathcal B_N=\mathcal P^A(C_B(N+2))^D.
\end{equation}
It may be chosen so that for $1\le p\le N$,
\[
 M_p\le\mathcal B_N^{p+1},\qquad
 M_{p+1}/M_p\le\mathcal B_N\quad(p<N).
\]
For the second inequality use
\[
 (p+2)\log(p+2)-(p+1)\log(p+1)\le\log(p+2)+1;
\]
the extra fixed factor $e^D$ is absorbed by $(C_B/C)^D$.

Choose the fixed $\vartheta>0$ sufficiently small that
$D\vartheta<1/200$, and put $N=\lfloor k^\vartheta\rfloor$.
Choose $Q$ after $A,D$ and all subsequent fixed polynomial exponents.
Under $\mathcal P^Q\le k^{\vartheta/100}$, for all sufficiently
large $k$ one has
\begin{equation}\label{ipm:expanded-hierarchy-ratio-small}
 \mathcal B_N\le k^{1/50},\qquad
 k^{-1}\mathcal B_N\le k^{-49/50}\le\tfrac12.
\end{equation}
Indeed
$\log\mathcal B_N\le A\vartheta\log k/(100Q)
 +D\vartheta\log k+O(1)$.

\textbf{Step 3. Low orders and the amplitude-weighted tail.}

Set $A^a=kw^a$. The normalized $p$th term is bounded by
$k^{1-p}M_p$, with an extra factor $\gamma(t)$ in its
oscillatory part. For $p\ge2$ the ratio of these majorants is at most
$k^{-1}\mathcal B_N$. Therefore, estimating $p=1$ separately and
$p=2$ with its own fixed value of $M_2$, the oscillatory and mean tails are bounded by
$2k^{-1}\gamma(t)M_2$ and $2k^{-1}M_2$, respectively. Since
$M_2$ is a fixed power of $\mathcal P$, this proves
\eqref{ipm:expanded-hierarchy-split-tail}.
The same estimates hold for the time derivatives in the supplied
bundles and for the fixed extra derivatives already included in
$M_p$. In particular
$\|A^a\|_\rho+\|DA^a\|_\rho+\|(A^a)_t\|_\rho\le\mathcal P^{C_2}$.
No bound by $\mathcal B_N^{O(1)}$ is inserted into $M_1$ or $M_2$;
doing so would unnecessarily make the low-order estimates depend
on $N$.

From the normalized finite velocity,
\[
 kv_{\mathrm{raw}}^a
 =V_1+k^{-1} C_1+
   \sum_{p=2}^Nk^{1-p}(V_p+\mathcal V_{\mathrm{mean}}B_p)
   +\sum_{p=2}^Nk^{-p} C_p,
\]
the same geometric estimate gives
$\|kv_{\mathrm{raw}}^a-V_1\|_\rho\le k^{-1}\mathcal P^{C_3}$.
The normalized exact Darcy correction in
\eqref{ipm:exact-darcy-correction} is
$-k^{-N}\mathbb P_GH_N$ and satisfies this bound as well.
For example, its norm is at most
$k^{-N}\mathcal P^C\gamma M_N$, and
$k^{1-N}M_N\le k^{-1} M_2$ for $N\ge2$.
The additional fixed powers of $\mathcal P$ and $\gamma\le\Gamma$
are absorbed in $C_3$. We have proved \eqref{ipm:approximation-bounds}.
In the last line $m_0\cdot V_1=0$ is used exactly, not estimated.
The last inequality follows by increasing the final fixed $Q$.
These estimates retain $\gamma$ on the finite oscillatory density
tail, but do not claim it for the phase means.

\textbf{Step 4. The entire residual, including the top curl.}

Let $\mathcal S_N$ denote the left side of
\eqref{ipm:raw-scalar} evaluated at $(w^a,v_{\mathrm{raw}}^a)$.
The recurrence cancels its coefficients at powers $1,\ldots,N$.
The remaining linear term is
$k^{-N-1}C_N\cdot R_0$. Its slow products have
$1\le i\le N+1$, $1\le j\le N$ and power $q=i+j$.
Its fast products have the same ranges and power $q=i+j-1$;
the $i=1$ fast terms are zero. In particular the fast product
with $i=N+1,j=1$ starts at power $N+1$, and has not been lost.
It follows that
\begin{equation}\label{ipm:expanded-hierarchy-raw-residual}
 \mathcal S_N=\sum_{q=N+1}^{2N+1}k^{-q} S_q.
\end{equation}

We now bound each $S_q$ without assuming any further cancellation.
For $i\le N$ the norm of $v_{\mathrm{raw},i}$ is at most a fixed polynomial in
$\mathcal P$ times $\mathcal B_N^{i+1}$; the same bound holds for
$v_{\mathrm{raw},N+1}=C_N$ with $i=N+1$. Each differentiated $f_j$ has bound
at most $\mathcal P^C\mathcal B_N^{j+1}$.
A slow product of power $q$ is therefore at most
$\mathcal P^C\mathcal B_N^{q+2}$, and a fast product at most
$\mathcal P^C\mathcal B_N^{q+3}$. The factors $\gamma$ can be
bounded by $\Gamma\le\mathcal P$ here: a residual need not retain
an oscillatory profile because it includes pure mean products.
There are at most $2N+1$ terms at any power. Since
$\mathcal B_N\ge N$ after increasing the fixed constants, their
number costs at most two additional powers. The linear term has
a smaller bound. Hence
\[
 \|S_q\|_\rho\le\mathcal P^{C_4}\mathcal B_N^{q+5},
 \qquad N+1\le q\le2N+1.
\]
Products with $R_0$ use its coefficient norm. This avoids inserting
the enlarged torus volume into the estimate.
By \eqref{ipm:expanded-hierarchy-ratio-small},
\[
 \|\mathcal S_N\|_\rho
   \le2\mathcal P^{C_4}k^{-N-1}\mathcal B_N^{N+6}.
\]

Replacing $v_{\mathrm{raw}}^a$ by the exact Darcy velocity adds exactly
\[
 -k^{-N-1}\mathbb P_GH_N\cdot
       [R_0+(\nabla_y+km_0\partial_\theta)w^a]
 =-k^{-N-1}\mathbb P_GH_N\cdot[R_0+DA^a].
\]
Multiplication by $R_0$ uses its coefficient Gevrey norm, whereas
$DA^a$ uses its function Gevrey norm. The corresponding product
bound is a fixed polynomial by the low-order estimates already
proved. The correction consequently has norm
at most $k^{-N-1}\mathcal P^{C_5}\mathcal B_N^{N+1}$.
Multiply the raw density equation by $k$ and use linearity of
$\mathcal V_D$. This gives
\begin{equation}\label{ipm:approximate-residual}
 r^a=(A^a)_t+\mathcal V_D A^a\cdot R_0+
                          \mathcal V_D A^a\cdot DA^a
\end{equation}
and the bound
\begin{equation}\label{ipm:residual-before-separation}
 \|r^a\|_\rho
       \le\mathcal P^{C_6}k^{-N}\mathcal B_N^{N+6}.
\end{equation}
All constants here are fixed independently of $N,k$ and the stage.
The source time derivative never enters this computation:
$\partial_t w^a$ uses only the time derivatives already in the
coefficient bundles.

Finally take logarithms. By
\eqref{ipm:expanded-hierarchy-ratio-small}, the right side of
\eqref{ipm:residual-before-separation} has logarithm at most
\[
 C_6\log\mathcal P-N\log k+\frac{N+6}{50}\log k.
\]
The first term is at most
$C_6\vartheta\log k/(100Q)$.
For sufficiently large $k$, this upper bound is less than
$-0.9N\log k$. Also
$N=\lfloor k^\vartheta\rfloor\ge0.9k^\vartheta$ once $k$ is large.
Thus, with ample numerical margin,
\begin{equation}\label{ipm:tiny-residual}
 \|r^a\|_\rho\le\exp(-0.6k^\vartheta\log k).
\end{equation}
The radius remains at least a fixed inverse power of $\mathcal P$.
The entire finite hierarchy has used one unchanged growth constant
$R$; the fixed passage to this radius is performed only after its
construction. These are precisely the residual and approximate-field
bounds used in the next subsection.
\end{proof}

\section{From the finite approximation to an exact solution}
\label{ipm:sec-exact}

\subsection{Exact scalar correction}

We give the correction argument at finite derivative cutoffs before
constructing a solution. This order avoids assuming a Gevrey solution
in order to justify its own energy estimate. All norms in this
subsection are on the fixed lifted torus for the layer under
consideration. Their constants have the uniformity in $\ell,k$
already established for the product and Darcy estimates.

\begin{proposition}\label{ipm:correction}
Suppose the supplied coefficients and approximation are real and
$C^\infty$ in time and space, the coefficients satisfy
\eqref{ipm:derived-coefficients},
and the approximation satisfies \eqref{ipm:tiny-residual} and
\eqref{ipm:approximation-bounds} at a radius bounded below by a fixed
inverse power of $\mathcal P$. Assume the frequency separation
\begin{equation}\label{ipm:correction-frequency-separation}
 \mathcal P^Q\le k^{\vartheta/100},\qquad k\ge k_*,
\end{equation}
for sufficiently large fixed $Q$ and $k_*$. Then there is a real
smooth $e$ on $[0,S]$ with $e(0)=0$ such that
$A=A^a+e$ solves
\begin{equation}\label{ipm:exact-lifted-scalar}
 A_t+\mathcal V_D A\cdot R_0+\mathcal V_D A\cdot DA=0.
\end{equation}
For one radius $\rho_*\ge\mathcal P^{-C}$,
\begin{equation}\label{ipm:correction-bound}
 \sup_{t\in[0,S]}
 \bigl(\|e(t)\|_{\rho_*}+\|e_t(t)\|_{\rho_*}
                   +\|\mathcal V_D e(t)\|_{\rho_*}\bigr)
 \le\mathcal P^C\exp(-k^\vartheta/2).
\end{equation}
If the parent coefficients and approximation have the joint reflection
parities of the construction, the correction is odd under that reflection.
\end{proposition}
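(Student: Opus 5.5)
We solve the exact error equation \eqref{ipm:intro-exact-error},
\[
 e_t+b\cdot\nabla_{y,\theta}e+\mathcal V_De\cdot(R_0+DA^a)=-r^a,\qquad e(0)=0,
 \qquad b=b^a+(k^{-1}\mathcal V_De,\,m_0\cdot\mathcal V_De),
\]
by a Galerkin (Fourier truncation) scheme on the lifted torus $\mathbb T_\ell^2\times\mathbb T$, closed by a priori estimates in a Gevrey norm whose radius decreases linearly in time. Truncation to Fourier modes $|\xi|+|j|\le\Lambda$ with the projection applied to the nonlinearity gives a finite ODE with local solutions. Because $\mathcal V_D$ commutes with that projection only up to variable coefficients, the truncation is applied to the whole right side. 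We then prove estimates uniform in $\Lambda$ and pass to the limit by compactness. We work with the finite-cutoff norms $\|\cdot\|_{\rho,M}$ and let $M\to\infty$ by monotone convergence, as the section preamble suggests. This avoids presupposing Gevrey regularity of the solution.

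\textbf{Key a priori estimate.} Set $\rho(t)=\rho_0-\kappa t$ with $\rho_0\ge\mathcal P^{-C}$ the radius of Proposition~\ref{ipm:finite-approximation} (reduced below the Darcy threshold of \eqref{ipm:darcy-gevrey}), and take $\kappa$ to be chosen. Pair $\partial^Ie$ with itself in $H^3$ and sum with the weights $\rho^n/(n!)^2$. We estimate the terms in turn.
\begin{enumerate}
\item The transport term has divergence-free $b$. Its top-order part cancels after integration by parts. The commutators are bounded by $\|b\|_\rho$ times the ``$\rho$-derivative'' norm $\sum n\rho^{n-1}E_n(e)/(n!)^2$, using the product calculus of Lemma~\ref{ipm:word-products} and \eqref{ipm:shiftabsorption}.
\item The radius loss $-\kappa\sum n\rho^{n-1}E_n(e)/(n!)^2$ absorbs these commutators once $\kappa\ge C\|b\|_\rho$. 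Since $\|b^a\|_\rho\le k^{-1/2}$ by \eqref{ipm:approximation-bounds}, a choice $\kappa\sim\rho_0/(2S)$ suffices while $\|\mathcal V_De\|_\rho$ stays small.
\item The zeroth-order term $\mathcal V_De\cdot(R_0+DA^a)$ is bounded by $\mathcal P^C\|e\|_\rho$, using \eqref{ipm:darcy-gevrey} and the coefficient bound for $R_0$.
\end{enumerate}
Gronwall then yields
\[
 \|e(t)\|_{\rho(t)}\le e^{\mathcal P^CS}\int_0^t\|r^a\|_{\rho_0}\le e^{\mathcal P^C}\exp(-0.6k^\vartheta\log k).
\]
The frequency separation \eqref{ipm:correction-frequency-separation} gives $\mathcal P^C\le k^{\vartheta/100}\ll k^\vartheta$, so this is at most $\mathcal P^C\exp(-k^\vartheta/2)$.

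\textbf{Bootstrap and remaining bounds.} A continuity argument closes the bound on $\|\mathcal V_De\|_\rho$ used for $\kappa$. The final radius is $\rho_*=\rho_0/2$. The bound for $\mathcal V_De$ then follows from \eqref{ipm:darcy-gevrey}. The bound for $e_t$ follows from the equation itself, losing a fixed fraction of the radius via \eqref{ipm:radiusderivative}; the factor $\rho^{-1}\le\mathcal P^C$ is harmless.

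\textbf{Regularity, reality, parity.} Smoothness in time follows from the equation by induction. Reality is preserved since all operators are real. Oddness follows from uniqueness: the reflected error solves the same equation with the same data when $A^a$ and the coefficients have the stated parities, and $\mathcal V_D$ commutes with the joint reflection. Uniqueness itself comes from an $L^2$ energy estimate on the difference of two solutions.

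\textbf{Main obstacle.} The hardest step is the Gevrey commutator estimate for the transport term. The difficulty is making it uniform in the enlarged torus and in the resonances of $D$, while controlling the quadratic piece $\mathcal V_De\cdot\nabla e$ at the top order. The plan is to treat $\mathcal V_De$ as part of the divergence-free field $b$, which works because $D\cdot\mathcal V_De=0$ exactly. The top-order pairing then vanishes, and only lower-order Leibniz terms remain. These are summed with the $\binom nl^{-1}$ weights exactly as in Step~2 of Proposition~\ref{ipm:lifted-inverse}. For the Galerkin version, the alternative is to mollify $b$ and use a uniform $\Lambda$-independent estimate, avoiding the projection commutator.
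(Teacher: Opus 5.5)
Your proposal is correct and follows essentially the paper's proof. The shared steps are: the same error equation with $\mathcal V_De$ absorbed into the divergence-free field $b$; Galerkin truncation of the whole right-hand side; finite-cutoff Gevrey norms with a linearly shrinking radius whose loss absorbs the word commutators (the paper's weight identity \eqref{ipm:transport-weight}); a bootstrap/Gronwall closure using the frequency separation; and recovery of $e_t$ and $\mathcal V_De$ at a smaller fixed radius. The differences are minor: the paper shrinks the radius at the small rate $2\Lambda(B_0+\Delta_k)$ rather than $\rho_0/(2S)$ (both work), and it gets oddness from uniqueness of the reflection-invariant Galerkin ODE rather than from an $L^2$ uniqueness estimate. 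Also, your mollification fallback is unnecessary: the Fourier projection commutes with derivatives and fixes $e_J$, so moving it onto the test function drops it from the energy identity.
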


\begin{proof}
\textbf{Step 1. The exact error equation.}
Linearity of $\mathcal V_D$ gives
\begin{align*}
 \mathcal V_D(A^a+e)\cdot D(A^a+e)
       -\mathcal V_D A^a\cdot DA^a
 &=\mathcal V_D(A^a+e)\cdot De+\mathcal V_D e\cdot DA^a.
\end{align*}
Subtracting the residual of $A^a$ therefore yields exactly
\begin{equation}\label{ipm:correction-equation}
 e_t+b\cdot\nabla_{y,\theta}e+Z(e)=-r^a,\qquad e(0)=0,
\end{equation}
where
\begin{equation}\label{ipm:expanded-correction-bz}
 \begin{split}
 b&=b^a+T_{k^{-1}} e,\qquad
 T_{k^{-1}} f=(k^{-1} \mathcal V_D f,m_0\cdot \mathcal V_D f),\\
 Z(e)&=\mathcal V_D e\cdot(R_0+DA^a).
 \end{split}
\end{equation}
Since $\operatorname{div}_D \mathcal V_D f=0$,
$\operatorname{div}_{y,\theta}b=0$. In $Z(e)$, the derivative
$D$ falls on the given approximation, not on $e$. Thus $Z$ is a
linear operator of order zero on the correction.

Choose $\rho_0$ below the approximation and coefficient radii and
below the finite-cutoff Darcy threshold. There is a number
$\Lambda=\mathcal P^{C_0}\ge1$, with a fixed exponent $C_0$, such that
for every $0<\rho\le\rho_0$ and every cutoff $M$,
\begin{equation}\label{ipm:expanded-correction-operator}
 \begin{split}
 \|\mathcal V_D f\|_{\rho,M}+\|T_{k^{-1}} f\|_{\rho,M}
       &\le\Lambda\|f\|_{\rho,M},\\
 \|Z(f)\|_{\rho,M}&\le\Lambda\|f\|_{\rho,M},\\
 \|b^a\|_{\rho,M}&\le B_0:=k^{-1/2},\qquad
 \|r^a\|_{\rho,M}\le R_k:=e^{-0.6k^\vartheta\log k}.
 \end{split}
\end{equation}
Here $\|f\|_{\rho,M}=\sum_{n=0}^M\rho^n|f|_n/(n!)^2$.
For the term $R_0\cdot \mathcal V_D f$, use the coefficient norm
\[
 \sum_{l\ge0}\frac{\rho^l}{(l!)^2}
       \sum_{|I|=l}\|\partial^IR_0\|_{W^{s,\infty}}.
\]
It is bounded by a fixed power of $\mathcal P$ and multiplies the
$H^s$ norm of $\mathcal V_D f$. This avoids a volume factor. If one instead
places $R_0$ in unnormalized $H^s$, the cost is $O(\ell^{-1})$, also
allowed by $\mathcal P$. The term containing $DA^a$ uses the
$H^s$ algebra inequality. Both estimates hold at the same finite
cutoff by the factorial product inequality.

\par\smallskip\noindent\textbf{Step 2. The fixed Sobolev-base transport estimate.}
For a divergence-free vector field $b$ and a smooth scalar $v$ on
the three-dimensional lifted torus, $s=3$ gives
\begin{equation}\label{ipm:expanded-correction-base}
 \left|\sum_{|\beta|\le s}
 \langle\partial^\beta v,
            [\partial^\beta,b\cdot\nabla]v\rangle_{L^2}\right|
       \le C_s\|b\|_{H^s}\|v\|_{H^s}^2.
\end{equation}
Indeed, the term $b\cdot\nabla\partial^\beta v$ has zero pairing
with $\partial^\beta v$. In a remaining term let
$r=|\beta|\le s$ and $q\ge1$ be the number of derivatives on $b$.
For $q=1$, put $\partial b$ in $L^\infty$ and the derivative of
$v$ of order $r$ in $L^2$. For $q=r$, put $\partial^r b$ in $L^2$
and $\nabla v$ in $L^\infty$. These are controlled by $H^s$ since
$s>5/2$. For $1<q<r$, the available regularities of the two
factors are $s-q$ and $s-r+q-1$. Their sum is
$2s-r-1\ge s-1>3/2$, and both are nonnegative, so their product
is in $L^2$ with norm at most $C_s\|b\|_{H^s}\|v\|_{H^s}$.
There are finitely many base multi-indices, depending only on $s$.
This proves \eqref{ipm:expanded-correction-base}.

Apply this estimate with $v=\partial^I e$, for any additional
ordered word $I$. Its constant is independent of the length of $I$,
because only the fixed $s$ derivatives are expanded in this step.
In particular, it does not ask for $b$ in $H^{s+1}$ or for an
additional derivative of $v$ beyond $H^s$.

\par\smallskip\noindent\textbf{Step 3. All additional derivatives stay inside the cutoff.}
After applying a word $I$ of length $n$ to \eqref{ipm:correction-equation},
the transport terms other than $b\cdot\nabla\partial^I e$ are
\[
 \sum_{\varnothing\ne J\subset\{1,\ldots,n\}}
       (\partial^{I_J}b)\cdot
             \nabla\partial^{I_{J^c}}e.
\]
For $|J|=l\ge1$ put $j=n-l$. Summing over the coordinate choices
in $I$ and over subsets of size $l$ gives the bound
\begin{equation}\label{ipm:expanded-correction-wordcount}
 C_s\binom nl |b|_l\,|e|_{j+1}.
\end{equation}
The word sums already include the direction of the final gradient;
there is no extra dimension factor growing with $n$.
With $w_n=\rho^n/(n!)^2$, the exact identity is
\begin{equation}\label{ipm:transport-weight}
 \frac{w_{j+l}\binom{j+l}{l}}{w_l(j+1)w_{j+1}}
       =\rho^{-1}\frac{j+1}{\binom{j+l}{l}}
       \le\rho^{-1}.
\end{equation}
Here $\binom{j+l}{l}\ge j+1$ because $l\ge1$. Also
$j+1=n-l+1\le n$. Thus every derivative on the right of
\eqref{ipm:expanded-correction-wordcount} is already present when
$n\le M$.

Define, pointwise in time,
\[
 X_M=\sum_{n=0}^M w_n|e|_n,\qquad
 Y_M=\sum_{n=0}^M n w_n|e|_n.
\]
Summing \eqref{ipm:expanded-correction-wordcount}, using
\eqref{ipm:transport-weight}, gives
$C_s\rho^{-1}\|b\|_{\rho,M}Y_M$.
The fixed-base commutators give
$C_s\|b\|_{H^s}X_M$ after dividing each energy inequality by
the corresponding norm. The zero-order and source terms are bounded
by \eqref{ipm:expanded-correction-operator}. Since
\[
 \|b\|_{\rho,M}\le B_0+\Lambda X_M,
 \qquad w_n'=(\rho'/\rho)n w_n,
\]
enlarging the fixed power $\Lambda$ once gives
\begin{equation}\label{ipm:finite-cutoff-energy}
 X_M'\le\Lambda(X_M+X_M^2+R_k)
 +\left[\frac{\rho'}{\rho}
       +\Lambda\rho^{-1}(B_0+X_M)\right]Y_M.
\end{equation}
This is an upper Dini derivative inequality at zeros of individual
norms. To justify it there, regularize each of the finitely many
norms by $(\|v\|_{H^s}^2+\varepsilon^2)^{1/2}$ and let
$\varepsilon\downarrow0$ with $M$ fixed. No error is retained in
the initial norm.

For $M=0$, $Y_M=0$ and there are no additional-word commutators.
For $M=1$, the only such product is
$\partial b\cdot\nabla e$, bounded by $|b|_1|e|_1$; it does
not involve $|e|_2$. These endpoint cases make explicit that the
estimate is not an infinite-order cancellation requiring a nonexistent
top derivative.

\par\smallskip\noindent\textbf{Step 4. Quantitative radius and norm bootstraps.}
Put $\Delta_k=e^{-k^\vartheta/2}$ and choose
\begin{equation}\label{ipm:expanded-correction-radius}
 \rho(t)=\rho_0-2\Lambda(B_0+\Delta_k)t.
\end{equation}
The constants determining $\rho_0^{-1}$ and $\Lambda$ are fixed
powers of $\mathcal P$. Choose $Q$ so large that
\eqref{ipm:correction-frequency-separation} implies
\[
 \Lambda\rho_0^{-1}\le k^{\vartheta/100},
 \qquad \Lambda S\le k^{\vartheta/100}.
\]
Since $S\le1$ and $\vartheta$ is a fixed small number, increasing
the universal lower threshold on $k$ then gives
$2\Lambda S(B_0+\Delta_k)\le\rho_0/2$. In particular
$\rho(t)\ge\rho_0/2$.
On a bootstrap interval where $X_M\le\Delta_k\le1$,
the bracket in \eqref{ipm:finite-cutoff-energy} is nonpositive.
Starting from zero and integrating the remaining scalar inequality,
\begin{equation}\label{ipm:expanded-correction-bootstrap}
 X_M(t)\le \Lambda S e^{2\Lambda S}R_k<\Delta_k/4.
\end{equation}
To check the strict inequality, take logarithms: the left side has
logarithm at most
$\log(\Lambda S)+2\Lambda S-0.6k^\vartheta\log k$,
which is less than $-k^\vartheta/2-\log4$ for all sufficiently
large $k$. The constants are uniform in $M$.
Continuity closes this bootstrap on the whole interval where the
smooth approximation exists.

\par\smallskip\noindent\textbf{Step 5. Existence without assuming a Gevrey solution.}
Let $\mathcal J_J$ be the orthogonal Fourier projection onto a finite
symmetric box of frequencies of the lifted torus. Include the zero
frequency and choose the boxes nested and invariant under reflection.
Solve
\begin{equation}\label{ipm:galerkin-equation}
 \begin{split}
 (e_J)_t+\mathcal J_J\bigl[
 (b^a+T_{k^{-1}} e_J)\cdot\nabla e_J+Z(e_J)\bigr]
       &=-\mathcal J_J r^a,\\
 e_J(0)&=0,\qquad \mathcal J_J e_J=e_J.
 \end{split}
\end{equation}
This is a finite-dimensional ODE with smooth time coefficients and
a locally Lipschitz right side. The velocity $\mathcal V_D e_J$ is
computed by the exact Darcy operator, so its lifted divergence is
zero. It is not necessary to truncate that velocity.

When a differentiated equation is paired with a derivative of $e_J$,
move $\mathcal J_J$ to the test function. The projection commutes
with derivatives and fixes that test function; it therefore disappears
from the energy identity. All the preceding estimates hold uniformly
in $J,M$. The projection contracts every derivative $H^s$ norm.

For a fixed $J$, first use $M=0$. Estimate
\eqref{ipm:expanded-correction-bootstrap} bounds the solution's
$H^s$ norm, and hence all its finitely many Fourier coordinates,
up to time $S$. Finite-dimensional continuation therefore gives
existence on $[0,S]$. Only after this is established do we apply
the same argument for each $M\ge1$. This yields uniform bounds at
every fixed Sobolev order on an already common interval.

For each fixed integer $q$, choose a sufficiently large $M$.
The bounds give $e_J$ bounded uniformly in $C_tH^{q+2}$.
The equation, the fixed-order boundedness of $\mathcal V_D$, and
Sobolev products give a uniform bound for $(e_J)_t$ in $C_tH^q$.
At this stage the torus is fixed; compactness requires no quantitative
constant uniform over different layers. Each fixed Fourier box has
equicontinuous bounded coefficients. A diagonal subsequence converges
uniformly in time on every such box, and the two-derivative bound
controls the complementary Fourier tails. Consequently
\[
 e_J\longrightarrow e
       \quad\hbox{in }C([0,S];H^q)
       \quad\hbox{for every fixed }q.
\]
The inverse $\mathcal V_D(t)$ is uniformly bounded at each such order,
so $\mathcal V_D e_J$ converges there too. Products converge in lower
fixed norms, and $\mathcal J_J$ tends strongly to the identity;
the uniformly higher derivative bounds control its tails even on
these varying products. Passing to \eqref{ipm:galerkin-equation}
gives \eqref{ipm:correction-equation} with $e(0)=0$.

At each finite cutoff the norm estimate passes to the limit.
Monotone convergence in $M$, followed by
$\rho(t)\ge\rho_0/2$, yields
$\sup_t\|e(t)\|_{\rho_0/2}\le\Delta_k$.
The equation gives a continuous first time derivative in every
fixed Sobolev space. Further time derivatives follow inductively:
the coefficients are smooth in time, and time differentiation of
the coercive Darcy equation gives its inverse's differentiated
operator formula at every fixed order. No quantitative bound on
arbitrarily many time derivatives is required for the induction.
Thus $e$ is smooth up to both endpoints of the finite interval.

\par\smallskip\noindent\textbf{Step 6. Derivative bounds and parity.}
At $\rho_*=\rho_0/4$, one derivative costs at most
$C\rho_0^{-1}$ by \eqref{ipm:radiusderivative}.
The bounds on $b,Z,r^a$ and the equation now give
$\|e_t\|_{\rho_*}\le\mathcal P^C\Delta_k$.
The Darcy estimate gives the same bound for $\mathcal V_D e$.
This proves \eqref{ipm:correction-bound}.

All coefficients of the Darcy law are real, and the projections
preserve real functions. For the indicated joint reflection,
the scalar correction equation preserves oddness: its transport
field is odd, its scalar gradient is even, and its residual is odd.
The reflection-invariant Galerkin equations have unique solutions,
so each $e_J$ is real and odd. Both properties pass to the limit.
\end{proof}

\subsection{Physical reconstruction and gradient estimates}

Let $A=A^a+e$ be supplied by Proposition~\ref{ipm:correction} for the
approximation of Proposition~\ref{ipm:finite-approximation}, and
put $v=\mathcal V_D A$.

\begin{proposition}
\label{ipm:physical-reconstruction}
Under the integer-carrier condition \eqref{ipm:carrier-lattice}, define
\begin{equation}\label{ipm:physical-increments}
 \Theta(t,X(t,\ell y))=\ell k^{-1} A_{\rm diag}(t,y),
 \qquad
 U(t,X(t,\ell y))=\ell k^{-1} F(t,y)v_{\rm diag}(t,y),
\end{equation}
where $f_{\rm diag}(t,y):=f(t,y,km_0\cdot y)$.
Then $P_{\rm new}=P+\Theta$ is an exact odd mean-zero solution with a
smooth periodic pressure $p_{\rm new}$, and
$U=u(P_{\rm new})-u(P)$. Set $\pi=p_{\rm new}-p$.
For the leading fields in \eqref{ipm:intro-leading-matrices}, there
are global physical errors and a fixed exponent $C_*>0$ such that,
with all compositions evaluated at $X(t,\ell y)$,
\begin{equation}\label{ipm:packet-gradient-errors}
 \begin{aligned}
 (\nabla\Theta)\circ X
 &=(\nabla\Theta)_{\rm lead}\circ X+E_r\circ X,\\
 (\nabla U)\circ X
 &=(\nabla U)_{\rm lead}\circ X+E_M\circ X,\\
 (\nabla^2\pi)\circ X
 &=(\nabla^2\pi)_{\rm lead}\circ X+E_H\circ X,\\
 \sup_t(\|E_r\|_\infty+\|E_M\|_\infty+\|E_H\|_\infty)
 &\le k^{-1}\mathcal P^{C_*}.
 \end{aligned}
\end{equation}
\end{proposition}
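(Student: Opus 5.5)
The proof splits into two parts. First, the reconstruction: the physical triple solves the increment system~\eqref{ipm:intro-increment}. Second, the error estimate: differentiate the graph fields and separate the leading fast terms.

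\textbf{Exactness.} Let $(\zeta)$ be the scalar potential with $D\zeta=\boldsymbol p$, recovered by Fourier series as described before~\eqref{ipm:intro-exact-physical}; the integer-carrier condition~\eqref{ipm:carrier-lattice} keeps it smooth. Define $\pi\circ X=\ell^2k^{-2}\zeta_{\rm diag}$. Put $w=k^{-1}A$, $v_{\rm raw}=k^{-1}v$, $q=k^{-2}\zeta$. Then $v=\mathcal V_DA$ is exactly the lifted Darcy law $v_{\rm raw}=-ca_2w-G\widetilde\nabla q$, and $\widetilde\nabla\cdot v_{\rm raw}=0$; the scalar equation~\eqref{ipm:exact-lifted-scalar} is the first line of~\eqref{ipm:intro-raw-equation}.

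Restriction to the graph commutes with $\widetilde\nabla$, since $\nabla_y(f_{\rm diag})=(\widetilde\nabla f)_{\rm diag}$. So the graph chain rule, the Piola identity ($\det F=1$), $F^T(r\circ X)=R_0$ and $(\nabla_x g)\circ X=\ell^{-1}F^{-T}\nabla_y(g\circ X)$ reverse the lifting computation. They give Darcy's law $U=-c\Theta e_2-\nabla\pi$, $\operatorname{div}U=0$, and~\eqref{ipm:intro-increment} for $(\Theta,U)$. Adding the exact parent equation shows that $P_{\rm new}=P+\Theta$ solves~\eqref{ipm:unforced} with pressure $p+\pi$. Uniqueness of the periodic Darcy velocity then gives $U=u(P_{\rm new})-u(P)$, once $U$ has zero mean.

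Oddness follows from the joint reflection parity (Proposition~\ref{ipm:correction} and Lemma~\ref{ipm:phase-parity}, together with oddness of $X$ and the odd means from Proposition~\ref{ipm:meaninverse}). An odd function has zero mean, which gives the mean-zero claims for $\Theta$ and $U$.

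\textbf{Error bounds.} Differentiate $\Theta\circ X$:
\[
(\nabla\Theta)\circ X=k^{-1}F^{-T}(\nabla_yA+km_0\partial_\theta A)_{\rm diag}=F^{-T}(DA)_{\rm diag}.
\]
Split $A=A_1+\sum_{p\ge2}k^{1-p}(A_p+B_p)+e$. The term $F^{-T}m_0\partial_\theta A_1$ is exactly $(\nabla\Theta)_{\rm lead}\circ X$, since $m=F^{-T}m_0$. The remainder consists of three pieces:
\begin{itemize}
\item $k^{-1}F^{-T}\nabla_yA_1$;
\item $D$ applied to the tails in~\eqref{ipm:expanded-hierarchy-split-tail};
\item $De$.
\end{itemize}
The first is bounded by $k^{-1}\mathcal P^C$ through the Gevrey bound on $A_1$ and $|F^{-1}|\le K^3$. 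The second is bounded by $k^{-1}\mathcal P^C$ after one radius loss~\eqref{ipm:radiusderivative}. The third is exponentially small by~\eqref{ipm:correction-bound}. The Gevrey norms at radius $\rho\ge\mathcal P^{-C}$ dominate $L^\infty$ through the $H^3\hookrightarrow L^\infty$ embedding, with torus-uniform constants, and this embedding is used for every bound above.

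The same scheme handles $\nabla U$, using $(\nabla U)\circ X=F(Dv)_{\rm diag}F^{-1}$ plus a term $k^{-1}\ell^{-1}$-weighted by $\nabla_yF$. Here $\ell$ cancels against the factor $\ell$ in $U$, so the term is of size $k^{-1}K^C$. In the leading piece $Fm_0\otimes\ldots$ is replaced by $\mathfrak m(m)\otimes m$ via $Fb_0=\mathfrak m(m)$ and $v=V_1+O_\rho(k^{-1}\mathcal P^C)$ from~\eqref{ipm:approximation-bounds}, together with $\|\mathcal V_De\|$ being tiny. The Hessian is handled by $\nabla^2\pi\circ X=F^{-T}(D(D\zeta))_{\rm diag}F^{-1}$ plus first-order terms carrying $\nabla_yF^{-T}$ with a factor $k^{-1}$.

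\textbf{Main obstacle.} The main obstacle is this Hessian step. We need $D\zeta=\boldsymbol p=k^{-1}(m_0\partial_\theta\pi_1)+O(k^{-1}\mathcal P^C)$, with the leading term $Gm_0\partial_\theta\pi_1$ reproducing $-c\alpha\chi a\,m_2\,m\otimes m/|m|^2 f'_\delta$. That requires bounding $\boldsymbol p-\boldsymbol p^a$ by the vector-gradient estimate of Proposition~\ref{ipm:lifted-inverse} rather than any scalar potential bound. So I would write $\boldsymbol p=G^{-1}(-ca_2A-v)$ directly. This expresses everything through $A$ and $v$, already controlled, and $G^{-1}=F^TF$ is a coefficient with polynomial bounds. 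One further derivative costs one radius reduction. Collecting the bounds gives a fixed exponent $C_*$ with all errors at most $k^{-1}\mathcal P^{C_*}$, uniformly in $t\in[0,S]$.
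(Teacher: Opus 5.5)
Your proposal is correct. The reconstruction (potential from the integer carrier, then the graph chain rule and Piola to recover Darcy's law, incompressibility and the scalar equation), the zero-mean argument, and the first two expansions follow the paper. Those expansions are $(\nabla\Theta)\circ X=F^{-T}(DA)_{\rm diag}$ and $(\nabla U)\circ X=F(Dv)_{\rm diag}F^{-1}+O(k^{-1}\mathcal P^C)$, the latter using $Fb_0=\mathfrak m(m)$.

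\textbf{The Hessian step.} This is where your route differs. The paper never differentiates the pressure. The physical Darcy law $U=-c\Theta e_2-\nabla\pi$ gives $\nabla^2\pi=-\nabla U-ce_2(\nabla\Theta)^T$. Hence $E_H=-E_M-ce_2E_r^T$ is controlled by the two bounds already proved. The leading term is $-\alpha\chi a(\mathfrak m(m)+ce_2)\otimes m\,f'_\delta$, which is the stated one because $\mathfrak m(m)+ce_2=c\,m_2m/|m|^2$.

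Your identity $\boldsymbol p=G^{-1}(-ca_2A-v)$ is the lifted form of the same Darcy law. So your route is valid and also avoids any scalar-potential estimate. It does, however, force you to recompute the leading Hessian through $\pi_1$. What you call the main obstacle is really a one-line consequence of the other two estimates.

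\textbf{Slips in your recomputation.} First, since $-ca_2A_1-V_1=Gm_0\partial_\theta\pi_1$, the expansion is $\boldsymbol p=m_0\partial_\theta\pi_1+O(k^{-1}\mathcal P^C)$. There is no factor $k^{-1}$ on the leading term. Second, the leading Hessian is $F^{-T}(m_0\otimes m_0)F^{-1}\partial_\theta^2\pi_1=(m\otimes m)\,\partial_\theta^2\pi_1$, with $\partial_\theta^2\pi_1=-c\alpha\chi a\,m_2|m|^{-2}f'_\delta$. It does not come from ``$Gm_0\partial_\theta\pi_1$''.

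\textbf{Parity citation.} Lemma~\ref{ipm:phase-parity} concerns $\theta\mapsto-\theta$ at fixed $(t,y)$, not the joint reflection $(y,\theta)\mapsto(-y,-\theta)$ you need. Joint oddness of the hierarchy needs its own short induction:
\begin{itemize}
\item evenness of $F,G,a_2,R_0,L,b_0,c_A^{\ell}$;
\item oddness of $A_1$;
\item parity preservation by the mean and scalar inverses and by the reconstruction.
\end{itemize}
This joint oddness is exactly the hypothesis under which Proposition~\ref{ipm:correction} returns an odd correction.
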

\begin{proof}
The chain rule for graph evaluation is
\begin{equation}\label{ipm:diagonal-chain-rule}
 \nabla_y f_{\rm diag}
 =(\nabla_y f+km_0\partial_\theta f)_{\rm diag}
 =k(Df)_{\rm diag}.
\end{equation}
The lattice condition makes this a periodic function on
$\mathbb T_\ell^2$: translating $y$ by a period changes the
angle by an integer multiple of $2\pi$.

\textbf{Step 1. The vector pressure has a genuine periodic potential.}
A lifted Fourier mode has slow frequency $\ell\nu$, with
$\nu\in\mathbb Z^2$, and angular frequency $j\in\mathbb Z$.
Its $D$-frequency is
\[
 \eta=k^{-1}\ell\nu+jm_0
       =\frac{\ell}{k}(\nu+jn_0),\qquad
 n_0=(k/\ell)m_0\in\mathbb Z^2.
\]
If $\eta\ne0$, then $|\eta|\ge\ell/k$. A vector in the closed
$D$-gradient space has its Fourier coefficient parallel to $\eta$
and has zero coefficient at each resonant frequency $\eta=0$.
For the smooth vector pressure $p$ in the Darcy construction define
\[
 \widehat\zeta(\nu,j)=
       \frac{\eta\cdot\widehat p(\nu,j)}{i|\eta|^2}
       \quad(\eta\ne0),\qquad
 \widehat\zeta(\nu,j)=0\quad(\eta=0).
\]
Then $D\zeta=p$ and
$|\widehat\zeta|\le(k/\ell)|\widehat p|$. Thus $\zeta$ is a smooth
periodic potential. Its possible $k/\ell$ norm cost is used only
to establish existence of a smooth potential; all quantitative
operator estimates concerned the vector $p$.

\textbf{Step 2. Darcy law, divergence, and the scalar equation.}
For the increments \eqref{ipm:physical-increments},
let $\pi(t,X(t,\ell y))=\ell^2k^{-2}\zeta_{\rm diag}(t,y)$.
By \eqref{ipm:diagonal-chain-rule},
\[
 (\nabla_x\pi)\circ X(t,\ell y)
       =\ell k^{-1} F^{-T}p_{\rm diag}.
\]
Since $FG=F^{-T}$ and $Fa_2=e_2$, the exact lifted Darcy law gives
$U=-c\Theta e_2-\nabla_x\pi$. This pressure is periodic, because
both the graph function and the inverse parent torus map are periodic
in the required sense.

For divergence, Piola's identity with $\det F=1$ gives
\begin{equation}\label{ipm:physical-piola}
 (\operatorname{div}_xU)\circ X
   =\operatorname{div}_z(F^{-1}U\circ X)
   =k^{-1}\operatorname{div}_y v_{\rm diag}
   =(\operatorname{div}_D v)_{\rm diag}=0.
\end{equation}
In the second equality $z=\ell y$ and $F^{-1}U\circ X=\ell k^{-1} v_{\rm diag}$.
Along the parent map,
$(D_t^P\Theta)\circ X=\ell k^{-1}(A_t)_{\rm diag}$ and
$F^T(\nabla P)\circ X=R_0$. Therefore
\[
 \begin{split}
 \bigl[\partial_t(P+\Theta)+(u(P)+U)\cdot\nabla(P+\Theta)\bigr]
                 \circ X(t,\ell y)
 &=\ell k^{-1}\bigl[A_t+v\cdot R_0+v\cdot DA\bigr]_{\rm diag}\\
 &=0.
 \end{split}
\]
This verifies every term and scale factor in the physical equation.

\textbf{Step 3. The physical zero mode.}
Under the joint reflection $(y,\theta)\mapsto(-y,-\theta)$,
$F,G,a_2,R_0,L,b_0,c_A^{\ell}$ are even. The leading $A_1$ is odd.
The mean inverse returns an odd scalar for odd data; the centered
scalar inverse preserves the same parity. In the recurrence the
scalar and velocity coefficients $A_p,V_p,C_p$ are odd under this
joint reflection, while their angular primitives $\pi_p,\psi_p$
are even. Spatial or angular differentiation reverses the joint
parity, which checks the recurrence one term at a time. This joint
parity is distinct from the order-dependent parity under reflection
of $\theta$ alone established in Lemma~\ref{ipm:phase-parity}.

The exact correction is odd by its construction. The physical graph
and the odd parent map carry joint oddness to spatial oddness.
Consequently $\Theta$ and $P+\Theta$ have zero spatial integral.
Integrating the periodic Darcy law now gives $\int U=0$.
Thus the physical velocity is exactly the normalized Darcy velocity.
No zero-mean condition is imposed on the other auxiliary phase leaves.

\textbf{Step 4. Global first-derivative and Hessian errors.}
At a fixed smaller radius, the hierarchy and correction give
\[
 A=A_1+\mathcal R_A,\qquad
 \mathcal V_D A=V_1+\mathcal R_V,\qquad
 \|\mathcal R_A\|_\rho+\|\mathcal R_V\|_\rho
                       \le k^{-1}\mathcal P^C.
\]
The exponentially small correction is included in these inequalities.
One fixed radius decrease supplies the same bound for any fixed
number of lifted derivatives, with a larger fixed polynomial.
Physical differentiation of the scalar in
\eqref{ipm:physical-increments} gives
\[
 (\nabla_x\Theta)\circ X=F^{-T}(DA)_{\rm diag}.
\]
The fast part of $DA_1$ is $m_0\partial_\theta A_1$; its slow
part is $k^{-1}\nabla_yA_1$. Hence
$(\nabla\Theta)\circ X
=m(\partial_\theta A_1)_{\rm diag}+O(k^{-1}\mathcal P^C)$.
For the velocity, differentiate $Fv$ on the graph. Derivatives of
$F$ and other slow coefficients acquire a factor $k^{-1}$, whereas
the leading fast derivative is
\[
 (Fb_0)\otimes(F^{-T}m_0)\,(\partial_\theta A_1)_{\rm diag}
       =\mathfrak m(m)\otimes m\,(\partial_\theta A_1)_{\rm diag}.
\]
Here $Fb_0=\mathfrak m(F^{-T}m_0)$ follows directly by substituting
$G=F^{-1}F^{-T}$ into $L$. All remaining terms are
$O(k^{-1}\mathcal P^C)$ globally.
Finally,
\[
 \nabla^2\pi=-\nabla U-ce_2(\nabla\Theta)^T,\qquad
 \mathfrak m(m)+ce_2=c\,m_2m/|m|^2.
\]
This yields exactly \eqref{ipm:packet-gradient-errors}.
The estimates are global because the exact Darcy projection and
nonlinear correction have been estimated globally. Only the leading
terms are localized.
\end{proof}

\section{The child particle map and recovery of the parent class}
\label{ipm:sec-map}

\begin{proposition}
\label{ipm:parent-class-return}
For the child of Proposition~\ref{ipm:physical-reconstruction}, let
$\Phi$ be the flow of $b=(k^{-1}v,m_0\cdot v)$ with
$\Phi(0)=\mathrm{id}$, and let $\pi_y$ project onto the first two components.
Define
\begin{equation}\label{ipm:perturbed-label-flow}
 Y(t,z)=\ell\,\pi_y
       \Phi(t,z/\ell,(k/\ell)m_0\cdot z).
\end{equation}
Then $Y$ is volume preserving and $X_{\rm new}=X\circ Y$ is the
actual initial-label flow. The initial density and $X_{\rm new}$,
together with its first two time derivatives, satisfy
\eqref{ipm:parent-class} with $K_{\rm new}\le(K\mathcal P k/\ell)^{C_*}$
for one fixed exponent $C_*$.
\end{proposition}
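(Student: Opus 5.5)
The argument has three parts: the flow identity for $X_{\rm new}$, the volume preservation of $Y$, and the Gevrey bounds for the four fields in \eqref{ipm:parent-class} with $K_{\rm new}\le(K\mathcal Pk/\ell)^{C_*}$.

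\textbf{Flow identity and volume preservation.} Since $\dot\theta-km_0\cdot\dot y=m_0\cdot v-m_0\cdot v=0$ along $\Phi$, the graph $\{\theta=km_0\cdot y\}$ is invariant. On it, $\Phi$ restricts to the flow of $\dot y=k^{-1}v_{\rm diag}(t,y)$. Hence $Y_t(t,z)=\ell k^{-1}v_{\rm diag}(t,Y/\ell)$. The chain rule then gives
\[
 \partial_tX_{\rm new}=X_t\circ Y+(F\circ Y)Y_t=(u+U)\circ X_{\rm new},
\]
using $U\circ X=\ell k^{-1}Fv_{\rm diag}$ from \eqref{ipm:physical-increments}. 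Uniqueness of smooth ODE solutions identifies $X_{\rm new}$ as the initial-label flow of $P+\Theta$.

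For volume preservation, note $\operatorname{div}_y(k^{-1}v_{\rm diag})=(\operatorname{div}_Dv)_{\rm diag}=0$ by \eqref{ipm:diagonal-chain-rule}. So the restricted flow preserves Lebesgue measure, and $\det D_zY=1$. Oddness of $Y$ follows from the joint parity of $v$ established in Proposition~\ref{ipm:physical-reconstruction}.

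\textbf{Bounds on the child flow.} The vector field $b$ is divergence free. By \eqref{ipm:approximation-bounds} and \eqref{ipm:correction-bound}, $\|b\|_\rho\le k^{-1}\mathcal P^C$ at a radius $\rho\ge\mathcal P^{-C}$. The first step is to convert this Gevrey norm into pointwise factorial bounds $D_n(b)\le BR^n(n!)^2$ with $B=k^{-1}\mathcal P^C$ and $R=\mathcal P^C$, using Sobolev embedding at base $s=3$. This gives $SBR\ll1$, so Lemma~\ref{ipm:smallflow} bounds $\Phi-\mathrm{id}$ and $\Phi_t$.

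For $\Phi_{tt}$ we need $b_t$. This comes from Proposition~\ref{ipm:lifted-inverse}, whose time-derivative part bounds $(\mathcal V_DA)_t$ from $A$ and $A_t$; those are controlled by the coefficient bundles and \eqref{ipm:correction-bound}.

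Restricting to the graph $\theta=(k/\ell)m_0\cdot z$ and rescaling by $z=\ell y$ multiplies the growth constant by at most $Ck/\ell$. So $Y-\mathrm{id}$, $Y_t$ and $Y_{tt}$ obey \eqref{ipm:parent-class}-type bounds with parameter $(\mathcal Pk/\ell)^C$.

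\textbf{Composition.} Write $X_{\rm new}-\mathrm{id}=(X-\mathrm{id})\circ Y+(Y-\mathrm{id})$. The time derivatives are
\[
 \partial_tX_{\rm new}=X_t\circ Y+(F\circ Y)Y_t,
\]
\[
 \partial_t^2X_{\rm new}=X_{tt}\circ Y+2(F_t\circ Y)Y_t+(\nabla F\circ Y)[Y_t,Y_t]+(F\circ Y)Y_{tt}.
\]
Each term is a composition handled by \eqref{ipm:compositionbound} with inner map $\mathrm{id}+(Y-\mathrm{id})$, combined with the product bounds of Lemmas~\ref{ipm:word-products} and~\ref{ipm:shiftcalculus}. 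All constants are polynomial in $K,\mathcal P,k/\ell$.

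The initial density is $P(0)+\Theta(0)$, where $\Theta(0)=\ell k^{-1}A_{\rm diag}(0,z/\ell)$ since $X(0)=\mathrm{id}$. Its Gevrey bound follows from $\|A(0)\|_\rho\le\mathcal P^C$ together with the same graph-restriction cost $(k/\ell)^n$.

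\textbf{Main obstacle.} The delicate step is the bookkeeping in the composition formula \eqref{ipm:compositionbound}. Its growth constant depends on $A_iR_i$, and the raw size of $Y-\mathrm{id}$ in unscaled labels is not small in the Gevrey sense once the $k/\ell$ growth enters. I would handle this by accepting a growth constant $R_{\rm new}$ that is polynomial in $A_iR_i$, which is allowed because only a polynomial $K_{\rm new}$ is claimed. The remaining task is to check that the sup-norm-to-Gevrey passage on the enlarged torus introduces no $\ell^{-1}$ volume factor beyond what $\mathcal P$ already absorbs; Lemma~\ref{ipm:seventh-foundations-sobolev} provides the period-uniform embedding needed for this.
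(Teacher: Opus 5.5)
Your proposal is correct and follows essentially the same route as the paper. The steps match: graph invariance gives the flow identity and $\det D_zY=1$; Lemma~\ref{ipm:smallflow} controls $\Phi,\Phi_t,\Phi_{tt}$, with $b_t$ supplied by the Darcy time-derivative estimate; graph restriction costs a factor $Ck/\ell$ in the growth constant; and Lemma~\ref{ipm:composition} together with the explicit formulas for $(X\circ Y)_t,(X\circ Y)_{tt}$ finishes. Your worry about $A_iR_i$ is resolved as you suggest, since $R_{\rm new}$ in that lemma is only polynomial in $A_iR_i$. The one detail the paper states and you leave implicit is that the integer-carrier condition gives $Y(t,z+2\pi n)=Y(t,z)+2\pi n$. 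This makes $Y-\mathrm{id}$ and $X_{\rm new}-\mathrm{id}$ genuine periodic displacements with finite sup norms.
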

\begin{proof}
It remains to show that the exact child provides the hypotheses for
the next layer, with one stage-independent exponent.

\textbf{Step 1. Small lifted transport and its time derivative.}
Use the sharper bound $\|b^a\|_{\rho_0}\le k^{-1}\mathcal P^C$
from the truncated hierarchy, before replacing it by $B_0$ in the
energy estimate. The uniform Sobolev embedding constant is fixed.
Frequency separation therefore gives
$C_sk^{-1}\mathcal P^C\le k^{-1/2}$. The exact correction contributes
at most $C_s\mathcal P^C e^{-k^\vartheta/2}\le k^{-1/2}$.
Together with the Darcy bounds these estimates give
\begin{equation}\label{ipm:small-lifted-transport}
 \sup_t\sum_{|I|=n}\|\partial^Ib(t)\|_\infty
       \le B R_b^n(n!)^2,\qquad
 B\le2k^{-1/2},\quad R_b\le\mathcal P^C.
\end{equation}
The scalar $A=A^a+e$ and its time derivative have polynomial Gevrey
norms at one fixed smaller radius. Differentiating the exact Darcy
equation, as in \eqref{ipm:darcy-time-derivative}, uses only
$G_t,(a_2)_t,A,A_t$. Consequently
\begin{equation}\label{ipm:lifted-time-coefficient}
 \sup_t\sum_{|I|=n}\|\partial^Ib_t(t)\|_\infty
       \le\mathcal P^C(\mathcal P^C)^n(n!)^2.
\end{equation}
No second scalar time derivative or time differentiation of the
formal pressure arrays is used.

Frequency separation implies $SBR_b$ is smaller than the fixed
threshold in Lemma~\ref{ipm:smallflow}. Thus \eqref{ipm:smallflowbound} gives
\begin{equation}\label{ipm:lifted-flow-estimate}
 \sup_t\sum_{|I|=n}
       \|\partial^I(\Phi-\mathrm{id})\|_\infty
       \le CBS(CR_b)^n(n!)^2.
\end{equation}
That lemma proves this for finite word cutoffs using a positive
composition majorant, then passes to all orders. Equations
$\Phi_t=b\circ\Phi$ and
$\Phi_{tt}=(b_t+(\nabla_{y,\theta}b)b)\circ\Phi$ give fixed polynomial size and
growth constants for the first two time derivatives as well, by
\eqref{ipm:smallflow-first-time}--\eqref{ipm:smallflow-second-time}.
All derivatives here are ordinary derivatives in the lifted
variables, not the degenerate derivative $D$.

\textbf{Step 2. The physical relabeling and its Jacobian.}
For the relabeling \eqref{ipm:perturbed-label-flow}, since $b=(k^{-1} v,m_0\cdot v)$, the quantity
$\theta-km_0\cdot y$ is constant along its trajectories.
Starting on the zero-offset graph therefore gives
\begin{equation}\label{ipm:label-velocity}
 Y_t(t,z)=\ell k^{-1}
       v(t,Y(t,z)/\ell,km_0\cdot Y(t,z)/\ell).
\end{equation}
The induced vector field has zero divergence in $z$, because
its divergence is $(\operatorname{div}_D v)_{\rm diag}=0$ after
the scaling and graph chain rule. Therefore
$\det D_zY=1$ and $Y(0,z)=z$.
Periodicity of the lifted field and the integer carrier imply
$Y(t,z+2\pi n)=Y(t,z)+2\pi n$. Thus $Y$ is a smooth torus
flow, not only a formal graph map.

Define
\begin{equation}\label{ipm:new-physical-flow}
 X_{\rm new}(t,z)=X(t,Y(t,z)).
\end{equation}
Differentiating in time gives
\begin{equation}\label{ipm:child-flow-identity}
 (X_{\rm new})_t
       =X_t(t,Y)+D_zX(t,Y)Y_t
       =(u(P)+U)(t,X_{\rm new}).
\end{equation}
The last identity uses exactly \eqref{ipm:physical-increments}.
Together with $X_{\rm new}(0,z)=z$, it identifies the map as the
actual initial-label flow of the constructed child.

\textbf{Step 3. Quantitative closure of all four fields.}
Graph differentiation in \eqref{ipm:perturbed-label-flow} multiplies
the derivative growth constant by at most $Ck/\ell$.
The factor $\ell$ outside the flow and the bounds for $\Phi$,
$\Phi_t,\Phi_{tt}$ show that $Y-\mathrm{id},Y_t,Y_{tt}$ have
Gevrey size at most $\mathcal P^C$ and growth at most
$(k/\ell)\mathcal P^C$.
Apply Lemma~\ref{ipm:composition} to
\[
 X_{\rm new}-\mathrm{id}
       =(X-\mathrm{id})\circ Y+(Y-\mathrm{id}).
\]
This expression uses periodic displacement fields, so their zeroth
norms are finite even for a lift of the torus flow.
The composition growth is a fixed polynomial in $K,\mathcal P,k/\ell$.
For the time derivatives use the explicit identities
\[
 \begin{split}
 (X\circ Y)_t
       &=X_t\circ Y+(DX\circ Y)Y_t,\\
 (X\circ Y)_{tt}
       &=X_{tt}\circ Y+2(DX_t\circ Y)Y_t\\
       &\quad +(D^2X\circ Y)[Y_t,Y_t]+(DX\circ Y)Y_{tt}.
 \end{split}
\]
Only the four fields in \eqref{ipm:parent-class} and at most two
additional spatial derivatives appear. A fixed derivative shift
changes their Gevrey size and growth by fixed polynomial factors.
Products and the composition lemma therefore give the same kind
of bound for both time derivatives of $X_{\rm new}$.

At time zero the physical labels are the physical coordinates.
The explicitly prescribed density is $P(0)$ plus the graph of
the finite hierarchy, with no exact correction. Its Gevrey norm
before graph evaluation is polynomial in $\mathcal P$ by the
tail estimate \eqref{ipm:expanded-hierarchy-split-tail}; its graph growth
costs at most $Ck/\ell$.
Hence the initial density satisfies the same fixed polynomial
bound. Choose one exponent $C_*$ larger than all these finitely
many size and growth exponents. Taking
$K_{\rm new}=(K\mathcal P k/\ell)^{C_*}$ then bounds each field's
$n$th derivative by $K_{\rm new}^{n+1}(n!)^2$.
This proves \eqref{ipm:packet-output-class} with exactly the input
class required at the next stage.
\end{proof}

\section{The initial density and its all-time increment bounds}
\label{ipm:sec-initial-increment}

\begin{proposition}
\label{ipm:initial-increment-bounds}
For the exact child of Proposition~\ref{ipm:physical-reconstruction},
the leading density increment is
\begin{equation}\label{ipm:leading-density}
 \Theta_{\rm lead}(t,X(t,\ell y))
 =\frac{\ell}{k}\alpha\chi(y)a(t,y)f_\delta(km_0\cdot y).
\end{equation}
The exact increment decomposes as
$\Theta=\Theta_{\rm osc}+\Theta_{\rm mean}+\Theta_{\rm corr}$, with
finite oscillatory and phase-independent parts and a global exact
correction. For each fixed integer $j\ge0$ and every $t\in[0,S]$,
\begin{equation}\label{ipm:packet-history}
 \begin{aligned}
 \|\Theta_{\rm osc}(t)\|_{C^j}
 &\le C_j\ell^{1-j}k^{j-1}\mathcal P^{C_j}\gamma(t),\\
 \|\Theta_{\rm mean}(t)\|_{C^j}
 &\le C_j\ell^{1-j}k^{-2}\mathcal P^{C_j},\\
 \|\Theta_{\rm corr}(t)\|_{C^j}
 &\le C_j\ell^{1-j}k^{j-1}\mathcal P^{C_j}
       e^{-k^\vartheta/2}.
 \end{aligned}
\end{equation}
Its oscillatory initial density is exactly $\Theta_{\rm lead}(0)$;
all higher oscillatory coefficients vanish initially, and
$\Theta_{\rm corr}(0)=0$. The remaining initial increment consists
of the phase means selected by Proposition~\ref{ipm:meaninverse}.
\end{proposition}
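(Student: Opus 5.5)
The plan is to read off the decomposition directly from the representation $\Theta\circ X=\ell k^{-1}A_{\rm diag}$ in \eqref{ipm:physical-increments}, with $A=A^a+e$ and $A^a=kw^a=\sum_{p=1}^Nk^{1-p}(A_p+B_p)$. Accordingly we define
\[
 \Theta_{\rm osc}\circ X=\ell k^{-1}\Bigl(\sum_{p=1}^Nk^{1-p}A_p\Bigr)_{\rm diag},\quad
 \Theta_{\rm mean}\circ X=\ell k^{-1}\Bigl(\sum_{p=2}^Nk^{1-p}B_p\Bigr)_{\rm diag},\quad
 \Theta_{\rm corr}\circ X=\ell k^{-1}e_{\rm diag}.
\]
With $A_1=\alpha\chi af_\delta$, the $p=1$ oscillatory term is exactly \eqref{ipm:leading-density}. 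The initial statements are then immediate. By \eqref{ipm:oscillatory-recurrence}, $A_p(0)=0$ for $p\ge2$. By Proposition~\ref{ipm:correction}, $e(0)=0$. The $B_p(0)$ are the values selected by Proposition~\ref{ipm:meaninverse}. Since $X(0)=\mathrm{id}$, the physical initial data coincide with the label-space expressions.

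For the $C^j$ bounds we differentiate in physical variables, using $(\nabla_x g)\circ X=\ell^{-1}F^{-T}\nabla_y(g\circ X)$ and the graph chain rule \eqref{ipm:diagonal-chain-rule}.

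\emph{Oscillatory and correction parts.} Each derivative of a graph function costs $k$ times a lifted derivative $D$, and an extra factor $\ell^{-1}$ comes from the label scaling. The derivatives of $F^{-T}$ and of the inverse flow $X^{-1}$ are bounded polynomially in $K\le\mathcal P$ by \eqref{ipm:parent-class} and Lemma~\ref{ipm:packet-calculus}. A Fa\`a di Bruno expansion to fixed order $j$ therefore gives a factor $\ell^{1-j}k^{j-1}$ times $\mathcal P^{C_j}$ times $\sup$ norms of at most $j$ lifted derivatives of the relevant lifted field. Those sup norms follow from the fixed-radius Gevrey norms by Sobolev embedding and \eqref{ipm:radiusderivative}. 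We then apply \eqref{ipm:expanded-hierarchy-split-tail} together with the $p=1$ bound from Lemma~\ref{ipm:coefficients} to get the factor $\gamma(t)$ for the oscillatory part, and \eqref{ipm:correction-bound} to get $e^{-k^\vartheta/2}$ for the correction.

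\emph{Mean part.} This piece needs more care, since the stated bound $\ell^{1-j}k^{-2}$ contains no power $k^j$. The mean coefficients are independent of $\theta$, so the graph evaluation is trivial and no factor $k$ arises from differentiation; physical derivatives cost only $\ell^{-1}$ per derivative via $y=z/\ell$ and $X^{-1}$. Lemma~\ref{ipm:phase-parity} gives $B_p=0$ for odd $p$, so the first nonzero mean is $B_2$, entering with prefactor $\ell k^{-1}\cdot k^{-1}$. The geometric-series tail argument of Proposition~\ref{ipm:finite-approximation} then bounds $\sum_{p\ge2}k^{1-p}B_p$ by $k^{-1}\mathcal P^C$ uniformly in time, which yields $\ell^{1-j}k^{-2}\mathcal P^{C_j}$.

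\emph{Main obstacle.} The main difficulty is bookkeeping: ensuring that the $C^j$ costs of composing with $X(t,\cdot)^{-1}$ and of converting Gevrey norms on the enlarged lifted torus to sup norms introduce only factors $\mathcal P^{C_j}$. In particular, no hidden volume factor $\ell^{-1}$ beyond those displayed should appear. This is handled by the uniform Sobolev constants of Lemma~\ref{ipm:seventh-foundations-sobolev} and by the fact that $\ell^{-1}\le\mathcal P$ anyway. The inverse flow is also odd and volume preserving, so its derivatives are controlled by those of $F^{-1}$.
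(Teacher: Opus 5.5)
Your proposal is correct and follows the paper's proof essentially step for step. It uses the same three-way split of $A=A^a+e$ on the graph, the chain-rule cost $(k/\ell)^j$ for graph functions versus only $\ell^{-j}$ for the phase-independent means, the tail bound \eqref{ipm:expanded-hierarchy-split-tail} plus the $p=1$ bound for the factor $\gamma(t)$, \eqref{ipm:correction-bound} for the correction, and the vanishing of $A_p(0)$ for $p\ge2$ and of $e(0)$; the appeal to Lemma~\ref{ipm:phase-parity} is superfluous, since $B_1=0$ by definition already makes $B_2$ the first possible mean.
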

\begin{proof}
For each fixed $j$, differentiating a graph function $j$ times costs
at most a constant times $k^j$. Scaling $z=\ell y$ costs
$\ell^{-j}$. The identity
$\nabla_x=F^{-T}\nabla_z$, applied recursively, involves only a
fixed number of derivatives of $F^{-1}$ at a fixed $j$.
Their sizes are fixed powers of $K$. Define the physical evaluation
operator by $(\mathcal T_tf)(X(t,\ell y))=\ell k^{-1} f_{\rm diag}(y)$.
The physical scalar prefactor $\ell/k$ gives
\[
 \|\mathcal T_tf\|_{C^j}
       \le C_j\ell^{1-j}k^{j-1}\mathcal P^{C_j}\|f\|_\rho.
\]
This inequality is precisely the iterated chain rule for
\eqref{ipm:physical-increments}.
Sobolev embedding in the lifted variables supplies evaluation on the
graph without an additional frequency factor.

By \eqref{ipm:expanded-hierarchy-split-tail} and the leading coefficient
bound, the finite oscillatory hierarchy has norm at most
$\mathcal P^C\gamma(t)$ after summation; its small factor is retained
before time suprema are taken. The mean density is
$\ell\sum_{p=2}^Nk^{-p}B_p$. It has no angular derivatives and
therefore costs no $k^j$, yielding
$C_j\ell^{1-j}k^{-2}\mathcal P^{C_j}$.
Apply \eqref{ipm:correction-bound} separately to obtain the final
line of \eqref{ipm:packet-history}; no factor $\gamma(t)$ is asserted
for that correction. At time zero, all $A_p$ for $p\ge2$ and $e$
vanish. The only initial oscillation is $A_1(0)$, which gives
\eqref{ipm:leading-density} at $t=0$, and the selected initial means are precisely those included in the mean bound.
\end{proof}

\section{The conditional exact child}
\label{ipm:sec-packet}

We collect the analytic construction into the statement used in
the induction. The parent, carrier, amplitude, and size parameter are
those of Sections~\ref{ipm:sec-history} and \ref{ipm:calculus}.
The pressure bound is assumed only for the parent; the geometric estimates
will supply it for the child.

\begin{proposition}\label{ipm:packet}
There are fixed positive constants $r,Q,C_*$, a fixed integer $h$,
and a fixed sufficiently small $\vartheta>0$ with the following property.
Suppose \eqref{ipm:parent-class} holds, $\nabla^2p\le I$ on $[0,S]$, and
\begin{equation}\label{ipm:packet-separation}
 \ell K^{2A+3}\le\varepsilon_*.
\end{equation}
Assume also the frequency separation \eqref{ipm:correction-frequency-separation}.
Then an exact odd mean-zero unforced solution $P_{\rm new}$ exists on
$[0,S]$. Write $\Theta=P_{\rm new}-P$, $m=F^{-T}m_0$, and evaluate lifted
quantities at $(t,y,\theta)=(t,y,km_0\cdot y)$ when composing with
$x=X(t,\ell y)$. Its leading density increment is given by \eqref{ipm:leading-density}.
The global gradient and Hessian errors satisfy
\eqref{ipm:packet-gradient-errors} of Proposition~\ref{ipm:physical-reconstruction}.
The exact increment has the decomposition and all-time bounds
\eqref{ipm:packet-history} of Proposition~\ref{ipm:initial-increment-bounds}.
The oscillatory initial density is exactly \eqref{ipm:leading-density}
at $t=0$, and $\Theta_{\rm corr}(0)=0$.
Finally, $P_{\rm new}(0)$ and the initial-label flow $X_{\rm new}$,
including its first two time derivatives, satisfy
\eqref{ipm:parent-class} with
\begin{equation}\label{ipm:packet-output-class}
 K_{\rm new}\le(K\mathcal P k/\ell)^{C_*}.
\end{equation}
All exponents are the same at every invocation. The proposition does not
assert $\nabla^2p_{\rm new}\le I$; that conclusion requires the geometric
sign and size estimates for the leading pressure-Hessian increment in \eqref{ipm:intro-leading-matrices}.
\end{proposition}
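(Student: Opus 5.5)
This proposition is an assembly statement, so I will prove it by chaining the earlier results in the order in which their hypotheses become available, checking at each link that the constants are fixed polynomial powers of $\mathcal P$.

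First I fix the constants. I choose $R=\mathcal P^r$ by Corollary~\ref{ipm:common-growth}. I then choose $h$ so that the operation-cost conventions preceding Lemma~\ref{ipm:coefficients} hold. Next come $\vartheta$ and $Q$, taken as in Proposition~\ref{ipm:finite-approximation} and enlarged to cover the requirements of Propositions~\ref{ipm:correction} and \ref{ipm:parent-class-return}. The hypothesis \eqref{ipm:packet-separation} is exactly the localization condition of Proposition~\ref{ipm:scalar-inverse} and Lemma~\ref{ipm:tube-localization}. The bound $\nabla^2p\le I$ is exactly what Proposition~\ref{ipm:meaninverse} needs.

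With these choices I run Proposition~\ref{ipm:hierarchy-construction} to depth $N=\lfloor k^\vartheta\rfloor$. Each mean source has zero spatial integral by \eqref{ipm:expanded-hierarchy-mean-compatible}, so the selected mean inverse applies at every order. Lemma~\ref{ipm:coefficients} then gives uniform coefficient bounds with the single constant $R$. From these, Proposition~\ref{ipm:finite-approximation} yields the residual bound \eqref{ipm:tiny-residual} and the approximation bounds \eqref{ipm:approximation-bounds} at a radius $\rho\ge\mathcal P^{-C}$. These are precisely the inputs of Proposition~\ref{ipm:correction}, which produces an odd correction $e$ with $e(0)=0$ and exact lifted solution $A=A^a+e$.

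Next, Proposition~\ref{ipm:physical-reconstruction} turns $A$ into an odd mean-zero unforced solution $P_{\rm new}=P+\Theta$ with the error bounds \eqref{ipm:packet-gradient-errors}. Proposition~\ref{ipm:initial-increment-bounds} supplies three things:
\begin{itemize}
\item the decomposition $\Theta=\Theta_{\rm osc}+\Theta_{\rm mean}+\Theta_{\rm corr}$;
\item the all-time bounds \eqref{ipm:packet-history};
\item the identification of the initial oscillation with \eqref{ipm:leading-density}, together with $\Theta_{\rm corr}(0)=0$.
\end{itemize}
Finally, Proposition~\ref{ipm:parent-class-return} gives the child flow $X_{\rm new}=X\circ Y$ and the bound \eqref{ipm:packet-output-class}.

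The main obstacle is uniformity of the exponents. I must confirm that every constant $C$ appearing in these propositions depends only on the fixed profile, cutoff, Sobolev index and dimension, and not on $N$, $k$, $\alpha$ or the stage. Only then can a single pair $(Q,C_*)$ be chosen before any invocation. To do this I will trace that each threshold is a fixed power of $K\le\mathcal P$. I will also check that the frequency-separation hypothesis \eqref{ipm:correction-frequency-separation} is the only place where $k$ is compared with $\mathcal P$, and take $Q$ as the maximum of the finitely many exponents required there. The seed $\alpha$ enters only through $\Gamma\le\mathcal P$, and no inverse of $\alpha$ appears.

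The remaining claims follow from the chain above. Oddness and mean zero come from Step~3 of Proposition~\ref{ipm:physical-reconstruction}. Exactness on all of $[0,S]$ comes from Proposition~\ref{ipm:correction} holding on the whole interval. As the statement says, no pressure bound is asserted for the child.
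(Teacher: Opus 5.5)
Your proposal is correct and follows the paper's own proof. Both chain the same results in the same order: the coefficient calculus and common growth constant, the hierarchy and its coefficient bounds, the finite approximation, the exact correction, the physical reconstruction, the parent-class recovery, and the initial-increment bounds. Both then fix the constants in dependency order: $R=\mathcal P^r$, then $h$, then $\vartheta$ with $D\vartheta<1/200$ once the hierarchy exponents are known, then $C_*$, and finally $Q$ and $k_*$; enlarging $Q$ changes none of the earlier exponents, which your ``maximum of finitely many exponents'' choice already respects.
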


\subsection{Dependency order of the fixed exponents}

\begin{proof}[Proof of Proposition~\ref{ipm:packet}]
Lemma~\ref{ipm:packet-calculus} and Corollary~\ref{ipm:common-growth} supply
the coefficient bounds and the three inverses.
Proposition~\ref{ipm:hierarchy-construction} and
Lemma~\ref{ipm:coefficients} construct and estimate the hierarchy.
Proposition~\ref{ipm:finite-approximation}, Proposition~\ref{ipm:correction},
and Proposition~\ref{ipm:physical-reconstruction} give the exact
physical child and its gradient estimates.
Proposition~\ref{ipm:parent-class-return} recovers the parent class,
and Proposition~\ref{ipm:initial-increment-bounds} supplies the initial data
and all-time increment bounds.
It remains to choose their constants in a common order.
First fix the dimension, $s$, cutoff, and profile constants.
The algebra of the parent class determines $A_0$ and the exponent $A$ in
Lemma~\ref{ipm:tube-localization}; the localization exponent $2A+3$ suffices for
\eqref{ipm:centered-propagator}.
Next derive the polynomial thresholds and prefactors in the mean,
scalar, and lifted Darcy inverses, including
\eqref{ipm:darcy-time-derivative}. This is a finite list independent
of the hierarchy depth. Choose $r$ so $R=\mathcal P^r$ exceeds those
thresholds and the leading profile growth, and then choose $h$ to
cover fixed shift costs. The hierarchy determines the fixed exponents
$A,D$ in \eqref{ipm:order-coefficient-bound} and the low-order sizes in
\eqref{ipm:approximation-bounds}. Choose $\vartheta>0$ with
$D\vartheta<1/200$, as in the truncation estimate.
The exact correction and flow calculation now determine their fixed
polynomial exponents and a fixed $C_*$ dominating the finitely many
outputs of Proposition~\ref{ipm:packet}.
Finally choose $Q$ to enforce all frequency separations used for
truncation, radius retention, Gronwall domination, and small lifted
flow. Increasing $Q$ changes none of the preceding exponents.
Choose the universal threshold $k_*$ last.

All applications of the mean, scalar, and Darcy inverses in the
hierarchy accept and return the same $R$ and arbitrary input shift.
The only radius decrease depending on evolution is the single one
in Proposition~\ref{ipm:correction}, performed after the hierarchy has been
truncated. Its initial and final radii are fixed inverse powers of
$\mathcal P$. No inverse amplitude seed enters any choice. This
completes the proof of Proposition~\ref{ipm:packet}.
\end{proof}

\section{From the growing wave to the layer inputs}
\label{ipm:seventh-cascade-interface}

The analytic theorem supplies global value remainders. We now
verify the geometric inputs needed to use it repeatedly: the five transfer
errors, the positive phase direction on the supported tube, and the global
upper pressure bound.

For clarity, the outputs of Proposition~\ref{ipm:packet} used in this section are
the following. The parent is exact on $[0,S]$, $S\le1$, has
$H(P):=\nabla^2p(P)\le I$, and has the stated Gevrey bounds, with parameter
$K$, on the four fields
\begin{equation}
 P(0),\qquad X-\operatorname{id},\qquad X_t,\qquad X_{tt}
 \label{ipm:cascade-four-fields}
\end{equation}
in initial physical labels. For a layer let $\ell,k,\delta$ be its scales,
$m_0$ its unit initial phase vector, $m=F^{-T}m_0$, and let
$\gamma(t)=\alpha a(t,0)$ be the actual scalar amplitude at the origin.
Use the size parameter $\mathcal P$ defined in \eqref{ipm:packet-parameter}.
There are fixed constants $Q,C_{\rm out},\vartheta>0$ such that the
theorem applies when $\ell K^{2A+3}$ is sufficiently small and
$\mathcal P^Q\le k^{\vartheta/100}$. It returns an exact child, with the
global gradient, velocity-gradient, and pressure-Hessian expansions stated
there, with error at most
\begin{equation}
 \epsilon=k^{-1}\mathcal P^{C_{\rm out}}.
 \label{ipm:cascade-epsilon}
\end{equation}
Its new four-field parameter is at most
$(K\mathcal P k/\ell)^{C_{\rm out}}$. We may enlarge fixed exponents and
constants once and for all.

The quantifiers on these constants will matter. The analytic proposition,
the fixed profile and cutoff, and the geometric lemmas determine a
finite list of universal constants and exponents. We choose that list first.
We then choose one inherited regularity exponent, one starting index $J$,
and finally one base size. These choices produce the entire infinite
sequence of exact stages. Only when proving convergence in a particular
$C^m$ or $C^\eta$ norm do we choose a further tail index depending on
$m$ or $\eta$. Such a tail index discards finitely many already constructed
smooth increments; it does not change the stages, the starting index,
or the base data.
\label{ipm:expanded-cascade-quantifiers}

We also use the proposition's all-time decomposition into a finite oscillatory
part, a part independent of the phase, and an exact correction which
vanishes initially.
For each fixed $m\ge0$, its physical density increment $\theta$ obeys
\begin{align}
 \|\theta_{\rm osc}(t)\|_{C^m}
   &\le C_m\ell^{1-m}k^{m-1}\mathcal P^{C_m}\gamma(t),
       \label{ipm:cascade-osc-bound}\\
 \|\theta_{\rm mean}(t)\|_{C^m}
   &\le C_m\ell^{1-m}k^{-2}\mathcal P^{C_m},
       \label{ipm:cascade-mean-bound}\\
 \|\theta_{\rm corr}(t)\|_{C^m}
   &\le C_m(k/\ell)^{m+1}\mathcal P^{C_m}
                   \exp(-k^\vartheta/2).
       \label{ipm:cascade-correction-bound}
\end{align}
The last, deliberately weakened, bound is convenient for every fixed $m$.
At $t=0$ the exact correction and all higher oscillatory coefficients vanish.
The oscillatory factor $\gamma(t)$ is not asserted for the exact correction.

\subsection{Bounds on the transfer errors}

Suppose an odd exact older solution $O$ satisfies
$H(O)\le I$ and $\|\nabla O\|_\infty\le C_{\nabla}h_{\rm old}$,
where $h_{\rm old}\ge1$. Let $P=O+\Theta$ be an exact layer
child whose density-gradient, velocity-gradient, and pressure-Hessian
remainders $E_r,E_M,E_H$ in \eqref{ipm:packet-gradient-errors} (with its parent $P$ replaced by $O$ and its child $P_{\rm new}$ replaced by $P$) satisfy
\[
 \sup_t\bigl(\|E_r(t)\|_\infty+\|E_M(t)\|_\infty
                    +\|E_H(t)\|_\infty\bigr)\le\epsilon\le1,
\]
with spatial norms over the whole physical torus.
Write the leading density-gradient increment at the origin as
$Gn$, where $G>0$ and $n=(\cos\alpha,\sin\alpha)$.
Linearity of Darcy's law and the layer expansions \eqref{ipm:packet-gradient-errors} give
\begin{equation}\label{ipm:seventh-cascade-central-decomposition}
 \begin{aligned}
 r_P&=Gn+R_r,& R_r&=r_O+E_r,\\
 M_P&=-cG\cos\alpha\,n^\perp\otimes n+R_M,
     &R_M&=M_O+E_M,\\
 H_P&=-cG\sin\alpha\,n\otimes n+R_H,
     &R_H&=H_O+E_H.
 \end{aligned}
\end{equation}
\begin{lemma}
\label{ipm:five-transfer-errors}
Under this setup, the remainders and ray variables satisfy
\[
 |R_r|+\|R_M\|+\|R_H\|+|\alpha'|+|(\log G)'|
 \le C_E(1+h_{\rm old}),\qquad C_E:=5+C_{\nabla}(1+6c).
\]
\end{lemma}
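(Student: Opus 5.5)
The plan is to bound the three matrix remainders directly, and then bound the ray derivatives $\alpha'$ and $(\log G)'$ through the transport equation for the leading gradient at the origin.

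\textbf{Matrix remainders.} Since $O$ is odd with $H(O)\le I$ and $\|\nabla O\|_\infty\le C_\nabla h_{\rm old}$, the matrix bounds \eqref{ipm:matrixbounds} apply to $O$. They give
\[
|r_O|\le C_\nabla h_{\rm old},\qquad \|H_O\|\le 1+cC_\nabla h_{\rm old},\qquad \|M_O\|\le 1+2cC_\nabla h_{\rm old}.
\]
Adding the global errors $E_r,E_M,E_H$, each at most $\epsilon\le1$, bounds $|R_r|+\|R_M\|+\|R_H\|$ by $5+C_\nabla(1+3c)h_{\rm old}$.

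\textbf{Ray derivatives.} First identify $Gn$ at the origin with the leading increment $g\,n$ from \eqref{ipm:intro-central-matrices}, with $G=g=\alpha a(t,0)|m(t,0)|/\delta$ and $n=m(t,0)/|m(t,0)|$. Because the origin is a stationary particle of the parent $O$, the phase normal obeys $m'=-M_O^Tm$ exactly at the origin. Projecting onto $n^\perp$ gives the rotation rate
\[
\alpha'=-(n^\perp)^TM_O^Tn,\qquad\text{so}\qquad |\alpha'|\le\|M_O\|.
\]
For the magnitude, use the ray identity \eqref{ipm:rayidentity}, which expresses $(\log G)'=(\log(a|m|))'$ through the parent Hessian $H_O$:
\[
(\log G)'=-(n^\perp)^TH_On^\perp,\qquad\text{so}\qquad |(\log G)'|\le\|H_O\|.
\]
Here $H_O$, not $H_P$, is the right matrix, because $a$ and $m$ are transported by the parent. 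Combining,
\[
|\alpha'|+|(\log G)'|\le 2+3cC_\nabla h_{\rm old}.
\]
Summing the two groups gives at most $7+C_\nabla(1+6c)h_{\rm old}$.

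\textbf{Constant.} This total exceeds the stated $C_E=5+C_\nabla(1+6c)$ by 2 in the additive constant. Closing that gap requires using $\epsilon\le1$ more economically, or noting that $\|H_O\|\le\max(1,cC_\nabla h_{\rm old})$ in two dimensions. For the latter, $\operatorname{tr}H_O=-c\,(r_O)_2$ together with the upper bound $H_O\le I$ bounds the lowest eigenvalue below by $-1-c|r_O|$. Since any constant of the form $C(1+h_{\rm old})$ suffices downstream, I would settle the numerical constant last, possibly by replacing the $5$ by a slightly larger value if the tight count fails.

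\textbf{Main obstacle.} The key point is the identification of $Gn$ with the \emph{leading} transported wave, whose equations are exact along the stationary particle. One must be careful that $\alpha,G$ are defined from the leading fields $(a,m)$, not from the full $\nabla\Theta(t,0)$. The difference between the two is absorbed into $R_r$ and does not need to be differentiated in time.
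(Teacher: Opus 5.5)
Your proof is correct and follows the paper's argument. You bound the three remainders by $|r_O|+\|M_O\|+\|H_O\|$ plus the layer errors, and the ray derivatives by $|\alpha'|\le\|M_O\|$ and $|(\log G)'|\le\|H_O\|$, using that the leading wave is transported by $O$ at the stationary origin; \eqref{ipm:matrixbounds} then closes the count.

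The gap of $2$ you found is not real, for two independent reasons:
\begin{itemize}
\item The setup bounds the \emph{sum} $\|E_r\|_\infty+\|E_M\|_\infty+\|E_H\|_\infty$ by $\epsilon\le1$, not each term separately. Using this gives the paper's total $4+\epsilon+C_{\nabla}(1+6c)h_{\rm old}$.
\item Even your looser total $7+C_{\nabla}(1+6c)h_{\rm old}$ is already at most $C_E(1+h_{\rm old})$, since $h_{\rm old}\ge1$.
\end{itemize}
So no enlargement of the $5$ is needed. Drop the aside $\|H_O\|\le\max(1,cC_{\nabla}h_{\rm old})$: it does not follow, since the trace argument gives only $1+c|r_O|$, and that value is attained when the top eigenvalue of $H_O$ equals $1$.
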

\begin{proof}
Oddness of the flow of $O$ fixes the origin. The angular identity in \eqref{ipm:intro-angular-speed} and the amplitude identity \eqref{ipm:rayidentity} therefore give
\[
 |\alpha'|\le\|M_O\|,\qquad
 |(\log G)'|=|(n^\perp)^TH_On^\perp|\le\|H_O\|.
\]
From $M_O=-H_O-ce_2r_O^T$ and
$\operatorname{tr}H_O=-c(r_O)_2$, the upper bound $H_O\le I$ implies
\[
 \|H_O\|\le1+cC_{\nabla}h_{\rm old},\qquad
 \|M_O\|\le1+2cC_{\nabla}h_{\rm old}.
\]
Adding the five quantities required by Lemma~\ref{ipm:amplification} gives the
fully explicit bound
\begin{equation}\label{ipm:seventh-cascade-five-errors}
 \begin{aligned}
 |R_r|+\|R_M\|+\|R_H\|+|\alpha'|+|(\log G)'|
 &\le |r_O|+2\|M_O\|+2\|H_O\|+\epsilon\\
 &\le4+C_{\nabla}(1+6c)h_{\rm old}+\epsilon\\
 &\le C_E(1+h_{\rm old}).
 \end{aligned}\qedhere
\end{equation}
\end{proof}

\subsection{History, future, and Hessian budgets}

\begin{lemma}
\label{ipm:explicit-packet-budget}
Let a layer over an exact parent $P$ have activation $s$, target
$t_*$, retained horizon $S=t_*+2W_{\rm next}\le1$, target gradient
$h$, and target angle $\pi/2+1/x_{\rm tar}$, where
$x_{\rm tar}\ge x\ge4$. Suppose its angle at the origin increases
on $[s,t_*]$ from at least $1/(2x)$. Set
$g(t)=\alpha a(t,0)|m(t,0)|/\delta$, so $g(t_*)=h$, and assume
\eqref{ipm:cascade-future-localization} and
\begin{equation}\label{ipm:seventh-cascade-target-gain}
 \frac{g(t_*)}{g(s)}\ge e^{c_0x},\qquad
 H(P)\le I,\qquad \|\nabla P\|_\infty\le C_{\nabla}h_{\rm par}.
\end{equation}
Then the retained-angle bounds proved in
Proposition~\ref{ipm:packet-history-geometry} hold, and the three
time-range bounds and positive supported phase satisfy
\eqref{ipm:seventh-cascade-three-time-ranges}--\eqref{ipm:seventh-cascade-supported-positive}.
For the cutoff and profile of Lemma~\ref{ipm:profilelemma}, with
the constant $C_f$ in \eqref{ipm:profile-bounds}, the global pressure and gradient estimates
\eqref{ipm:seventh-cascade-global-budget}--\eqref{ipm:seventh-cascade-gradient-bounds}
hold with the explicit constants displayed there. If
$|m(t,0)|^{-1}\le B_mK^A$, then the seed bounds
\eqref{ipm:seventh-cascade-seed-parameter} also hold.
\end{lemma}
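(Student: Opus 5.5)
The plan is to obtain the statement by reassembling Proposition~\ref{ipm:packet-history-geometry}, Corollary~\ref{ipm:profile-pressure-cost}, Corollary~\ref{ipm:packet-budget}, and the tube comparison \eqref{ipm:cascade-tube-comparison}, and to track every constant explicitly instead of writing a generic $C$. The hypotheses here are those of Proposition~\ref{ipm:packet-history-geometry}, with one change: the angle range on $[s,t_*]$ is assumed only at the lower end ($\beta(s)\ge1/(2x)$, with $\beta$ increasing). So the first step is to recover the full range $\beta(t)\in[1/(2x),\pi-1/(2x)]$.

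\textbf{Recovering the angle range.} Monotonicity gives $\beta(t)\ge\beta(s)\ge1/(2x)$. It also gives $\beta(t)\le\beta(t_*)=\pi/2+1/x_{\rm tar}\le\pi-1/(2x)$, using $x\ge4$. With this range, the argument of Proposition~\ref{ipm:packet-history-geometry} applies line by line.

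\textbf{Three time ranges.}
\begin{itemize}
\item History ($0\le t\le s$): integrating the ray identity \eqref{ipm:rayidentity} with $H(P)\le I$ backward gives \eqref{ipm:backwardray}. Combined with the gain hypothesis, this yields $g(t)\le e\,h\,e^{-c_0x}$.
\item Activation ($s\le t\le t_*$): the same backward bound from $t_*$ gives $g\le e h$.
\item Retained future ($t_*\le t\le S$): \eqref{ipm:matrixbounds} bounds $|\beta'|\le\|M_P\|\le1+2cC_\nabla h_{\rm par}$ and $(\log(a|m|))'\le\|H_P\|\le 1+cC_\nabla h_{\rm par}$. The first condition in \eqref{ipm:cascade-future-localization} then keeps $\beta$ within $1/(4x_{\rm tar})$ of its target value. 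It also keeps the growth factor $\exp[(1+cC_\nabla h_{\rm par})2W_{\rm next}]\le e^{1/(4x_{\rm tar})}\le2$, so $g\le2h$.
\end{itemize}

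\textbf{Positive phase on the tube.} For the supported-phase statement I invoke \eqref{ipm:cascade-tube-comparison}. The second condition in \eqref{ipm:cascade-future-localization} bounds the direction error by $1/(10\pi x_{\rm tar})$. This gives $m_2/|m|\ge 9/(10\pi x)$ on $[s,t_*]$ and $m_2/|m|\ge 1/2-1/(10\pi x_{\rm tar})$ afterwards, uniformly on $\operatorname{supp}\chi$. The same comparison converts the values at the origin into tube values, $\alpha\chi a|m|/\delta\le 2C_\chi g(t)$. This is the source of the explicit constants $2eC_\chi$ (up to time $t_*$) and $4C_\chi$ (after $t_*$).

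\textbf{Budgets and seed bound.} For the global pressure budget, apply Corollary~\ref{ipm:profile-pressure-cost} on $[s,S]$, where the sign $m_2>0$ holds. This gives a positive part at most $C_fc\cdot4eC_\chi\,\delta h$. On $[0,s]$, use its sign-free bound, which gives $C_f^2c\cdot 2eC_\chi\,h e^{-c_0x}$. Outside the transported support the leading fields vanish, so both bounds are global. The gradient bound follows from $\|f_\delta'\|_\infty\le C_f^2/\delta$ exactly as in \eqref{ipm:cascade-gradient-increment}, with $C_{\rm lead}$ adjusted to absorb the factor $2$ from the future range. Finally, the seed bounds follow by dividing by $|m(t,0)|$ with $|m(t,0)|^{-1}\le B_mK^A$. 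This gives $\gamma\le 2eB_mK^A\delta h$ on $[0,S]$ and $\gamma\le eB_mK^A\delta he^{-c_0x}$ on $[0,s]$.

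\textbf{Main obstacle.} The only delicate point is bookkeeping. The displayed constants must be explicit and must match the referenced displays \eqref{ipm:seventh-cascade-three-time-ranges}--\eqref{ipm:seventh-cascade-seed-parameter}. In particular, the factor $2$ in the future growth requires $x_{\rm tar}$ large enough that $e^{1/(4x_{\rm tar})}\le2$, which holds for $x_{\rm tar}\ge4$. The factor $2$ from the tube comparison has to be carried through rather than absorbed into a generic $C$.
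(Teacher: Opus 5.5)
Your argument follows the paper's proof step for step. The history and activation ranges come from the backward ray bound with $H(P)\le I$ and $S\le1$. The retained future comes from the matrix bounds together with the first condition of \eqref{ipm:cascade-future-localization}. The tube comparison gives the supported sign and the factor $2C_\chi$, and the one-sided profile cost gives the pressure budget. Your derivation of the full angle range on $[s,t_*]$ from monotonicity and $x_{\rm tar}\ge x\ge4$ is a slightly more self-contained substitute for the paper's appeal to \eqref{ipm:transfer-angle-range}.

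The one thing to fix is the bookkeeping you singled out yourself. Since $2<e$, the retained-future bound $g\le2h$ is already dominated by $eh$. Hence $\sup_{[0,S]}g\le eh$ and $\alpha\chi a|m|/\delta\le 2eC_\chi h$ on all of $[0,S]$. This has three consequences:
\begin{itemize}
\item No adjustment of $C_{\rm lead}=2eC_\chi C_f^2$ is needed; adjusting it would contradict the constant displayed in \eqref{ipm:cascade-gradient-increment}.
\item The positive Hessian part on $[s,S]$ is at most $2ecC_\chi C_f\delta h$. Your $4ecC_\chi C_f\delta h$ happens to fit under the displayed $C_H$, but only because $C_f\ge1$.
\item The first seed bound in \eqref{ipm:seventh-cascade-seed-parameter} holds with the displayed factor $e$, not your $2e$.
\end{itemize}

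Two small omissions remain. You should add the global remainders, whose sum is at most $\epsilon$, to the leading fields in both the budget and the gradient bounds. You should also treat the velocity-gradient part of the second line of \eqref{ipm:seventh-cascade-gradient-bounds} explicitly, using $|\mathfrak m|\le c$.
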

\begin{proof}
The first condition in \eqref{ipm:seventh-cascade-target-gain} is supplied by
\eqref{ipm:transfer-gain} in Lemma~\ref{ipm:amplification};
the other two are inherited exact-parent bounds.

The first bound in \eqref{ipm:cascade-future-localization} gives
$|\beta(t)-\beta(t_*)|\le1/(4x_{\rm tar})$ for $t_*\le t\le S$,
as in the proof of Proposition~\ref{ipm:packet-history-geometry}.
Moreover, with $n(t)=m(t,0)/|m(t,0)|$,
\[
 \frac d{dt}\log g=-(n^\perp)^TH(P)n^\perp
 \le1+cC_{\nabla}h_{\rm par}.
\]
Thus \eqref{ipm:cascade-future-localization} bounds its forward
growth exponent by $1/(4x_{\rm tar})$, and in particular
$g(t)\le2h$ for $t_*\le t\le S$.
The lower logarithmic derivative bound $-1$ gives the two backward
bounds
\begin{equation}\label{ipm:seventh-cascade-three-time-ranges}
 \begin{aligned}
 g(t)&\le e h e^{-c_0x},&&0\le t\le s,\\
 g(t)&\le e h,&&s\le t\le t_*,\\
 g(t)&\le2h,&&t_*\le t\le S.
 \end{aligned}
\end{equation}
Here $S\le1$ was used to replace each backward factor by $e$.
The first line uses the gain in
\eqref{ipm:seventh-cascade-target-gain}; it is not a forward
stability assertion.

The second bound in \eqref{ipm:cascade-future-localization} ensures
$\ell K^{2A+3}\le1/(20\pi)$. Hence
\eqref{ipm:cascade-tube-comparison} gives, for
$y\in\operatorname{supp}\chi$ and $t\in[0,S]$,
\begin{equation}\label{ipm:seventh-cascade-tube-hypothesis}
 \frac{a(t,\ell y)|m(t,\ell y)|}{a(t,0)|m(t,0)|}\le2,
 \qquad
 \left|\frac{m(t,\ell y)}{|m(t,\ell y)|}-n(t)\right|
 \le\frac1{10\pi x_{\rm tar}}.
\end{equation}

On the active interval $[s,t_*]$, the angle range \eqref{ipm:transfer-angle-range} in Lemma~\ref{ipm:amplification} gives
\[
 \sin\beta(t)\ge\sin(1/(2x))\ge\frac1{\pi x},\qquad s\le t\le t_*.
\]
The direction error in
\eqref{ipm:seventh-cascade-tube-hypothesis} is at most
$1/(10\pi x)$, so the second phase component on the entire active
supported tube is positive. On the retained interval, the angle estimate following from
\eqref{ipm:cascade-future-localization}, as in
Proposition~\ref{ipm:packet-history-geometry}, gives
$\sin\beta(t)\ge1/2$, with an even smaller relative direction error. Thus
\begin{equation}\label{ipm:seventh-cascade-supported-positive}
 m_2(t,\ell y)>0\quad\hbox{on the supported tube for }s\le t\le S.
\end{equation}
No assertion about the sign of this component is made on $[0,s]$.

Use $C_\chi$ from Corollary~\ref{ipm:packet-budget} and the profile bounds
\eqref{ipm:profile-bounds}.
Use the leading pressure-Hessian increment in \eqref{ipm:intro-leading-matrices},
evaluated at $X(t,\ell y)$.
For a unit vector $v$, on $[s,S]$ the sign just proved gives
\[
 v^T(\nabla^2\pi)_{\rm lead}v
 \le cC_f\alpha\chi a|m|
 \le2e c C_\chi C_f\delta h.
\]
On $[0,s]$ we instead use its absolute value and the first line of
\eqref{ipm:seventh-cascade-three-time-ranges}:
\[
 \|(\nabla^2\pi)_{\rm lead}\|
 \le cC_f^2\frac{\alpha\chi a|m|}{\delta}
 \le2e c C_\chi C_f^2h e^{-c_0x}.
\]
Let the sum of the three global layer remainder norms be at most
$\epsilon$. The time split and the fact that the leading pressure-Hessian
increment vanishes outside the tube give, globally,
\begin{equation}\label{ipm:seventh-cascade-global-budget}
 H(P+\Theta)\le H(P)
 +\left[C_H(\delta h+h e^{-c_0x})+\epsilon\right]I,
 \qquad C_H=2e c C_\chi(C_f+C_f^2).
\end{equation}
In particular a global phase mean is covered by the global remainder;
it has not been assigned the oscillatory history factor.

With $C_{\rm lead}$ defined in \eqref{ipm:cascade-gradient-increment},
the same estimates give the two gradient bounds used in the induction
and limit:
\begin{equation}\label{ipm:seventh-cascade-gradient-bounds}
 \begin{aligned}
 \|\nabla\Theta(t)\|_\infty
 &\le C_{\rm lead}h+\epsilon,
       &&0\le t\le S,\\
 \|\nabla\Theta(t)\|_\infty+
       \|\nabla u(\Theta)(t)\|_\infty
 &\le(1+c)C_{\rm lead}h e^{-c_0x}+\epsilon,
       &&0\le t\le s.
 \end{aligned}
\end{equation}
The velocity bound uses $|\mathfrak m(\xi)|\le c$ and the global
velocity-gradient remainder. These constants are fixed before
choosing the scales.

Finally, if the parent class gives
$|m(t,0)|^{-1}\le B_mK^A$, then
\begin{equation}\label{ipm:seventh-cascade-seed-parameter}
 \sup_{[0,S]}\gamma(t)\le eB_mK^A\delta h,
 \qquad
 \gamma(t)\le eB_mK^A\delta h e^{-c_0x}\quad(t\le s).
\end{equation}
This follows from $\gamma=\delta g/|m|$. Neither formula requires a
lower bound for $\alpha$ or for $\gamma$.
\end{proof}

\section{Choosing the stage parameters}
\label{ipm:sec-parameters}

First fix every universal exponent in Proposition~\ref{ipm:packet} and in the
preceding lemmas. Increase $L$ to cover
\eqref{ipm:cascade-future-localization}. Fix
$C_{\nabla}:=4\max(C_{\rm lead},C_f^2,1)$, with $C_{\rm lead}$ from
\eqref{ipm:cascade-gradient-increment}. This is the global gradient constant.
Fix also $C_E:=5+C_{\nabla}(1+6c)$ from
Lemma~\ref{ipm:five-transfer-errors}.
Fix the numerical thresholds for transfer, supported phase positivity,
and the future window, as specified above.
Choose a fixed inherited exponent
\begin{equation}
 A_K>2C_{\rm out}+2.
 \label{ipm:cascade-inherited-exponent}
\end{equation}
Next choose a sufficiently large integer $J$, depending only on these
fixed constants. The size of the base is chosen last.

For $j\ge J$, set
\begin{align}
 x_j&=j^2x_{j-1},&
 h_j&=\exp(x_{j-1}/j^5),&
 \widehat k_j&=\exp(x_{j-1}/j^2),
       \label{ipm:cascade-scales}\\
 \delta_j&=\exp(-x_{j-1}/j^3),&
 \ell_j&=\exp(-x_{j-1}/j^{7/2}),&
 W_j&=\frac{2x_{j-1}^2}{c h_{j-1}}.
       \nonumber
\end{align}
The actual $k_j$ differs from $\widehat k_j$ by at most $C\ell_j$ to
enforce the lattice condition below. The base values are prescribed by
\begin{equation}
 h_{J-1}=\exp\bigl(x_{J-1}/(J-1)^7\bigr),\qquad
 k_{J-1}=\exp\bigl(x_{J-1}/(J-1)^4\bigr).
 \label{ipm:cascade-base-bookkeeping}
\end{equation}
The latter is a bookkeeping bound, not a Fourier carrier for the base.

\begin{lemma}[Uniform domination of polynomial factors by the scales]
\label{ipm:scale-envelopes}
Put $X=x_{J-1}\ge1$ and $a_j=((j-1)!/(J-1)!)^2$.
For fixed $p,r,q\ge0$, $b>0$, and $J\ge2q+4$, the sequence
$x_{j-1}=Xa_j$ satisfies
\begin{equation}
 \frac{a_{j+1}/(j+1)^q}{a_j/j^q}
       =j^2\left(\frac j{j+1}\right)^q\ge2,\qquad j\ge J.
 \label{ipm:expanded-cascade-doubling}
\end{equation}
For a finite constant $D_{p,r,q,J}$ independent of $X$,
\begin{equation}
 \sup_{j\ge J}
 \frac{p\log j+r\log(Xa_j)}{Xa_j/j^q}
       \le D_{p,r,q,J}\frac{1+\log X}{X}
       \longrightarrow0\quad(X\to\infty).
 \label{ipm:expanded-cascade-polynomial-uniformity}
\end{equation}
Moreover,
\[
 \sum_{j\ge J}j^p x_{j-1}^r e^{-b x_{j-1}/j^q}
       \longrightarrow0\qquad(X\to\infty).
\]
\end{lemma}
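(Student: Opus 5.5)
The plan is to work throughout with the closed form $a_j=((j-1)!/(J-1)!)^2$, which satisfies $a_J=1$ and $a_{j+1}=j^2a_j$, and is consistent with $x_j=j^2x_{j-1}$. The proof has three parts: the ratio identity, the uniform supremum bound, and the series bound.

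\textbf{Ratio identity.} The first claim \eqref{ipm:expanded-cascade-doubling} is direct: $a_{j+1}/a_j=j^2$, so the ratio equals $j^2(j/(j+1))^q$. We then use
\[
(1+1/j)^{-q}\ge e^{-q/j}\ge e^{-1/2}
\]
for $j\ge 2q$. This gives a ratio of at least $j^2e^{-1/2}\ge 2$, since $j\ge J\ge4$.

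\textbf{Uniform supremum bound.} For \eqref{ipm:expanded-cascade-polynomial-uniformity}, set $b_j=a_j/j^q$. By the doubling property, $b_j\ge 2^{j-J}b_J=2^{j-J}J^{-q}$. We also need an upper bound on $\log a_j$: since $a_j\le ((j-1)!)^2$, we have $\log a_j\le 2j\log j$. The numerator is then at most
\[
p\log j+r\log X+2rj\log j,
\]
using $\log(Xa_j)=\log X+\log a_j$ with $X\ge1$. Dividing by $Xb_j\ge X2^{j-J}J^{-q}$ gives the bound
\[
\frac{J^q2^{J}}{X}\bigl[(p+2rj)\log j\cdot 2^{-j}+r\log X\cdot 2^{-j}\bigr].
\]
Its supremum over $j\ge J$ is at most $D_{p,r,q,J}(1+\log X)/X$, because $j\log j\,2^{-j}$ is bounded. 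This expression tends to $0$ as $X\to\infty$.

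\textbf{Series bound.} This is the only step needing care, since the $j$-dependence must be controlled uniformly in $X$. Write each term as
\[
\exp\Bigl(p\log j+r\log(Xa_j)-bXb_j\Bigr).
\]
By the previous step, once $X$ is large enough that $D_{p,r,q,J}(1+\log X)/X\le b/2$, the exponent is at most $-\tfrac b2Xb_j$. The doubling again gives $b_j\ge 2^{j-J}J^{-q}$, so the sum is bounded by
\[
\sum_{i\ge0}\exp\bigl(-\tfrac b2 XJ^{-q}2^{i}\bigr).
\]
Since $2^i\ge i+1$, this is at most a geometric series, $\le e^{-\kappa X}/(1-e^{-\kappa X})$ with $\kappa=bJ^{-q}/2$, which tends to $0$ as $X\to\infty$. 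No further obstacle is expected beyond keeping track that all constants depend only on $p,r,q,J,b$ and not on $X$.
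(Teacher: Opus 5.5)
Your proof is correct and follows essentially the same route as the paper. Both use the doubling bound $x_{j-1}/j^q\ge (X/J^q)2^{j-J}$ and the estimate $\log a_j\le 2j\log j$, then absorb the polynomial factor into half the exponent and compare $\sum_{i\ge0}e^{-A2^i}$, with $A=bX/(2J^q)$, to a geometric series. The only difference is that you control the $\log a_j$ term through the doubling bound instead of the paper's sharper $a_j\ge J^{2(j-J)}$; this is harmless, since $D_{p,r,q,J}$ is allowed to depend on $J$.
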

\begin{proof}
The definition of $a_j$ gives
\eqref{ipm:expanded-cascade-doubling}, since $J\ge2q+4$.
Thus $x_{j-1}/j^q\ge (X/J^q)2^{j-J}$.
To prove \eqref{ipm:expanded-cascade-polynomial-uniformity}, note that
$\log a_j\le2j\log j$ and $a_j\ge J^{2(j-J)}$.
Hence $j^q(p\log j+r\log a_j)/a_j$ is bounded. The coefficient
of $\log X$ is bounded by $rJ^q2^{-(j-J)}$ using
\eqref{ipm:expanded-cascade-doubling}.
Consequently, for fixed $b>0$,
\begin{equation}
 \sum_{j\ge J}j^p x_{j-1}^r e^{-b x_{j-1}/j^q}
       \longrightarrow0\quad(X\to\infty).
 \label{ipm:expanded-cascade-uniform-series}
\end{equation}
To prove the summation assertion, use
\eqref{ipm:expanded-cascade-polynomial-uniformity} to absorb the
polynomial by half the exponent. With $A=bX/(2J^q)$, the remaining
series is bounded by
$\sum_{n\ge0}e^{-A2^n}\le e^{-A}/(1-e^{-A})$.
Only $q\in\{0,2,3,7/2,5,7\}$ is needed for the construction.
The polynomial degrees used later for a fixed norm can be arbitrary
without changing this finite list or the chosen $J$.
\end{proof}

\begin{lemma}[Simultaneous scale inequalities]
\label{ipm:scales}
After choosing $J$ sufficiently large and then $x_{J-1}$ sufficiently
large, the following inequalities hold at every stage. Suppose
$K_{j-1}\le k_{j-1}^{A_K}$, and choose $\Gamma_j$ by
\eqref{ipm:cascade-gamma-choice}. Then
\begin{align}
 x_j\ell_jK_{j-1}^{2A+3}&\ll1,&
 \mathcal P_j^Q&\le k_j^{\vartheta/100},
       \label{ipm:cascade-scale-interface}\\
 k_j^{-1}\mathcal P_j^{C_{\rm out}}&\le k_j^{-1/2},&
 (K_{j-1}\mathcal P_j k_j/\ell_j)^{C_{\rm out}}
       &\le k_j^{A_K}.
       \label{ipm:cascade-scale-output}
\end{align}
For $j>J$, the geometric errors $E_j\le C(1+h_{j-2})$ satisfy
\begin{equation}
 \frac{E_jj^2x_{j-1}^3}{h_{j-1}}\ll1.
 \label{ipm:cascade-scale-geometry}
\end{equation}
Furthermore,
\begin{equation}
 \frac{W_{j+1}}{W_j}\le\frac1{16},\qquad
 (1+h_{j-1})W_{j+1}\ll x_j^{-1},
 \label{ipm:cascade-scale-horizons}
\end{equation}
and each of the series
\begin{equation}
 \sum_{j\ge J}\delta_jh_j,\qquad
 \sum_{j\ge J}h_je^{-c_0x_{j-1}},\qquad
 \sum_{j\ge J}k_j^{-1/2}
 \label{ipm:cascade-budget-series}
\end{equation}
can be made arbitrarily small by increasing the base size.
\end{lemma}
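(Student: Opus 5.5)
The plan is to take logarithms of every inequality and reduce each one to a comparison of the form $\text{(polynomial in } j,\ x_{j-1}) \cdot e^{-b x_{j-1}/j^q}\ll1$. Such comparisons are then settled uniformly in $j\ge J$ by Lemma~\ref{ipm:scale-envelopes}. Write $X=x_{J-1}$, so $x_{j-1}=Xa_j$, and record the logarithms of the scales: $\log h_j=x_{j-1}/j^5$, $\log\widehat k_j=x_{j-1}/j^2$, $\log\ell_j^{-1}=x_{j-1}/j^{7/2}$, $\log\delta_j^{-1}=x_{j-1}/j^3$.

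The key structural fact is the hierarchy $j^{-2}\gg j^{-3}\gg j^{-7/2}\gg j^{-5}$, together with the relation $x_{j-2}=x_{j-1}/(j-1)^2$. Because of the latter, every quantity inherited from stage $j-1$ has logarithm of order at most $x_{j-1}/j^4$. This applies to $\log K_{j-1}\le A_K x_{j-1}/(j-1)^4$ (from $K_{j-1}\le k_{j-1}^{A_K}$, with the base bookkeeping \eqref{ipm:cascade-base-bookkeeping} at $j=J$) and to $\log h_{j-1}\approx x_{j-1}/j^7$. Since $\Gamma_j\le\max(1,CK_{j-1}^A\delta_jh_j)$ and $\delta_jh_j<1$, we get $\log\mathcal P_j\le C x_{j-1}/j^{7/2}$, dominated by $\ell_j^{-1}$ and $K_{j-1}$.

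With these logarithms in hand, the individual inequalities follow by comparing exponents.
\begin{itemize}
\item The localization condition in \eqref{ipm:cascade-scale-interface} holds because $x_{j-1}/j^{7/2}$ beats $(2A+3)A_Kx_{j-1}/(j-1)^4+\log x_j$ for $j\ge J$ large.
\item The separation $\mathcal P_j^Q\le k_j^{\vartheta/100}$ holds because $QC x_{j-1}/j^{7/2}\le(\vartheta/100)x_{j-1}/j^2$ once $j^{3/2}\ge100QC/\vartheta$, which fixes how large $J$ must be. The rounding $|k_j-\widehat k_j|\le C\ell_j$ only changes $\log k_j$ by $o(1)$.
\item The first bound in \eqref{ipm:cascade-scale-output} is the same comparison with a different exponent.
\item The second bound in \eqref{ipm:cascade-scale-output} reads $C_{\rm out}(\log K_{j-1}+\log\mathcal P_j+\log k_j+\log\ell_j^{-1})\le A_K\log k_j$. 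The leading term on the left is $C_{\rm out}\log k_j$, and everything else is $O(x_{j-1}/j^{7/2})=o(\log k_j)$. This is exactly why $A_K>2C_{\rm out}+2$ suffices.
\item For \eqref{ipm:cascade-scale-geometry}, $E_jj^2x_{j-1}^3/h_{j-1}\le C j^2x_{j-1}^3\exp(x_{j-2}/(j-1)^5-x_{j-2}/(j-1)^5\cdot\ldots)$. More concretely, $\log(h_{j-2}/h_{j-1})=x_{j-3}/(j-2)^5-x_{j-2}/(j-1)^5\le -\tfrac12 x_{j-1}/j^7$, using the doubling \eqref{ipm:expanded-cascade-doubling}. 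Lemma~\ref{ipm:scale-envelopes} then makes this uniformly small.
\item In \eqref{ipm:cascade-scale-horizons}, $W_{j+1}/W_j=j^4(x_{j-1}/x_{j-2})^{\cdots}$; more precisely it equals $(x_j/x_{j-1})^2\,h_{j-1}/h_j=j^4\exp(x_{j-2}/(j-1)^5-x_{j-1}/j^5)$. The exponent is again $\le -c x_{j-1}/j^7$, so the ratio is at most $1/16$. The second inequality, $(1+h_{j-1})W_{j+1}x_j\lesssim x_j^3h_{j-1}/h_j$, is handled identically.
\item The series \eqref{ipm:cascade-budget-series} have general terms $\exp(x_{j-1}(j^{-5}-j^{-3}))$, $\exp(x_{j-1}/j^5-c_0x_{j-1})$ and $\exp(-x_{j-1}/(2j^2))$. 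Each is of the form covered by the summation clause of Lemma~\ref{ipm:scale-envelopes}, so they tend to $0$ as $X\to\infty$.
\end{itemize}

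The main obstacle is uniformity. Every comparison must hold for all $j\ge J$ simultaneously with a single $J$, so the order of choices matters. $J$ must be chosen from the finitely many fixed exponents ($Q,C,C_{\rm out},A,A_K,\vartheta$) so that each polynomial-in-$j$ coefficient ratio, such as $j^{3/2}\ge100QC/\vartheta$ or $(j-1)^4/j^{7/2}\ge$ const, holds. Only afterwards may $X$ be taken large, to kill the additive $\log x_j$, $\log j$ and $O(1)$ terms through \eqref{ipm:expanded-cascade-polynomial-uniformity}. The first stage $j=J$ needs separate care, since $h_{J-2}$ and $K_{J-1}$ come from the base choice \eqref{ipm:cascade-base-bookkeeping} rather than the recursion; I would check it directly with exponents $(J-1)^{-7}$ and $(J-1)^{-4}$ in place of the recursive ones.
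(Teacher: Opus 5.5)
Your route is the paper's route: take logarithms, use the ordering of the exponents $j^{-2},j^{-3},j^{-7/2},j^{-5}$ and the extra factor $(j-1)^{-2}$ coming from $x_{j-2}=x_{j-1}/(j-1)^2$, fix $J$ from finitely many constants, and then send $x_{J-1}\to\infty$ using Lemma~\ref{ipm:scale-envelopes}. One step, however, is false as stated: your bound on the size parameter. By \eqref{ipm:packet-parameter}, $\mathcal P_j=2+K_{j-1}+\ell_j^{-1}+\delta_j^{-1}+\Gamma_j$ contains the term $\delta_j^{-1}=\exp(x_{j-1}/j^3)$. Hence $\log\mathcal P_j\ge x_{j-1}/j^3$, which exceeds $Cx_{j-1}/j^{7/2}$ as soon as $j>C^2$. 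Your claim that $\mathcal P_j$ is dominated by $\ell_j^{-1}$ and $K_{j-1}$ overlooks this term.

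The correct bound, as in the paper, is $\log\mathcal P_j\le Dx_{j-1}/j^3$. Here $D$ depends only on the fixed constants and $A_K$; the $K_{j-1}$ term is compared using $(j/(j-1))^4\le16$. Consequently $\log\mathcal P_j/\log k_j\le 2D/j$, not $O(j^{-3/2})$.

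The error is repairable, because every use of $\mathcal P_j$ only needs this ratio to be small uniformly for $j\ge J$, and $D$ is fixed before $J$.
\begin{itemize}
\item The separation $\mathcal P_j^Q\le k_j^{\vartheta/100}$ needs $J\ge200QD/\vartheta$ in place of your condition $j^{3/2}\ge100QC/\vartheta$.
\item The remainder bound $k_j^{-1}\mathcal P_j^{C_{\rm out}}\le k_j^{-1/2}$ needs $J\ge4C_{\rm out}D$.
\item In the output bound the lower-order terms are $O(x_{j-1}/j^3)$ rather than $O(x_{j-1}/j^{7/2})$. This gives exactly \eqref{ipm:expanded-cascade-regularity-recurrence}, whose right side is at most $3C_{\rm out}/2<A_K$ for large $J$.
\end{itemize}

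Two smaller points. First, the inequality $\log(h_{j-2}/h_{j-1})\le-\tfrac12x_{j-1}/j^7$ follows from a direct denominator comparison, namely $(j-1)^5/(j-2)^7\le1/2$ for large $j$; the doubling property \eqref{ipm:expanded-cascade-doubling} is not the relevant reason. Second, no separate treatment of the first stages is needed. The bookkeeping \eqref{ipm:cascade-base-bookkeeping} is designed so that the recursive formulas for $\log h_{j-1}$ and $\log\widehat k_{j-1}$ hold exactly at $j=J$, and the formula for $\log h_{j-2}$ holds exactly at $j=J+1$. With the corrected $\mathcal P_j$ exponent, your argument closes.
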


\begin{proof}
Write $x=x_{j-1}$. For $j>J$, the definitions give
\begin{equation}
 \log h_{j-1}=\frac{x}{(j-1)^7},\qquad
 \log h_{j-2}=\frac{x}{(j-1)^2(j-2)^7},\qquad
 \log\widehat k_{j-1}=\frac{x}{(j-1)^4}.
 \label{ipm:cascade-prior-scales}
\end{equation}
The first and third formulas hold at $j=J$ by
\eqref{ipm:cascade-base-bookkeeping}; the second at $j=J+1$
uses the prescribed base value and is also exact.
To handle rounding explicitly, the mean-value theorem gives
$|\log k_j-\log\widehat k_j|
\le2C\ell_j/\widehat k_j$ once $C\ell_j/\widehat k_j\le1/2$.
Thus, for the fixed starting index $J$, one sufficiently large base size
$x_{J-1}$ ensures, uniformly in $j\ge J$,
\begin{equation}
 \frac9{10}\frac{x}{j^2}\le\log k_j
       \le\frac{11}{10}\frac{x}{j^2},\qquad
 \log K_{j-1}\le\frac{11A_K}{10}\frac{x}{(j-1)^4}.
 \label{ipm:expanded-cascade-rounded-logs}
\end{equation}
The second bound uses the inherited hypothesis, with its exact
bookkeeping version at the first step.

For a sufficiently large fixed $J$, denominator comparison gives
\[
 \log(h_j/h_{j-1})\ge\frac{x}{2j^5},\qquad
 \log(h_{j-1}/h_{j-2})\ge\frac{x}{2j^7}\quad(j>J).
\]
For example, the second difference equals
$x(j-1)^{-7}[1-(j-1)^5/(j-2)^7]$, whose bracket is at least
$1/2$ when $J$ is large. Since $h_{j-2}\ge1$, the geometric error
in \eqref{ipm:cascade-scale-geometry} is at most
$2C_Ej^2x^3e^{-x/(2j^7)}$. Formula
\eqref{ipm:expanded-cascade-uniform-series} makes its supremum
smaller than the prescribed transfer threshold. The first step has
exactly frozen geometry and no such error.

For the horizons use
\[
 \frac{W_{j+1}}{W_j}=j^4\frac{h_{j-1}}{h_j},\qquad
 (1+h_{j-1})W_{j+1}x_j
       \le C j^6x^3\frac{1+h_{j-1}}{h_j}.
\]
Both right sides are bounded by fixed polynomials times
$e^{-x/(2j^5)}$. Their uniform smallness gives the precise factor
$1/16$ and the chosen future-angle threshold. It also gives
$h_{j-1}/h_j\le1/4$, as required for the global gradient induction.

The localization logarithm satisfies
\[
 \log(x_j\ell_jK_{j-1}^{2A+3})
   \le \log(j^2x)-\frac{x}{j^{7/2}}
                   +\frac{2(2A+3)A_Kx}{(j-1)^4}.
\]
Choose $J$ so large that the last term is at most
$x/(2j^{7/2})$. Formula
\eqref{ipm:expanded-cascade-polynomial-uniformity} then makes the
entire expression arbitrarily negative, proving both the analytic
and the stronger supported-direction localization.

From $\Gamma_j=\max(1,CK_{j-1}^A\delta_jh_j)$,
$\log\Gamma_j\le C+A\log K_{j-1}+\log h_j$.
For a sum of five positive terms, its logarithm is at most $\log5$
plus the largest logarithm. Thus
\[
 \log\mathcal P_j
   \le C\left(1+\frac{A_Kx}{(j-1)^4}
                   +\frac{x}{j^3}+\frac{x}{j^5}\right)
   \le D\frac{x}{j^3},
 \qquad
 \frac{\log\mathcal P_j}{\log k_j}\le\frac{2D}{j}.
\]
Choose $D$ using only the fixed constants and $A_K$, before the final
choice of $J$. This is possible, for example, by bounding
$(j/(j-1))^4\le16$ for all $j\ge2$. Terms independent of $x$
are absorbed by increasing the base. The choices
$J\ge200QD/\vartheta$ and $J\ge4C_{\rm out}D$ respectively give
the frequency and remainder bounds in the statement.

The logarithm of the proposed new $K$, divided by $\log k_j$, is
bounded explicitly by
\begin{equation}
 \frac{\log[(K_{j-1}\mathcal P_jk_j/\ell_j)^{C_{\rm out}}]}
             {\log k_j}
 \le C_{\rm out}
       \left(1+\frac{3A_K}{j^2}
                 +\frac{2D}{j}+\frac2{j^{3/2}}\right).
 \label{ipm:expanded-cascade-regularity-recurrence}
\end{equation}
Here the first fraction follows from
\eqref{ipm:expanded-cascade-rounded-logs}, after making $J$ larger
than one fixed absolute constant. Choose $J$ so that the three
fractions sum to at most $1/2$. The right side is then
$3C_{\rm out}/2<A_K$, proving the output bound with the very same
$A_K$ at every stage. A fixed power of the previous $K$ is affordable;
a new regularity exponent is not selected at later stages.

The three summands in \eqref{ipm:cascade-budget-series} are bounded by
fixed constants times, respectively,
\[
 e^{-x/(2j^3)},\qquad e^{-c_0x/2},\qquad e^{-x/(3j^2)}.
\]
The inequalities follow respectively from
$j^{-5}\le(2j^3)^{-1}$, $j^{-5}\le c_0/2$, and the lower
rounded carrier bound. Their sums tend to zero by
\eqref{ipm:expanded-cascade-uniform-series}.
There have been finitely many lower bounds on $J$, all involving
constants fixed earlier. Fix such a $J$. All subsequent requirements
are uniform smallness consequences of increasing the single base $X$,
so one base meets them simultaneously. This proves the claimed
order of quantifiers.
\end{proof}

\section{The stationary base and the first stage}
\label{ipm:sec-base}

With the universal constants and numerical scales fixed, we
construct the initial exact parent. The first layer is then obtained
from this parent by the same induction step as every later layer.

\begin{lemma}
\label{ipm:stationary-base}
For every sufficiently large integer $N$, set
$\nu=(-1,N)$ and $x_{J-1}=1/\arctan(1/N)$, and use the base
levels in \eqref{ipm:cascade-base-bookkeeping}. There is an odd
mean-zero stationary solution $P_{J-1}$ with gradient at the origin
$h_{J-1}\nu/|\nu|$, upper Hessian at most $I/4$, and global
gradient at most $Ch_{J-1}$. Its initial density and explicit
particle map satisfy the four-field class with
$K_{J-1}\le k_{J-1}^{A_K}$. One may take
$t_{J-1}=0$, $S_{J-1}=2W_J\le1$; the first transfer has zero
geometric remainder.
\end{lemma}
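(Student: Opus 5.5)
The base will be a single plane wave
\[
 P_{J-1}(x)=-\frac{h_{J-1}}{|\nu|}\sin(\nu\cdot x),\qquad \nu=(-1,N),
\]
adjusted in sign so that $\nabla P_{J-1}(0)=h_{J-1}\nu/|\nu|$. It is odd and has zero mean. Its velocity is $u=\mathfrak m(\nu)\widehat{\ }$ applied to one Fourier mode. Since $\mathfrak m(\nu)\cdot\nu=0$, the velocity is parallel to the level lines of $P_{J-1}$. So $u\cdot\nabla P_{J-1}=0$ and the solution is stationary.

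\textbf{Hessian and gradient.} Using the central matrices \eqref{ipm:intro-central-matrices}, the pressure Hessian is
\[
 H=-c\,h_{J-1}\frac{\nu_2}{|\nu|}\cos(\nu\cdot x)\,\frac{\nu\otimes\nu}{|\nu|^2},
\]
which is indefinite. We fix this by replacing $\sin$ with an asymmetric profile: take $P_{J-1}=|\nu|^{-1}h_{J-1}\,\delta_0 f_{\delta_0}(\nu\cdot x)$ with $f_\delta$ from Lemma~\ref{ipm:profilelemma}, or a rescaling of it with $f'(0)=1$ and negative part as small as needed. A one-dimensional function of $\nu\cdot x$ is still stationary, since $u$ is parallel to its level lines. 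The Hessian becomes $-c h\,(\nu_2/|\nu|)\,g'(\nu\cdot x)\,\widehat\nu\otimes\widehat\nu$. Because $\nu_2>0$, its positive part is at most $c h\,\|g'_-\|_\infty$.

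Requiring $\le I/4$ means we need $\|g'_-\|\le 1/(4ch_{J-1})$. We get this from a profile of the form $g=\epsilon$-periodized bump primitive. As in the proof of Lemma~\ref{ipm:profilelemma}, the negative slope there is $\mu/(\delta(1-\mu))\sim a_0$ times the normalization. So we normalize $g'(0)=1$ with $g'=(\psi(\theta/\delta)-\mu_\delta)/(1-\mu_\delta)$, which gives negative part $\le 2a_0\delta$. Then we choose $\delta=\delta_{\rm base}\ll 1/h_{J-1}$.

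This choice costs Gevrey growth polynomial in $h_{J-1}$ and $N$. The global gradient bound $\le Ch_{J-1}$ follows from $\|g'\|_\infty\le 2$.

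\textbf{Angle and parameters.} The angle of $\nu$ is $\pi/2+\arctan(1/N)=\pi/2+1/x_{J-1}$, which is exactly the transfer geometry of Lemma~\ref{ipm:amplification}. Stationarity gives $\alpha'=0$ and $G'=0$. The decomposition \eqref{ipm:transfer-decomposition} holds exactly at the origin because $g'(0)=1$ and $g''(0)=0$ by evenness, so $R_r=R_M=R_H=0$.

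\textbf{Particle map.} The flow is explicit: $X(t,z)=z+t\,u(z)$. This holds because $u$ is constant along its own trajectories: $u$ depends only on $\nu\cdot x$, which is preserved since $u\cdot\nu=0$. Hence $X_{tt}=0$, and $X-\mathrm{id}$ and $X_t$ have Gevrey bounds with size and growth polynomial in $h_{J-1}$, $N$, $\delta_{\rm base}^{-1}$. The same holds for $P(0)$. Since $N\sim x_{J-1}$ and $h_{J-1},\delta_{\rm base}^{-1}$ are at most $e^{Cx_{J-1}/(J-1)^7}$, all of this is $\le k_{J-1}^{A_K}=\exp(A_Kx_{J-1}/(J-1)^4)$ once $x_{J-1}$ is large. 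This is the main bookkeeping step, and it needs the profile's Gevrey constants to be polynomial in $\delta^{-1}$, which Lemma~\ref{ipm:profilelemma} provides.

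\textbf{Time horizon and the obstacle.} $W_J=2x_{J-1}^2/(ch_{J-1})$ is tiny because $h_{J-1}$ is exponential in $x_{J-1}$, so $S_{J-1}=2W_J\le1$.

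The main obstacle is making the upper Hessian bound $I/4$ compatible with the large gradient $h_{J-1}$ at the origin. We resolve it through the asymmetric profile with tiny negative slope. We must check that the periodicity constraint, that $g$ is periodic with zero mean, still allows $g'(0)=1$ with $\|g'_-\|\le(4ch)^{-1}$. It does, because the mean of $\psi(\cdot/\delta)$ is $O(\delta)$.
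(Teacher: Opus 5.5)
Your proposal is correct and is essentially the paper's own construction: the paper takes $P_{J-1}=b f_{\delta_{\rm b}}(\nu\cdot z)$ with $\delta_{\rm b}=d_0/h_{J-1}$ and $b=d_0/|\nu|$, which is exactly your $(h_{J-1}/|\nu|)\,\delta f_\delta(\nu\cdot x)$ with $\delta$ a small multiple of $1/h_{J-1}$, and it then uses the same explicit shear $X(t,z)=z+tu(z)$, the same exact transfer geometry with $E=0$, and the same bookkeeping $K_{J-1}\le C(1+x_{J-1}h_{J-1}/d_0)\le k_{J-1}^{A_K}$. Your opening sine example and the remark $g''(0)=0$ are superfluous, since the Hessian bound already forces the narrow asymmetric profile and the transfer decomposition uses only values at the origin.
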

\begin{proof}
To realize the base, choose an integer $N$ and put
\begin{equation}
 \nu=(-1,N),\qquad
 x_{J-1}=\frac1{\arctan(1/N)},\qquad n_{\rm b}=\nu/|\nu|.
 \label{ipm:cascade-base-angle}
\end{equation}
The angle of $n_{\rm b}$ is exactly $\pi/2+1/x_{J-1}$, and
$|\nu|\asymp x_{J-1}$ because $\arctan(1/N)\asymp1/N$.
The admissible base sizes tend to infinity as $N$ runs through the
integers. The uniform scale conclusions therefore remain available
with this arithmetic restriction.
Fix a small constant $d_0>0$ such that
$cC_fd_0\le1/4$. With $h=h_{J-1}$ define
\begin{equation}
 \delta_{\rm b}=d_0/h,\qquad b=d_0/|\nu|,\qquad
 P_{J-1}(z)=b f_{\delta_{\rm b}}(\nu\cdot z).
 \label{ipm:cascade-base}
\end{equation}
Its velocity is $b\mathfrak m(\nu)f_{\delta_{\rm b}}(\nu\cdot z)$ and
$\mathfrak m(\nu)\cdot\nu=0$, so this is an exact stationary solution.
Indeed every nonzero Fourier mode of the density is a multiple of
$\nu$, and the degree-zero even symbol has the same value on all
such multiples. Its mean mode is zero. Thus the displayed velocity
is exact, and its dot product with
$b\nu f'_{\delta_{\rm b}}(\nu\cdot z)$ vanishes identically.
It is odd and mean zero, and
\begin{equation}
 \nabla P_{J-1}(0)=h n_{\rm b},\qquad
 H(P_{J-1})=-cb\nu_2\frac{\nu\otimes\nu}{|\nu|^2}
                           f'_{\delta_{\rm b}}(\nu\cdot z)
                  \le\tfrac14 I.
 \label{ipm:cascade-base-hessian}
\end{equation}
By \eqref{ipm:profile-bounds}, its global density gradient satisfies
$\|\nabla P_{J-1}\|_\infty\le b|\nu|C_f^2/\delta_{\rm b}=C_f^2h$.
Its initial-label flow is the explicit shear
\begin{equation}
 X_{J-1}(t,a)=a+t u(P_{J-1})(a),\qquad
 (X_{J-1})_t=u(P_{J-1})(a),\qquad
 (X_{J-1})_{tt}=0.
 \label{ipm:cascade-base-flow}
\end{equation}
Indeed $\nu\cdot X$ is constant along each particle. The rank-one
matrix $D u(P_{J-1})$ squares to zero, whence
$F^{-1}=I-tD u(P_{J-1})$ and $\det F=1$.

We check the entire inherited class, including derivative counts,
rather than only the first few norms. For any ordered word $I$ of
length $n$, differentiating the plane wave multiplies
$f_{\delta_{\rm b}}^{(n)}(\nu\cdot a)$ by a product of $n$ components
of $\nu$. Summing these products in absolute value gives
$(|\nu_1|+|\nu_2|)^n\le(\sqrt2|\nu|)^n$. Consequently
\[
 [P_{J-1}]_n+[u(P_{J-1})]_n
       \le C b\left(\frac{C|\nu|}{\delta_{\rm b}}\right)^n(n!)^2.
\]
The same estimate holds for $X_{J-1}-\operatorname{id}$ on $[0,1]$
and for $(X_{J-1})_t$, while $(X_{J-1})_{tt}=0$.
Taking
$K_{J-1}=C(1+x_{J-1}h/d_0)$ with a sufficiently large fixed $C$
therefore bounds all four fields in exactly the convention of
\eqref{ipm:parent-class}. In particular
\[
 \log K_{J-1}
    \le C+\log x_{J-1}+\frac{x_{J-1}}{(J-1)^7}
    \le A_K\log k_{J-1}
\]
for the already large $J$ and then sufficiently large $N$.
For example the coefficient inequality
$(J-1)^{-7}\le A_K[2(J-1)^4]^{-1}$ holds at all our allowed $J$,
and the remaining $C+\log x_{J-1}$ is absorbed by the other half
as $N\to\infty$. No physical carrier at the bookkeeping scale
$k_{J-1}$ is needed, and no exponential deformation bound for the
flow has been used.

Choose $N$ large enough for all scale requirements and for
$2W_J=4x_{J-1}^2/(ch_{J-1})\le1$.
This is possible since its denominator is
$\exp[x_{J-1}/(J-1)^7]$.
Also $\delta_{\rm b}\le\delta_0$ for such $N$.
Set $t_{J-1}=0$ and $S_{J-1}=2W_J$.
The first transfer has exactly frozen parent geometry and $E=0$.
\end{proof}

\section{Closing the stage induction}
\label{ipm:sec-cascade}

\subsection{Order of stage choices and preservation of the bounds}

\begin{proposition}
\label{ipm:ordered-stage-closure}
Under the scale conditions of Lemma~\ref{ipm:scales}, an exact
parent satisfying the induction bounds determines the carrier,
target time, retained horizon, and seed before any child mean is
selected. The resulting horizon lies inside the parent's interval.
With $C_{\nabla}=4\max(C_{\rm lead},C_f^2,1)$, the conditions
$h_{j-1}/h_j\le1/4$, $\epsilon_j\le1$, and $h_j\ge1$ preserve
the global gradient bound. A base upper Hessian $I/4$ and a total
budget at most $1/2$ preserve $H(P_j)\le3I/4$ at every stage.
\end{proposition}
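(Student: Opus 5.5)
The proof is an induction on the stage index $j\ge J$. At stage $j$ the hypotheses are: an exact odd parent $P_{j-1}$ on $[0,S_{j-1}]$ with $H(P_{j-1})\le 3I/4$; $\|\nabla P_{j-1}\|_\infty\le C_\nabla h_{j-1}$; four-field parameter $K_{j-1}\le k_{j-1}^{A_K}$; and leading gradient at the origin $h_{j-1}$ at angle $\pi/2+1/x_{j-1}$ attained at $t_{j-1}$. The base case is Lemma~\ref{ipm:stationary-base}, which gives upper Hessian $I/4$, global gradient $C_f^2h_{J-1}\le C_\nabla h_{J-1}$, and zero geometric remainder.

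\textbf{Order of choices.} The point is that every geometric quantity depends only on the parent flow. The phase $m=F^{-T}m_0$ and the amplitude $a$ in \eqref{ipm:intro-wave} are determined by $P_{j-1}$, $m_0$ and the activation time, so they do not depend on any child mean. First round $m_0$ and $k_j$ to the lattice condition, with $\tau$ small as allowed by Lemma~\ref{ipm:amplification}. Lemma~\ref{ipm:five-transfer-errors} together with \eqref{ipm:cascade-scale-geometry} verifies the transfer hypothesis, and Lemma~\ref{ipm:amplification} then gives a first crossing $t_j\in[t_{j-1}+W_j/4,\,t_{j-1}+3W_j/4]$. Set $S_j=t_j+2W_{j+1}$. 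By \eqref{ipm:cascade-scale-horizons}, $S_j\le t_{j-1}+7W_j/8<S_{j-1}$, so the horizon lies inside the parent's interval. The seed $\alpha_j$ is fixed by the normalization \eqref{ipm:cascade-target-normalization}. Only after all of this is Proposition~\ref{ipm:packet} applied on $[0,S_j]$, with the means selected through $\eta(S_j)=0$. Its hypotheses come from Lemma~\ref{ipm:scales}.

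\textbf{Gradient bound.} Write $P_j=P_{j-1}+\Theta_j$. By \eqref{ipm:seventh-cascade-gradient-bounds},
\[
\|\nabla P_j\|_\infty\le C_\nabla h_{j-1}+C_{\rm lead}h_j+\epsilon_j
\le \tfrac14C_\nabla h_j+\tfrac14 C_\nabla h_j+h_j\le C_\nabla h_j .
\]
This uses $h_{j-1}\le h_j/4$, $C_{\rm lead}\le C_\nabla/4$, $\epsilon_j\le1\le h_j$ and $C_\nabla\ge4$.

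\textbf{Hessian bound.} By \eqref{ipm:seventh-cascade-global-budget},
\[
H(P_j)\le H(P_{J-1})+\sum_{i=J}^{j}\bigl[C_H(\delta_ih_i+h_ie^{-c_0x_{i-1}})+\epsilon_i\bigr]I .
\]
By \eqref{ipm:cascade-budget-series} together with $\epsilon_i\le k_i^{-1/2}$, the total budget is at most $1/2$ once the base size is large. Combined with the base bound $I/4$, this gives $H(P_j)\le 3I/4\le I$ at every stage. This is the hypothesis Proposition~\ref{ipm:packet-history-geometry} needs at the next stage.

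The remaining inductive items transfer directly. Oddness comes from Proposition~\ref{ipm:packet}, and the bound $K_j\le k_j^{A_K}$ comes from \eqref{ipm:cascade-scale-output}.

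\textbf{Main obstacle.} The hardest step is checking that the Hessian budget argument applies over the whole interval $[0,S_j]$. The sign $m_2>0$ holds on $[t_{j-1},S_j]$ by Lemma~\ref{ipm:explicit-packet-budget}. On $[0,t_{j-1}]$ one must instead use the small-history bound, which requires the activation gain $e^{c_0x_{j-1}}$ from Lemma~\ref{ipm:amplification}.
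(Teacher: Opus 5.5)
Your proposal is correct and follows the paper's proof essentially step for step. It uses the same ordering of choices: rounded carrier, first crossing $t_j$ from Lemma~\ref{ipm:amplification}, $S_j=t_j+2W_{j+1}\le t_{j-1}+7W_j/8<S_{j-1}$, seed normalization, and only then Proposition~\ref{ipm:packet} with $\eta(S_j)=0$. It also uses the same gradient chain, ending at most $\tfrac34C_\nabla h_j$, and the same summed Hessian budget. Your telescoping of \eqref{ipm:seventh-cascade-global-budget} from the base is exactly the paper's partial-sum bookkeeping $H(P_j)\le b_jI$. That is the form the induction hypothesis should carry, rather than only $H(P_{j-1})\le 3I/4$.
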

\begin{proof}
The foregoing estimates can be used with the following strictly ordered
choices at stage $j$.
\begin{enumerate}
\item The numerical levels $x_i,h_i,\ell_i,\delta_i,\widehat k_i,W_i$
are prescribed from $J$ and the base size. The parent $P_{j-1}$ is
already known on $[0,t_{j-1}+2W_j]$.
\item Use its known value $F_{j-1}(t_{j-1},0)$ to choose and round the
initial phase vector. Solve its parent-transported phase ODE and
determine the first target time $t_j$. This calculation uses no
coefficient of the new layer.
\item Set $S_j=t_j+2W_{j+1}$. The next width is a prescribed number,
not a function of the child solution. Restrict the known parent to
$[0,S_j]$, normalize the seed at $t_j$, and choose $\Gamma_j$ using
\eqref{ipm:seventh-cascade-seed-parameter}.
\item Solve the finite triangular hierarchy on this fixed interval,
including its terminally selected means at time $S_j$, and then make
the exact scalar correction. These mean initial data may depend on
the known parent's future, but they do not alter that parent or the
already selected $t_j,S_j$.
\item Apply \eqref{ipm:seventh-cascade-global-budget} and
\eqref{ipm:seventh-cascade-gradient-bounds} to the returned exact
child. Its Hessian bound is an output of this step, not an input to
its own mean inverse.
\end{enumerate}
The inequality $W_{j+1}\le W_j/16$ and the transfer bound
\eqref{ipm:transfer-time} give
\[
 S_j\le t_{j-1}+\tfrac34W_j+\tfrac18W_j
     <t_{j-1}+2W_j=S_{j-1}.
\]
Thus the parent data used in every step exist on the required interval.

Using the fixed choice $C_{\nabla}=4\max(C_{\rm lead},C_f^2,1)$, suppose
$h_{j-1}/h_j\le1/4$, $\epsilon_j\le1$, and $h_j\ge1$.
Then the full global gradient induction is the explicit inequality
\begin{equation}\label{ipm:seventh-cascade-gradient-induction}
 \|\nabla P_j\|_\infty
 \le C_{\nabla}h_{j-1}+C_{\rm lead}h_j+\epsilon_j
 \le\tfrac34C_{\nabla}h_j<C_{\nabla}h_j.
\end{equation}
If the base upper Hessian is at most $I/4$ and the sum of the budget
increments in \eqref{ipm:seventh-cascade-global-budget} is at most
$1/2$, every exact stage has $H(P_j)\le3I/4<I$. The constants
$C_{\rm lead},C_H,C_{\nabla},C_E$ have now all been fixed in a finite order.
The scale lemma may consequently choose one $J$ and one base size to
meet their finitely many tolerances. Estimates in higher fixed
$C^m$ norms are used only later to choose convergence tails; they do
not enter this stage closure.
\end{proof}

\subsection{Construction of the exact stage sequence}

We now perform the induction with the constants already fixed.
The scale choice ensures $h_{j-1}/h_j\le1/4$.
Increase the base once, before constructing any child, so that
\begin{equation}
 \sum_{j\ge J}
    \bigl[C_H\delta_jh_j+C_Hh_je^{-c_0x_{j-1}}
                         +k_j^{-1/2}\bigr]\le\frac12.
 \label{ipm:expanded-cascade-summed-budget}
\end{equation}
The uniform envelopes in the scale lemma prove this for every
allowed lattice rounding, so the choice does not depend on any
unconstructed ray.
Define $b_{J-1}=1/4$ and let $b_j$ be $1/4$ plus this sum through
stage $j$. Then $b_j\le3/4<1$; in the induction
we will retain the sharper inequality $H(P_j)\le b_jI$.
The strict margin ensures that using $H\le I$ in the analytic
proposition never relies on the child bound before it is proved.

\begin{proposition}
\label{ipm:stage-induction}
For the fixed scales and stationary base above, there are exact odd
mean-zero solutions $P_j$ on $[0,S_j]$, $j\ge J$, and target
times $t_j$ such that
\[
 \frac14W_j\le t_j-t_{j-1}\le\frac34W_j,
 \qquad S_j=t_j+2W_{j+1}<S_{j-1}.
\]
Every stage satisfies
\[
 H(P_j)\le b_jI\le\tfrac34I,
 \qquad \|\nabla P_j\|_\infty\le C_{\nabla}h_j,
 \qquad K_j\le k_j^{A_K}.
\]
The $j$th leading density-gradient increment at the origin has magnitude
$h_j$ and angle $\pi/2+1/x_j$ at $t_j$. Its carrier satisfies the integer-lattice
condition. Its initial density contains the oscillatory seed and
the selected phase means; its exact residual correction vanishes
at time zero.
\end{proposition}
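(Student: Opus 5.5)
We argue by induction on $j\ge J$, carrying the hypotheses
\[
 \mathcal I_{j-1}:\quad P_{j-1}\text{ exact, odd, mean zero on }[0,S_{j-1}],\quad H(P_{j-1})\le b_{j-1}I,\quad \|\nabla P_{j-1}\|_\infty\le C_\nabla h_{j-1},\quad K_{j-1}\le k_{j-1}^{A_K},
\]
together with the geometric data at the origin: the leading gradient increment of the last layer has magnitude $h_{j-1}$ and angle $\pi/2+1/x_{j-1}$ at $t_{j-1}$. The base case $j-1=J-1$ is Lemma~\ref{ipm:stationary-base}. There the gradient at the origin is $h_{J-1}n_{\rm b}$, the angle is exactly $\pi/2+1/x_{J-1}$, the Hessian bound $I/4=b_{J-1}I$ holds, the regularity bound $K_{J-1}\le k_{J-1}^{A_K}$ holds, and the frozen rank-one decomposition has $E=0$.

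\textbf{Step: geometric transfer.} For the step we follow the ordered choices of Proposition~\ref{ipm:ordered-stage-closure}.

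\emph{Decomposition.} Write $P_{j-1}=O+\Theta_{j-1}$ with $O=P_{j-2}$. Lemma~\ref{ipm:five-transfer-errors} then gives the decomposition \eqref{ipm:transfer-decomposition} on $[t_{j-1},S_{j-1}]$ with $E\le C_E(1+h_{j-2})$. The remainder bound $\epsilon_{j-1}\le k_{j-1}^{-1/2}\le1$ comes from \eqref{ipm:cascade-scale-output}.

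\emph{Hypotheses of the transfer lemma.} Scale condition \eqref{ipm:cascade-scale-geometry} supplies $Ej^2x^3/H_0\le\epsilon_0$, with $x=x_{j-1}$ and $H_0=G(t_{j-1})$. One point needs care: $G(t_{j-1})$ is the actual leading magnitude, and it equals $h_{j-1}$ by the normalization of the previous stage. Choose $m_{0,j}$ as a lattice rounding of the perpendicular to the parent phase at $t_{j-1}$, pulled back by $F_{j-1}(t_{j-1},0)^T$. Rounding at scale $\ell_j/k_j$ produces an angular error $\tau$ with $|\tau|j^2x\le\epsilon_0$, by the size of $k_j/\ell_j$. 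Here $W=W_j$, since $H_0=h_{j-1}$.

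\emph{Conclusions.} Lemma~\ref{ipm:amplification} now gives $t_j\in[t_{j-1}+W_j/4,\,t_{j-1}+3W_j/4]$, the angle range, and gain at least $e^{c_0x_{j-1}}$. Set $S_j=t_j+2W_{j+1}$. The chain $S_j<S_{j-1}$ follows from $W_{j+1}\le W_j/16$, as in Proposition~\ref{ipm:ordered-stage-closure}.

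\textbf{Step: analytic construction.} Fix $\alpha_j$ so that $g(t_j)=h_j$. Choose $\Gamma_j$ by \eqref{ipm:seventh-cascade-seed-parameter}, using $|m(t,0)|^{-1}\le K^{A}$ from Lemma~\ref{ipm:tube-localization}. The hypotheses of Proposition~\ref{ipm:packet} are:
\begin{itemize}
\item the parent class with $K=K_{j-1}$;
\item $H\le b_{j-1}I\le I$;
\item localization \eqref{ipm:packet-separation};
\item frequency separation \eqref{ipm:correction-frequency-separation}.
\end{itemize}
The last two are \eqref{ipm:cascade-scale-interface}. The proposition returns an exact odd mean-zero $P_j$ on $[0,S_j]$ with remainders $\epsilon_j\le k_j^{-1/2}$. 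Its initial density is the seed plus the selected means, and its correction vanishes at time zero. It also returns $K_j\le(K_{j-1}\mathcal P_jk_j/\ell_j)^{C_{\rm out}}\le k_j^{A_K}$ by \eqref{ipm:cascade-scale-output}.

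\textbf{Step: closing the bounds.}
\begin{itemize}
\item \emph{Hessian.} Apply Lemma~\ref{ipm:explicit-packet-budget}. Its future-localization hypothesis \eqref{ipm:cascade-future-localization} is \eqref{ipm:cascade-scale-horizons} combined with the first condition of \eqref{ipm:cascade-scale-interface}. It gives $H(P_j)\le H(P_{j-1})+[C_H(\delta_jh_j+h_je^{-c_0x_{j-1}})+k_j^{-1/2}]I\le b_jI$, by the definition of $b_j$ and \eqref{ipm:expanded-cascade-summed-budget}.
\item \emph{Gradient.} The bound follows from \eqref{ipm:seventh-cascade-gradient-induction}.
\item \emph{Geometry at $t_j$.} The magnitude $h_j$ is fixed by normalization, and the angle $\pi/2+1/x_j$ is the crossing condition. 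This is the input needed at the next stage.
\end{itemize}

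\textbf{Expected obstacle.} The delicate point is the circularity check. The target time, the horizon, the seed and $\Gamma_j$ must all be fixed before the child's terminal mean problem is solved. The Hessian bound of the child is only an output, while the inverse uses only the parent's bound $H(P_{j-1})\le I$. The strict margin $b_j\le3/4$ keeps these roles separate.

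A second check is that the leading magnitude used as $H_0$ in the transfer lemma is the leading part of $\nabla\Theta_{j-1}$ at the origin, not the full gradient. Its drift over $[t_{j-1},S_{j-1}]$ is therefore controlled by the $(\log G)'$ term in Lemma~\ref{ipm:five-transfer-errors}, whose bound involves only $h_{j-2}$.
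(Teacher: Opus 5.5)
Your proposal is correct and follows the paper's proof essentially step for step. Both arguments run the same chain: the two-generation error bound through $O=P_{j-2}$, lattice rounding of $F_{j-1}(t_{j-1},0)^Tv$, and the transfer lemma fixing $t_j$ and then $S_j$ before the child is built. Both then close the Hessian, gradient, and $K_j$ bounds using the layer proposition and the scale lemma.

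The only detail you compress is the rounding error. The paper records the conditioning factor from passing through $F^{-T}$, namely $|\tau_j|\le CK_{j-1}^A\ell_j/\widehat k_j$. The scales absorb this factor, since $\log K_{j-1}=O(x_{j-1}/(j-1)^4)$ is negligible against $\log(\widehat k_j/\ell_j)$.
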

\begin{proof}
Inductively the exact odd stage $P_{j-1}$ is defined on
$[0,S_{j-1}]$, where
\begin{equation}
 S_{j-1}=t_{j-1}+2W_j,\qquad
 H(P_{j-1})\le I,\qquad
 \|\nabla P_{j-1}\|_\infty\le C_{\nabla} h_{j-1},\qquad
 K_{j-1}\le k_{j-1}^{A_K}.
 \label{ipm:cascade-induction}
\end{equation}
For $j>J$, the previous leading density-gradient increment at the origin
has magnitude $h_{j-1}$ and angle $\pi/2+1/x_{j-1}$ at $t_{j-1}$.
Although this leading ray is not transported by $P_{j-1}$, it is
transported by the exact older solution $P_{j-2}$, which is available
on the entire current interval. The phase evolution after
\eqref{ipm:intro-phase}, the identity \eqref{ipm:rayidentity}, and the
matrix bounds \eqref{ipm:matrixbounds} imply
\[
 |(\log G_{j-1})'|+|\alpha_{j-1}'|
       \le C(1+h_{j-2}).
\]
The global layer errors and the older density-gradient bound give
\eqref{ipm:transfer-decomposition}--\eqref{ipm:transfer-error} for
$P_{j-1}$, with
\begin{equation}
 E_j\le C(1+h_{j-2})+C\epsilon_{j-1}
            \le C'(1+h_{j-2}).
 \label{ipm:cascade-error-generation}
\end{equation}
Here are the individual terms in this error estimate. With
$O=P_{j-2}$ and the previous leading density-gradient increment at the origin denoted by
$G_{j-1}n_{j-1}$, linearity of the Darcy operator gives
\[
 \begin{aligned}
 r_{P_{j-1}}-G_{j-1}n_{j-1}&=r_O+E_r,\\
 M_{P_{j-1}}+cG_{j-1}(n_{j-1})_1
                    n_{j-1}^\perp\otimes n_{j-1}&=M_O+E_M,\\
 H_{P_{j-1}}+cG_{j-1}(n_{j-1})_2
                    n_{j-1}\otimes n_{j-1}&=H_O+E_H.
 \end{aligned}
\]
The three $E$ terms have norm sum at most $\epsilon_{j-1}$.
Furthermore
$\|r_O\|\le C_{\nabla}h_{j-2}$,
$\|M_O\|\le1+2cC_{\nabla}h_{j-2}$, and
$\|H_O\|\le1+cC_{\nabla}h_{j-2}$.
The derivatives of $\alpha_{j-1}$ and $\log G_{j-1}$ are bounded
by the latter two norms because their ray is transported by $O$.
Adding these estimates proves
$E_j\le C_E(1+h_{j-2})$, since $\epsilon_{j-1}\le1$.
Thus the constant $C_E$ used when choosing the scales depends only
on previously fixed constants.
\label{ipm:expanded-cascade-two-generation-error}

Set $s=t_{j-1}$ and select the child phase. Let
$v$ be the unit vector with angle $1/x_{j-1}$, perpendicular to the
previous leading density-gradient increment in the desired orientation. The ideal unit
initial vector is
\[
 m_0^*=\frac{F_{j-1}(s,0)^Tv}{|F_{j-1}(s,0)^Tv|}.
\]
Round $(\widehat k_j/\ell_j)m_0^*$ to a nearest integer vector
$\nu_j\in\mathbb Z^2$, and define
\begin{equation}
 k_j=\ell_j|\nu_j|,\qquad m_{0,j}=\nu_j/|\nu_j|.
 \label{ipm:cascade-rounding}
\end{equation}
Then $|k_j-\widehat k_j|\le C\ell_j$ and
$|m_{0,j}-m_0^*|\le C\ell_j/\widehat k_j$.
To check the loss under $F^{-T}$, use
$|v/|v|-w/|w||\le2|v-w|/|w|$ for nonzero $v,w$,
with $v=F^{-T}m_{0,j}$ and $w=F^{-T}m_0^*$.
The denominator is at least $\|F\|^{-1}$.
The resulting unit-vector difference is at most
$2\|F^{-1}\|\|F\||m_{0,j}-m_0^*|$.
For differences below $1/2$, the angular difference is at most
twice the unit-vector difference. Both matrix norms have fixed
polynomial bounds in $K_{j-1}$, proving
\begin{equation}
 |\tau_j|\le CK_{j-1}^A\ell_j/\widehat k_j,\qquad
 |\tau_j|j^2x_{j-1}\ll1.
 \label{ipm:cascade-rounding-error}
\end{equation}
The logarithm of the right side in the second assertion is bounded
above by
$C+\log(j^2x_{j-1})+A\log K_{j-1}
 -x_{j-1}/j^{7/2}-x_{j-1}/j^2$.
It tends to a large negative value uniformly with the base by
\eqref{ipm:expanded-cascade-rounded-logs} and
\eqref{ipm:expanded-cascade-polynomial-uniformity}.
Thus it satisfies the fixed transfer tolerance.
This establishes the exact periodicity condition
$(k_j/\ell_j)m_{0,j}\in\mathbb Z^2$ with an admissible angular error.

The parent is already known, so Lemma~\ref{ipm:amplification} now
determines a first target time $t_j$ satisfying \eqref{ipm:transfer-time}: 
\begin{equation}
 \beta_j(t_j)=\pi/2+1/x_j,\qquad
 W_j/4\le t_j-t_{j-1}\le3W_j/4.
 \label{ipm:cascade-target-time}
\end{equation}
Fix the new horizon
\begin{equation}
 S_j=t_j+2W_{j+1}.
 \label{ipm:cascade-new-horizon}
\end{equation}
By \eqref{ipm:cascade-scale-horizons},
$S_j\le t_{j-1}+(3/4+1/8)W_j<S_{j-1}$. Normalize the seed by
\eqref{ipm:cascade-target-normalization}, with target $h_j$, and choose
$\Gamma_j$ by \eqref{ipm:cascade-gamma-choice}.
The inherited gradient bound supplies the future estimates required
for that choice. All the hypotheses of Proposition~\ref{ipm:packet} now
hold, by Lemma~\ref{ipm:scales}. Apply it on the already fixed interval
$[0,S_j]$ to obtain $P_j$.

In particular the terminal condition used to select the layer's phase
means is imposed only after $S_j$ is known. Neither $t_j$ nor $S_j$
depends on the child mean being solved: both use only the existing
parent and the prescribed next width. This fixes the dependency order
without a shooting or fixed-point problem.

Combining the leading pressure-Hessian bound \eqref{ipm:cascade-h-budget} with the
pressure-Hessian remainder in \eqref{ipm:packet-gradient-errors} gives
\[
 H(P_j)\le H(P_{j-1})+
 \bigl[C_H(\delta_jh_j+h_je^{-c_0x_{j-1}})+\epsilon_j\bigr]I
 \le b_jI\le(3/4)I<I,
\]
by \eqref{ipm:expanded-cascade-summed-budget}.
The global gradient bound $\|\nabla P_j\|_\infty\le C_{\nabla}h_j$
follows from \eqref{ipm:seventh-cascade-gradient-induction}.
The layer output estimate \eqref{ipm:packet-output-class}, with
$C_{\rm out}\ge C_*$, and the second inequality in
\eqref{ipm:cascade-scale-output} give
\[
 K_j\le(K_{j-1}\mathcal P_jk_j/\ell_j)^{C_{\rm out}}
     \le k_j^{A_K}.
\]
The prescribed leading normalization \eqref{ipm:cascade-target-normalization}
and target angle at the origin \eqref{ipm:cascade-target-time} also hold,
so the induction is complete.
\end{proof}

\section{The accumulation time and the limiting smooth initial datum}
\label{ipm:sec-initial-limit}

\subsection{The accumulation time}

\begin{lemma}
\label{ipm:common-time}
The times from Proposition~\ref{ipm:stage-induction} have a limit
$T\in(0,1)$ and satisfy
\[
 0<T-t_j\le\tfrac45W_{j+1}<2W_{j+1},\qquad T<S_j.
\]
Every finite stage is therefore smooth on a neighborhood of $[0,T]$.
\end{lemma}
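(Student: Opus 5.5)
The plan is to read everything off the stage bounds of Proposition~\ref{ipm:stage-induction} together with the horizon inequality $W_{j+1}\le W_j/16$ from \eqref{ipm:cascade-scale-horizons}. The times $t_j$ are strictly increasing, since $t_j-t_{j-1}\ge W_j/4>0$. Increments are bounded above by $\tfrac34W_j$, and $W_j$ decays at least geometrically with ratio $1/16$. Hence $T=\lim_j t_j=t_{J-1}+\sum_{i\ge J}(t_i-t_{i-1})$ exists and is finite. It is positive because $t_{J-1}=0$ and $t_J\ge W_J/4>0$.

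Next I bound the tail. For each $j$,
\[
 T-t_j=\sum_{i\ge j+1}(t_i-t_{i-1})\le\tfrac34\sum_{i\ge j+1}W_i
 \le\tfrac34W_{j+1}\sum_{m\ge0}16^{-m}=\tfrac34\cdot\tfrac{16}{15}W_{j+1}=\tfrac45W_{j+1}.
\]
Strict positivity of $T-t_j$ follows from $T\ge t_{j+1}\ge t_j+W_{j+1}/4$. The inequality $\tfrac45W_{j+1}<2W_{j+1}=S_j-t_j$ is then immediate, and it gives $T<S_j$.

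For $T<1$, apply the same sum at $j=J-1$. This gives $T\le\tfrac45W_J$. The base lemma (Lemma~\ref{ipm:stationary-base}) arranges $2W_J=S_{J-1}\le1$, so $T\le\tfrac25<1$.

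Finally, each stage $P_j$ is an exact smooth solution on $[0,S_j]$ by Proposition~\ref{ipm:stage-induction}, and we have shown $S_j>T$. So $P_j$ is smooth on the neighborhood $[0,S_j)$ of $[0,T]$.

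I expect no real obstacle. The only point needing care is that $W_{j+1}\le W_j/16$ holds for all $j\ge J$, including the first index. This is exactly what Lemma~\ref{ipm:scales} provides once $J$ and the base are fixed.
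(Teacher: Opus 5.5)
Your proposal is correct and matches the paper's proof: strict monotonicity from $t_j-t_{j-1}\ge W_j/4$, the geometric tail bound $\tfrac34\sum_{i>j}W_i\le\tfrac34\cdot\tfrac{16}{15}W_{j+1}=\tfrac45W_{j+1}$ coming from $W_{i+1}\le W_i/16$, and then $T<S_j$. The only cosmetic difference is how you get $T<1$: you apply the tail bound at $j=J-1$ to obtain $T\le\tfrac45W_J\le\tfrac25$, whereas the paper reads it off from $T<S_j<S_{J-1}=2W_J\le1$; both routes use the same base condition $2W_J\le1$.
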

\begin{proof}
The target times are strictly increasing and their increments are
comparable to $W_j$. Since $W_{j+1}\le W_j/16$, their limit
\[
 T=\lim_{j\to\infty}t_j
\]
is finite and positive. More precisely,
\[
 0<T-t_j\le\frac34\sum_{i>j}W_i
      \le\frac{3/4}{1-1/16}W_{j+1}=\frac45W_{j+1}<2W_{j+1},\qquad T<S_j\le1.
\]
The final inequality is strict. Thus every finite stage is smooth on a
neighborhood of $[0,T]$.
\end{proof}

\subsection{The limiting smooth initial datum}

\begin{proposition}
\label{ipm:initial-data-convergence}
For every fixed integer $m\ge0$, the increments
$\theta_j=P_j-P_{j-1}$ satisfy
\[
 \sum_{j\ge J}\sup_{0\le t\le t_{j-1}}
                   \|\theta_j(t)\|_{C^m}<\infty.
\]
Consequently $P_j(0)$ converges in $C^\infty$ to a single odd mean-zero
density $\rho_{\rm in}$. The stage sequence and its initial datum are
independent of the derivative order $m$.
\end{proposition}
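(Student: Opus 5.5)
Fix $m$ and split each increment as in \eqref{ipm:cascade-osc-bound}--\eqref{ipm:cascade-correction-bound}, $\theta_j=\theta_{j,\rm osc}+\theta_{j,\rm mean}+\theta_{j,\rm corr}$. We then bound the three pieces separately on the history interval $[0,t_{j-1}]$ by explicit functions of $x_{j-1}$ and $j$. After that, Lemma~\ref{ipm:scale-envelopes} shows that each resulting series converges. The stages, $J$ and the base size are already fixed and do not depend on $m$. So we only have to verify that the $j$th term is eventually summable. The finitely many early terms are finite because each $\theta_j$ is smooth on $[0,S_j]$.

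\textbf{Oscillatory part.} On $[0,t_{j-1}]=[0,s]$ the history bound \eqref{ipm:seventh-cascade-seed-parameter} gives $\gamma_j(t)\le eB_mK_{j-1}^A\delta_jh_je^{-c_0x_{j-1}}$. Combined with \eqref{ipm:cascade-osc-bound}, this yields
\[
\|\theta_{j,\rm osc}(t)\|_{C^m}\le C_m\ell_j^{1-m}k_j^{m-1}\mathcal P_j^{C_m}K_{j-1}^A\delta_jh_je^{-c_0x_{j-1}}.
\]
We take logarithms and use \eqref{ipm:expanded-cascade-rounded-logs} together with the bound $\log\mathcal P_j\le Dx_{j-1}/j^3$ from the proof of Lemma~\ref{ipm:scales}. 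This gives $\log k_j\le \tfrac{11}{10}x_{j-1}/j^2$, $\log\ell_j^{-1}=x_{j-1}/j^{7/2}$, $\log h_j=x_{j-1}/j^5$, and $\log K_{j-1}\lesssim A_Kx_{j-1}/(j-1)^4$. Every one of these is $o(x_{j-1})$ as $j\to\infty$ with $m$ fixed. Hence for $j\ge j_m$, with $j_m$ depending on $m$, $C_m$ and $c_0$, the exponent is at most $-\tfrac{c_0}{2}x_{j-1}$. The tail is then dominated by $\sum_j e^{-c_0x_{j-1}/2}$, which converges because $x_{j-1}$ grows superexponentially.

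This is the key step, and it is where we expect the main obstacle. The gain $e^{c_0x_{j-1}}$ must dominate the derivative cost $k_j^{m}$. That works only because $\log k_j\sim x_{j-1}/j^2$ is eventually much smaller than $c_0x_{j-1}$, and the threshold index depends on $m$. This matches the quantifier remark in Section~\ref{ipm:seventh-cascade-interface}: we discard finitely many increments without changing the construction.

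\textbf{Mean and correction parts.} The mean bound \eqref{ipm:cascade-mean-bound} has no $\gamma$ factor. It is $C_m\ell_j^{1-m}k_j^{-2}\mathcal P_j^{C_m}$, whose logarithm is at most
\[
(m-1)\frac{x_{j-1}}{j^{7/2}}-\frac{9}{5}\frac{x_{j-1}}{j^2}+C_mD\frac{x_{j-1}}{j^3}+C_m.
\]
This is below $-x_{j-1}/j^2$ once $j$ is large relative to $m$ and $C_m$. The point is that the $k_j^{-2}$ decay wins because $j^{-2}\gg j^{-3},j^{-7/2}$. The correction bound \eqref{ipm:cascade-correction-bound} is at most $(k_j/\ell_j)^{m+1}\mathcal P_j^{C_m}e^{-k_j^\vartheta/2}$. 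Here $k_j^\vartheta=\exp(\vartheta\log k_j)$ is exponentially large in $x_{j-1}/j^2$, while the prefactor is only $\exp(O_m(x_{j-1}/j^2))$, so this term is tiny. Both series then converge by \eqref{ipm:expanded-cascade-uniform-series}, applied with $q=2$, or more crudely by comparison with $\sum_j e^{-x_{j-1}/j^2}$.

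\textbf{Limit.} Summability at $t=0\le t_{j-1}$ shows that $P_j(0)=P_{J-1}(0)+\sum_{i\le j}\theta_i(0)$ is Cauchy in every $C^m$. It therefore converges in $C^\infty$ to some $\rho_{\rm in}$. Oddness and zero mean pass to the limit, since each $P_j$ is odd and mean zero by Proposition~\ref{ipm:stage-induction}. Finally, the limit does not depend on $m$: the sequence $P_j(0)$ was fixed before any $m$ was chosen, and the limits in different $C^m$ agree because they all coincide with the uniform ($C^0$) limit.
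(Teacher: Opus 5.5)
Your proposal is correct and follows the paper's proof essentially step for step. It uses the same three-way split, the history bound on $\gamma_j$ so that $e^{-c_0x_{j-1}}$ beats the derivative cost $k_j^{m}$, the $k_j^{-2}$ decay for the phase means, the $e^{-k_j^\vartheta/2}$ factor for the correction, and an $m$-dependent tail index $j_m$, with the finitely many early stages handled by smoothness on $[0,S_j]$. The only differences are cosmetic. You keep the $K_{j-1}^A$ factor explicit rather than absorbing it into $\mathcal P_j$, and you compare the correction prefactor $\exp(O_m(x_{j-1}/j^2))$ directly with $\exp(-k_j^\vartheta/2)$ instead of first bounding it by $k_j^\vartheta/4$.
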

\begin{proof}
Set $\theta_j=P_j-P_{j-1}$. For $t\le t_{j-1}$,
\eqref{ipm:cascade-gamma-history} and
\eqref{ipm:cascade-osc-bound} give, at each fixed derivative order $m$,
\begin{align}
 \|\theta_{j,\rm osc}(t)\|_{C^m}
    &\le C_m\ell_j^{1-m}k_j^{m-1}\mathcal P_j^{C_m}
                     h_j e^{-c_0x_{j-1}}
       \le C_m e^{-c_0x_{j-1}/2},
       \label{ipm:cascade-history-cm-osc}\\
 \|\theta_{j,\rm mean}(t)\|_{C^m}
    &\le C_m\ell_j^{1-m}k_j^{-2}\mathcal P_j^{C_m}
       \le C_m e^{-x_{j-1}/j^2},
       \label{ipm:cascade-history-cm-mean}
\end{align}
for all sufficiently large $j$ depending on $m$. We have absorbed the
fixed power of $K$ and the factor $\delta_j\le1$ into the displayed
$\mathcal P_j$ power. To verify these inequalities, take logarithms:
apart from $\log C_m$, their respective upper bounds are
\[
 -c_0x_{j-1}+\frac{2m x_{j-1}}{j^2}
                   +\frac{D_mx_{j-1}}{j^3},\qquad
 -\frac{9x_{j-1}}{5j^2}+\frac{D_mx_{j-1}}{j^3}.
\]
Here $D_m$ is finite, independent of the stage, and may depend on
$m$ and the already fixed constants. These bounds follow by inserting
the rounded carrier inequalities, $\log\ell_j^{-1}=x_{j-1}/j^{7/2}$,
$\log\mathcal P_j\le D x_{j-1}/j^3$, and
$\log h_j=x_{j-1}/j^5$.
For $m=0$ any negative carrier or localization exponents can be
dropped when forming an upper bound.
Choose a tail index $j_m\ge J$ such that
$2m/j_m^2\le c_0/4$,
$D_m/j_m^3\le c_0/4$, and $D_m/j_m\le1/2$.
The same inequalities then hold for all $j\ge j_m$ and imply
\eqref{ipm:cascade-history-cm-osc} and
\eqref{ipm:cascade-history-cm-mean}.

For the exact correction, its logarithmic prefactor is bounded by
$D'_m(\log k_j+\log\ell_j^{-1}+\log\mathcal P_j)$.
This grows only linearly in $\log k_j$ for fixed $m$, while
$k_j^\vartheta=\exp(\vartheta\log k_j)$.
Increase $j_m$, if necessary, so that this prefactor is at most
$k_j^\vartheta/4$. The correction is then bounded by
$C_m\exp(-k_j^\vartheta/4)$, whose sum converges. For example
$k_j^\vartheta/4\ge x_{j-1}/j^2$ eventually, by the lower carrier
bound and elementary exponential growth, and
\eqref{ipm:expanded-cascade-doubling} applies.
The two other right sides in
\eqref{ipm:cascade-history-cm-osc}--\eqref{ipm:cascade-history-cm-mean}
are summable by the same doubling argument. The finitely many terms
with $J\le j<j_m$ are smooth on their respective compact intervals.
Consequently
\begin{equation}
 \sum_{j\ge J}\sup_{0\le t\le t_{j-1}}
                      \|\theta_j(t)\|_{C^m}<\infty
 \qquad\text{for every fixed }m.
 \label{ipm:cascade-full-history-convergence}
\end{equation}
In particular the initial data converge in $C^\infty$ to an odd
mean-zero function $\rho_{\rm in}$. This uses the actual small seed
$\gamma_j(0)$; a bound by the maximum amplitude up to the target would
not suffice.
The constants $C_m,D_m,j_m$ are chosen only in this convergence
argument, after the entire sequence has been constructed. Thus
convergence in all fixed $C^m$ norms is asserted for a single
initial datum. No scale or base choice depends on infinitely many
derivative-order constants.
\label{ipm:expanded-cascade-fixed-order-tails}
\end{proof}

\section{The smooth limiting solution and finite-time blow-up}
\label{ipm:sec-smooth-limit}

\subsection{The smooth limiting solution}

\begin{proposition}
\label{ipm:smooth-convergence}
On every compact subinterval of $[0,T)$, the stages converge with all
mixed space-time derivatives to an exact unforced solution
$\rho\in C^\infty([0,T)\times\mathbb T^2)$ with
$\rho(0)=\rho_{\rm in}$, where $\rho_{\rm in}$ is the datum of
Proposition~\ref{ipm:initial-data-convergence}.
\end{proposition}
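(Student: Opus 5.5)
Fix a compact interval $[0,T']$ with $T'<T$. Since $t_j\uparrow T$, there is an index $j_0$ with $t_{j-1}>T'$ for all $j\ge j_0$. Every stage $P_j$ with $j\ge j_0$ is then defined and smooth on $[0,T']$, because $T'<T<S_j$ by Lemma~\ref{ipm:common-time}. On this interval each later increment $\theta_j=P_j-P_{j-1}$, $j\ge j_0$, is in its small-history regime $t\le t_{j-1}$. Write
\[
 P_j=P_{j_0-1}+\sum_{i=j_0}^j\theta_i .
\]
Proposition~\ref{ipm:initial-data-convergence} already gives
\[
 \sum_j\sup_{[0,t_{j-1}]}\|\theta_j\|_{C^m}<\infty
\]
for every $m$. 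Hence $P_j\to\rho$ uniformly in $C^m_x$ on $[0,T']$ for every $m$, with $\rho(0)=\rho_{\rm in}$. Since $T'<T$ is arbitrary, this defines $\rho$ on $[0,T)$, and the limits on nested intervals agree.

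The next step upgrades spatial convergence to convergence of mixed space-time derivatives. I would not estimate time derivatives of the layers directly. Instead I would use the equation. Each $P_j$ solves \eqref{ipm:unforced} exactly, so $\partial_tP_j=-u(P_j)\cdot\nabla P_j$. The Darcy map $\rho\mapsto u(\rho)$ is a degree-zero multiplier, bounded on $H^{m}$ for every $m$, and hence bounded $C^{m+1}\to C^m$ on $\mathbb T^2$ through Sobolev embedding. Therefore $u(P_j)\to u(\rho)$ in every $C^m$, uniformly on $[0,T']$. The products then give
\[
 \partial_tP_j\to-u(\rho)\cdot\nabla\rho
\]
uniformly in every $C^m$. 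Iterating the identity shows that $\partial_t^kP_j$ is a polynomial expression in spatial derivatives of $P_j$ and $u(P_j)$. For $k=2$, for example, one differentiates the equation, which requires $u(\partial_tP_j)$; this is again controlled by the multiplier bound. So every $\partial_t^k\partial_x^\beta P_j$ converges uniformly on $[0,T']\times\mathbb T^2$.

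By the standard theorem on uniform convergence of derivatives, the limit $\rho$ is $C^\infty$ on $[0,T']\times\mathbb T^2$. It satisfies $\partial_t\rho+u(\rho)\cdot\nabla\rho=0$ with the Darcy velocity \eqref{ipm:darcy}, because the equation passes to the limit termwise. Oddness and zero mean also pass to the limit.

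The main obstacle I expect is the first step: checking that the history bounds \eqref{ipm:cascade-history-cm-osc}, \eqref{ipm:cascade-history-cm-mean}, and the correction bound are genuinely uniform over all $t\le t_{j-1}$, not only at $t=0$. Proposition~\ref{ipm:initial-data-convergence} asserts exactly this supremum, using $\gamma_j(t)\le CK^A\delta_jh_je^{-c_0x_{j-1}}$ from \eqref{ipm:cascade-gamma-history}, the time-uniform mean bound, and the all-time correction bound, so I will invoke it directly. A secondary check is that the finitely many early stages $j<j_0$ contribute only the fixed smooth function $P_{j_0-1}$ on $[0,T']$, which is automatic.
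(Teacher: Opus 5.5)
Your proposal is correct and follows essentially the paper's route. You get spatial convergence in every $C^m$ from the history summability of Proposition~\ref{ipm:initial-data-convergence}, then get time derivatives from the differentiated equation $\partial_t^{q+1}P_j=-\sum_{a}\binom{q}{a}u(\partial_t^aP_j)\cdot\nabla\partial_t^{q-a}P_j$ and boundedness of the Darcy multiplier; the paper does the same in $C([0,t_0];H^s)$ with the fundamental theorem of calculus, where you use $C^m$ and the uniform-convergence-of-derivatives theorem. There are two harmless slips: $\partial_t^kP_j$ is not a polynomial in derivatives of $P_j$ and $u(P_j)$ but involves nested Darcy operators (your $u(\partial_tP_j)$ remark and the recursion already handle this), and Sobolev embedding alone gives $u\colon C^{m+2}\to C^m$ rather than $C^{m+1}\to C^m$, which costs nothing since you have convergence in every $C^m$.
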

\begin{proof}
Fix $t_0<T$. Eventually $t_{j-1}>t_0$, so
\eqref{ipm:cascade-full-history-convergence} shows that $P_j$ converges
on $[0,t_0]$ in every spatial $C^m$ norm. Denote the limit by $\rho$.
Here is the passage through time derivatives in fixed Banach spaces.
For every integer $s\ge2$, spatial convergence gives
$P_j\to\rho$ in $C([0,t_0];H^s)$.
The bounded Fourier symbol gives
$\|u(f)\|_{H^s}\le c\|f\|_{H^s}$.
Since $H^s(\mathbb T^2)$ is an algebra for $s\ge2$,
the bilinear map
\[
 (f,g)\longmapsto u(f)\cdot\nabla g
\]
is continuous from $H^{s+1}\times H^{s+1}$ to $H^s$.
Therefore the exact stage equations imply
$\partial_tP_j\to-u(\rho)\cdot\nabla\rho$
in $C([0,t_0];H^s)$ for every such $s$.
Passing to the integral equation
$P_j(t)=P_j(0)+\int_0^t\partial_tP_j(\sigma)\,d\sigma$
proves that $\rho$ has this derivative and solves
\eqref{ipm:darcy}--\eqref{ipm:unforced}.

For the induction step, suppose convergence of
$\partial_t^aP_j$ has been proved in
$C([0,t_0];H^s)$ for all $a\le q$ and every fixed $s\ge2$.
Differentiate the classical stage equation $q$ times:
\begin{equation}
 \partial_t^{q+1}P_j
  =-\sum_{a=0}^q\binom qa
      u(\partial_t^aP_j)\cdot\nabla\partial_t^{q-a}P_j.
 \label{ipm:expanded-cascade-time-recursion}
\end{equation}
The induction hypothesis at $s+1$ and the bilinear continuity just
proved make the right side converge in $C([0,t_0];H^s)$.
The fundamental theorem of calculus applied to
$\partial_t^qP_j$ then shows that its limit is differentiable in
$H^s$ with this limiting derivative. This proves the induction
for $q+1$, simultaneously for every fixed $s$.
The velocity commutes with time differentiation because it is a
bounded linear map on each of these spaces.
Finally Sobolev embedding at arbitrarily large $s$ gives convergence
and continuity of every mixed space-time derivative.
Thus
\begin{equation}
 \rho\in C^\infty([0,T)\times\mathbb T^2),\qquad
 \rho(0)=\rho_{\rm in},
 \label{ipm:cascade-smooth-limit}
\end{equation}
and $\rho$ is an exact odd mean-zero unforced solution on the whole
interval $[0,T)$. No stability assertion across the accumulation time
is required.
\end{proof}

\subsection{Finite-time blow-up}

\begin{proposition}
\label{ipm:component-divergence}
The solution of Proposition~\ref{ipm:smooth-convergence} satisfies
\[
 \lim_{t\uparrow T}\partial_2\rho(t,0)
 =\lim_{t\uparrow T}\partial_1u_1(\rho)(t,0)=+\infty.
\]
More precisely, there are fixed $a_1,a_2>0$ and a sequence
$D_j\to0$ such that, for every $t\in[t_j,T)$,
\begin{equation}
 \partial_2\rho(t,0)\ge a_1h_j-2/c-D_j,\qquad
 \partial_1u_1(\rho)(t,0)\ge a_2h_j/x_j^2-1-D_j.
 \label{ipm:cascade-limit-lower}
\end{equation}
In particular, the solution has no
smooth continuation through $T$.
\end{proposition}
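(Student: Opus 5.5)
Fix $t\in[t_j,T)$ and choose $L>j$ so large that $t<t_{L-1}$. Decompose
\[
 \rho(t)=P_{J-1}(t)+\sum_{i=J}^{j}\theta_i(t)+\sum_{i=j+1}^{L}\theta_i(t)+\sum_{i>L}\theta_i(t),
\]
and evaluate $\partial_2$ and $\partial_1u_1$ at the origin term by term.

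The tail $i>L$ is still in its small-history regime, and by \eqref{ipm:cascade-full-history-convergence} it tends to zero as $L\to\infty$. The terms of the decomposition then fall into three groups.

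\textbf{Positive layers} ($J\le i\le L$ with $t_i\le t$ or $t_{i-1}\le t$). For $i$ with $t\ge t_{i-1}$, the time $t$ lies in $[t_{i-1},S_i]$, since $S_i>T$ by Lemma~\ref{ipm:common-time}. On that interval the angle $\beta_i(t)$ of the leading increment lies in $[1/(2x_{i-1}),\pi-1/(2x_{i-1})]$ during activation (Lemma~\ref{ipm:amplification}) and within $1/(4x_i)$ of $\pi/2+1/x_i$ in the retained future. By \eqref{ipm:intro-positive-components}, both leading components $g_i\sin\beta_i$ and $cg_i\cos^2\beta_i\sin\beta_i$ are therefore nonnegative. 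Each differs from the exact derivative by the global remainder $\epsilon_i$ of \eqref{ipm:packet-gradient-errors}.

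\textbf{The distinguished layer $i=j$.} Here $t\ge t_j$, so the future bound gives $g_j(t)\ge h_j e^{-1/(4x_j)}\ge h_j/2$, using the lower bound $-(1+cC_\nabla h_{j-1})$ on the log-derivative from the ray identity \eqref{ipm:rayidentity} together with \eqref{ipm:cascade-future-localization}. It also gives $\sin\beta_j\ge1/2$ and $\cos^2\beta_j\ge(3/(4x_j))^2/2$. This yields $\partial_2\theta_j(t,0)\ge h_j/4-\epsilon_j$ and $\partial_1u_1(\theta_j)(t,0)\ge c h_j/(64x_j^2)-\epsilon_j$, which supplies $a_1$ and $a_2$.

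\textbf{Later layers in history} ($i>j$ with $t<t_{i-1}$). The second line of \eqref{ipm:seventh-cascade-gradient-bounds} bounds their contribution by $(1+c)C_{\rm lead}h_ie^{-c_0x_{i-1}}+\epsilon_i$, which is summable.

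For the base term, $P_{J-1}$ is stationary with gradient direction $n_{\rm b}$. Its $\partial_2$ is positive. Its $\partial_1u_1$ equals $-cP'\,\nu_1^2\nu_2/|\nu|^2\cdot(\ldots)$ at the origin, and I will check that this is nonnegative. If the sign check fails, it is harmless: the base is a fixed smooth function, and its contribution is bounded by a constant. The constants $-2/c$ and $-1$ in \eqref{ipm:cascade-limit-lower} suggest the author absorbs instead the bounded old/base quantities through $H\le I$ and $\operatorname{tr}H=-cr_2$, i.e.\ $\partial_2\rho\ge -2/c$ type bounds. I would follow that route: bound all nonpositive contributions collectively via the global Hessian bound $H(P)\le I$ and $\partial_1u_1=-\partial_{11}p$, which gives $\partial_1u_1\ge-1$, and via $cr_2=-\operatorname{tr}H\ge-2$.

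More precisely, the exact identities give $\partial_1u_1(\rho)=-H_{11}(\rho)\ge-1$ and $\partial_2\rho\ge-2/c$ for every stage, by the Hessian budget of Proposition~\ref{ipm:stage-induction}. A cruder lower bound therefore holds for the sum of everything except the $j$th leading term. Writing $\rho=\theta_{j,\rm lead}+(\rho-\theta_{j,\rm lead})$ does not directly inherit $H\le I$, however. I will instead split $\rho=P_{j-1}+\theta_j+\sum_{i>j}\theta_i$ and use three facts:
\begin{itemize}
\item[(i)] $H(P_{j-1})\le I$, so $\partial_1u_1(P_{j-1})\ge-1$ and $\partial_2P_{j-1}\ge-2/c$;
\item[(ii)] $\theta_j$ gives the positive lead terms minus $\epsilon_j$;
\item[(iii)] layers $i>j$ are either in history (small) or active and nonnegative up to $\epsilon_i$.
\end{itemize}
Then $D_j=\sum_{i\ge j}\epsilon_i+\sum_{i>j}(1+c)C_{\rm lead}h_ie^{-c_0x_{i-1}}\to0$ by \eqref{ipm:cascade-budget-series}, since $\epsilon_i\le k_i^{-1/2}$.

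The main obstacle is the active layers $i>j$ with $t\ge t_{i-1}$. Their leading components are nonnegative only on the origin ray, so I must make sure the exact remainder $\epsilon_i$ is the only correction. This holds because \eqref{ipm:packet-gradient-errors} is global. I must also confirm that the angle at the origin stays in $(0,\pi)$ with $\cos^2\ge0$ on both the activation and retained intervals, which Lemmas~\ref{ipm:amplification} and~\ref{ipm:explicit-packet-budget} supply.

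Finally, $h_j\to\infty$ and $h_j/x_j^2=\exp(x_{j-1}/j^5)/(j^4x_{j-1}^2)\to\infty$. So for any bound $M$, choosing $j$ large makes the right sides of \eqref{ipm:cascade-limit-lower} exceed $M$ on $[t_j,T)$, which gives both limits. Unboundedness of $\nabla\rho$ then precludes a smooth continuation.
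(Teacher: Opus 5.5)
Your final argument is correct and is essentially the paper's proof. You split $\rho=P_{j-1}+\theta_j+\sum_{i>j}\theta_i$; use $H(P_{j-1})\le I$ with $\operatorname{tr}H=-cr_2$ and $\partial_1u_1=-H_{11}$ to get the constants $-2/c$ and $-1$; take persistent lower bounds for layer $j$ on $[t_j,S_j]$; take nonnegative leading terms plus global remainders $\epsilon_i$ for later active layers; and use summable history bounds for later inactive ones. The differences are only cosmetic: you bound $g_j\ge h_je^{-1/(4x_j)}$ via the two-sided Hessian bound and the future window instead of $g_j\ge e^{-1}h_j$ via $H\le I$, and your exploratory discussion of the base term is not needed.
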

\begin{proof}
Let $g_j$ and $\beta_j$ denote the magnitude and angle of the leading
density-gradient increment of layer $j$ at the origin. On its active and retained interval
$[t_{j-1},S_j]$ its phase has positive second component. Its two useful
leading contributions are
\begin{equation}
 (\partial_2\theta_j)_{\rm lead}=g_j\sin\beta_j\ge0,\qquad
 (\partial_1u_1(\theta_j))_{\rm lead}
          =c g_j\cos^2\beta_j\sin\beta_j\ge0.
 \label{ipm:cascade-positive-components}
\end{equation}
The square of the first phase component in the second formula is
essential. These signs hold even while a layer crosses the vertical
direction.
To derive the second component without an orientation convention,
the leading velocity-gradient increment is
$c g_j((n_j)_1(n_j)_2,-(n_j)_1^2)\otimes n_j$ by
\eqref{ipm:velocity-symbol}; its $(1,1)$ entry is exactly
$c g_j(n_j)_1^2(n_j)_2$.
Oddness of the parent flow sends label zero to spatial zero, so the
phase in these formulas is exactly zero at the observation point.
The factor $f'_{\delta_j}(0)/\delta_j^{-1}$ is one, and no
oscillatory sign has been discarded.

At the target $g_j(t_j)=h_j$. By
\eqref{ipm:rayidentity} and $H(P_{j-1})\le I$,
\begin{equation}
 g_j(t)\ge e^{-(t-t_j)}h_j\ge e^{-1}h_j,\qquad
 t_j\le t\le S_j.
 \label{ipm:cascade-persistent-amplitude}
\end{equation}
The parent gradient bound and the matrix bounds \eqref{ipm:matrixbounds} yield
\[
 |\beta_j(t)-\beta_j(t_j)|
       \le C(1+h_{j-1})W_{j+1}\le\frac1{4x_j}.
\]
Thus, throughout $[t_j,S_j]$, its angle lies between
$\pi/2+3/(4x_j)$ and $\pi/2+5/(4x_j)$. For the chosen large base,
there are fixed $a_1,a_2>0$ such that
\begin{equation}
 (\partial_2\theta_j)_{\rm lead}\ge a_1h_j,\qquad
 (\partial_1u_1(\theta_j))_{\rm lead}\ge a_2h_j/x_j^2.
 \label{ipm:cascade-persistent-components}
\end{equation}
For example $\sin\beta_j\ge1/2$ on this interval.
Since $|\cos\beta_j|=\sin(\beta_j-\pi/2)$ and
$0<\beta_j-\pi/2\le1$ for the chosen base,
$\sin v\ge v/2$ gives $|\cos\beta_j|\ge3/(8x_j)$.
Together with $g_j\ge e^{-1}h_j$, these yield the displayed
bounds with $a_1=1/(2e)$ and $a_2=9c/(128e)$, or with
any smaller fixed constants.

For times before activation, \eqref{ipm:cascade-small-history} and the
global expansions instead give
\begin{equation}
 \|\nabla\theta_j(t)\|_\infty
       +\|\nabla u(\theta_j)(t)\|_\infty
       \le d_j+C\epsilon_j,\qquad
 d_j=Ch_je^{-c_0x_{j-1}},\qquad t\le t_{j-1}.
 \label{ipm:cascade-preactivation-error}
\end{equation}
Both $\sum d_j$ and $\sum\epsilon_j$ converge. Define
$D_j=C\sum_{i\ge j}(d_i+\epsilon_i)$, so $D_j\to0$.

Every earlier exact parent satisfies the pointwise bounds
\begin{equation}
 \partial_2P_{j-1}=-\operatorname{tr}H(P_{j-1})/c\ge-2/c,\qquad
 \partial_1u_1(P_{j-1})=-H_{11}(P_{j-1})\ge-1.
 \label{ipm:cascade-parent-lower}
\end{equation}
Fix $j$ and any $t\in[t_j,T)$. Telescope from $P_{j-1}$ through finitely
many later stages. Layer $j$ has the lower bounds
\eqref{ipm:cascade-persistent-components}; every later active layer
has nonnegative leading contributions by
\eqref{ipm:cascade-positive-components}; every later layer in history
costs at most \eqref{ipm:cascade-preactivation-error}. Sum the global
remainder bounds and then pass to the smooth limit at this fixed time.
In detail, for every finite $n\ge j$, $P_n(t)$ and every increment
in $P_n=P_{j-1}+\sum_{i=j}^n\theta_i$ are defined because
$t<T<S_n$ and the horizons decrease. If $i>j$ and $t\ge t_{i-1}$,
the leading contribution is nonnegative and its error is at most
$\epsilon_i$. If $t<t_{i-1}$, its entire contribution has lower
bound $-d_i-C\epsilon_i$. These two alternatives exhaust all later
layers. The sum of their lower errors is bounded by $D_j$,
independently of both $n$ and $t$. At the fixed time $t<T$, the
smooth convergence already proved allows $n\to\infty$ in these
two derivative components, including the velocity component.
This proves \eqref{ipm:cascade-limit-lower} uniformly for all
$t\in[t_j,T)$.
Both $h_j$ and $h_j/x_j^2$ tend to infinity, since
$\log h_j=x_{j-1}/j^5$ dominates every fixed multiple of $\log x_j$.
For the latter assertion, insert $x_j=j^2x_{j-1}$ and apply
\eqref{ipm:expanded-cascade-polynomial-uniformity}, or its
fixed-base asymptotic as $j\to\infty$, to
$(2\log x_{j-1}+4\log j)/(x_{j-1}/j^5)$.
The ratio tends to zero because of the factorial growth.
Given any finite lower threshold, choose $j$ large in
\eqref{ipm:cascade-limit-lower}. The threshold then holds at every
subsequent time below $T$, not merely at the target times. Therefore
\begin{equation}
 \lim_{t\uparrow T}\partial_2\rho(t,0)
   =\lim_{t\uparrow T}\partial_1u_1(\rho)(t,0)=+\infty.
 \label{ipm:cascade-component-limits}
\end{equation}
In particular both $L^\infty$ gradient norms tend to infinity. Combined
with \eqref{ipm:cascade-smooth-limit}, this excludes any smooth
continuation through $T$.
\end{proof}

\section{Uniqueness and the terminal limit}
\label{ipm:seventh-physical}

The component limits fix the maximal smooth lifespan. We complete
the argument by proving smooth uniqueness and convergence at the terminal
time for the actual physical solution.

\subsection{Smooth uniqueness and maximal smooth lifespan of the limit}

\begin{lemma}
\label{ipm:seventh-physical-uniqueness}
Two smooth solutions of the normalized unforced IPM equation on
$[0,\tau]\times\mathbb T^2$ with identical initial density agree on that
interval. More quantitatively, if they are $\rho,\sigma$ and
$w=\rho-\sigma$, then
\begin{equation}\label{ipm:seventh-physical-stability}
 \|w(t)\|_2^2
 \le\|w(0)\|_2^2
 \exp\left(2c\int_0^t\|\nabla\sigma(s)\|_\infty\,ds\right).
\end{equation}
\end{lemma}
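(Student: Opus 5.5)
The plan is a standard $L^2$ energy estimate for the difference $w=\rho-\sigma$, using only that the Darcy velocity is an order-zero Fourier multiplier. Subtracting the two transport equations gives
\[
 \partial_tw+u(\rho)\cdot\nabla w+u(w)\cdot\nabla\sigma=0,
\]
since $u(\rho)\cdot\nabla\rho-u(\sigma)\cdot\nabla\sigma=u(\rho)\cdot\nabla w+u(w)\cdot\nabla\sigma$ by linearity of $u$. Both solutions are smooth on the compact set $[0,\tau]\times\mathbb T^2$, so $w$ is smooth and we may differentiate $\|w(t)\|_2^2$ directly. We pair the equation with $w$ and integrate over $\mathbb T^2$.

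The transport term vanishes. Because $\operatorname{div}u(\rho)=0$,
\[
 \int w\,u(\rho)\cdot\nabla w=\tfrac12\int u(\rho)\cdot\nabla(w^2)=0.
\]
For the remaining term we use Cauchy--Schwarz:
\[
 \Bigl|\int w\,u(w)\cdot\nabla\sigma\Bigr|\le\|\nabla\sigma\|_\infty\|u(w)\|_2\|w\|_2 .
\]
By Plancherel and the symbol \eqref{ipm:velocity-symbol}, $|\mathfrak m(\xi)|=c|\xi_1|/|\xi|\le c$, and $\widehat{u(w)}(0)=0$, so $\|u(w)\|_2\le c\|w\|_2$.

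This yields
\[
 \frac{d}{dt}\|w\|_2^2\le2c\|\nabla\sigma(t)\|_\infty\|w\|_2^2 .
\]
Gronwall's inequality then gives \eqref{ipm:seventh-physical-stability}. If $w(0)=0$, the estimate forces $w\equiv0$ on $[0,\tau]$, which is the uniqueness claim.

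No step here is a real obstacle. The only points needing care are the bound $|\mathfrak m|\le c$ and the fact that $\|\nabla\sigma\|_\infty$ is finite on $[0,\tau]$, which holds because $\sigma$ is smooth on a compact interval. Mean-zero normalization plays no role, since the zero Fourier mode of the velocity is zero regardless of the mean of $w$.
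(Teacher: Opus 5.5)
Your proposal is correct and follows the paper's proof essentially verbatim: the difference equation $w_t+u(\rho)\cdot\nabla w+u(w)\cdot\nabla\sigma=0$, pairing with $w$ so the transport term vanishes by $\operatorname{div}u(\rho)=0$, the bound $|\mathfrak m|\le c$, and Gronwall. Your Plancherel/Cauchy--Schwarz step and the computation $|\mathfrak m(\xi)|=c|\xi_1|/|\xi|$ simply make explicit what the paper states in one line.
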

\begin{proof}
Linearity of the Darcy map gives the exact difference equation
\[
 w_t+u(\rho)\cdot\nabla w+u(w)\cdot\nabla\sigma=0.
\]
Pair with $w$ in $L^2$. The first transport term integrates to zero by
periodicity and $\operatorname{div}u(\rho)=0$. Thus
\[
 \frac12\frac d{dt}\|w\|_2^2
 =-\int_{\mathbb T^2}w\,u(w)\cdot\nabla\sigma
 \le c\|\nabla\sigma\|_\infty\|w\|_2^2
\]
because the Darcy symbol in \eqref{ipm:velocity-symbol} satisfies
$|\mathfrak m|\le c$. Integrating this scalar differential
inequality proves \eqref{ipm:seventh-physical-stability}. On a compact
smooth interval its exponent is finite. Zero initial difference
therefore implies zero difference throughout.
\end{proof}

\begin{remark}
This also verifies precisely the initial-data issue in removing a
force. If a correction erased the entire perturbation residual while
preserving the parent's initial density, its exact smooth output would
equal the parent by the lemma. Our initial density includes the primary
oscillatory seed and the selected phase means before the zero-initial
residual correction is applied. That distinction is an exact
initial-data distinction; no failure of uniqueness is being used.
\end{remark}

\begin{corollary}
\label{ipm:maximal-lifespan}
The solution of Proposition~\ref{ipm:smooth-convergence} is the
unique smooth solution with initial density $\rho_{\rm in}$ on
every compact subinterval of $[0,T)$. Its maximal smooth lifespan
is $T$.
\end{corollary}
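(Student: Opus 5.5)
The corollary has two assertions, uniqueness on compact subintervals and maximal lifespan $T$, and each reduces to results already proved.

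\textbf{Uniqueness.} Let $\sigma$ be any smooth solution with $\sigma(0)=\rho_{\rm in}$ on $[0,t_0]\times\mathbb T^2$, where $t_0<T$. By Proposition~\ref{ipm:smooth-convergence}, $\rho$ is smooth on $[0,t_0]\times\mathbb T^2$. Hence $\|\nabla\sigma\|_\infty$ is bounded on this compact set, and the exponent in \eqref{ipm:seventh-physical-stability} is finite. Apply Lemma~\ref{ipm:seventh-physical-uniqueness} to the pair $(\rho,\sigma)$. Since $w(0)=0$, we get $w\equiv0$ on $[0,t_0]$. Because $t_0<T$ was arbitrary, this gives uniqueness on every compact subinterval. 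It also shows that any smooth solution from $\rho_{\rm in}$ defined on some $[0,\tau)$ agrees with $\rho$ on $[0,\min(\tau,T))$.

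\textbf{Lifespan $T$.} The lifespan is at least $T$ because $\rho\in C^\infty([0,T)\times\mathbb T^2)$. For the reverse inequality, suppose a smooth solution $\tilde\rho$ from $\rho_{\rm in}$ existed on $[0,\tau]\times\mathbb T^2$ with $\tau\ge T$. By the uniqueness step, $\tilde\rho=\rho$ on $[0,T)$. Continuity of $\nabla\tilde\rho$ on the compact set $[0,T]\times\mathbb T^2$ makes $\partial_2\tilde\rho(t,0)$ bounded as $t\uparrow T$. This contradicts Proposition~\ref{ipm:component-divergence}, which gives $\partial_2\rho(t,0)\to+\infty$. The same argument applies to any smooth extension to a half-open interval $[0,\tau)$ with $\tau>T$, since $[0,T]$ lies inside it. So no smooth solution extends past or up to $T$ inclusively, and the maximal smooth lifespan is exactly $T$.

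I expect no serious obstacle, since all the analytic work is contained in Lemma~\ref{ipm:seventh-physical-uniqueness} and Proposition~\ref{ipm:component-divergence}. The one point needing care is that uniqueness is applied only on compact intervals where both solutions are smooth, so the Gronwall exponent is finite. The blow-up is then used only through the boundedness of gradients of a hypothetical extension, not through any stability estimate across $T$.
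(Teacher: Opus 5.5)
Your proposal is correct and follows the paper's own argument. Uniqueness on compact subintervals comes from the $L^2$ stability estimate of Lemma~\ref{ipm:seventh-physical-uniqueness}, and the lifespan is exactly $T$ because any smooth extension would agree with $\rho$ below $T$ and have bounded first derivatives near $T$, contradicting the component limits of Proposition~\ref{ipm:component-divergence}; your treatment of the endpoint case $\tau=T$ and of half-open extensions is a harmless refinement of the paper's phrasing.
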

\begin{proof}
The smooth limit in \eqref{ipm:cascade-smooth-limit} is consequently the
unique smooth solution with its limiting initial density on every
compact subinterval of $[0,T)$. Its maximal smooth lifespan is $T$:
the construction supplies existence up to every smaller time, whereas
any smooth extension to a time greater than $T$ would coincide with it
below $T$ and have bounded first derivatives near $T$, contradicting
\eqref{ipm:cascade-component-limits}. This last argument uses the
proved component limits, without an additional continuation criterion.
\end{proof}

\subsection{H\"older continuity of the limit}

\begin{proposition}
\label{ipm:terminal-holder}
For each $0\le\eta<1$, the stage sequence converges in
$C([0,T];C^\eta(\mathbb T^2))$. These limits agree with the
smooth solution on $[0,T)$ and have one common terminal value
$\rho_*\in\bigcap_{0\le\eta<1}C^\eta(\mathbb T^2)$.
The terminal value is odd and mean zero, and
$\rho(t)\to\rho_*$ in every such norm as $t\uparrow T$.
\end{proposition}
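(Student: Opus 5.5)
The plan is to show that the telescoping series $P_j=P_{J-1}+\sum_{i\ge J}\theta_i$ converges uniformly on $[0,T]$ in $C^\eta$. Each $P_j$ is defined and smooth on $[0,S_j]\supset[0,T]$ by Lemma~\ref{ipm:common-time}. It therefore suffices to prove
\[
 \sum_{j\ge J}\sup_{0\le t\le T}\|\theta_j(t)\|_{C^\eta}<\infty .
\]
Each increment is continuous in time with values in $C^1\subset C^\eta$, so the uniform limit then lies in $C([0,T];C^\eta)$. On $[0,T)$ it agrees with $\rho$, because there the stages already converge in every $C^m$ by Proposition~\ref{ipm:smooth-convergence}. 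The limits for different $\eta$ coincide, since they all agree with the $C^0$ limit. Oddness and zero mean pass to uniform limits. This gives the common terminal value $\rho_*$ with $\rho(t)\to\rho_*$ in every $C^\eta$.

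For the summability I would interpolate: $\|\theta\|_{C^\eta}\le C\|\theta\|_{C^0}^{1-\eta}\|\theta\|_{C^1}^{\eta}$. I would split the time range. On $[0,t_{j-1}]$ the history bound \eqref{ipm:cascade-full-history-convergence} already gives summability in every $C^m$. On $[t_{j-1},T]$ the layer is active, and I would use \eqref{ipm:seventh-cascade-gradient-bounds}, which gives $\|\nabla\theta_j\|_\infty\le C_{\rm lead}h_j+\epsilon_j\le Ch_j$. For the $C^0$ norm I would use \eqref{ipm:cascade-osc-bound}--\eqref{ipm:cascade-correction-bound} with $m=0$ and $\gamma\le\Gamma_j\le CK_{j-1}^A\delta_jh_j$. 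The oscillatory part is then bounded by $\ell_jk_j^{-1}\mathcal P_j^{C_0}$. The mean part is bounded by $\ell_jk_j^{-2}\mathcal P_j^{C_0}$. The correction is bounded by $(k_j/\ell_j)\mathcal P_j^{C}e^{-k_j^\vartheta/2}$. In logarithms, the $C^0$ bound is at most $-x_{j-1}/j^2\cdot(9/10)+Dx_{j-1}/j^3$, which is at most $-x_{j-1}/(2j^2)$ for $j$ beyond a tail index. The $C^1$ bound is $\log h_j=x_{j-1}/j^5$.

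The main step is checking that the product $\|\theta_j\|_{C^0}^{1-\eta}\|\theta_j\|_{C^1}^\eta$ is summable for each fixed $\eta<1$. Its logarithm is at most
\[
 -(1-\eta)\frac{x_{j-1}}{2j^2}+\eta\frac{x_{j-1}}{j^5}+\log C ,
\]
which is at most $-(1-\eta)x_{j-1}/(4j^2)$ once $j^3\ge 8\eta/(1-\eta)$. That threshold is a tail index depending on $\eta$; as in Proposition~\ref{ipm:initial-data-convergence}, choosing it does not alter the stages. The resulting series $\sum e^{-b x_{j-1}/j^2}$ converges by the doubling estimate \eqref{ipm:expanded-cascade-doubling}. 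The finitely many remaining terms are finite because each $\theta_j$ is smooth on $[0,S_j]\supset[0,T]$.

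The delicate point is that the correction's $C^0$ bound \eqref{ipm:cascade-correction-bound} loses a factor $k_j/\ell_j$, and its $C^1$ bound loses $(k_j/\ell_j)^2$. These losses are harmless only because $\log(k_j/\ell_j)$ grows like $x_{j-1}/j^2$, which is beaten by $k_j^\vartheta$, as in the earlier tail argument. A second point is that the $C^1$ bound on $[t_{j-1},S_j]$ must come from the global remainder estimate, not from the amplitude history alone. For the velocity this is not needed, since the statement concerns only the density.
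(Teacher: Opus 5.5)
Your proposal is correct and follows essentially the paper's proof: a uniform-in-time $C^0$ bound of size $\ell_jk_j^{-1}\mathcal P_j^C$ with the exponentially small correction absorbed, a $C^1$ bound polynomial in $\mathcal P_j$ (the paper explicitly allows your route through the global gradient estimate and $h_j\le\mathcal P_j^C$), the interpolation $\|f\|_{C^\eta}\le C_\eta\|f\|_{C^0}^{1-\eta}\|f\|_{C^1}^\eta$, summation by the doubling of $x_{j-1}/j^2$, and identification of the telescoped limit with $\rho$ on $[0,T)$. The differences are cosmetic. Your split at $t_{j-1}$ is unnecessary, because \eqref{ipm:cascade-osc-bound}--\eqref{ipm:cascade-correction-bound} and \eqref{ipm:seventh-cascade-gradient-bounds} already hold on all of $[0,S_j]$. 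Also, $\Gamma_j=\max(1,CK_{j-1}^A\delta_jh_j)$ rather than the second entry alone, which is harmless since only $\Gamma_j\le\mathcal P_j$ is used.
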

\begin{proof}
The amplitude choice \eqref{ipm:cascade-gamma-choice} and the all-time
layer bounds give, uniformly for $0\le t\le T$,
\begin{equation}
 \|\theta_j(t)\|_{C^0}\le C\ell_jk_j^{-1}\mathcal P_j^C,\qquad
 \|\theta_j(t)\|_{C^1}\le\mathcal P_j^C.
 \label{ipm:cascade-alltime-low}
\end{equation}
Here the exact correction is smaller than the displayed quantities for
large $j$; finitely many initial stages cause no problem. For the second
bound one can either use the layer estimate with $m=1$, or the global
gradient estimate and
$h_j=\gamma_j(t_j)|m_j(t_j,0)|/\delta_j\le\mathcal P_j^C$.
For the first bound, the oscillatory term has $m=0$ in
\eqref{ipm:cascade-osc-bound} and $\gamma_j\le\Gamma_j\le\mathcal P_j$.
The mean term is bounded by
$C\ell_jk_j^{-2}\mathcal P_j^C\le
C\ell_jk_j^{-1}\mathcal P_j^C$.
Even using the weakened correction bound, its ratio to
$\ell_jk_j^{-1}\mathcal P_j^C$ is at most a fixed polynomial in
$k_j/\ell_j,\mathcal P_j$ times $e^{-k_j^\vartheta/2}$, which
tends to zero. For $m=1$, the same reasoning gives the second
bound. Thus only the fixed constants for derivative orders zero
and one occur in \eqref{ipm:cascade-alltime-low}; their values do
not have to be uniform over all derivative orders.

For $0<\eta<1$, the elementary interpolation inequality
\begin{equation}
 \|f\|_{C^\eta}
      \le C_\eta\|f\|_{C^0}^{1-\eta}\|f\|_{C^1}^{\eta}
 \label{ipm:cascade-interpolation}
\end{equation}
follows by bounding a difference quotient with the minimum of
$2\|f\|_\infty$ and $\|\nabla f\|_\infty|x-y|$, and splitting at
$|x-y|=2\|f\|_\infty/\|\nabla f\|_\infty$.
If the gradient norm is zero, the function is constant and the
inequality is immediate, with the full $C^1$ norm understood to
include the supremum norm. The same supremum term bounds the
zeroth-order part of the $C^\eta$ norm.
Consequently
\begin{equation}
 \sup_{0\le t\le T}\|\theta_j(t)\|_{C^\eta}
       \le C_\eta k_j^{-(1-\eta)}\mathcal P_j^{C_\eta}
       \le C_\eta k_j^{-(1-\eta)/2}
 \label{ipm:cascade-holder-sum}
\end{equation}
for all sufficiently large $j$ depending on $\eta$. We used
$\ell_j^{1-\eta}\le1$ and
$\log\mathcal P_j/\log k_j\to0$. The right side is summable.
Explicitly, since $\log\mathcal P_j/\log k_j\le2D/j$, take
$j_\eta\ge4DC_\eta/(1-\eta)$, after including any fixed
prefactor in the harmless outside constant. Then the exponent
from $\mathcal P_j^{C_\eta}$ is at most
$(1-\eta)\log k_j/2$. For this fixed $\eta$,
\eqref{ipm:expanded-cascade-doubling} with $q=2$ gives summability
of the remaining carrier powers. The choice of $j_\eta$ does not
alter the construction when $\eta$ approaches one.
The case $\eta=0$ follows directly from
\eqref{ipm:cascade-alltime-low}.

Each finite stage is smooth past $T$ by
Lemma~\ref{ipm:common-time}. The telescoping density series therefore
converges in $C([0,T];C^\eta)$ for every $0\le\eta<1$. Its restriction
to $[0,T)$ is the previously constructed smooth solution. In particular
there is a terminal density, common to all these spaces, such that
\begin{equation}
 \rho_*\in\bigcap_{0\le\eta<1}C^\eta(\mathbb T^2),\qquad
 \rho(t)\longrightarrow\rho_*\quad\hbox{in }C^\eta
                         \quad(t\uparrow T).
 \label{ipm:cascade-terminal-density}
\end{equation}
The terminal densities agree because their $C^0$ limits agree. Oddness
and zero mean pass to this limit.
For completeness, the space $C^\eta(\mathbb T^2)$ is Banach in the
usual supremum-plus-H\"older-seminorm norm. A series whose suprema
of these norms are summable is uniformly Cauchy in
$C([0,T];C^\eta)$, and its uniform limit is continuous. This proves
the continuity assertion at $T$ directly. It requires neither a
uniform $C^1$ bound on the total solution nor an equation for a
smooth extension past $T$; the component limits rule out such an
extension.
\end{proof}

\begin{theorem}
\label{ipm:cascade}
The finite-stage construction of Proposition~\ref{ipm:packet} yields an odd,
mean-zero initial density $\rho_{\rm in}\in C^\infty(\mathbb T^2)$ and an
exact unforced IPM solution smooth on $[0,1)\times\mathbb T^2$ such that
\[
 \lim_{t\uparrow1}\partial_2\rho(t,0)
   =\lim_{t\uparrow1}\partial_1u_1(\rho)(t,0)=+\infty.
\]
It has a terminal density in every $C^\eta$, $0\le\eta<1$, with
convergence in each such norm.
\end{theorem}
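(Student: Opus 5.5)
The proof assembles the results of the last three sections and then rescales time. First, fix the universal constants of Proposition~\ref{ipm:packet} and the geometric lemmas. Next choose $J$, and finally the base size through the integer $N$ of Lemma~\ref{ipm:stationary-base}, so that Lemma~\ref{ipm:scales} and the summed budget \eqref{ipm:expanded-cascade-summed-budget} hold. Proposition~\ref{ipm:stage-induction} then gives exact odd mean-zero stages $P_j$ on $[0,S_j]$. Lemma~\ref{ipm:common-time} gives the accumulation time $T\in(0,1)$ with $T<S_j$ for every $j$.

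Proposition~\ref{ipm:initial-data-convergence} yields the odd mean-zero datum $\rho_{\rm in}\in C^\infty(\mathbb T^2)$. Proposition~\ref{ipm:smooth-convergence} yields an exact unforced solution $\rho\in C^\infty([0,T)\times\mathbb T^2)$ with $\rho(0)=\rho_{\rm in}$. Proposition~\ref{ipm:component-divergence} gives
\[
 \partial_2\rho(t,0)\to+\infty,\qquad \partial_1u_1(\rho)(t,0)\to+\infty
\]
as $t\uparrow T$. Proposition~\ref{ipm:terminal-holder} gives the common terminal density $\rho_*$, with $\rho(t)\to\rho_*$ in every $C^\eta$, $\eta<1$. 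Corollary~\ref{ipm:maximal-lifespan} identifies this as the unique smooth solution from $\rho_{\rm in}$, so the statement concerns the actual solution rather than one constructed object.

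Finally, normalize the blowup time to $1$. Set
\[
 \tilde\rho(t,x)=T\rho(Tt,x),\qquad \tilde\rho_{\rm in}=T\rho_{\rm in}.
\]
Darcy's law is linear, so $u(\tilde\rho)(t,x)=Tu(\rho)(Tt,x)$. Then
\[
 \partial_t\tilde\rho=T^2(\partial_t\rho)(Tt,\cdot)=-T^2\bigl(u(\rho)\cdot\nabla\rho\bigr)(Tt,\cdot)=-u(\tilde\rho)\cdot\nabla\tilde\rho,
\]
so $\tilde\rho$ solves \eqref{ipm:unforced} on $[0,1)$. It remains smooth, odd, and mean zero. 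Multiplication by $T>0$ preserves the signs and divergence of $\partial_2\tilde\rho(t,0)=T\partial_2\rho(Tt,0)$ and of $\partial_1u_1(\tilde\rho)(t,0)=T\partial_1u_1(\rho)(Tt,0)$. The terminal density becomes $T\rho_*$, and the $C^\eta$ convergence is preserved.

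The main obstacle is not in this assembly but in checking that the cited results are compatible in their quantifiers. The scale lemma must be satisfiable with the arithmetic restriction $x_{J-1}=1/\arctan(1/N)$. The pressure bound $H\le I$ used by the mean inverse in stage $j$ must be the bound already proved for $P_{j-1}$. Both points are handled in Lemma~\ref{ipm:stationary-base} and Proposition~\ref{ipm:ordered-stage-closure}, so I would only verify that the order of choices there matches the one stated above.
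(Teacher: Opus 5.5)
Your proposal is correct and follows the paper's proof: it cites Proposition~\ref{ipm:stage-induction}, Lemma~\ref{ipm:common-time}, and Propositions~\ref{ipm:smooth-convergence}, \ref{ipm:component-divergence}, \ref{ipm:terminal-holder}, then rescales by $\widetilde\rho(t,x)=T\rho(Tt,x)$ using linearity of Darcy's law. Your additional appeal to Corollary~\ref{ipm:maximal-lifespan} is harmless, but the statement as written does not need it.
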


\begin{proof}
Proposition~\ref{ipm:stage-induction} constructs the stages, and
Lemma~\ref{ipm:common-time} supplies their accumulation time.
Propositions~\ref{ipm:smooth-convergence},
\ref{ipm:component-divergence}, and \ref{ipm:terminal-holder}
give the asserted solution and limits with terminal time $T>0$.
Define $\widetilde\rho(t,x)=T\rho(Tt,x)$. Since the Darcy velocity is
linear in its density,
\[
 \partial_t\widetilde\rho
       +u(\widetilde\rho)\cdot\nabla\widetilde\rho
       =T^2\bigl(\partial_t\rho+u(\rho)\cdot\nabla\rho\bigr)(Tt,x)=0.
\]
This moves the singular time to one. Multiplication by the positive
constant $T$ preserves both component limits, smoothness of the initial
density, oddness, zero mean, and every stated terminal convergence.
\end{proof}

\appendix
\section{Uniform operator estimates}
\label{ipm:seventh-appendices}

This appendix gives the uniform operator estimates used in the main argument.

\subsection{Uniform Sobolev constants on rectangular tori}
\label{ipm:seventh-foundations-section}

The following estimates are uniform as the torus periods increase.

\begin{lemma}
\label{ipm:seventh-foundations-sobolev}
Let
$\mathbb T_L^d=\prod_{i=1}^d\mathbb R/(L_i\mathbb Z)$, with
$L_i\ge L_0>0$, and use unnormalized Lebesgue measure.
For fixed $s>d/2$ the $H^s$ algebra constant is bounded independently
of the $L_i$. For $s>d/2+1$ the same is true of the embedding
$H^s\longrightarrow W^{1,\infty}$.
For an integer $s\ge0$, multiplication from
$W^{s,\infty}\times H^s$ to $H^s$ also has a uniform constant.
\end{lemma}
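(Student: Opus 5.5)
The plan is to work entirely on the Fourier side, where the torus $\mathbb T_L^d$ has dual lattice $\Lambda_L=\prod_i(2\pi/L_i)\mathbb Z$. We use the normalization $f(x)=|\mathbb T_L|^{-1}\sum_{\xi\in\Lambda_L}\widehat f(\xi)e^{i\xi\cdot x}$ with $\widehat f(\xi)=\int f e^{-i\xi\cdot x}$. By Plancherel, $\|f\|_{H^s}^2\simeq|\mathbb T_L|^{-1}\sum_\xi\langle\xi\rangle^{2s}|\widehat f(\xi)|^2$, with constants depending only on $s$ and $d$. The key quantity that replaces the usual lattice sums is the weighted counting bound
\[
 |\mathbb T_L|^{-1}\sum_{\xi\in\Lambda_L}\langle\xi\rangle^{-2\sigma}\le C_{\sigma,d,L_0}\qquad(\sigma>d/2).
\]
We prove it by comparing the sum with an integral. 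Each lattice cell has volume $(2\pi)^d/|\mathbb T_L|$ and diameter at most $2\pi\sqrt d/L_0$. Since $\langle\xi\rangle\simeq\langle\zeta\rangle$ on a cell when the cell diameter is bounded, the sum is controlled by $(2\pi)^{-d}\int\langle\zeta\rangle^{-2\sigma}\,d\zeta$ times a constant depending only on $L_0$. This is the point where $L_i\ge L_0$ enters.

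The embedding follows from Cauchy--Schwarz:
\[
 |f(x)|\le|\mathbb T_L|^{-1}\sum|\widehat f|\le\Bigl(|\mathbb T_L|^{-1}\sum\langle\xi\rangle^{-2s}\Bigr)^{1/2}\Bigl(|\mathbb T_L|^{-1}\sum\langle\xi\rangle^{2s}|\widehat f|^2\Bigr)^{1/2},
\]
which gives $\|f\|_\infty\le C\|f\|_{H^s}$ uniformly for $s>d/2$. Applying the same bound to $\nabla f$ gives $H^s\to W^{1,\infty}$ for $s>d/2+1$.

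For the algebra property we use $\widehat{fg}=|\mathbb T_L|^{-1}\widehat f*\widehat g$ together with $\langle\xi\rangle^s\le C(\langle\xi-\eta\rangle^s+\langle\eta\rangle^s)$. This splits $\langle\xi\rangle^s|\widehat{fg}(\xi)|$ into two symmetric pieces. For the first piece, Young's inequality on the lattice, with the factor $|\mathbb T_L|^{-1}$ distributed consistently, bounds the $\ell^2$ norm by $\|f\|_{H^s}$ times the normalized $\ell^1$ sum $|\mathbb T_L|^{-1}\sum|\widehat g|$. By the previous step that normalized sum is bounded by $C\|g\|_{H^s}$. An alternative route is the standard physical-space Leibniz/Gagliardo--Nirenberg argument, but the Fourier route makes the uniformity transparent.

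The third claim needs no Fourier analysis. For integer $s$ and $|\beta|\le s$, Leibniz gives $\partial^\beta(af)=\sum\binom\beta\gamma\partial^\gamma a\,\partial^{\beta-\gamma}f$, so
\[
 \|\partial^\beta(af)\|_2\le\sum\binom\beta\gamma\|\partial^\gamma a\|_\infty\|\partial^{\beta-\gamma}f\|_2.
\]
The constant is purely combinatorial and is independent of $L$ and $L_0$.

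The main obstacle is the bookkeeping of the factors $|\mathbb T_L|$ in the convolution and Young steps for the algebra estimate. If these are not tracked correctly, a spurious volume factor appears, which is exactly the non-uniformity the lemma is meant to exclude. We will first write the Young inequality in the normalized measure $|\mathbb T_L|^{-1}\times$counting on $\Lambda_L$ and check that the counting lemma is used only in that normalized form.
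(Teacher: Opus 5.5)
Your proposal is correct and follows essentially the same route as the paper. Both arguments use the same four steps:
\begin{itemize}
\item the uniform bound $|\mathbb T_L|^{-1}\sum_\xi\langle\xi\rangle^{-2\sigma}\le C$ obtained by comparing lattice cells with an integral;
\item Cauchy--Schwarz for the normalized Fourier $\ell^1$ bound and hence the embedding;
\item the $\ell^2*\ell^1\to\ell^2$ convolution estimate, after splitting $\langle\xi\rangle^s$, for the algebra property;
\item Leibniz with coefficient derivatives in $L^\infty$ for the multiplier estimate.
\end{itemize}
The only difference is that you put $|\mathbb T_L|^{-1}$ in the inversion formula rather than in the Fourier coefficients, which is a cosmetic change of normalization.
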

\begin{proof}
Put $V=\prod_iL_i$, and normalize Fourier coefficients by $V^{-1}$.
The frequency lattice is
$\Xi_L=\prod_i(2\pi/L_i)\mathbb Z$, and
\[
 \|f\|_{H^s}^2
   =V\sum_{\xi\in\Xi_L}\langle\xi\rangle^{2s}|\widehat f(\xi)|^2.
\]
For every $a>d/2$,
\begin{equation}\label{ipm:seventh-foundations-lattice-sum}
 V^{-1}\sum_{\xi\in\Xi_L}\langle\xi\rangle^{-2a}
       \le C_{a,d,L_0}.
\end{equation}
Indeed the rectangular cell centered at $\xi$ has volume
$(2\pi)^d/V$, and its diameter is bounded by a number depending only
on $d,L_0$. Throughout that cell $\langle x\rangle$ and
$\langle\xi\rangle$ are comparable with a uniform constant.
Multiply the sum by its cell volume and compare it with
$\int_{\mathbb R^d}\langle x\rangle^{-2a}\,dx$.

Cauchy--Schwarz and
\eqref{ipm:seventh-foundations-lattice-sum} give
\[
 \sum_\xi|\widehat f(\xi)|\le C_{s,d,L_0}^{1/2}\|f\|_{H^s}
 \quad(s>d/2).
\]
Applying the same calculation to the first derivatives gives the
$W^{1,\infty}$ estimate when $s-1>d/2$.
For the algebra estimate use $\langle\xi+\zeta\rangle
\le\langle\xi\rangle+\langle\zeta\rangle$ and
\[
 \langle\xi+\zeta\rangle^s
   \le2^{\max(s-1,0)}(\langle\xi\rangle^s+\langle\zeta\rangle^s)
\]
(the factor is $2^{s-1}$ for $s\ge1$)
in the convolution formula for $\widehat{fg}$ and then the
$\ell^2*\ell^1\to\ell^2$ inequality on the counting lattice.
Multiplying by $V^{1/2}$ and using the preceding Fourier
$\ell^1$ bound yields
$\|fg\|_{H^s}\le C_s\|f\|_{H^s}\|g\|_{H^s}$.
All factors of $V$ cancel. Finally, for the multiplier estimate,
expand only the fixed number $s$ of derivatives by Leibniz's rule,
put the coefficient derivatives in $L^\infty$, and put the function
derivatives in $L^2$. Equivalence between this derivative norm and
the displayed Fourier norm depends only on $s,d$.
\end{proof}

\subsection{Time dependence of a projected inverse}

\begin{lemma}
\label{ipm:time-projected-inverse}
Let $\Pi$ be a fixed orthogonal projection on
$L^2(\mathbb T_L^d;\mathbb R^r)$ commuting with translations, and set
$\mathcal H=\operatorname{ran}\Pi$. Fix an integer $s\ge0$.
Let $A(t,x)$ be a smooth symmetric matrix field in $x$, continuous
in $t$ with values in $W^{s,\infty}$, and suppose
$A(t,x)\ge\lambda I$ for a fixed $\lambda>0$.
Define $T_{A(t)}v=\Pi(A(t)v)$ on $\mathcal H$.
Then $T_{A(t)}$ is invertible on $\mathcal H\cap H^s$, and
$t\mapsto T_{A(t)}^{-1}$ is continuous in operator norm there.
If $A\in C_t^1W^{s,\infty}$ and $h\in C_t^1H^s$, then
$v=T_A^{-1}\Pi h\in C_t^1H^s$ satisfies
\begin{equation}\label{ipm:seventh-foundations-time-inverse}
 v_t=T_A^{-1}\Pi(h_t-A_tv).
\end{equation}
\end{lemma}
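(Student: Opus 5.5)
The plan is to show first that $T_{A(t)}$ is bounded, self-adjoint and coercive on $\mathcal H$, exactly as in \eqref{ipm:expanded-mean-coercivity}. Then I will lift invertibility to $H^s$ using the fact that $\Pi$ commutes with translations and derivatives. Finally, I will differentiate the identity $T_Av=\Pi h$ in time through a difference quotient.

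\textbf{Step 1 ($L^2$ invertibility).} For $v\in\mathcal H$ we have $\langle v,T_Av\rangle=\int v\cdot Av\ge\lambda\|v\|_2^2$. Symmetry of $A$ together with $\Pi=\Pi^*$ gives self-adjointness. Coercivity makes the range closed and the kernel zero, so the closed-range argument yields $\|T_A^{-1}\|_{L^2\to L^2}\le\lambda^{-1}$.

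\textbf{Step 2 ($H^s$ regularity).} Since $\Pi$ commutes with translations, it commutes with difference quotients and derivatives. It is therefore bounded on $H^s$ and preserves $\mathcal H\cap H^s$. Given $T_Av=\Pi g$ with $g\in H^s$, the difference quotient identity is
\[
T_A\delta_av=\Pi\delta_ag-\Pi\bigl((\delta_aA)\tau_av\bigr).
\]
Step 1 gives uniform $L^2$ bounds on $\delta_av$, hence $v\in H^1$. Iterating up to order $s$, with a polynomial constant in $\lambda^{-1}$ and $\|A\|_{W^{s,\infty}}$ exactly as in \eqref{ipm:expanded-mean-elliptic-base}, shows that $T_A^{-1}$ is bounded on $\mathcal H\cap H^s$. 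For continuity in time, use the resolvent identity
\[
T_{A(t)}^{-1}-T_{A(t')}^{-1}=T_{A(t)}^{-1}\,\Pi\bigl((A(t')-A(t))\,\cdot\,\bigr)\,T_{A(t')}^{-1}.
\]
The uniform $H^s$ bounds, together with $\|A(t')-A(t)\|_{W^{s,\infty}}\to0$, give operator-norm continuity.

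\textbf{Step 3 (the time derivative).} Subtract the equations at times $t+\tau$ and $t$, divide by $\tau$, and use that $\Pi$ is independent of time. This gives
\[
T_{A(t+\tau)}\frac{v(t+\tau)-v(t)}{\tau}=\Pi\Bigl(\frac{h(t+\tau)-h(t)}{\tau}-\frac{A(t+\tau)-A(t)}{\tau}v(t)\Bigr).
\]
Applying $T_{A(t+\tau)}^{-1}$ and letting $\tau\to0$, the $C^1$ hypotheses make the right side converge in $H^s$ to $\Pi(h_t-A_tv)$. The operator-norm continuity of Step 2 then yields \eqref{ipm:seventh-foundations-time-inverse}. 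The resulting $v_t$ is continuous in $t$ because each factor is, so $v\in C^1_tH^s$.

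The only delicate point is Step 2. A Sobolev bound on $T_A^{-1}$ does not follow from coercivity alone, and the translation-commuting property of $\Pi$ is exactly what is needed for the difference-quotient bootstrap to close.
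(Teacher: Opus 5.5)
Your proposal is correct and follows essentially the same route as the paper's proof. Both argue by coercive self-adjoint $L^2$ inversion, then difference quotients commuting with $\Pi$ to get bounded inversion on $\mathcal H\cap H^s$ locally uniformly in time, then the resolvent identity for operator-norm continuity, and finally a time difference quotient for \eqref{ipm:seventh-foundations-time-inverse}; your Step 3 simply writes out explicitly the difference-quotient identity that the paper only names.
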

\begin{proof}
Coercivity and self-adjointness give an $L^2$ inverse of norm at
most $\lambda^{-1}$. Commuting spatial difference quotients with
$\Pi$, as in the proof of Proposition~\ref{ipm:meaninverse}, gives
bounded inversion on $\mathcal H\cap H^s$, locally uniformly in time.
The resolvent identity
\[
 T_{A(t)}^{-1}-T_{A(u)}^{-1}
   =T_{A(t)}^{-1}\Pi(A(u)-A(t))T_{A(u)}^{-1}
\]
holds on the projected $H^s$ space and proves operator continuity.
For differentiable $A,h$, taking a difference quotient gives
\eqref{ipm:seventh-foundations-time-inverse}.
\end{proof}

\section*{Acknowledgments}
The author is supported by the Ram\'on y Cajal grant RYC2022-035363-I.


\begin{thebibliography}{99}
\small
\raggedright
\setlength{\itemsep}{0pt}
\bibitem{lit:ABB} L. Alp\"oge and T. Buckmaster, \emph{Blowup for the Boussinesq equations with smooth forcing}, manuscript (2026). \url{https://cims.nyu.edu/~tristanb/boussinesq.pdf}.
\bibitem{lit:ABE} L. Alp\"oge and T. Buckmaster, \emph{Blowup for the Euler equations with smooth forcing}, manuscript (2026). \url{https://cims.nyu.edu/~tristanb/euler.pdf}.
\bibitem{lit:ABC} L. Alp\"oge, T. Buckmaster, and M. P. Coiculescu, \emph{Extending the C\'ordoba--Mart\'inez-Zoroa IPM blow-up to uniformly space-time smooth forcing}, preprint (2026). \url{https://arxiv.org/abs/2609.16470}.
\bibitem{lit:BKM} J. T. Beale, T. Kato, and A. Majda, \emph{Remarks on the breakdown of smooth solutions for the 3-D Euler equations}, Comm. Math. Phys. \textbf{94} (1984), 61--66. \url{https://doi.org/10.1007/BF01212349}.
\bibitem{lit:BCM} R. Bianchini, D. C\'ordoba, and L. Mart\'inez-Zoroa, \emph{Non existence and strong ill-posedness in $H^2$ for the stable IPM equation}, preprint (2024). \url{https://arxiv.org/abs/2410.01297}.
\bibitem{lit:BCP} R. Bianchini, T. Crin-Barat, and M. Paicu, \emph{Relaxation approximation and asymptotic stability of stratified solutions to the IPM equation}, Arch. Ration. Mech. Anal. \textbf{248} (2024), 2. \url{https://arxiv.org/abs/2210.02118}.
\bibitem{lit:BJPW} R. Bianchini, M. J. Jo, J. Park, and S. Wang, \emph{Sharp asymptotic stability of the incompressible porous media equation}, preprint (2025). \url{https://arxiv.org/abs/2505.05165}.
\bibitem{lit:BL} J. Bourgain and D. Li, \emph{Strong ill-posedness of the incompressible Euler equation in borderline Sobolev spaces}, Invent. Math. \textbf{201} (2015), 97--157. \url{https://arxiv.org/abs/1307.7090}.
\bibitem{lit:Brenier} Y. Brenier, \emph{Generalized solutions and hydrostatic approximation of the Euler equations}, Physica D \textbf{237} (2008), 1982--1988. \url{https://doi.org/10.1016/j.physd.2008.02.026}.
\bibitem{lit:CCGO} \'{A}. Castro, D. C\'ordoba, F. Gancedo, and R. Orive, \emph{Incompressible flow in porous media with fractional diffusion}, Nonlinearity \textbf{22} (2009), 1791--1815. \url{https://doi.org/10.1088/0951-7715/22/8/002}.
\bibitem{lit:CCL} \'{A}. Castro, D. C\'ordoba, and D. Lear, \emph{Global existence of quasi-stratified solutions for the confined IPM equation}, Arch. Ration. Mech. Anal. \textbf{232} (2019), 437--471. \url{https://arxiv.org/abs/1804.08490}.
\bibitem{lit:Chen} J. Chen, \emph{Asymptotically self-similar blowup for 3D incompressible Euler with $C^{1,1/3-}$ velocity II: 3D profiles, blowup, and limiting behavior}, preprint (2026). \url{https://arxiv.org/abs/2605.15130}.
\bibitem{lit:CH1} J. Chen and T. Y. Hou, \emph{Stable nearly self-similar blowup of the 2D Boussinesq and 3D Euler equations with smooth data I: Analysis}, preprint (2022). \url{https://arxiv.org/abs/2210.07191}.
\bibitem{lit:CH2} J. Chen and T. Y. Hou, \emph{Stable nearly self-similar blowup of the 2D Boussinesq and 3D Euler equations with smooth data II: Rigorous numerics}, Multiscale Model. Simul. \textbf{23} (2025), 25--130. \url{https://doi.org/10.1137/23M1580395}.
\bibitem{lit:Cheverry} C. Cheverry, \emph{Cascade of phases in turbulent flows}, Bull. Soc. Math. France \textbf{134} (2006), 33--82. \url{https://doi.org/10.24033/bsmf.2501}.
\bibitem{lit:CPT} C. Collot, C. Prange, and J. Tan, \emph{Stable self-similar singularity formation for infinite energy solutions of the incompressible porous medium equations}, Comm. Math. Phys. (2026). \url{https://doi.org/10.1007/s00220-026-05688-0}.
\bibitem{lit:CFG} D. C\'ordoba, D. Faraco, and F. Gancedo, \emph{Lack of uniqueness for weak solutions of the incompressible porous media equation}, Arch. Ration. Mech. Anal. \textbf{200} (2011), 725--746. \url{https://arxiv.org/abs/0912.3210}.
\bibitem{lit:CGO} D. C\'ordoba, F. Gancedo, and R. Orive, \emph{Analytical behavior of two-dimensional incompressible flow in porous media}, J. Math. Phys. \textbf{48} (2007), 065206. \url{https://doi.org/10.1063/1.2404593}.
\bibitem{lit:CMEuler} D. C\'ordoba and L. Mart\'inez-Zoroa, \emph{Blow-up for the incompressible 3D-Euler equations with uniform $C^{1,\frac12-\varepsilon}\cap L^2$ force}, preprint (2023). \url{https://arxiv.org/abs/2309.08495}.
\bibitem{lit:CMIPM} D. C\'ordoba and L. Mart\'inez-Zoroa, \emph{Finite time singularities of smooth solutions for the 2D incompressible porous media (IPM) equation with a smooth source}, preprint (2024), revised 2025. \url{https://arxiv.org/abs/2410.22920}.
\bibitem{lit:CMZ} D. C\'ordoba, L. Mart\'inez-Zoroa, and F. Zheng, \emph{Finite time singularities to the 3D incompressible Euler equations for solutions in $C^\infty(\mathbb R^3\setminus\{0\})\cap C^{1,\alpha}\cap L^2$}, Ann. PDE \textbf{11} (2025), 19. \url{https://arxiv.org/abs/2308.12197}.
\bibitem{lit:CMZHypo} D. C\'ordoba, L. Mart\'inez-Zoroa, and F. Zheng, \emph{Finite time blow-up for the hypodissipative Navier Stokes equations with a force in $L^1_t C_x^{1,\epsilon}\cap L^\infty_t L_x^2$}, Arch. Ration. Mech. Anal. \textbf{250} (2026), 38. \url{https://doi.org/10.1007/s00205-026-02198-0}.
\bibitem{lit:CC} A. D. D. Craik and W. O. Criminale, \emph{Evolution of wavelike disturbances in shear flows: a class of exact solutions of the Navier--Stokes equations}, Proc. R. Soc. Lond. A \textbf{406} (1986), 13--26. \url{https://doi.org/10.1098/rspa.1986.0061}.
\bibitem{lit:DLS} C. De Lellis and L. Sz\'ekelyhidi, Jr., \emph{The Euler equations as a differential inclusion}, Ann. of Math. \textbf{170} (2009), 1417--1436. \url{https://doi.org/10.4007/annals.2009.170.1417}.
\bibitem{lit:Dembski} K. H. Dembski, \emph{Singularity formation in the incompressible porous medium equation without boundary mass}, preprint (2025). \url{https://arxiv.org/abs/2511.01827}.
\bibitem{lit:ElgindiIPM} T. M. Elgindi, \emph{On the asymptotic stability of stationary solutions of the inviscid incompressible porous medium equation}, Arch. Ration. Mech. Anal. \textbf{225} (2017), 573--599. \url{https://arxiv.org/abs/1411.6958}.
\bibitem{lit:ElgindiEuler} T. M. Elgindi, \emph{Finite-time singularity formation for $C^{1,\alpha}$ solutions to the incompressible Euler equations on $\mathbb R^3$}, Ann. of Math. \textbf{194} (2021), 647--727. \url{https://doi.org/10.4007/annals.2021.194.3.2}.
\bibitem{lit:EGM} T. M. Elgindi, T.-E. Ghoul, and N. Masmoudi, \emph{On the stability of self-similar blow-up for $C^{1,\alpha}$ solutions to the incompressible Euler equations on $\mathbb R^3$}, Camb. J. Math. \textbf{9} (2021), 1035--1075. \url{https://arxiv.org/abs/1910.14071}.
\bibitem{lit:EJ} T. M. Elgindi and I.-J. Jeong, \emph{Ill-posedness for the incompressible Euler equations in critical Sobolev spaces}, Ann. PDE \textbf{3} (2017), 7. \url{https://doi.org/10.1007/s40818-017-0027-7}.
\bibitem{lit:EP} T. M. Elgindi and F. Pasqualotto, \emph{From instability to singularity formation in incompressible fluids}, preprint (2023). \url{https://arxiv.org/abs/2310.19780}.
\bibitem{lit:FH} B. R. Fabijonas and D. D. Holm, \emph{Multi-frequency Craik--Criminale solutions of the Navier--Stokes equations}, J. Fluid Mech. \textbf{506} (2004), 207--215. \url{https://doi.org/10.1017/S0022112004008511}.
\bibitem{lit:FV} S. Friedlander and M. M. Vishik, \emph{Instability criteria for the flow of an inviscid incompressible fluid}, Phys. Rev. Lett. \textbf{66} (1991), 2204--2206. \url{https://doi.org/10.1103/PhysRevLett.66.2204}.
\bibitem{lit:Isett} P. Isett, \emph{A proof of Onsager's conjecture}, Ann. of Math. \textbf{188} (2018), 871--963. \url{https://arxiv.org/abs/1608.08301}.
\bibitem{lit:JK} M. J. Jo and J. Kim, \emph{Quantitative asymptotic stability of the quasi-linearly stratified densities in the IPM equation with the sharp decay rates}, J. Funct. Anal. \textbf{286} (2024), 110401. \url{https://arxiv.org/abs/2210.11437}.
\bibitem{lit:KY} A. Kiselev and Y. Yao, \emph{Small scale formations in the incompressible porous media equation}, Arch. Ration. Mech. Anal. \textbf{247} (2023), 1. \url{https://arxiv.org/abs/2102.05213}.
\bibitem{lit:LL} S. Le Diz\`es and S. Leblanc, \emph{Note on `Multi-frequency Craik--Criminale solutions of the Navier--Stokes equations' by B. R. Fabijonas and D. D. Holm}, J. Fluid Mech. \textbf{550} (2006), 43--50. \url{https://doi.org/10.1017/S0022112005007949}.
\bibitem{lit:LH} A. Lifschitz and E. Hameiri, \emph{Local stability conditions in fluid dynamics}, Phys. Fluids A \textbf{3} (1991), 2644--2651. \url{https://doi.org/10.1063/1.858153}.
\bibitem{lit:OpenAI} OpenAI, \emph{Finite time blowup for the Euler equation}, manuscript (2026). \url{https://cdn.openai.com/pdf/315b36cd-ec98-4023-8342-93345194ece1/euler.pdf}.
\bibitem{lit:Park} J. Park, \emph{Stability analysis of the incompressible porous media equation and the Stokes transport system via energy structure}, preprint (2024). \url{https://arxiv.org/abs/2403.14187}.
\bibitem{lit:SWZZ} F. Shao, D. Wei, P. Zhang, and Z. Zhang, \emph{Self-similar blow-up solutions of $d$-dimensional incompressible Euler equations with $C^{1,(1-2/d)-}$ velocity}, preprint (2026), version 2, 22 July 2026. \url{https://arxiv.org/abs/2605.19716v2}.
\bibitem{lit:Shkoller} S. Shkoller, \emph{Incompressible Euler blowup at the $C^{1,1/3}$ threshold}, preprint (2026). \url{https://arxiv.org/abs/2603.10945}.
\bibitem{lit:ZhengEuler} F. Zheng, \emph{Finite-time blowup of the Euler equation without forcing}. \url{https://arxiv.org/abs/2610.02509}.
\end{thebibliography}
\end{document}